\theoremstyle{plain}
\newtheorem{theo}{Theorem}[section]
\newtheorem{prop}[theo]{Proposition}
\newtheorem{coro}[theo]{Corollary}
\theoremstyle{definition}
\newtheorem{defi}[theo]{Definition}
\newtheorem{nota}[theo]{Notation}
\newtheorem{cons}[theo]{Construction}
\theoremstyle{remark}
\newtheorem{rem}[theo]{Remark}
\newtheorem{exa}[theo]{Example}
\numberwithin{equation}{section}
\newcommand{\op}{^{\mathrm{op}}}
\newcommand{\cat}{\mathbf}
\newcommand{\oper}{\mathscr}
\newcommand{\on}{\mathrm}
\newcommand{\Emb}{\on{Emb}}
\newcommand{\Map}{\on{Map}}
\newcommand{\Mod}{\cat{Mod}}
\newcommand{\oMod}{\oper{M}od}
\newcommand{\oMan}{\oper{M}an}
\newcommand{\oSpace}{\oper{S}pace}
\newcommand{\oCom}{\oper{C}om}
\newcommand{\oAss}{\oper{A}ss}
\newcommand{\oMor}{\oper{M}or}
\newcommand{\oCob}{\oper{C}ob}
\newcommand{\oCospan}{\oper{C}ospan}
\newcommand{\oModCat}{\oMod\oper{C}at}
\newcommand{\Cat}{\cat{Cat}}
\newcommand{\Alg}{\cat{Alg}}
\newcommand{\Man}{\cat{Man}}
\renewcommand{\S}{\cat{S}}
\newcommand{\Spec}{\cat{Spec}}
\newcommand{\un}{\mathbb{I}}
\newcommand{\id}{\mathrm{id}}
\newcommand{\goto}[1]{\stackrel{#1}{\longrightarrow}}
\newcommand{\Hom}{\underline{\mathrm{Hom}}}
\renewcommand{\L}{\mathbb{L}}
\newcommand{\undtil}[1]{\underline{\widetilde{#1}}}
\newcommand{\und}[1]{\underline{#1}}
\title{Operads, modules and Topological field theories}
\author{Geoffroy Horel}
\begin{document}

\address{Mathematisches Institut\\
Einsteinstrasse 62\\
D-48149 Münster\\
Deutschland}
\email{geoffroy.horel@gmail.com}\thanks{The author was partially supported by an NSF grant and Michael Weiss's Alexander von Humboldt professor grant}
\keywords{operads, modules, factorization homology, cobordism category, little disk operad, topological field theories}
\subjclass[2010]{18G55, 18D50, 55P48, 81T45}

\begin{abstract}
In this paper, we describe a general theory of modules over an algebra over an operad. We also study functors between categories of modules. Specializing to the operad $\oper{E}_d$ of little $d$-dimensional disks, we show that each $(d-1)$ manifold gives rise to a theory of modules over $\oper{E}_d$-algebras and each bordism gives rise to a functor from the category defined by its incoming boundary to the category defined by its outgoing boundary. We describe how to assemble these categories into a map from a certain $\infty$-operad to the $\infty$-operad of $\infty$-categories.
\end{abstract}

\maketitle

\tableofcontents

\section*{Introduction}

A standard idea in mathematics is to study algebras through their representations, also known as modules. This idea can be applied to various notions of algebras (associative algebras, commutative algebras, Lie algebras, etc.). If we have to deal with more complicated types of algebras defined by an operad, we must first understand what the correct notion of module is. There is a definition of \emph{operadic modules} over an algebra over an operad, but this is too restrictive in our opinion. For instance, operadic modules over associative algebras are bimodules. However, left modules are at least equally interesting as bimodules. This suggests that, in general, there are several interesting theories of modules over an algebra. 

Our first contribution in this paper is to classify all objects that can sensibly be called modules over an algebra over a certain operad. As it turns out, for a given operad $\oper{O}$, notions of modules over $\oper{O}$-algebras are in one-to-one correspondence with associative algebras in the symmetric monoidal category of right $\oper{O}$-modules (see \ref{PMod}). For $P$ an associative algebra in right modules over $\oper{O}$, we say that a module parametrized by this particular object is a $P$-shaped module. For instance left modules, right modules and bimodules are three different shapes of modules for the operad $\oper{A}ss$.

An interesting feature of the categories of modules of a certain shape over a certain kind of algebra is that they usually carry operations that are present, independently of what the algebra is. Those operations are entirely determined by the type of algebra and the shape of the module. For example, if one takes a commutative algebra, then the category of left modules has a symmetric monoidal structure. If the algebra is only associative, then this symmetric monoidal structure does not exist. On the other hand, on the category of bimodules over an associative algebra, there exists a monoidal structure.

Monoidal or symmetric monoidal structure on categories are not specific to homotopy theory and can be found in most fields of mathematics. However, when working in a homotopy theoretic context, one may encounter monoidal structure parametrized by operads in spaces. For instance, it has been proved by Lurie in \cite{luriehigher} that if $A$ is an $\oper{E}_{d+1}$-algebra, there is an $\oper{E}_{d}$-monoidal structure on the category of left modules over $A$. If $d$ is at least $3$, there is no classical analogue of an $\oper{E}_d$-monoidal category. A similar kind of result, also due to Lurie, is that the category of operadic $\oper{E}_d$-modules has an $\oper{E}_d$-monoidal structure. This last result is the main step in Lurie's proof of Deligne's conjecture.

Our second main contribution in this paper is to construct operations on categories of modules over algebras of a certain type that generalize all those that we have just mentioned. 

Before explaining these operations, let us say a few words about our language. The paper \cite{luriehigher} uses $\infty$-categorical techniques. We have decided to use model categories instead. Most models of $\infty$-categories admit a strict enrichment in spaces, which allows one to speak of an $\infty$-category which is $\oper{O}$-monoidal for some operad in spaces $\oper{O}$. On the other hand, as far as we know, there is no accepted definition of an $\oper{O}$-monoidal model category. In this paper we suggest a definition of such an object by constructing a simplicial operad of model categories (see \ref{construction of oModCat}). 

The objects of this operad are model categories, the morphisms are given by left Quillen multi-functors and weak equivalences between them. We also extends Rezk's nerve to a functor from the operad of model category to the operad of complete Segal spaces (see \ref{comparison model category complete Segal spaces}). Hence, if we have an operad $\oper{O}$, we can make sense of what an $\oper{O}$-algebra in model category is. We just define it to be a map from $\oper{O}$ to our operad of model categories. Using our comparison map we see that such a data induces an $\oper{O}$-algebra structure on the corresponding complete Segal space, which means that our theory is homotopically sensible.

Coming back to our initial problem, our approach is to put a model structure on the categories of modules we have constructed (see \ref{model category on PMod_A}). This first step is quite standard. At this stage, for a given operad $\oper{O}$ and a given (cofibrant) $\oper{O}$-algebra $A$, we are able to construct a function $P\mapsto P\Mod_A$ which sends an associative algebra in right $\oper{O}$-modules to a model category of $P$-shaped $A$-modules.

Our next step is to extends this function to a map of operad from a simplicial operad $\oMor(\oper{O})$ to the operad of model categories (see \ref{map from Mor to ModCat}). The operad $\oMor(\oper{O})$ is the Morita operad of $\Mod_{\oper{O}}$. Its objects are associative algebras in $\Mod_{\oper{O}}$ and its morphisms are given by bimodules and weak equivalences between them.

As a particular example we study the case of $\oper{E}_d$-algebra. We construct categories of modules associated to $(d-1)$-manifolds (see \ref{def of Stau modules}) and construct functors between these categories of modules indexed by bordisms (see \ref{functor induced by a bordism}). In the end, the structure we produce is a map of operad from a certain operad $f\widehat{\oper{C}ob}_d$ (defined in \ref{fake cobordism category}) closely related to the symmetric monoidal category of cobordims to the operad of model categories. As a corollary, we recover the fact that the category of operadic $\oper{E}_d$-modules over an $\oper{E}_d$-algebra is an $\oper{E}_d$-monoidal category and that the category of left modules is an $\oper{E}_{d-1}$-monoidal category.

We also study the case of commutative algebras. In that case we show that factorization homology reduces to the tensor product between spaces and commutative algebras generalizing a result of \cite{mcclurethh}. We also construct operations on the various categories of modules over a commutative algebra indexed by cospans of spaces (see \ref{from cospan to modcat}).

\subsection*{Related work}

The idea of using right $\oper{O}$-modules to construct interesting invariants of $\oper{O}$-algebras was initiated in \cite{fressemodules}.

The idea of a $2$-category of model categories is mentioned without any definition in \cite{hoveymodel}. It is also implicit in several papers of Dugger.

In \cite{luriehigher}, the author shows that the category of operadic $\oper{E}_d$-module over an $\oper{E}_d$-algebra carries an action of the operad $\oper{E}_d$. Our work extends this action to an action of the operad of cobordisms from copies of the $(d-1)$-spheres to the $(d-1)$-sphere.

The recent paper \cite{toenoperations} shows that for nice operads $\oper{O}$ in spaces, the operad $\oper{O}$ acts on $\oper{O}(2)$ via cospans of spaces. In our language, we see that if $A$ is a commutative algebra in $\cat{C}$, then $\int_{\oper{O}(2)}A$ is an $\oper{O}$-algebra in $\oMor(\cat{C})$. In particular, $L\Mod_{\int_{\oper{O}(2)}A}$ is an $\oper{O}$-algebra in $\oModCat$.

The existence of a fully extended topological field theory constructed from an $\oper{E}_d$-algebra was sketched in \cite{lurieclassification}. A rigorous construction will appear in  \cite{calaquefactorization} and \cite{calaquecategory}. In this paper, we construct the restriction of this field theory in dimension $d$ and $d-1$.

\subsection*{Acknowledgments}
This paper was mainly developed during my time as a graduate student at MIT. I would like to thank my advisor Haynes Miller for his guidance. This work also benefited a lot from conversations with Clark Barwick, David Ayala, Ricardo Andrade and Pedro Boavida de Brito.

\subsection*{Conventions}

\begin{itemize}
\item A boldface letter or word like $\cat{X}$ or $\cat{Mod}$ always denotes a category. 

\item All categories are assumed to be simplicial. If they are ordinary categories we give them the discrete simplicial structure. We denote by $\on{Fun}(\cat{X},\cat{Y})$ the simplicial category of simplicial functors from $\cat{X}$ to $\cat{Y}$. 

\item $\Map_{\cat{X}}(X,Y)$ denotes the simplicial set of maps between $X$ and $Y$ in the category $\cat{X}$.

\item $\cat{X}(X,Y)$ denotes the set of maps from $X$ to $Y$ in the category $\cat{X}$. Equivalently, $\cat{X}(X,Y)$ is the set of $0$-simplices of $\Map_{\cat{X}}(X,Y)$.

\item A calligraphic letter like $\oper{M}$ always denotes a (colored) operad in the category of simplicial sets.

\item If $\cat{C}$ is a symmetric monoidal simplicial categoy, $\cat{C}[\oper{M}]$ denotes the category of $\oper{M}$-algebras in $\cat{C}$.

\item The symbol $\cong$ denotes an isomorphism. The symbol $\simeq$ denotes an isomorphism in the homotopy category (i.e. a zig-zag of weak equivalences). 

\item The letters $Q$ and $R$ generically denote the cofibrant and fibrant replacement functor in the ambient model category. There is a natural transformation $Q\to\on{id}$ and $\on{id}\to R$.

\item In this work, the word space usually means \emph{simplicial set}. We try to say \emph{topological spaces} when we want to talk about topological spaces.

\item We allow ourselves to treat topological spaces as simplicial sets without changing the notation. The reader is invited to apply the functor $\on{Sing}$ as needed.

\item We also allow ourselves to treat category as simplicial sets without changing the notation. More precisely, if we have a category or an operad enriched in categories, we use the same notation for the simplicially enriched category or operad obtained by applying the nerve functor to each Hom category.

\item The word \emph{spectrum} is to be interpreted as symmetric spectrum in simplicial sets.

\item We say \emph{large category} to talk about a category enriched over possibly large simplicial sets. We say \emph{category} to talk about a category enriched over small simplicial sets. We say \emph{small category} to talk about a category whose objects and morphisms both are small. The meaning of small and large can be made precise by way of Grothendieck universes.
\end{itemize}

\section{The Morita bioperad}

\subsection{Bioperads}

In this subsection, we develop a theory of bioperads. A bioperad is to an operad in $\Cat$ what a bicategory is to a $2$-category. Bioperads with one object are studied under the name operadic category in \cite{toenoperations} \footnote{We have decided to change the name because the term operadic category is used in another context in work of Batanin and Markl}.

We sue the notation $\Omega$ for the category of dendrices (see for instance \cite{moerdijkinner}). For $\tau\in\Omega$, we denote $E(\tau)$ the set of edges of $\tau$, $L(\tau)$ the set of leaves and $R(\tau)$ the root. The set $E(\tau)-(L(\tau)\sqcup R(\tau))$ is called the set of internal edges and is the set of edges connected to exactly two vertices. We denote by $V(\tau)$ the set of vertices. If $v$ is a vertex of a tree, we denote by $v_{in}$ the set of incoming edges and $v_{out}$ the unique outgoing edge. A vertex is called external if all of its incoming edges are leaves. Recall that the morphisms in $\Omega\op$ are compositions of isomorphisms, faces and degeneracies. The degeneracies ``blow-up'' an edge into two edges connected by a vertex. The faces either collapse an internal edge or remove an external vertex together with all its incoming edges.

\begin{defi}
If $S$ is a set, an \emph{$S$-decorated tree} is the data of an element of $\Omega$ together with a map $S\to E(\tau)$.
\end{defi}

\begin{defi}
An \emph{$S$-multigraph} in $\cat{Cat}$ is the data, for each family of elements of $S$, $\{x_i\}_{i\in I}$ and each element $y$ of $S$, of a category $\oper{G}(\{x_i\}_{i\in I};y)$.
\end{defi}

An $S$-multi-graph can assign a value to any $S$-decorated tree by the following formula
\[\oper{G}(\tau)=\prod_{v\in V(\tau)}\oper{G}(\{e\}_{e\in v_{in}};v_{out})\]
Note that $\tau$ being decorated, any edge is labeled by an element of $S$, $v_{in}$ denotes the set of incoming edges with their given label and similarly for $v_{out}$.

If $x:\tau\to\tau'$ is a map in $\Omega\op$ any decoration on $\tau$ can be transferred to a decoration on $\tau'$. Indeed, if $x$ is a face or an isomorphism, this is tautological and if $x$ is a degeneracy (i.e. turns an edge into two edges connected by a vertex), we just duplicate the label on the edge.

\begin{defi}
Let $\tau$ be a decorated tree, a \emph{composition data on $\tau$} is a sequence $c=(c_0,\ldots,c_k)$ of composable faces, degeneracies and isomorphisms in $\Omega\op$ such that the source of $c_0$ is $\tau$.
\end{defi}

If $c$ is a composition data on $\tau$, we denote by $c_*(\tau)$ the tree $c_n\circ\ldots\circ c_0(\tau)$ with its induced decoration. If $c$ is a composition data on $\tau$ and $c'$ is a composition data on $c_*\tau$, then we denote by $c'\circ c$ the composition data on $\tau$ obtained by concatenation.

We can now give the main definition of this subsection.

\begin{defi}
A \emph{bioperad} is the data of
\begin{itemize}
\item A set of objects $\on{Ob}(\oper{M})$.
\item An $\on{Ob}(\oper{M})$-multigraph $\oper{M}$.
\item For each decorated tree $\tau$ and each composition data $c$ on $\tau$, a functor
\[\chi_c:\oper{M}(\tau)\to \oper{M}(c_*(\tau))\]
\item For each pair of composition data $c$, $c'$ on $\tau$ with same target, a natural isomorphism
\[\kappa_{c,c'}:\chi_c\implies \chi_{c'}\]
\end{itemize}
such that 
\begin{itemize}
\item If $c$ and $c'$ are two composable composition data, we have
\[\chi_{c\circ c'}=\chi_c\circ\chi_{c'}\]
\item If $c,c',c''$ are three composition data on the same decorated tree with same target, then
\[\kappa_{c',c''}\circ\kappa_{c,c'}=\kappa_{c,c''}\]
\end{itemize}
\end{defi}

\begin{rem}
The fact that $\chi_{c\circ c'}=\chi_c\circ\chi_{c'}$ implies that the maps $\chi$ are entirely determined by their value on isomorphisms faces and degeneracies.
\end{rem}

\begin{rem}
The usual definition of a bicategory, involves a ``minimal'' set of coherence isomorphisms. Here, we have chosen the opposite approach of giving a maximal set of such isomorphisms. The advantage our approach is that there is no complicated coherence theorem to prove since any two compositions that we may wish to compare are related by a given isomorphism. The disadvantage is of course that it is potentially hard to prove that something is a bioperad. The approach of having a minimal set of coherence isomorphisms is sketched in \cite{toenoperations} in the one object case.
\end{rem}

There is an obvious notion of a strict morphism between bioperads but, as expected, most morphisms occurring in nature are not strict morphisms. We now define the notion of a pseudo-functor between bioperads.

\begin{defi}
A \emph{pseudo-functor} $f:\oper{M}\to\oper{N}$ is a morphism of graphs together with the data of isomorphisms
\[\upsilon_{c}:f\circ\chi^{\oper{M}}_c\to \chi_c^{\oper{N}}\circ f\]
for any composition data $c$ on $\tau$.

Those isomorphisms are such that for any pair $(c,d)$ and $(c',d')$ of composable composition data with same source and target, the following diagram commutes
\[\xymatrixcolsep{3pc}
\xymatrix{
 &\chi_{c\circ d}\circ f\ar[r]^{\kappa^{\oper{N}}_{c\circ d,c'\circ d'}\circ f}&\chi_{c'\circ d'}\circ f\\
\chi_c\circ f\circ\chi_d\ar[ur]^{\chi_c\circ\upsilon_d}& & & \chi_{c'}\circ f\circ\chi_{d'}\ar[ul]_{\chi_{c'}\circ\upsilon_{d'}}\\
 &f\circ\chi_{c\circ d}\ar[ul]^{\upsilon_c\circ\chi_d}\ar[r]_{f\circ\kappa^{\oper{M}}_{c\circ d,c'\circ d'}}&f\circ\chi_{c'\circ d'}\ar[ur]_{\upsilon_c'\circ\chi_{d'}}
}
\]
\end{defi}

Now we explain how a bioperad can be strictified into an equivalent $2$-operad. In fact, our definition of a bioperad has so much data that it is almost tautological. We give ourselves, on each tree $\tau$, a composition data  with target the corolla with the same number of leaves. We call it the standard composition data on $\tau$.

\begin{cons}\label{strictification}
Let $\oper{M}$ be a bioperad. We define $\on{Str}\oper{M}$ to be the multi-graph whose objects are those of $\oper{M}$, and with 
\[\on{Ob}(\on{Str}\oper{M}(\{x_i\}_{i\in I};y))=\coprod_{\tau,L(\tau)\cong I, R(\tau)=y}\on{Ob}\oper{M}(\tau)\]
If $p,q\in \on{Ob}(\on{Str}\oper{M}(\{x_i\}_{i\in I};y))$ we define the set of $2$-morphisms between them to be the set of $2$-morphisms between $\chi_c(p)$ and $\chi_c(q)$ where $c$ is the standard composition data on $\tau$.

Vertical composition of $2$-morphisms is straightforward. For horizontal composition we use the isomorphisms $\kappa_{c,c'}$ to compare the potentially non-standard composition data with the standard one.
\end{cons}

If $f:\oper{M}\to\oper{N}$ is a pseudo-functor with $\oper{N}$ strict, we get a strict functor
\[\on{Str}(f):\on{Str}(\oper{M})\to\oper{N}\]
which coincides with $f$ on objects and which sends $p\in \oper{M}(\tau)$ to the composition of $f(p)$. Moreover we have a strict functor $\on{Str}\oper{M}\to\oper{M}$ which is the identity on objects and which sends $p\in\oper{M}(\tau)$ to its standard composition. It is straightforward to check that this morphism is an equivalence. This means that we have turned the morphism
\[f:\oper{M}\to\oper{N}\]
into a zig-zag of strict arrows
\[\oper{M}\leftarrow\on{Str}\oper{M}\goto{\on{Str}f}\oper{N}\]
where the left pointing arrow is a biequivalence (bijective on objects and induces an equivalence on each Hom category). 

\begin{rem}
The data of a bioperad contains the data of a pseudo-functor $\Omega\op\to \cat{Cat}$. This pseudo-functor can be strictified to an actual functor using a classical result. In the end, we get a functor $\Omega\op\to\cat{Cat}$ which satisfies Segal's condition. In particular, if we apply the nerve functor to each Hom category, we get a Segal operad. Segal operad can then be strictified to actual operads in $\S$. This approach gives an equivalent model for the strictification.
\end{rem}

\subsection{The Morita bioperad}

It is a classical fact that, if $\cat{V}$ is a monoidal category, there is a bicategory whose objects are associative algebras in $\cat{V}$ and whose morphisms are bimodules. The composition being given by relative tensor product of bimodules. We call this bicategory the \emph{Morita bicategory}. Our purpose in this subsection is to extend this structure to that of a bioperad when $\cat{V}$ is symmetric monoidal.

Let $\cat{V}$ be a cocomplete closed symmetric monoidal category and let $S$ be a class of associative algebra such that
\begin{itemize}
\item If $\{A_i\}_{i\in I}$ is a finite collection of elements of $S$ then $\otimes_iA_i$ (with its induced associative algebra structure) is in $S$.
\item If $A$ is in $S$, $A\op$ is in $S$.
\end{itemize} 

We construct a large multigraph $\oMor^d(\cat{V},S)$. Its objects are the elements of $S$. If $\{A_i\}_{i\in I}$ and $B$ are objects, we set
\[\oMor^d(\cat{V},S)(\{A_i\};B)={}_B\Mod_{\{A_i\}_I}\]

The category ${}_B\Mod_{\{A_i\}_I}$ is the category of right modules over $B\op\otimes(\otimes_IA_i)$. 

\begin{theo}
Let $(\cat{V},\otimes)$ be a symmetric monoidal category, then $\oMor^d(\cat{V},S)$ can be extended to a bioperad.
\end{theo}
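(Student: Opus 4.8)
The plan is to unwind the definition of a bioperad. By the remark following that definition it is enough to specify $\chi_c$ when $c$ is an isomorphism, an inner face, an outer face or a degeneracy, to set $\chi_{c\circ c'}=\chi_c\circ\chi_{c'}$ on general composition data, and then to produce the coherence isomorphisms $\kappa_{c,c'}$ and verify their cocycle identity. Accordingly the proof has three parts: (i) the elementary functors; (ii) the isomorphisms $\kappa$; (iii) the cocycle condition.

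For (i): on an isomorphism of decorated trees $\chi$ is the tautological reindexing of the product $\oMor^d(\cat{V},S)(\tau)=\prod_v\oMor^d(\cat{V},S)(v_{in};v_{out})$. For a degeneracy replacing an edge labelled $E$ by two edges joined by a bivalent vertex $v$, the target acquires the extra factor $\oMor^d(\cat{V},S)(\{E\};E)={}_E\Mod_E$, and $\chi$ sends a decoration $x$ to $(x,E)$ with $E$ regarded as the unit $E$--$E$-bimodule. For an outer face removing a top vertex $v$, the outgoing edge of $v$ merely becomes a leaf, so no module category at a surviving vertex changes, and $\chi$ is the projection that forgets the $v$-factor (the terminal functor in the corolla case). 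The only case with content is an inner face contracting an internal edge $e$ labelled $E$ between an upper vertex $w$ and a lower vertex $v$: writing the $v$-decoration as a right module $M$ over $B\op\otimes E\otimes(\otimes_IA_i)$ and the $w$-decoration as a right module $N$ over $E\op\otimes(\otimes_JC_j)$, we set $\chi(M,N,\dots)=(M\otimes_EN,\dots)$, where $M\otimes_EN$ is the coequalizer of $M\otimes E\otimes N\rightrightarrows M\otimes N$. It exists since $\cat{V}$ is cocomplete, is bifunctorial, and — using the symmetry of $\otimes$ to put the residual actions in position — is a right module over $B\op\otimes(\otimes_IA_i)\otimes(\otimes_JC_j)$, which is exactly the algebra attached to the merged vertex. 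Composites of these are functors and $\chi_{c\circ c'}=\chi_c\circ\chi_{c'}$ holds by construction, so (i) is essentially bookkeeping.

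For (ii) and (iii): given $c,c'\colon\tau\to\sigma$ with the same target, inspecting the definitions shows that for each vertex $w$ of $\sigma$ the objects $\chi_c(x)_w$ and $\chi_{c'}(x)_w$ are both iterated relative tensor products of the decorations at the vertices of $\tau$ that get merged into $w$, differing only in the bracketing and ordering of the tensors over the internal edges and in spurious unit bimodules coming from degeneracies. The isomorphism $\kappa_{c,c'}$ is then assembled from associativity of the relative tensor product, the unit law $M\otimes_EE\cong M$, and the interchange law $M\otimes_{E\otimes F}(N\otimes P)\cong(M\otimes_EN)\otimes_FP$, the last of which is where branching of $\tau$, and hence the symmetry of $\cat{V}$, is used. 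Rather than then prove a coherence theorem to obtain the cocycle identity $\kappa_{c',c''}\circ\kappa_{c,c'}=\kappa_{c,c''}$, I would realize each $\chi_c(x)_w$ as a single colimit — a tree-shaped two-sided bar construction over the algebras labelling the internal edges of the part of $\tau$ lying over $w$ — in such a way that associativity, unitality and symmetry are all absorbed into the indexing category of that colimit; the $\kappa$'s then become the canonical comparisons between two presentations of one and the same colimit, and their cocycle identity is automatic. The colimits exist because $\cat{V}$ is cocomplete, and because $\cat{V}$ is closed $\otimes$ commutes with them, which is what makes the bar constructions compatible with edge contraction and unit insertion.

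The main obstacle is precisely the construction and analysis of this bar-construction functor. It has to be set up as a functor of $x$ carrying the correct module structure over the ambient outer algebra, which forces one to track which tensor factor of that algebra comes from which branch of $\tau$ and to normalize using the symmetry of $\cat{V}$; and one must then check that contracting an inner edge, deleting a top vertex and inserting a degeneracy each reproduce it up to canonical isomorphism. This is the familiar associativity-and-unitality bookkeeping for the relative tensor product, now distributed over a tree — lengthy, but not conceptually hard. Granting it, all of the bioperad axioms for $\oMor^d(\cat{V},S)$ follow.
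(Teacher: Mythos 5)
Your construction of the elementary functors matches the paper's: relative tensor products $M\otimes_E N$ for inner faces, insertion of the unit bimodule for degeneracies (you also spell out the outer-face and isomorphism cases, which the paper's sketch leaves implicit). The genuine divergence is in how $\kappa_{c,c'}$ and the cocycle identity are obtained. You propose to present each $\chi_c(x)_w$ as a tree-shaped two-sided bar colimit that manifestly depends only on the source and target of $c$; the $\kappa$'s are then the unique comparisons between two presentations of one colimit and the cocycle is automatic, but the construction and analysis of that colimit is, as you say, the main burden. The paper instead avoids building any new object: it observes that $\chi_c(P,Q,R)$ \emph{corepresents} the functor sending $N$ to maps $\otimes P_i\to N$ that are balanced over every internal-edge algebra (e.g.\ $f(d.p,q,r)=f(p,q,r.d)$) --- a functor that visibly depends only on the underlying decorated trees --- and Yoneda then produces the $\kappa$'s and forces the cocycle identity with no further work. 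Both moves are instances of the same strategy (exhibit a composition-data-independent characterization of the composite), but the paper's representability argument gets the coherences essentially for free, whereas your bar-construction route is more constructive and explicit at the cost of the setup you flag. Your proof is correct, granting the construction of the tree-shaped colimit; you would save that effort by instead inspecting what your iterated coequalizers corepresent, which is exactly the paper's shortcut.
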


\begin{proof}
(Sketch) For a family of modules $\{M_i\}_{i\in I}$ and a module $N$, we denote
\[\cat{V}(\{M_i\},N)\]
the set of maps $\otimes M_i\to N$. The axioms of a symmetric monoidal category tell us that any two interpretation of $\otimes M_i$ can be compared by a canonical isomorphism.

It suffices to construct the maps $\chi_c$ for $c$ a face or a degeneracy.

Given $M$ in ${}_{B_k}\Mod_{\{A_i\}_I}$ and $N$ in ${}_C\Mod_{\{B_j\}_J}$ the tensor product $N\otimes_{B_k}M$ is an object of ${}_C\Mod_{\{A_i\}_I\sqcup \{B_j\}_{j\in J-k}}$ which is defined to be the composition along the face which collapses the edge with label $B_k$. There is also the unit $A\in {}_A\Mod_A$ which allows us to define $\chi_c$ for degeneracies.

To construct the map $\kappa_c$, let us first look at an example. Consider the following decorated tree $\tau$ in $\oMor^d(\cat{V},S)$
\[
\xymatrix{
 \ar[dr]^A& & & \\
 \ar[r]^B& \square\ar[r]^D&\square\ar[r]^F& \\
 \ar[r]^C&\square\ar[ur]^E& & 
 }
\]

For a composition data $c$, $\chi_c$ takes as input three elements of $\cat{V}$ $P$, $Q$ and $R$ where $P$ has a right action of $A$ and $B$ and a left action of $D$, $Q$ has a right action of $C$ and a left action of $E$ and $R$ has a right action of $D$ and $E$ and a left action of $F$. Then consider any composition data $c$ on $\tau$ which collapses the two internal edges i.e a composition data with target:
\[
\xymatrix{
 \ar[dr]^A& & \\
 \ar[r]^B&\square\ar[r]^F& \\
 \ar[ur]^C& &
 }
\]

Then $\chi_c(P,Q,R)$ is an object of $\cat{V}$ such that the functor $\cat{V}(\chi_c(P,Q,R),-)$ is isomorphic to the sub functor of $\cat{V}(P,Q,R;-)$ sending $N$ to those maps $P\otimes Q\otimes R\to N$ intertwining the $D$ and $E$ actions. For instance if $\cat{V}$ is the category of modules over a ring, then $\cat{V}(\chi_c(P,Q,R),N)$ would be the set of trilinear maps $f:P\times Q\times R\to N$ with the additional property that $f(d.p,q,r)=f(p,q,r.d)$ and $f(p,e.q,r)=f(p,q,r.e)$. 

In general, we observe that if $c$ and $c'$ are two composition data with same source and target, the functor represented by $\chi_c(P_i)$ and $\chi_{c'}(P_i)$ are isomorphic. Thus, by Yoneda's lemma, the map $\kappa_{c,c'}$ is uniquely determined and therefore must satisfy the required properties.
\end{proof}

\begin{rem}
It is proved in \cite{shulmanconstructing} that there is actually a symmetric monoidal bicategory which extends the Morita bicategory of $\cat{V}$ whenever $\cat{V}$ is symmetric monoidal.

In fact a symmetric monoidal bicategory is a richer structure than a bioperad. We believe that in general, a symmetric monoidal bicategory has an underlying bioperad, just as a symmetric monoidal category has an underlying operad.
\end{rem}

\subsection{The Morita bioperad of a symmetric monoidal model category}

\begin{defi}\label{admissible class of algebras}
Let $(\cat{V},\otimes,\un)$ be a symmetric monoidal cofibrantly generated model category. We say that an associative algebras in $\cat{V}$ is admissible if the model structure on $\Mod_A$ transferred along the functor
\[\Mod_A\to\cat{V}\]
exists and moreover, the forgetful functor preserves cofibrations.

We say that a class of associative algebras in $\cat{V}$ is admissible if all its members are admissible and it is stable under taking tensor products and opposite algebras.
\end{defi}

\begin{cons}\label{Morita bioperad}
Let $S$ be an admissible class of algebras. We construct a large bioperad $\oMor(\cat{V},S)$. Its objects are the elements of $S$. If $\{A_i\}_{i\in I}$ and $B$ are in $S$, we define
\[\oMor(\{A_i\};B)=w({}_B\Mod_{\{A_i\}_I})_c\]

The category ${}_B\Mod_{\{A_i\}_I}$ is the category of right modules over $B\op\otimes(\otimes_IA_i)$. By assumption, it has a model structure transferred from the model structure on $\cat{V}$ and $\oMor(\{A_i\};B)$ is the subcategory of weak equivalences between cofibrant objects in that model category. The bioperad structure follows almost directly from that on $\oMor^d(\cat{V},S)$. The only thing that needs to be checked is that the tensor product of bimodules preserves cofibrant objects and weak equivalences between them. But this fact follows from the next proposition.

When $\cat{V}$ is such that the class all associative algebras of $\cat{V}$ are admissible, we write $\oMor(\cat{V})$ for the Morita bioperad with respect to this class.
\end{cons}

\begin{prop}\label{tensor product is left Quillen bifunctor}
Let $A$, $B$ and $C$ be three associative algebras in $S$. The relative tensor product
\[-\otimes_B-:{}_A\Mod_B\times {}_B\Mod_C\to {}_A\Mod_C\]
is a Quillen bifunctor.
\end{prop}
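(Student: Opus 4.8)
The plan is to reduce the statement to a pushout–product axiom for the relative tensor product. Recall that for $A$-$B$-bimodule $M$ and $B$-$C$-bimodule $N$, the relative tensor product $M \otimes_B N$ is the coequalizer of the two maps $M \otimes B \otimes N \rightrightarrows M \otimes N$, equivalently the geometric realization of the two-sided bar construction $B_\bullet(M,B,N)$ with $B_n(M,B,N) = M \otimes B^{\otimes n} \otimes N$. Since $\cat V$ is a cofibrantly generated symmetric monoidal model category, the underlying functor $-\otimes-\colon \cat V \times \cat V \to \cat V$ is already a Quillen bifunctor (this is part of the monoidal model category axioms). The first step is to record that the transferred model structure on each category ${}_A\Mod_B$ has its (generating) cofibrations described in terms of the generating (acyclic) cofibrations of $\cat V$ tensored up with the free bimodule functor $A \otimes - \otimes B$, and that the forgetful functor to $\cat V$ preserves cofibrations by the admissibility hypothesis in \ref{admissible class of algebras}.

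The key step is then to verify the pushout–product axiom: given a cofibration $f\colon M \to M'$ in ${}_A\Mod_B$ and a cofibration $g\colon N \to N'$ in ${}_B\Mod_C$, the induced map
\[
M' \otimes_B N \cup_{M \otimes_B N} M \otimes_B N' \longrightarrow M' \otimes_B N'
\]
is a cofibration in ${}_A\Mod_C$, acyclic if either $f$ or $g$ is. By adjunction and the standard argument reducing the pushout–product axiom to generating cofibrations, it suffices to treat $f$ and $g$ of the form $A \otimes i \otimes B$ and $B \otimes j \otimes C$ for $i,j$ generating (acyclic) cofibrations of $\cat V$. For such free bimodules the relative tensor product simplifies drastically: $(A \otimes X \otimes B) \otimes_B (B \otimes Y \otimes C) \cong A \otimes X \otimes B \otimes Y \otimes C$, so the relative pushout–product reduces, after forgetting to $\cat V$ and using that free bimodules are cofibrant there, to an ordinary pushout–product $A \otimes (i \,\square\, j) \otimes C$ in $\cat V$, tensored on both sides by the (cofibrant) objects $A$ and $C$. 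Here $i \,\square\, j$ is a cofibration (acyclic if $i$ or $j$ is) by the pushout–product axiom in $\cat V$, and tensoring a cofibration with cofibrant objects $A$, $C$ on left and right again yields a cofibration of $\cat V$; one then checks this map is a cofibration of ${}_A\Mod_C$, which follows because the forgetful functor creates and reflects cofibrations in the needed range by admissibility.

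I expect the main obstacle to be the bookkeeping in the reduction to generating cofibrations: one must argue that the class of maps $f$ for which the relative pushout–product with every cofibration $g$ is an (acyclic) cofibration is saturated — closed under pushout, transfinite composition, and retract — so that it suffices to check it on generators, and symmetrically in $g$. This is routine but requires care because the relative tensor product $-\otimes_B-$ is only a left adjoint in each variable separately (it preserves colimits variable-wise), which is exactly what is needed for the saturation argument to go through. A secondary point is to confirm that, because we are taking free bimodules, no bar-construction/flatness subtlety intervenes: the relative tensor product of free bimodules is computed by a single tensor product in $\cat V$, so there is nothing derived to worry about at the level of generators. Once the pushout–product axiom is established, the proposition follows, and this in turn supplies the missing verification in \ref{Morita bioperad} that relative tensor product preserves cofibrant objects and weak equivalences between them (the latter by Ken Brown's lemma, since a Quillen bifunctor sends weak equivalences between cofibrant objects to weak equivalences).
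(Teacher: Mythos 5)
Your overall strategy---reduce the pushout--product axiom to generating cofibrations and compute on free bimodules---is exactly the paper's intended argument (the paper's whole proof is ``it suffices to check it on generators''). However, the closing step of your argument has a genuine gap. First, a computational slip: the pushout--product of $A\otimes i\otimes B$ with $B\otimes j\otimes C$ over $B$ is $A\otimes B\otimes(i\,\square\,j)\otimes C$ up to symmetry isomorphism, not $A\otimes(i\,\square\,j)\otimes C$---you have dropped the middle $B$-factor, which is precisely where the admissibility of $B$ must enter. Second, and more seriously, you finish by tensoring with the ``cofibrant objects $A$, $C$'' and then asserting that ``the forgetful functor creates and reflects cofibrations in the needed range by admissibility.'' Neither part is available: Definition \ref{admissible class of algebras} does \emph{not} require $A$, $B$, $C$ to be cofibrant in $\cat V$, and admissibility only says the forgetful functor \emph{preserves} cofibrations (a cofibration of $\Mod_R$ forgets to a cofibration of $\cat V$)---it certainly does not reflect or create them. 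In a transferred model structure a module map can very well forget to a cofibration of $\cat V$ without being a cofibration of $\Mod_R$, so this step of your argument would fail.

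The correct way to close is to run the implication in the opposite direction. The pushout--product $A\otimes B\otimes(i\,\square\,j)\otimes C$ is the image of $B\otimes(i\,\square\,j)$ under the free $(A,C)$-bimodule functor $A\op\otimes C\otimes(-)$, and that functor is left Quillen because it is the left adjoint along which the model structure on ${}_A\Mod_C$ is transferred; so it suffices to show $B\otimes(i\,\square\,j)$ is a (trivial) cofibration in $\cat V$. For cofibrations this is where admissibility of $B$ is used: the generating cofibrations $k\otimes B$ of $\Mod_B$ forget to cofibrations of $\cat V$, and since $-\otimes B$ preserves colimits and retracts it therefore sends \emph{every} cofibration of $\cat V$ to a cofibration of $\cat V$; applying this to the cofibration $i\,\square\,j$ (which is one by the pushout--product axiom in $\cat V$) gives what is needed. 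No cofibrancy hypothesis on $A$, $B$, $C$ is required, and no appeal to cofibration-reflection is made.
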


\begin{proof}
It suffices to check it on generators.
\end{proof}

\subsection{Functoriality of the Morita bioperad}

\begin{prop}\label{Morita functoriality}
Let $F:\cat{V}\to\cat{W}$ be a lax symmetric monoidal left Quillen functor. Let $S$ and $T$ be two admissible classes of associative algebras in $\cat{V}$ and in $\cat{W}$ such that $F(S)\subset T$. Then $F$ induces a pseudo-functor
\[\oMor(F):\oMor(\cat{V},S)\to\oMor(\cat{W},S)\]
\end{prop}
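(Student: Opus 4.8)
The plan is to build the pseudo-functor $\oMor(F)$ level by level, starting from the underlying multi-graph and then promoting it to a pseudo-functor using the rigidity observed in the proof of the existence of the Morita bioperad. On objects we simply send $A \in S$ to $F(A) \in T$, which makes sense because $F$ is lax symmetric monoidal and hence sends associative algebras to associative algebras, and because $F(S) \subset T$. (There is a typo in the target of the statement which should read $\oMor(\cat{W}, T)$; I would silently correct this.) On hom-categories, I would use that a lax symmetric monoidal functor sends a $B\op \otimes (\otimes_I A_i)$-module $M$ to an $F(B\op \otimes (\otimes_I A_i))$-module $F(M)$, together with the canonical lax structure map $F(B\op) \otimes F(\otimes_I A_i) \to F(B\op \otimes (\otimes_I A_i))$ and the iterated lax maps identifying $F(\otimes_I A_i)$ with a quotient receiving a map from $\otimes_I F(A_i)$, so that $F(M)$ becomes an object of ${}_{F(B)}\Mod_{\{F(A_i)\}_I}$ in $\cat{W}$. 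Since $F$ is left Quillen it preserves cofibrant objects and weak equivalences, and the transferred model structures on module categories are built from $\cat{V}$ (resp. $\cat{W}$), so $F$ restricts to a functor $w({}_B\Mod_{\{A_i\}})_c \to w({}_{F(B)}\Mod_{\{F(A_i)\}})_c$. This gives the morphism of $S$-multi-graphs underlying $\oMor(F)$.

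Next I would produce the coherence isomorphisms $\upsilon_c$. By the remark following the definition of a bioperad, the maps $\chi_c$ are determined by their values on faces and degeneracies, so it suffices to construct $\upsilon_c$ for $c$ a face or a degeneracy and then extend by the concatenation formula $\chi_{c\circ c'} = \chi_c \circ \chi_{c'}$. For a collapsing face, $\chi_c$ is a relative tensor product $N \otimes_B M$, and the required isomorphism $F(N \otimes_B M) \cong F(N) \otimes_{F(B)} F(M)$ is the standard fact that a left Quillen (hence colimit-preserving) lax symmetric monoidal functor commutes with relative tensor products up to canonical isomorphism — this uses that $-\otimes_B-$ is computed as a reflexive coequalizer of tensor products, that $F$ preserves such colimits and the monoidal product, and compatibility of the lax structure maps with the bar resolution. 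For a degeneracy, $\chi_c$ inserts a unit $A \in {}_A\Mod_A$, and $\upsilon_c$ is built from the unit constraint of the lax monoidal structure $\un_{\cat{W}} \to F(\un_{\cat{V}})$, or rather from the unit map $\un_{\cat{W}} \to F(A)$ giving the canonical $F(A)$-bimodule structure. One must check these isomorphisms are natural and lie in $w(\cdots)_c$, which is immediate since $F$ preserves weak equivalences and cofibrancy.

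Finally I would verify the pseudo-functor coherence: the hexagonal (in the paper, pentagon-like) diagram relating the $\upsilon$'s, the $\kappa$'s of $\oMor(\cat{V},S)$ and of $\oMor(\cat{W},T)$. Here I would invoke exactly the Yoneda argument used in the sketch proof of the existence theorem: both composites around the diagram are maps between objects representing canonically isomorphic functors on $\cat{W}$ (namely sub-functors of multilinear maps out of a tensor product with prescribed intertwining conditions), so the diagram commutes by representability, uniqueness, and the corresponding statement downstairs. In other words, the maximal-coherence design of the bioperad notion does the work for us: there is nothing to check beyond identifying the functors represented.

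The main obstacle, and the only genuinely non-formal point, is the interchange of $F$ with relative tensor products, i.e.\ constructing and checking naturality of $F(N \otimes_B M) \cong F(N) \otimes_{F(B)} F(M)$ compatibly across all faces and degeneracies simultaneously. This is where laxness (rather than strong monoidality) could in principle cause trouble, but since the relative tensor product is a specific colimit (a reflexive coequalizer of iterated $\otimes$) and $F$ preserves both that colimit and $\otimes$, the comparison map exists and is an isomorphism; coherence then reduces to the coherence of the lax structure maps of $F$, which is part of the definition of a lax symmetric monoidal functor. Once this is in hand, all remaining verifications are formal, driven by the Yoneda/representability principle inherited from the construction of $\oMor^d(\cat{V},S)$.
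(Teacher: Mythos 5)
Your argument follows the same route as the paper: send $A\in S$ to $F(A)$, use the lax monoidal structure to push bimodules to bimodules, observe that $F$ restricts to weak equivalences between cofibrants because it is left Quillen and the model structures on bimodule categories are transferred, and then obtain coherence via the representability argument already used in the sketch proof that $\oMor^d(\cat{V},S)$ is a bioperad. The paper's own proof is far terser --- it simply asserts that $F$ induces a left Quillen functor ${}_A\Mod_B\to{}_{F(A)}\Mod_{F(B)}$ and that the square relating $-\otimes_B-$ and $-\otimes_{F(B)}-$ commutes up to a unique isomorphism --- so your write-up is a faithful unpacking of the intended argument.

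There is, however, a real slip in your justification of the key isomorphism $F(N\otimes_B M)\cong F(N)\otimes_{F(B)}F(M)$. You claim this holds because the relative tensor product is a reflexive coequalizer of iterated tensors and ``$F$ preserves both that colimit and $\otimes$.'' A \emph{lax} symmetric monoidal functor does \emph{not} preserve $\otimes$: the structure map $F(X)\otimes F(Y)\to F(X\otimes Y)$ is merely a map, not an isomorphism. Laxness plus colimit-preservation only yields a canonical comparison $F(N)\otimes_{F(B)}F(M)\to F(N\otimes_B M)$, with no reason for it to be invertible. The step as stated is valid only if $F$ is \emph{strong} symmetric monoidal. (In the paper's actual application, the functor $-\circ_{\oper{O}}A$ \emph{is} strong monoidal by the monoidality proposition of Section~3, so the eventual use is safe; and the paper itself sidesteps the issue by asserting commutativity ``up to a unique isomorphism'' without elaboration, so it shares the gap.) Since you single this out as ``the only genuinely non-formal point,'' you should either strengthen the hypothesis to strong monoidal, or supply an actual argument that the comparison is invertible under the lax plus left-Quillen assumption --- dismissing it on the grounds that $F$ preserves $\otimes$ is not available.
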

 
\begin{proof}
It suffices to notice that $F$ induces a left Quillen functor
\[{}_A\Mod_B\to {}_{F(A)}\Mod_{F(B)}\]
for any pair of algebras $A$ and $B$ in $S$. Moreover if $A$ $B$ and $C$ are three algebras in $S$, the diagram
\[\xymatrix{
{}_A\Mod_B\times {}_B\Mod_C\ar[r]^{-\otimes_B-}\ar[d]_F& {}_A\Mod_C\ar[d]^F\\
{}_{F(A)}\Mod_{F(B)}\times {}_{F(B)}\Mod_{F(C)}\ar[r]_{\hspace{30pt}-\otimes_{FB}-}& {}_{F(A)}\Mod_{F(C)}
}
\]
commutes up to a unique isomorphism.
\end{proof}

\begin{prop}\label{Morita equivalence}
Let $F:\cat{V}\to\cat{W}$ be a lax symmetric monoidal left Quillen equivalence. Let $S$ and $T$ be as above. Then $F$ induces a homotopically fully faithful pseudo-functor
\[\oMor(F):\oMor(\cat{V},S)\to\oMor(\cat{W},T)\]
Assume moreover, that for any algebra $A$ in $T$ there is a weak equivalence of associative alegbras $F(B)\to A$ for some associative algebra $B\in S$. Then the map $\oMor(F)$ is a weak equivalence.
\end{prop}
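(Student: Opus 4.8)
The plan is to verify the two claims separately, using the strictification of Construction \ref{strictification} to reduce statements about pseudo-functors to statements about honest simplicial functors, and then to use the fact that $F$ is a Quillen equivalence to control the homotopy type of each Hom-category. First I would recall that, since $F$ is lax symmetric monoidal left Quillen, Proposition \ref{Morita functoriality} already gives the pseudo-functor $\oMor(F)$; the content here is homotopical. For the first claim I would fix algebras $A,B\in S$ and examine the induced functor on Hom-categories
\[
w({}_B\Mod_A)_c\longrightarrow w({}_{F(B)}\Mod_{F(A)})_c.
\]
The key observation is that ${}_B\Mod_A$ is the category of right modules over $B\op\otimes(\otimes A_i)$, and $F$ being lax monoidal sends this ring to $F(B)\op\otimes(\otimes F(A_i))$, with a comparison map $F(B\op\otimes\cdots)\to F(B)\op\otimes\cdots$ which, since $F$ is a \emph{strong}-enough monoidal Quillen equivalence on the relevant cofibrant objects, is a weak equivalence of algebras. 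So the induced functor on modules is, up to the restriction-of-scalars Quillen equivalence along a weak equivalence of algebras, just the base-change functor $F_!$ of a Quillen equivalence between module categories; hence it induces a weak equivalence on the nerves of subcategories of weak equivalences between cofibrant objects (this is the standard fact that a Quillen equivalence induces an equivalence on the underlying $\infty$-categories, restricted to the core). This gives homotopic full faithfulness.

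For the second claim I would argue essential surjectivity on objects up to equivalence. Given $A\in T$, the hypothesis supplies an algebra $B\in S$ and a weak equivalence $F(B)\to A$ in $\cat{W}[\oAss]$. A weak equivalence of associative algebras induces a zig-zag of weak equivalences between the corresponding Morita Hom-categories (again via restriction/extension of scalars being a Quillen equivalence), so $A$ and $F(B)$ become isomorphic objects in the relevant homotopy category of the strictified $2$-operad $\on{Str}\oMor(\cat{W},T)$. Combined with the first part, which says $\oMor(F)$ is fully faithful up to homotopy, this shows $\on{Str}\oMor(F)$ (equivalently, after the biequivalence $\on{Str}\oMor(\cat{V},S)\to\oMor(\cat{V},S)$ of Construction \ref{strictification}) is bijective on equivalence classes of objects and an equivalence on each Hom-space, i.e. a weak equivalence of bioperads.

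The step I expect to be the main obstacle is the first one: making precise in what sense the canonical map $F(B\op\otimes(\otimes_I A_i))\to F(B)\op\otimes(\otimes_I F(A_i))$ is a weak equivalence, and then that the resulting restriction-of-scalars functor is a Quillen equivalence on the \emph{transferred} model structures on module categories. The lax monoidal structure map is generally \emph{not} an iso, so one needs $F$ to be a strong symmetric monoidal functor on cofibrant objects (which is part of what ``Quillen equivalence'' buys via the unit/counit being weak equivalences together with \cite{hoveymodel}-style arguments), plus the admissibility hypothesis on $S$ and $T$ to guarantee the transferred model structures exist and that cofibrant modules over $F(B\op\otimes\cdots)$ restrict to cofibrant objects of $\cat{W}$. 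Once that comparison is in hand, everything reduces to the well-known statement that Quillen equivalences induce equivalences of homotopy categories and of mapping spaces, applied degreewise to the simplicial Hom-categories of the Morita bioperad.
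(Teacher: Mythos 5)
Your proposal matches the paper's strategy step for step: for full faithfulness, reduce to the observation that $F$ induces a left Quillen equivalence ${}_B\Mod_A\to{}_{F(B)}\Mod_{F(A)}$ and then use that a left Quillen equivalence $G:\cat{X}\to\cat{Y}$ gives a weak equivalence $N(w\cat{X}_c)\to N(w\cat{Y}_c)$; for essential surjectivity, use the hypothesis to reduce to showing that weakly equivalent algebras become isomorphic in $\on{Ho}(\oMor(\cat{W},T))$, which the paper realizes by exhibiting an explicit cofibrant $A$--$B$-bimodule $B^m$ (your ``restriction/extension of scalars'' phrasing is the same construction, just not named). Two remarks: first, you correctly flag a gap that the paper's own sketch elides --- that the lax structure map $F(B\op\otimes(\otimes_I A_i))\to F(B)\op\otimes(\otimes_I F(A_i))$ must be a weak equivalence for the induced functor on modules to be a Quillen equivalence; this requires a weak-monoidal-Quillen-equivalence argument (Schwede--Shipley style) together with the admissibility hypotheses, and the paper simply asserts it. Second, for the essential surjectivity step you should be slightly more careful: equivalences of Hom-categories do not by themselves make two objects isomorphic in a bicategory unless the equivalences are \emph{implemented by} $1$-morphisms of the bicategory; the paper handles this by noting that the Quillen equivalence is precisely tensoring with the bimodule $B^m$, which \emph{is} a $1$-morphism of $\oMor(\cat{W},T)$, and that an inverse is likewise given by a bimodule. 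Your ``restriction/extension of scalars'' points at exactly this, but it is worth making explicit.
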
 

\begin{proof}
For the first claim, it suffices to observe that $F$ induces a left Quillen equivalence
\[{}_A\Mod_B\to {}_{F(A)}\Mod_{F(B)}\]

Then we claim that if $G:\cat{X}\to\cat{Y}$ is a left Quillen equivalence, the induced map
\[N(w\cat{X}_c)\to N(w\cat{Y}_c)\]
is a weak equivalence.

The second claim is an essential surjectivity claim. It suffices to check that being isomorphic in $\on{Ho}(\Alg(\cat{C}))$ implies being isomorphic in $\on{Ho}(\oMor(\cat{C}))$. Let $u:A\to B$ be an equivalence of associative algebras. Let $B^m$ be a cofibrant $A$-$B$-bimodule which is weakly equivalent to $B$ seen as an $A$-$B$-bimodule through the map $u$. Then, for any associative algebra $C$, we get a map
\[B^m\otimes_B-:{}_B\Mod_C\to{}_A\Mod_C\]
It is straightforward to check that it is a Quillen equivalence. Thus the algebras $A$ and $B$ are isomorphic in $\on{Ho}(\oMor(\cat{C}))$.
\end{proof}

\section{The operad of model categories}

In this section, we construct a large operad $\oModCat$ whose objects are model categories and whose category of morphisms is the category of left Quillen multi-functors and equivalences between them. We construct a map from this operad to the operad of complete Segal spaces which coincides with Rezk's nerve construction on objects.

\subsection{Simplicial operad of model categories}

\begin{defi}
Let $\cat{X}$ and $\cat{Y}$ be two model categories. Let $F$ and $G$ be two left Quillen functors $\cat{X}\to\cat{Y}$. A \emph{natural weak equivalence} $\alpha:F\to G$ is a natural transformation $F_{|\cat{X}_c}\to G_{|\cat{X}_c}$ which is objectwise a weak equivalence.
\end{defi}

There is an obvious (vertical) composition between natural weak equivalences but there is also an horizontal composition between natural transformation which preserves natural weak equivalences by the following proposition.

\begin{prop}
Let $\cat{X}$, $\cat{Y}$ and $\cat{Z}$ be three model categories and let $F$, $G$ be two left Quillen functors from $\cat{X}$ to $\cat{Y}$ and $K$ and $L$ be two left Quillen functors from $\cat{Y}\to\cat{Z}$. Let $\alpha$ be a natural weak equivalence between $F$ and $G$ and $\beta$ be a natural weak equivalence between $K$ and $L$, then the horizontal composition of $\alpha$ and $\beta$ is a natural weak equivalence.
\end{prop}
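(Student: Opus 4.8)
The plan is to reduce everything to Ken Brown's lemma, which guarantees that a left Quillen functor sends every weak equivalence between cofibrant objects to a weak equivalence.

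First I would recall the description of the horizontal composite. By definition $\beta * \alpha$ is the natural transformation $(KF)_{|\cat{X}_c}\to (LG)_{|\cat{X}_c}$ whose component at a cofibrant object $X$ is the common diagonal of the naturality square of $\beta$ evaluated at the morphism $\alpha_X\colon F(X)\to G(X)$, that is
\[
(\beta*\alpha)_X \;=\; \beta_{G(X)}\circ K(\alpha_X) \;=\; L(\alpha_X)\circ \beta_{F(X)}.
\]
That these two expressions agree and that $\beta*\alpha$ is natural on $\cat{X}_c$ is the standard Godement interchange computation; it uses only that $\alpha$ and $\beta$ are natural (on $\cat{X}_c$ and $\cat{Y}_c$ respectively) and that $F(X),G(X)$ lie in $\cat{Y}_c$ whenever $X\in\cat{X}_c$, which holds because $F$ and $G$ are left Quillen.

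Then, to see that $(\beta*\alpha)_X$ is a weak equivalence, I would argue on the factorisation $(\beta*\alpha)_X=L(\alpha_X)\circ\beta_{F(X)}$. Since $F$ is left Quillen and $X$ is cofibrant, $F(X)$ is cofibrant in $\cat{Y}$, so $\beta_{F(X)}\colon KF(X)\to LF(X)$ is a weak equivalence by the hypothesis that $\beta$ is a natural weak equivalence. Since $\alpha_X$ is a weak equivalence between the cofibrant objects $F(X)$ and $G(X)$ and $L$ is left Quillen, Ken Brown's lemma makes $L(\alpha_X)$ a weak equivalence. The composite is then a weak equivalence, which is exactly what is needed. (Equivalently one may use the other factorisation $\beta_{G(X)}\circ K(\alpha_X)$, with $G(X)$ cofibrant and $K$ applied to a weak equivalence between cofibrant objects.)

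I do not expect any genuine obstacle here: the whole content is Ken Brown's lemma together with the fact that left Quillen functors preserve cofibrant objects. The only point requiring a little care is that $\alpha$ and $\beta$ are defined only on cofibrant objects, so $\beta*\alpha$ is defined only on $\cat{X}_c$; this causes no trouble precisely because $F$ and $G$ carry $\cat{X}_c$ into $\cat{Y}_c$, so $\beta$ can be evaluated at $F(X)$, $G(X)$ and at the morphism $\alpha_X$.
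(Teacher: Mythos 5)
Your proof is correct and follows the same route as the paper: factor the horizontal composite at a cofibrant $X$ as $L(\alpha_X)\circ\beta_{F(X)}$, note that $F(X)$ is cofibrant so $\beta_{F(X)}$ is a weak equivalence, and invoke Ken Brown's lemma to see that $L$ carries the weak equivalence $\alpha_X$ between cofibrant objects to a weak equivalence. The extra remarks about the Godement interchange and why $\beta$ can be evaluated at $F(X)$ are welcome but do not change the argument.
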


\begin{proof}
The horizontal composition evaluated at a cofibrant object $x$ is the composition
\[KF(x)\goto{\beta F}LF(x)\goto{L\alpha}LG(x)\]
Since $F$ is left Quillen, $F(x)$ is cofibrant and the first map is a weak equivalence. The second map is $L$ applied to $\alpha(x):F(x)\to G(x)$ which is a weak equivalence between cofibrant objects. Since $L$ is left Quillen, this is an equivalence as well.
\end{proof}

\begin{cons}
The $2$-category $\cat{ModCat}$ has as objects the model categories and its category of morphism from $\cat{X}$ to $\cat{Y}$ is the category whose objects are left Quillen functors: $\cat{X}\to\cat{Y}$ and morphisms are natural weak equivalences between left Quillen functors.
\end{cons}

Now we want to extend $\cat{ModCat}$ to an operad.

For two model categories $\cat{X}$ and $\cat{Y}$, one can put a product model structure on $\cat{X}\times\cat{Y}$, but the left Quillen functors from $\cat{X}\times\cat{Y}$ to $\cat{Z}$ are usually not the right thing to consider. The correct notion of ``pairing'' $\cat{X}\times\cat{Y}\to \cat{Z}$ is the notion of a left Quillen bifunctor.

We need a version of a Quillen multifunctor with more than two inputs. Let us first recall the definition of the cube category.

\begin{defi}
The \emph{$n$-dimensional cube} is the poset of subsets of $\{1,\ldots,n\}$. We use the notation $\cat{P}(n)$ to denote that category. Equivalently, $\cat{P}(n)$ is the product of $n$ copies of $\cat{P}(1)=[1]$. The category $\cat{P}_{1}(n)$ is the full subcategory of $\cat{P}(n)$ contatining all objects except the maximal element.
\end{defi}

\begin{defi}\label{hyper pushout product axiom}
If $(\cat{X}_i)_{i\in\{1,\ldots,n\}}$ is a family of categories and $f_i$ is an arrow in $\cat{X}_i$ for each $i$, we denote by $C(f_1,\ldots,f_n)$ the product
\[\prod_i f_i:\cat{P}(n)\to\prod_i{\cat{X}_i}\]
\end{defi}

\begin{defi}
Let $(\cat{X}_i)_{i\in\{1,\ldots,n\}}$ and $\cat{Y}$ be model categories. Let $T:\prod_{i=1}^n\cat{X}_i\to \cat{Y}$ be a functor. We say that $T$ is a \emph{left Quillen $n$-functor} if it satisfies the following three conditions
\begin{itemize}
\item If we fix all variables but one. The induced functor $\cat{X}_i\to\cat{Y}$ is a left adjoint.
\item If $f_i:A_i\to B_i$ is a cofibration in $\cat{X}_i$ for $i\in\{1,\ldots,n\}$ then the map
\[\on{colim}_{\cat{P}_1(n)}T(C(f_1,\ldots,f_n))\to T(B_1,\ldots,B_n)\]
is a cofibration in $\cat{Y}$
\item If further one of the $f_i$ is a trivial cofibration, then the map
\[\on{colim}_{\cat{P}_1(n)}T(C(f_1,\ldots,f_n))\to T(B_1,\ldots,B_n)\]
is a trivial cofibration in $\cat{Y}$
\end{itemize}

A \emph{natural weak equivalence between left Quillen $n$-functors} $T$ and $S$ is a natural transformation $T_{|\prod{\cat{X}_i}_c}\to T'_{|\prod{\cat{X}_i}_c}$ which is objectwise a weak equivalence.
\end{defi}

\begin{cons}\label{construction of oModCat}
We can now construct an operad in $\Cat$ denoted $\oModCat$ whose objects are model category and whose category of operations $\oModCat(\{\cat{X}_i\};\cat{Y})$ is the category of left Quillen $n$-functors $\prod_i\cat{X}_i\to\cat{Y}$ and natural weak equivalences. As usual, we also denote $\oModCat$ the operad in $\S$ obtained by applying the nerve functor to each category of multi-morphisms.
\end{cons}

\bigskip

Now, take $\cat{V}$ to be a cofibrantly generated closed symmetric monoidal model category. Recall that for a class of associative algebras $S$ in $\cat{V}$, $\oMor(\cat{V},S)$ is the Morita operad of $\cat{V}$ on the elements of $S$ (see \ref{Morita bioperad} for a construction).

\begin{prop}\label{from rings to cat}
There is a map of bioperads
\[\oMor(\cat{V},S)\to\oModCat\]
which sends the object $A$ to the model category  $L\Mod_A$ and sends an element $M$ in $({}_B\Mod_{\{A_i\}})_c$ to the left Quillen multi-functor
\[ M\otimes_{\otimes_iA_i}(\otimes_i-):\prod_iL\Mod_{A_i}\to L\Mod_B\]

\end{prop}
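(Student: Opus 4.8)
The plan is to verify the two things that must be checked to upgrade the bioperad map of \ref{from rings to cat} to the present statement: that the assignment on objects and multi-morphisms is well-defined (lands in left Quillen $n$-functors and natural weak equivalences), and that it respects the bioperad structure, i.e.\ the composition functors $\chi_c$ and the coherence isomorphisms $\kappa_{c,c'}$. First I would recall the underlying algebra: for associative algebras $A_i$ in $\cat{V}$ and a module $M \in {}_B\Mod_{\{A_i\}}$, that is, a right $B\op \otimes (\otimes_i A_i)$-module, the functor $M \otimes_{\otimes_i A_i}(\otimes_i -) : \prod_i L\Mod_{A_i} \to L\Mod_B$ is the composite of the external tensor product $\prod_i L\Mod_{A_i} \to L\Mod_{\otimes_i A_i}$, $(N_i) \mapsto \otimes_i N_i$, followed by the relative tensor product $M \otimes_{\otimes_i A_i} - : L\Mod_{\otimes_i A_i} \to L\Mod_B$; and that this is precisely the composition in $\oMor(\cat{V},S)$ of the corolla-decorated tree whose leaves carry $A_i$ and whose root carries $B$ along the map that realizes $M$ as a single module.

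Next I would address well-definedness as a left Quillen $n$-functor. Each input variable is left adjoint because $- \otimes_{(-)} -$ is left adjoint in each variable separately; the external tensor $\otimes_i N_i$ is left Quillen $n$-functor on $\prod_i L\Mod_{A_i}$ by the pushout-product axiom in $\cat{V}$ together with the fact (from admissibility, \ref{admissible class of algebras}) that the forgetful functors $L\Mod_{A_i} \to \cat{V}$ and $L\Mod_{\otimes_i A_i} \to \cat{V}$ create and detect cofibrations and trivial cofibrations on the relevant generating sets; and post-composing with the left Quillen functor $M \otimes_{\otimes_i A_i} -$ (which is left Quillen since $M$ is cofibrant, by \ref{tensor product is left Quillen bifunctor}) preserves the $n$-functor conditions. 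For natural weak equivalences: a morphism in $\oMor(\{A_i\};B)$ is a weak equivalence $M \to M'$ between cofibrant bimodules, and it induces the natural transformation $M \otimes_{\otimes_i A_i}(\otimes_i -) \to M' \otimes_{\otimes_i A_i}(\otimes_i -)$ which, evaluated at a tuple of cofibrant $N_i$, is $(M \to M') \otimes_{\otimes_i A_i}(\otimes_i N_i)$, a weak equivalence between cofibrant objects again by \ref{tensor product is left Quillen bifunctor} (the relative tensor product sends weak equivalences between cofibrant objects to such). Functoriality of this assignment on the nose in $M$ is clear.

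Then I would check compatibility with the bioperad structures. The composition functors $\chi_c$ on the source side are built (in the proof of the theorem in Section 1.2 and in \ref{Morita bioperad}) out of relative tensor products of bimodules and insertion of units $A \in {}_A\Mod_A$; on the target side $\oModCat$ has composition given by composition of left Quillen multi-functors. So for each generating composition datum — a face collapsing an edge labelled $B_k$, or a degeneracy — I must produce the structure isomorphism $\upsilon_c$. For a collapsing face one has the canonical associativity isomorphism $(N \otimes_{B_k} M) \otimes_{\cdots}(\otimes -) \cong N \otimes_{B_k}(M \otimes_{\cdots}(\otimes -))$ of iterated relative tensor products, and for a degeneracy the unit isomorphism $A \otimes_A - \cong \id$; these are exactly the canonical coherence isomorphisms in $\cat{V}$, so the coherence hexagon in the definition of a pseudo-functor commutes because both paths are canonical isomorphisms between the same underlying iterated tensor product (as in the $\kappa$ argument of the Section 1.2 theorem, invoke Yoneda: all the functors in sight are representable and the comparisons are forced). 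The $\kappa_{c,c'}$ on $\oModCat$ are likewise unique once we know the representing objects agree, so the compatibility square for $\upsilon$ is automatic.

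The main obstacle is the second paragraph's verification that the external tensor product $\prod_i L\Mod_{A_i} \to L\Mod_{\otimes_i A_i}$ is genuinely a left Quillen $n$-functor in the sense of \ref{hyper pushout product axiom} — i.e.\ that the iterated pushout-product, formed with $n$ cofibrations of $A_i$-modules, is a cofibration of $\otimes_i A_i$-modules, and trivial if one input is. The point is that cofibrations of modules over an admissible algebra are retracts of cell complexes built from $\{A \otimes f : f \text{ a generating (trivial) cofibration of } \cat{V}\}$, and the external tensor product of such cells over the $A_i$ is, up to the associativity isomorphism of $\cat{V}$, an $(\otimes_i A_i)$-cell built from the iterated pushout-products of the $f_i$ in $\cat{V}$; so the claim reduces to the hyper-pushout-product axiom in $\cat{V}$ itself, which holds since $\cat{V}$ is a symmetric monoidal model category (the two-variable case iterates). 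Granting this, everything else is bookkeeping with canonical isomorphisms, and I would simply remark that the proof ``reduces to checking on generators,'' in the style of \ref{tensor product is left Quillen bifunctor}.
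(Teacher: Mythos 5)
Your proposal is correct, and it fills in genuine content: the paper's own proof of \ref{from rings to cat} consists of the single word ``Straightforward,'' so there is no authorial argument to compare against in detail. Your expansion identifies precisely the two things that need checking and handles both correctly.

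On the left Quillen $n$-functor condition, your reduction to generators is the right argument. The one point worth stating a bit more carefully is that, once you know the external tensor product $\prod_i L\Mod_{A_i}\to L\Mod_{\otimes_i A_i}$ satisfies the pushout-product axiom, post-composition with $M\otimes_{\otimes_i A_i}-$ preserves this because a left Quillen one-variable functor preserves colimits, hence commutes with $\on{colim}_{\cat{P}_1(n)}$, and preserves (trivial) cofibrations; you gesture at this but it is the step that actually lets the two-stage factorization do work. For the generators argument itself: a generating cofibration of $L\Mod_{A_i}$ has the form $A_i\otimes g_i$ for $g_i$ a generating cofibration of $\cat{V}$; the external product $\otimes_i(A_i\otimes g_i)$ identifies, via the symmetry and associativity of $\cat{V}$, with $(\otimes_i A_i)\otimes(\otimes_i g_i)$ as a cube-shaped diagram; the free-module functor $(\otimes_iA_i)\otimes-$ preserves colimits and is left Quillen, so the iterated pushout-product over $\cat{P}_1(n)$ is the free module on the corresponding pushout-product in $\cat{V}$, which is a (trivial) cofibration by iterating the two-variable pushout-product axiom. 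This is exactly what you say.

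On the bioperad compatibility, your observation that $\oModCat$ is a strict operad in $\cat{Cat}$ (composition of multi-functors is strictly associative, so its $\kappa$'s are identities) simplifies the pseudo-functor coherence check to producing the $\upsilon_c$ and verifying one hexagon, and the Yoneda argument — both sides of each coherence diagram are canonical isomorphisms between representable functors, hence equal — is the same mechanism used in the paper's Section 1.2 proof for the Morita bioperad itself. Everything in your argument is consistent with the paper's conventions (cofibrant $M$, admissible algebras, the relative tensor product being a Quillen bifunctor per \ref{tensor product is left Quillen bifunctor}).
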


\begin{proof}
Straightforward.
\end{proof}

\subsection{Model categories and complete Segal spaces}

The goal of this subsection and the following is to relate the simplicial operad $\oModCat$ to a reasonable model of the simplicial operad of $\infty$-categories.

Rezk defines in \cite{rezkmodel} a model structure on simplicial spaces whose fibrant objects are Reedy fibrant simplicial spaces satisfying Segal condition and a completeness condition. It has been shown that it is a reasonable model for $\infty$-categories. There are explicit Quillen equivalences between the model category and Joyal model structure on simplicial sets and Bergner model structure on simplicial categories. In this paper, we take the convention of calling complete Segal space any simplicial space which is levelwise equivalent to a complete Segal space in Rezk's sense. We use the terminology \emph{fibrant complete Segal space} for what Rezk calls a complete Segal space.

The category of simplicial spaces with Rezk's model structure is a simplicial symmetric monoidal (for the cartesian product) cofibrantly generated model category. Hence, there is a simplicial operad $\oper{CSS}$ whose objects are fibrant complete Segal spaces and with
\[\oper{CSS}(\{X_i\};Y)=\Map_{s\S}(\prod_iX_i,Y)\]

We will use the operad $\oper{CSS}$ as our model for the correct operad of $\infty$-categories. Our goal is to construct a map
\[\oModCat\to\oper{CSS}\]
Our first task is to define this map on objects. For any relative category $(C,wC)$, there is a simplicial space $N(C,wC)$ constructed in \cite{rezkmodel} whose space of $n$-simplices is the nerve of the category of weak equivalences in the relative category $C^{[n]}$. In particular, we can apply this nerve to a model category. It has been proved by Barwick and Kan (see \cite{barwickpartial}) that the resulting simplicial space is a complete Segal space. Unfortunately, the assignment $\cat{M}\mapsto N(\cat{M},w\cat{M})$ is not functorial with respect to left Quillen functors. In order to remedy this we use the following observation.

\begin{prop}
Assume that $M$ has a functorial cofibrant replacement functor. Then, the full inclusion of relative categories $(\cat{M}_c,w\cat{M}_c)\to(\cat{M},w\cat{M})$ induces a levelwise weak equivalence
\[N(\cat{M}_c,w\cat{M}_c)\to N(\cat{M},w\cat{M})\]
\end{prop}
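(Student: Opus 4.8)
The plan is to exhibit the map $N(\cat{M}_c,w\cat{M}_c)\to N(\cat{M},w\cat{M})$ as levelwise a weak equivalence of simplicial sets by analyzing what happens in each simplicial degree $n$ separately. Fix $n$. By construction, the space of $n$-simplices of $N(\cat{M},w\cat{M})$ is the nerve of $w(\cat{M}^{[n]})$, the category of objectwise weak equivalences between functors $[n]\to\cat{M}$ (equivalently, sequences of $n$ composable arrows in $\cat{M}$), and the space of $n$-simplices of $N(\cat{M}_c,w\cat{M}_c)$ is the nerve of $w((\cat{M}_c)^{[n]})=w((\cat{M}^{[n]})_c)$, where the last identification uses that a diagram $[n]\to\cat{M}$ lands in $\cat{M}_c$ iff it is cofibrant in the projective-type sense we are using (i.e. objectwise cofibrant). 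So it suffices to prove the following general statement: if $\cat{N}$ is a category with weak equivalences admitting a functorial cofibrant replacement $Q\colon\cat{N}\to\cat{N}$ together with a natural weak equivalence $q\colon Q\Rightarrow\id$, then the inclusion $w\cat{N}_c\hookrightarrow w\cat{N}$ induces a weak equivalence on nerves. Applying this to $\cat{N}=\cat{M}^{[n]}$ (which inherits a functorial cofibrant replacement from $\cat{M}$ by applying $Q$ objectwise) finishes the argument for every $n$.

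For the general statement, I would use the standard deformation-retract argument. The functor $Q$ restricts to a functor $w\cat{N}\to w\cat{N}_c$ (it preserves weak equivalences since $q$ is natural, so $Q(f)$ is a weak equivalence whenever $f$ is, by two-out-of-three in the naturality square), and composing with the inclusion $i\colon w\cat{N}_c\hookrightarrow w\cat{N}$ gives an endofunctor of $w\cat{N}$. The natural weak equivalence $q\colon Q\Rightarrow\id_{\cat{N}}$ is in particular a natural transformation of functors $w\cat{N}\to w\cat{N}$ whose components are morphisms of $w\cat{N}$ (they are weak equivalences!), hence it defines a simplicial homotopy $N(i\circ Q)\simeq N(\id)=\id$ after taking nerves. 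Likewise, restricting $q$ to $\cat{N}_c$ gives a natural transformation between $Q\circ i$ and $\id_{w\cat{N}_c}$ with components in $w\cat{N}_c$, yielding $N(Q\circ i)\simeq \id$ on $N(w\cat{N}_c)$. Thus $N(i)$ is a homotopy equivalence of simplicial sets, in particular a weak equivalence.

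The one point that requires a little care — and which I expect to be the only real subtlety — is the claim that $Q$ preserves weak equivalences and that the components of $q$ genuinely live in the subcategory of weak equivalences, so that the natural transformations above induce honest simplicial homotopies (recall that a natural transformation $F\Rightarrow G$ of functors into a category $\cat{D}$ induces a homotopy $NF\simeq NG$ always, but here we need the source to be $w\cat{N}$ and the transformation to have components in $w\cat{N}$, which is automatic, and we need $Q$ to actually land in $w\cat{N}_c$ as a functor, which uses that $Q$ sends weak equivalences to weak equivalences). This preservation property is exactly what "functorial cofibrant replacement" gives us in a model category via the two-out-of-three property applied to the naturality square of $q$, so no extra hypothesis is needed. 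Finally, one should note that applying $Q$ objectwise to diagrams $[n]\to\cat{M}$ does produce a functorial cofibrant replacement on $\cat{M}^{[n]}$ with respect to objectwise weak equivalences — but we do not need this to be a model-categorical cofibrant replacement in any projective/injective model structure, only a functor together with an objectwise-weak-equivalence to the identity landing in objectwise-cofibrant diagrams, which is immediate.
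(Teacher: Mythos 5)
Your proof is correct and takes essentially the same approach as the paper: both reduce to each simplicial level $n$, use the functorial cofibrant replacement to produce a functor $Q_n\colon w\cat{M}^{[n]}\to w\cat{M}_c^{[n]}$, and observe that the natural weak equivalences $i_n\circ Q_n\Rightarrow\id$ and $Q_n\circ i_n\Rightarrow\id$ become simplicial homotopies after applying the nerve, so the inclusion is a levelwise homotopy equivalence. Your extra remarks (two-out-of-three to see $Q$ preserves weak equivalences, and the care about $q$ having components in $w\cat{N}$) are points the paper leaves implicit but are welcome clarifications of the same argument.
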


\begin{proof}
The cofibrant replacement functor induces a functor
\[Q_n:w\cat{M}^{[n]}\to w\cat{M}_c^{[n]}\]
If $i_n$ is the inclusion $w\cat{M}_c^{[n]}\to w\cat{M}^{[n]}$, we have natural weak equivalences
\[i_n\circ Q_n\goto{\simeq} \id_{w\cat{M}^{[n]}}\]
and 
\[Q_n\circ i_n\goto{\simeq}\id_{w\cat{M}_c^{[n]}}\]

Taking the nerve, these natural transformations are turned into homotopies. Therefore $Q_n$ induces a weak equivalence from $N_n(\cat{M},w\cat{M})$ to $N_n(\cat{M}_c,w\cat{M}_c)$ which is a homotopy inverse to the inclusion
\[N_n(\cat{M}_c,w\cat{M}_c)\to N_n(\cat{M},w\cat{M})\]
\end{proof}

By \cite{barwickpartial}, this implies that $N(\cat{M}_c,w\cat{M}_c)$ is a complete Segal space which is also a model for the $\infty$-category presented by $\cat{M}$ and which is functorial in left Quillen functors.

\subsection{The simplicial category of relative categories}

We want to show that the category of relative categories is enriched in spaces. For $C$ and $D$ two relative categories, the space of maps between them is the nerve of the category of natural transformations between functors $C\to D$ which are objectwise weak equivalences. Our goal in this subsection is to show that this mapping space is also the space of maps between $N(C,wC)$ and $N(D,wD)$.

It will be convenient to be a bit more general and consider the category $\cat{DCat}$ of double categories. A double category is a category object in the category of categories. In concrete term, a double category $\mathbb{D}$ is the data of
\begin{itemize}
\item a set of objects $\on{Ob}(\mathbb{D})$,
\item two categories $\mathbb{D}_h$ and $\mathbb{D}_v$ whose set of objects are $\on{Ob}(\mathbb{D})$,
\item a set of $2$-squares (i.e. squares whose horizontal arrows are in $\mathbb{D}_h$ and vertical arrows are in $\mathbb{D}_v$.
\end{itemize}
Moreover, the square can be composed either vertically or horizontally and there are associativity and unitality axioms.

There are two obvious functors from double categories to categories namely $\mathbb{D}\mapsto \mathbb{D}_h$ and $\mathbb{D}\mapsto \mathbb{D}_v$. These two functors both have left adjoints that we denote $C\mapsto C^h$ and $C\mapsto C^v$. The double category $C^h$ is the double category whose horizontal category is $C$, vertical category is the discrete category on the objects of $C$ and $2$-squares are the obvious ones. The category $C^v$ is defined symmetrically.

Given a category $C$ together with two wide subcategories $H$ and $V$, there is a double category $\mathbb{D}(C,H,V)$ whose vertical category is $V$, horizontal category is $H$ and admissible $2$-squares are commutative squares in $C$. For instance, the category $C^h$ is the category $\mathbb{D}(C,C,C_0)$ (where $C_0$ denotes the subcategory of $C$ which contains only the identities) and $C^v$ is $\mathbb{D}(C,C_0,C)$.

There is a functor $\cat{RelCat}\to\cat{DCat}$ sending $(C,wC)$ to $\mathbb{D}(C,C,wC)$.

If $C$ and $D$ are two categories, we define $C\square D$ the double category $\mathbb{D}(C\times D,C\times D_0,C_0\times D)$.

\medskip

There is a fully faithful nerve functor $N:\cat{DCat}\to s\S$ sending $\mathbb{D}$ to the bisimplicial set whose $[p,q]$ simplices are the functors $[p]\square [q]\to \mathbb{D}$. If we break the symmetry and see a double category as a category in categories, we see that applying the usual nerve once sends a double category to a simplicial object in categories. Applying the nerve degreewise yields a bisimplicial set which is exactly the one we are describing here.

Let us compute $N([p]\square[q])$. It is the bisimplicial set whose $[m,n]$ simplices are the functors
\[[m]\square[n]\to [p]\square[q]\]

This is isomorphic to the set of  pairs of functors $([m]\to[p],[n]\to[q])$. Hence $N([p]\square[q])\cong \Delta[p,q]$

This implies that the double nerve of the double category $\mathbb{D}(C,C,wC)$ coincides with Rezk's classifying functor for relative categories. Following Rezk, we write $N(C,wC)$ instead of $N(\mathbb{D}(C,C,wC))$.

\begin{prop}
There is a natural isomorphism $N(\mathbb{D})\times N(\mathbb{D}')\cong N(\mathbb{D}\times\mathbb{D}')$.
\end{prop}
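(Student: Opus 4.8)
The plan is to reduce the statement to the computation $N([p]\square[q]) \cong \Delta[p,q]$ already established just above, together with the fact that $N : \cat{DCat} \to s\S$ is fully faithful and preserves products of representables in an appropriate sense. First I would observe that both sides of the claimed isomorphism, viewed as functors of the pair $(\mathbb{D},\mathbb{D}')$, commute with colimits in each variable separately: the product functor $-\times -$ on bisimplicial sets preserves colimits in each variable because $s\S$ is cartesian closed, and $N$ preserves colimits because it is a nerve functor with a left adjoint (equivalently, $\cat{DCat}$ is a presheaf category — double categories are models of a limit sketch, but the relevant point is that $\mathbb{D} \mapsto N(\mathbb{D})$ is computed by $N(\mathbb{D})_{[m,n]} = \cat{DCat}([m]\square[n], \mathbb{D})$, so $N$ is a right adjoint; to get colimit-preservation I instead write every double category canonically as a colimit of the representables $[p]\square[q]$ and check the maps are compatible). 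So it suffices to verify the isomorphism when $\mathbb{D} = [p]\square[q]$ and $\mathbb{D}' = [p']\square[q']$ are representable.

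In the representable case, I would compute both sides explicitly. On one hand, by the computation in the text, $N([p]\square[q]) \cong \Delta[p,q]$ and $N([p']\square[q']) \cong \Delta[p',q']$, so the left-hand side is $\Delta[p,q]\times\Delta[p',q']$, whose $[m,n]$-simplices are quadruples of maps $([m]\to[p],[n]\to[q],[m]\to[p'],[n]\to[q'])$. On the other hand, the product of double categories $[p]\square[q] \times [p']\square[q']$ is computed componentwise, and an $[m,n]$-simplex of its nerve is a functor $[m]\square[n] \to [p]\square[q]\times[p']\square[q']$, which by the universal property of the product in $\cat{DCat}$ is a pair of functors $[m]\square[n]\to[p]\square[q]$ and $[m]\square[n]\to[p']\square[q']$, i.e.\ again a quadruple of maps as above by the representable computation. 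These bijections are manifestly natural in $[m]$ and $[n]$, giving the desired isomorphism of bisimplicial sets in the representable case.

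The main technical point — and the step I expect to require the most care — is the passage from the representable case to the general case, i.e.\ justifying that $N(-)\times N(-)$ and $N(-\times -)$ both preserve colimits in each variable and that the isomorphism constructed on representables extends to a \emph{natural} isomorphism on all of $\cat{DCat}\times\cat{DCat}$. For this I would use that every double category $\mathbb{D}$ is the colimit of the diagram of its $[p]\square[q]$-simplices (the density of the representables $[p]\square[q]$ in $\cat{DCat}$, which follows once one knows $\cat{DCat}$ is a presheaf category, or can be checked by hand since the $2$-squares, horizontal arrows and vertical arrows of $\mathbb{D}$ are exactly its maps out of $[1]\square[1]$, $[1]\square[0]$ and $[0]\square[1]$ respectively), that $N$ is cocontinuous, and that $-\times-$ is cocontinuous in each variable on $s\S$; then both functors $(\mathbb{D},\mathbb{D}')\mapsto N(\mathbb{D})\times N(\mathbb{D}')$ and $(\mathbb{D},\mathbb{D}')\mapsto N(\mathbb{D}\times\mathbb{D}')$ are left Kan extensions of their restriction to representables, and the representable-level isomorphism extends uniquely. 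Everything else is a routine unwinding of definitions.
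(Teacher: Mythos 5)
Your proposal has a genuine gap. The load-bearing claim — that $N:\cat{DCat}\to s\S$ is cocontinuous — is false. You yourself note mid-proposal that $N$ is a right adjoint (it is given degreewise by a corepresentable, $N(\mathbb{D})_{[p,q]}=\cat{DCat}([p]\square[q],\mathbb{D})$, and its left adjoint is the realization functor), and right adjoints preserve limits, not colimits. The ordinary nerve $\cat{Cat}\to\S$ already fails to preserve colimits (the pushout $[1]\sqcup_{[0]}[1]\cong[2]$ has nerve $\Delta[2]$, whereas the pushout of nerves is $\Lambda^1[2]$), and the same obstruction applies to $\cat{DCat}$. The parenthetical claim that $\cat{DCat}$ is a presheaf category is also incorrect: double categories satisfy associativity and unit equations, so $\cat{DCat}$ is only a reflective full subcategory of $s\S$ via $N$, not the whole presheaf category. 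Your fallback — writing $\mathbb{D}$ as the canonical colimit of its simplices and "checking the maps are compatible" — only buys you preservation of that one canonical colimit (this does follow from full faithfulness of $N$), but the diagrams you actually need to push through $N$ after applying $-\times\mathbb{D}'$ are not of that canonical form, so the reduction to representables does not close.

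The irony is that the statement is about a \emph{product}, which is a limit, so the property of $N$ you should reach for is the one you already identified: it preserves limits. The paper's proof is a one-liner along exactly those lines: in degree $[p,q]$, the right-hand side is $\cat{DCat}([p]\square[q],\mathbb{D}\times\mathbb{D}')$, which by the universal property of the product in $\cat{DCat}$ is $\cat{DCat}([p]\square[q],\mathbb{D})\times\cat{DCat}([p]\square[q],\mathbb{D}')$, i.e.\ the degree-$[p,q]$ part of $N(\mathbb{D})\times N(\mathbb{D}')$, since products in $s\S$ are computed degreewise. No reduction to representables, no colimit argument, no density — just that corepresentable functors preserve products. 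Your computation in the representable case is correct, but it is a special case of this observation rather than a stepping stone toward it.
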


\begin{proof}
The right hand side, in degree $[p,q]$, is the set of functor $[p]\square[q]\to \mathbb{D}\times\mathbb{D}'$. The left hand side, in degree $[p,q]$, is the product of the set of functors $[p]\square[q]\to \mathbb{D}$ with the set of functors $[p]\square[q]\to\mathbb{D}'$.
\end{proof}

Let $\mathbb{D}$ and $\mathbb{E}$ be two double categories, there is an internal Hom $\mathbb{E}^\mathbb{D}$ satisfying the universal property
\[\cat{DCat}(\mathbb{C}\times\mathbb{D},\mathbb{E})\cong\cat{DCat}(\mathbb{C},\mathbb{E}^{\mathbb{D}})\]

\begin{prop}
There is an isomorphism $N(\mathbb{E}^\mathbb{D})\cong N\mathbb{E}^{N\mathbb{D}}$
\end{prop}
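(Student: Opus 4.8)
$ $

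The plan is to compute the bidegree-$(p,q)$ sets on each side and check that the resulting comparison is compatible with the bisimplicial structure. On the left, by definition of the nerve of a double category, $N(\mathbb{E}^{\mathbb{D}})_{[p,q]}$ is the set of double functors $[p]\square[q]\to\mathbb{E}^{\mathbb{D}}$, and by the universal property of the internal Hom in $\cat{DCat}$ recalled just above, this is naturally in bijection with the set of double functors $([p]\square[q])\times\mathbb{D}\to\mathbb{E}$. On the right, since $s\S$ is a presheaf category it is cartesian closed, so $(N\mathbb{E}^{N\mathbb{D}})_{[p,q]}=s\S(\Delta[p,q]\times N\mathbb{D},N\mathbb{E})$; using the already established isomorphisms $\Delta[p,q]\cong N([p]\square[q])$ and $N\mathbb{A}\times N\mathbb{B}\cong N(\mathbb{A}\times\mathbb{B})$, and then full faithfulness of $N$, this set is also identified with the set of double functors $([p]\square[q])\times\mathbb{D}\to\mathbb{E}$. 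Concretely the chain runs
\begin{align*}
N(\mathbb{E}^{\mathbb{D}})_{[p,q]}&=\cat{DCat}([p]\square[q],\mathbb{E}^{\mathbb{D}})\cong\cat{DCat}(([p]\square[q])\times\mathbb{D},\mathbb{E})\\
&\cong s\S(N(([p]\square[q])\times\mathbb{D}),N\mathbb{E})\cong s\S(N([p]\square[q])\times N\mathbb{D},N\mathbb{E})\\
&\cong s\S(\Delta[p,q]\times N\mathbb{D},N\mathbb{E})=(N\mathbb{E}^{N\mathbb{D}})_{[p,q]}.
\end{align*}

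It then remains to observe that this chain of bijections is natural in $([p],[q])\in\Delta\times\Delta$, so that it assembles into an isomorphism of bisimplicial sets. The first bijection is the $\cat{DCat}$ tensor–hom adjunction, natural in the first variable; the next two are full faithfulness of $N$ and the monoidal comparison $N\mathbb{A}\times N\mathbb{B}\cong N(\mathbb{A}\times\mathbb{B})$, both natural; and $\Delta[p,q]\cong N([p]\square[q])$ is natural in $[p]\square[q]$, which is in turn functorial in $([p],[q])$. Composing natural isomorphisms, the comparison is natural, which is exactly what is needed.

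I do not expect a serious obstacle here: this is essentially an adjunction/Yoneda chase built entirely from facts already in hand (existence of $\mathbb{E}^{\mathbb{D}}$, full faithfulness of $N$, and the two monoidal identities). The only points requiring care are the variance bookkeeping in the cartesian-closed structure of $s\S$ — making sure one really has $(Y^X)_{[p,q]}=s\S(\Delta[p,q]\times X,Y)$ and not a transpose of it — and tracking naturality in the two simplicial directions simultaneously; both become mechanical once the identifications above are written out.
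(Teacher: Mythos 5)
Your proof is correct and follows essentially the same route as the paper's: both compute the $[p,q]$-bidegree of each side via the $\cat{DCat}$ tensor--hom adjunction, the cartesian-closedness of $s\S$, and the previously established facts that $N$ is fully faithful, product-preserving, and sends $[p]\square[q]$ to $\Delta[p,q]$. You additionally spell out the naturality in $([p],[q])$, which the paper leaves implicit.
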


\begin{proof}
A $[p,q]$ simplex in the left hand side is a functor 
\[[p]\square[q]\to \mathbb{E}^\mathbb{D}\]

By adjunction, this is a functor
\[([p]\square[q])\times\mathbb{D}\to \mathbb{E}\]

On the other hand, a $[p,q]$ simplex of the right hand side is a map 
\[\Delta[p,q]\to N\mathbb{E}^{N\mathbb{D}}\]

which by adjunction is the same data as a map
\[\Delta[p,q]\times N\mathbb{D}\to N\mathbb{E}\]

But since $\Delta[p,q]\cong N([p]\square [q])$ and $N$ is fully faithful and product preserving, this set is isomorphic to the set of functors
\[([p]\square[q])\times\mathbb{D}\to \mathbb{E}\]
\end{proof}

\begin{rem}
Assume that $\mathbb{E}=\mathbb{D}(E,E,wE)$ and $\mathbb{D}=\mathbb{D}(D,D,wD)$, then $\mathbb{E}^\mathbb{D}$ is the double category $\mathbb{D}(E^D,E^D,wE^D)$ where $E^D$ denotes the category of weak equivalences preserving functors and natural transformations and $wE^D$ is the wide subcategory whose arrows are natural transformations which are objectwise weak equivalences. In other words, the functor $\cat{RelCat}\to\cat{DCat}$ is not only fully faithful but it also preserves products and inner Homs.
\end{rem}

There are two projections $\Delta^2\to\Delta$ that we denote $p_h$ and $p_v$. By precomposition, they induce two maps $p_h^*$ and $p_v^*$ $:\S\to s\S$, We choose the labels so that if $C$ is a category, we have 
\[p_h^*NC\cong N(C^h)\;\;\; p_v^*NC\cong N(C^v)\]

We construct a mapping space $v\Map_{s\S}$ in $s\S$ by declaring that the $[q]$ simplices of $v\Map_{s\S}(X,Y)$ are the maps
\[X\times p_v^*[q]\to Y\]

We define $h\Map_{s\S}$ analogously.

\begin{prop}
Let $E$ and $D$ be two relative categories, then 
\[v\Map_{s\S}(N(D,wD),N(E,wE))\cong N(wE^D)\]
\[h\Map_{s\S}(N(D,wD),N(E,wE))\cong N(E^D)\]
\end{prop}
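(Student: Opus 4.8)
The plan is to reduce both isomorphisms to the previously established compatibility between the nerve functor $N\colon\cat{DCat}\to s\S$ and the internal Hom of double categories. First I would unwind the left-hand sides. By definition $v\Map_{s\S}(X,Y)$ has $[q]$-simplices the maps $X\times p_v^*[q]\to Y$, and since $p_v^*[q]=p_v^*N[q]\cong N([q]^v)$ while $N$ preserves products, a $[q]$-simplex of $v\Map_{s\S}(N(D,wD),N(E,wE))$ is a map $N(\mathbb{D}(D,D,wD))\times N([q]^v)\to N(\mathbb{D}(E,E,wE))$, i.e. a map $N(\mathbb{D}(D,D,wD)\times [q]^v)\to N(\mathbb{D}(E,E,wE))$. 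Using full faithfulness of $N$ this is the same as a functor of double categories $\mathbb{D}(D,D,wD)\times [q]^v\to \mathbb{D}(E,E,wE)$, which by the adjunction defining the internal Hom is a functor $[q]^v\to \mathbb{D}(E,E,wE)^{\mathbb{D}(D,D,wD)}$.

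Next I would identify that internal Hom. By the remark following the computation of $N(\mathbb{E}^\mathbb{D})$, when $\mathbb{E}=\mathbb{D}(E,E,wE)$ and $\mathbb{D}=\mathbb{D}(D,D,wD)$ one has $\mathbb{E}^{\mathbb{D}}=\mathbb{D}(E^D,E^D,wE^D)$, where $E^D$ is the category of weak-equivalence-preserving functors with natural transformations and $wE^D$ its wide subcategory of objectwise weak equivalences. Therefore a functor $[q]^v\to\mathbb{D}(E^D,E^D,wE^D)$ is precisely a functor $[q]\to wE^D$ into the vertical category, i.e. a $[q]$-simplex of the nerve $N(wE^D)$. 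Chasing these bijections is manifestly natural in $D$ and $E$, giving the first isomorphism. The second isomorphism is entirely parallel: replace $p_v^*$ by $p_h^*$, so a $[q]$-simplex becomes a functor $[q]^h\to\mathbb{D}(E^D,E^D,wE^D)$, which is a functor $[q]\to E^D$ into the horizontal category, i.e. a $[q]$-simplex of $N(E^D)$.

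I expect no serious obstacle here: every input has already been assembled earlier in the excerpt — full faithfulness and product-preservation of $N\colon\cat{DCat}\to s\S$, the isomorphism $N(\mathbb{E}^\mathbb{D})\cong N\mathbb{E}^{N\mathbb{D}}$, the explicit description of $\mathbb{E}^\mathbb{D}$ for relative categories, and the identities $p_v^*NC\cong N(C^v)$, $p_h^*NC\cong N(C^h)$. The only point requiring mild care is bookkeeping of which nerve direction ($p_v^*$ versus $p_h^*$, and horizontal versus vertical category of a double category) matches which side of the claimed isomorphism; the labels were chosen in the excerpt precisely so that $p_v^*$ corresponds to the vertical direction, so one must simply apply this consistently. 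If one prefers a more hands-on argument, one can alternatively verify the bijection on $[q]$-simplices directly by writing out a functor $[q]\square[m],[n]$-diagram into $\mathbb{D}(E,E,wE)$ and checking that the data of such a diagram that is "constant in the $[q]$-direction as a $v$-variable" is exactly a string of $q$ composable objectwise weak equivalences of functors $D\to E$; but routing through the internal Hom is cleaner and avoids recomputing coherences.
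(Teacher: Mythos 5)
Your argument is correct and follows the same path as the paper's: identify $p_v^*\Delta[q]$ with $N([q]^v)$, use full faithfulness and product-preservation of the double nerve, reduce to the internal Hom of double categories via the exponential adjunction, and then read off simplices of $N(wE^D)$ from the explicit description of $\mathbb{D}(E,E,wE)^{\mathbb{D}(D,D,wD)}$. The only cosmetic difference is the order in which the adjunction is applied (you work directly in $\cat{DCat}$, whereas the paper first uses the exponential in $s\S$ together with the already-proved isomorphism $N(\mathbb{E}^{\mathbb{D}})\cong N\mathbb{E}^{N\mathbb{D}}$); these are interchangeable and yield the same chain of bijections.
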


\begin{proof}
Indeed by the above proposition and remark, we have
\[v\Map_{s\S}(ND,NE)_q:= s\S(ND\times p_v^*(\Delta[q]), NE)\cong s\S(p_v^*(\Delta[q]),N(E^D))\]
Now, by construction, $p_v^*(\Delta[q])\cong N([q]^v)$. Therefore 
\[v\Map_{s\S}(ND,NE)_q\cong \cat{DCat}([q]^v,E^D)\cong N(wE^D)_q\]
\end{proof}

Note that $p_v^*[q]=N([q]^v)$ is the bisimplicial set whose $[m,n]$ simplices are functors $[n]\to [q]$. In other words, $p_v^*[q]$ is the simplicial set $\Delta[q]$ seen as a constant simplicial space. Hence the space $v\Map_{s\S}$ is what Rezk denotes $\Map_{s\S}$.

To conclude, we have proved the following

\begin{prop}
The relative nerve functor from relative categories to simplicial spaces can be promoted to a fully faithful functor of simplicially enriched categories if we give $\cat{RelCat}$ the following simplicial enrichment
\[\Map_{\cat{RelCat}}(D,E)=N(wE^D)\]
$\hfill\square$
\end{prop}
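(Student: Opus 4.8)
The plan is to assemble the statement from the sequence of propositions and remarks that have already been established in this subsection, essentially by bookkeeping: the real content was contained in the isomorphisms $N(\mathbb{D}\times\mathbb{D}')\cong N\mathbb{D}\times N\mathbb{D}'$ and $N(\mathbb{E}^{\mathbb{D}})\cong N\mathbb{E}^{N\mathbb{D}}$, together with the identification of $h\Map_{s\S}$ with $N(E^{D})$. So the proof should be short. First I would recall that $\cat{RelCat}$ is a category (indeed a $2$-category), and that we have just exhibited, via the fully faithful embedding $\cat{RelCat}\to\cat{DCat}$, $(C,wC)\mapsto\mathbb{D}(C,C,wC)$, that this embedding preserves finite products and internal homs (this is exactly the content of the Remark following the computation of $N(\mathbb{E}^{\mathbb{D}})$). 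Hence setting $\Map_{\cat{RelCat}}(D,E):=N(wE^{D})$ is the same as the internal hom of relative categories evaluated degreewise, and the associativity and unitality of the enrichment composition follow from the corresponding structure on the cartesian-closed category $\cat{DCat}$, transported along the product-preserving fully faithful functor $N$.

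Concretely the steps are: (1) observe that for relative categories $D$ and $E$ the relative category $E^{D}$ of weak-equivalence-preserving functors, with weak equivalences the objectwise ones, is again a relative category, and that $(D,E)\mapsto E^{D}$ together with the evaluation and composition maps makes $\cat{RelCat}$ into a category enriched over itself; (2) apply the relative nerve $N$ to each hom-object $E^{D}$ to get the claimed simplicial enrichment $\Map_{\cat{RelCat}}(D,E)=N(wE^{D})$ — note one must take the \emph{underlying space}, i.e.\ the simplicial set of objects-and-weak-equivalences $N(wE^{D})$, not the full bisimplicial nerve, which is why the statement singles out $wE^{D}$; (3) check that $N$ carries the composition $E^{D}\times F^{E}\to F^{D}$ to a map $N(wE^{D})\times N(wF^{E})\to N(wF^{D})$ compatible with the simplicial category axioms, which is immediate from the natural isomorphism $N(\mathbb{D})\times N(\mathbb{D}')\cong N(\mathbb{D}\times\mathbb{D}')$ already proved; (4) for full faithfulness of the promoted functor on mapping spaces, invoke the previous proposition: $h\Map_{s\S}(N(D,wD),N(E,wE))\cong N(E^{D})$, and restrict to the subcategory of weak equivalences, or equivalently pass to $v\Map$ — in any case the mapping space of the simplicial category $\cat{RelCat}$ maps isomorphically onto the simplicial mapping space computed in $s\S$ between the nerves, which is precisely what fully faithful enriched functor means here.

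There is essentially no hard step; the only thing requiring a little care is the bookkeeping about \emph{which} nerve and \emph{which} mapping space appear. One must reconcile the two a priori different spaces: the bisimplicial internal hom $N(\mathbb{E}^{\mathbb{D}})$ of double categories versus the honest simplicial set $N(wE^{D})$ that enriches $\cat{RelCat}$. The resolution, already implicit in the discussion of $v\Map_{s\S}$ and $h\Map_{s\S}$, is that the $q$-simplices of the mapping space are indexed by maps out of $p_v^{*}\Delta[q]=N([q]^{v})$, i.e.\ out of $\Delta[q]$ viewed as a constant simplicial space; plugging this into $N(\mathbb{E}^{\mathbb{D}})$ via the computation $\cat{DCat}([q]^{v},E^{D})\cong N(wE^{D})_{q}$ collapses the bisimplicial object to the claimed simplicial set. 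So the proof is: quote the preceding proposition identifying $v\Map_{s\S}(N(D,wD),N(E,wE))$ with $N(wE^{D})$, observe compatibility of this identification with composition using the product-preservation of $N$, and conclude that $N\colon\cat{RelCat}\to s\S$ is a fully faithful functor of simplicially enriched categories with the stated enrichment. $\hfill\square$
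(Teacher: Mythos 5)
Your proposal is correct and follows the same route as the paper, which in fact leaves this proposition with no written proof because it is an immediate corollary of the preceding proposition identifying $v\Map_{s\S}(N(D,wD),N(E,wE))$ with $N(wE^D)$ naturally in $D$ and $E$. Your write-up usefully makes explicit the compatibility of this identification with composition (via product-preservation of the double nerve) and the reconciliation of the bisimplicial hom $N(\mathbb{E}^{\mathbb{D}})$ with the simplicial set $N(wE^D)$ through $p_v^*$.
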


\subsection{Comparison with the homotopy correct mapping space}

For general relative categories $D$ and $E$, the space $\Map_{\cat{RelCat}}(D,E)$ is usually not equivalent to the derived mapping space. However, if $E$ is a model category, we have the following proposition.

\begin{prop}\label{homotopy correct mapping space}
If $\cat{E}$ is a cofibrantly generated model category which is Quillen equivalent to a combinatorial model category and $D$ is any relative category, then, the map
\[\Map_{\cat{RelCat}}(D,\cat{E})=\Map_{s\S}(N(D,w),N(\cat{E},w))\to\Map_{s\S}(N(D,w),RN(\cat{E},w))\]
is an equivalence for any fibrant replacement $N(\cat{E},w)\to RN(\cat{E},w)$.
\end{prop}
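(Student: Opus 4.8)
The plan is to recognize both sides of the map as classifying spaces and to reduce the statement to a rectification theorem for diagrams in combinatorial model categories. By the computation of the previous subsection, the source $\Map_{s\S}(N(D,w),N(\cat{E},w))=\Map_{\cat{RelCat}}(D,\cat{E})$ is the nerve $N(w\cat{E}^{D})$, where $\cat{E}^{D}$ denotes the category of $wD$-inverting functors $D\to\cat{E}$ and $w\cat{E}^{D}$ its wide subcategory of objectwise weak equivalences. On the other hand, since $\cat{E}$ is a model category, $N(\cat{E},w)$ is a complete Segal space in our sense (Barwick--Kan), so $RN(\cat{E},w)$ is a fibrant model of the $\infty$-category presented by $\cat{E}$; and since all objects of $s\S$ are Rezk-cofibrant while $RN(\cat{E},w)$ is Rezk-fibrant, $\Map_{s\S}(N(D,w),RN(\cat{E},w))$ is the homotopy-correct mapping space. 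As $N(D,w)$ presents the localization of $D$ at $wD$, this target is the space of $wD$-inverting $\infty$-functors from $D$ to the $\infty$-category presented by $\cat{E}$, and the proposition asserts that this space is computed strictly, by honest $wD$-inverting functors into $\cat{E}$.

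First I would reduce to the case where $\cat{E}$ is a combinatorial simplicial model category. Both source and target are unchanged up to equivalence under a Quillen equivalence of $\cat{E}$: the target because it depends only on the presented $\infty$-category, and the source by a standard argument (a left Quillen equivalence of cofibrantly generated model categories, composed with functorial (co)fibrant replacements, induces a Dwyer--Kan equivalence of the relevant relative categories of $wD$-inverting diagrams, compatibly with the comparison map). By Dugger's presentation theorem one may then assume $\cat{E}$ is a left Bousfield localization of a projective model category of simplicial presheaves; in particular $\cat{E}$ is simplicial, combinatorial and has functorial factorizations, and $\on{Fun}(D,\cat{E})$ carries an injective model structure.

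The core of the proof is then rectification. Using functorial factorizations, $N(w\cat{E}^{D})$ is equivalent to the nerve of the category of $wD$-inverting, cofibrant--fibrant-valued functors $D\to\cat{E}$ with objectwise weak equivalences; since $\cat{E}$ is simplicial, this is the classifying space of the $wD$-inverting strict simplicial functors into the simplicial category $\cat{E}_{cf}$ of cofibrant--fibrant objects. Because $\cat{E}_{cf}$ is a fibrant simplicial category presenting the $\infty$-category of $\cat{E}$, the adjunction between the homotopy-coherent nerve and its left adjoint applied to $N(D,w)$, combined with the rectification of homotopy-coherent diagrams valued in a combinatorial model category, identifies this space with $\Map_{s\S}(N(D,w),RN(\cat{E},w))$ compatibly with the given map; here the restriction to $wD$-inverting strict functors on one side corresponds to localizing $N(D)$ at $wD$ on the other.

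The hard part is exactly this last step, and it is the reason for the hypotheses: $N(\cat{E},w)$ is not Reedy -- hence not Rezk -- fibrant, so one cannot simply say that $\Map_{s\S}(N(D,w),-)$ is a right Quillen functor and therefore preserves the weak equivalence $N(\cat{E},w)\to RN(\cat{E},w)$; one genuinely needs that homotopy-coherent diagrams in $\cat{E}$ can be strictified, which is where combinatoriality (or simpliciality) is essential. The second delicate point is the bookkeeping: matching the $wD$-inverting condition on strict functors with the localization of $N(D)$ at $wD$ on the coherent side, so that the two mapping spaces are compared precisely by the stated map and not merely abstractly equivalent.
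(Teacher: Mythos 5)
Your overall strategy—reduce to a combinatorial simplicial model category via Dugger and then rectify—is the right one, and the paper also ultimately rests on rectification (it cites Lurie's Proposition A.3.4.13). But there is a genuine gap in the passage from an ordinary index category to a general relative category $D$, and you flag it yourself without filling it: the claim that ``the restriction to $wD$-inverting strict functors on one side corresponds to localizing $N(D)$ at $wD$ on the other,'' compatibly with the specific comparison map in the statement, is precisely the content that needs proof. You also use the reduction-to-simplicial-combinatorial step for arbitrary relative $D$ as if it were the same as the ordinary-category case, but there is no obvious transferred model structure on the category of $wD$-inverting functors in general, so ``a standard argument'' is doing quite a lot of undeclared work there.

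The paper handles both difficulties by splitting the proof differently. It invokes rectification only when $D$ is an ordinary category (where $\cat{E}^D$ does carry the projective model structure and Quillen invariance is classical). For a general relative category it then exhibits $\Map_{\cat{RelCat}}(D,\cat{E})$ as a pullback of $\Map_{\cat{RelCat}}((D,\on{iso}D),\cat{E})$ over the condition that arrows in $wD$ land in weak equivalences of $\cat{E}$, and observes that this pullback is a homotopy pullback because saturation of $\cat{E}$ makes $N_1(w\cat{E},w\cat{E})\to N_1(\cat{E},w\cat{E})$ an inclusion of connected components, hence a fibration. Comparing this square with the analogous square for $RN(\cat{E},w)$, the completeness of $N(\cat{E},w)$ then gives the equivalence on the remaining corners. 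In other words, the paper's pullback-square argument is exactly the concrete replacement for the ``bookkeeping'' you identify as delicate. Your proposal could likely be completed by citing a theorem that mapping out of a Dwyer--Kan localization computes $wD$-inverting functors on the nose (and checking that it agrees with the map in the statement), but as written that step is missing.
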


\begin{proof}
(1) Assume first that $D$ is an ordinary category (i.e. $wD\subset \on{iso}(D)$), then $\cat{E}^D$ can be given the projective model structure and by classical model category method, we see that $\Map_{\cat{RelCat}}(D,\cat{E})$ does not change if we replace $\cat{E}$ by a Quillen equivalent model category. The same is of course true for $\Map_{s\S}(N(D,w),RN(\cat{E},w))$ and we can assume by a result of Dugger (see \cite{duggerreplacing}) that $\cat{E}$ is a combinatorial simplicial model category. In that case, the result is proved by Lurie in \cite[Proposition A.3.4.13]{lurietopos}.

(2) Now assume that $D$ is a general relative category, then $\Map_{\cat{RelCat}}(D,\cat{E})$ fits in the following pullback diagram
\[
\xymatrix{
\Map(N(D,wD),\cat{E})\ar[d]\ar[r]&\Map(N(D,\on{iso}D),\cat{E})\ar[d]\\
N_1(w\cat{E},w\cat{E})^{\on{Ar}(wD)}\ar[r]&N_1(\cat{E},w\cat{E})^{\on{Ar}(wD)}
}
\]
where $\on{Ar}(wD)$ denotes the set of arrows of $wD$. In words, this is saying that the space of maps $(D,wD)\to \cat{E}$ is the space of maps $C\to \cat{E}$ with the property that the restriction to $wC$ lands in the space of weak equivalences of $\cat{E}$. Since a model category is saturated, the map $N_1(w\cat{E},w\cat{E})\to N_1(\cat{E},w\cat{E})$ coincides with the inclusion
\[N(\cat{E},w\cat{E})_{\on{hoequiv}}\to N_1(\cat{E},w\cat{E})\]
and in particular is an inclusion of connected components and hence is a fibration. This means that the above pullback square is a homotopy pullback. On the other hand, we have a similar pullback diagram for $Y=RN(\cat{E},w\cat{E})$:
\[
\xymatrix{
\Map_{s\S}(N(D,wD),Y)\ar[d]\ar[r]&\Map_{s\S}(N(D,\on{iso}D),Y)\ar[d]\\
Y_{\on{hoequiv}}^{\on{Ar}(wD)}\ar[r]&Y_1^{\on{Ar}(wD)}
}
\]
which is a homotopy pullback square for the same reason. 

(3) There is a comparison map from the first square to the second square. In order to prove that the component on the upper-left corner is an equivalence, it suffices to show that the other three components are equivalences. The case of the upper-right corner is dealt with in the first paragraph. To deal with the other two cases, first, we recall that $N(\cat{E},w\cat{E})$ is a complete Segal space. In particular, the map $N(\cat{E},w\cat{E})\to Y$ is a levelwise equivalence. This means that the bottom right corner is an equivalence. The completeness also implies that 
\[N(\cat{E},w\cat{E})_{\on{hoequiv}}\to Y_{\on{hoequiv}}\]
is an equivalence.
\end{proof}

For $\cat{M}$, a model category, we can form the simplicial space $N(\cat{M}_c,w\cat{M}_c)$. Note that a left Quillen functor between model categories $\cat{M}\to\cat{N}$ induces a map
\[N(\cat{M}_c,w\cat{M}_c)\to N(\cat{N}_c,w\cat{N}_c)\]
and more generally, a left Quillen multi-functor $\prod_i\cat{M}_i\to\cat{N}$ produces a map
\[\prod_iN((\cat{M}_i)_c,(w\cat{M}_c)_i)\to N(\cat{N}_c,w\cat{N}_c)\]
Hence sending a model category to its full subcategory of cofibrant objects, we get a map
\[\oModCat\to \oper{R}el\oper{C}at^{\on{naive}}\]
where $\oper{R}el\oper{C}at^{\on{naive}}$ denotes the simplicial operad whose objects are relative categories and with 
\[\oper{R}el\oper{C}at^{\on{naive}}(\{C_i\};D)=\Map_{\cat{RelCat}}(\prod_iC_i,D)\]

This map can be composed with Rezk's nerve to get a map
\[\phi:\oModCat\to\oper{CSS}^{\on{naive}}\]
where $\oper{CSS}^{\on{naive}}$ denotes the simplicial operad whose objects are (non-fibrant) complete Segal spaces and morphisms are given by the (underived) mapping spaces. The operad $\oper{CSS}$ sits inside $\oper{CSS}^{\on{naive}}$ as the full suboperad on fibrant objects. 

We say that a complete Segal space $Y$ is good if for any relative category $(C,wC)$, the map
\[\Map_{s\S}(N(C,wC),Y)\to\Map_{s\S}(N(C,wC),RY)\]
is an equivalence for any Reedy fibrant replacement $Y\to RY$. Good simplicial spaces are stable under product, hence we can consider the full suboperad $\oper{C}at_{\infty}\hookrightarrow \oper{CSS}^{\on{naive}}$ on good simplicial spaces. The inclusion
\[\oper{CSS}\to\oper{C}at_{\infty}\]
is fully faithful by definition and essentially surjective (indeed any good simplicial space is equivalent in the simplicial operad $\oper{C}at_\infty$ to a Reedy fibrant replacement). To conclude, we have proved the following theorem.

\begin{theo}\label{comparison model category complete Segal spaces}
Let $\oModCat^c$ be the full suboperad of $\oModCat$ on model categories that are Quillen equivalent to a combinatorial model category.

The assignment $\cat{M}\mapsto N(\cat{M}_c,w\cat{M}_c)$ extends to a map of simplicial operad
\[\oModCat^c\to\oper{C}at_{\infty}\]
where $\oper{C}at_{\infty}$ is a simplicial operad which is weakly equivalent to the operad of fibrant complete Segal spaces. 
\end{theo}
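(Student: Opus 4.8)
The plan is to assemble the constructions of this section into the stated map of simplicial operads and then to identify the target, so the ``proof'' is really a recapitulation. First I would check that sending a model category to its relative category $(\cat{M}_c,w\cat{M}_c)$ of cofibrant objects upgrades to a \emph{strict} map of simplicial operads $\oModCat\to\oper{R}el\oper{C}at^{\on{naive}}$. A left Quillen $n$-functor restricts, along the inclusions of cofibrant subcategories, to a functor preserving weak equivalences (by Ken Brown's lemma), a natural weak equivalence between two such restricts to a morphism in $w\cat{N}_c^{\prod_i(\cat{M}_i)_c}$, and, because $\oModCat$ is built from honest left Quillen multi-functors, this restriction operation is strictly compatible with operadic composition, units, and products of multi-functors. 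Hence one obtains a bona fide map of simplicial operads, not merely a pseudo-map.

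Next I would note that the double nerve $N\colon\cat{DCat}\to s\S$ is fully faithful and preserves finite products, and that, by the identification $\Map_{\cat{RelCat}}(\prod_iC_i,D)\cong\Map_{s\S}(\prod_iN(C_i,w),N(D,w))$ established above, $N$ induces a map of simplicial operads $\oper{R}el\oper{C}at^{\on{naive}}\to\oper{CSS}^{\on{naive}}$ which is an isomorphism on all multi-mapping spaces. Composing, I obtain $\phi\colon\oModCat\to\oper{CSS}^{\on{naive}}$ with $\phi(\cat{M})=N(\cat{M}_c,w\cat{M}_c)$, which is a complete Segal space by \cite{barwickpartial}. Then I would restrict to $\oModCat^c$ and verify that $\phi$ factors through the full suboperad $\oper{C}at_\infty\hookrightarrow\oper{CSS}^{\on{naive}}$ on good simplicial spaces: for $\cat{M}$ cofibrantly generated and Quillen equivalent to a combinatorial model category, $\cat{M}$ admits a functorial cofibrant replacement, so the inclusion $(\cat{M}_c,w\cat{M}_c)\hookrightarrow(\cat{M},w\cat{M})$ induces a levelwise equivalence $N(\cat{M}_c,w\cat{M}_c)\goto{\simeq}N(\cat{M},w\cat{M})$; combining this with Proposition \ref{homotopy correct mapping space} shows that $\Map_{s\S}(N(C,wC),-)$ cannot distinguish $N(\cat{M}_c,w\cat{M}_c)$ from a Reedy fibrant replacement, i.e. that it is good. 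Finally, $\oper{CSS}\hookrightarrow\oper{C}at_\infty$ is a full suboperad, hence homotopically fully faithful, and it is essentially surjective because every good simplicial space is connected, by a weak equivalence in $\oper{C}at_\infty$, to its Reedy fibrant replacement, which is a fibrant complete Segal space; since a homotopically fully faithful and essentially surjective map of simplicial operads is a weak equivalence, $\oper{C}at_\infty\simeq\oper{CSS}$.

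The step I expect to require the most care — rather than any isolated difficulty — is the strictness of the first map: one must make sure restriction to cofibrant objects is compatible with operadic composition and units on the nose, not merely up to coherent homotopy, which is precisely why it was worthwhile to model $\oModCat$ via strict left Quillen $n$-functors rather than via an $\infty$-categorical gadget. The only substantive analytic input is Proposition \ref{homotopy correct mapping space} (which itself rests on Dugger's replacement theorem and Lurie's computation of mapping spaces in model categories); granting that, everything here is formal.
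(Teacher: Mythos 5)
Your recapitulation follows the paper's own argument step for step: build the strict map $\oModCat\to\oper{R}el\oper{C}at^{\on{naive}}$, compose with the relative nerve using the isomorphism $\Map_{\cat{RelCat}}(\prod_iC_i,D)\cong\Map_{s\S}(\prod_iN(C_i,w),N(D,w))$, restrict to $\oModCat^c$ and land in the suboperad of good objects via Proposition~\ref{homotopy correct mapping space}, and identify $\oper{C}at_\infty$ with $\oper{CSS}$ by full faithfulness plus essential surjectivity via Reedy fibrant replacement. This is the same approach as the paper, which treats the theorem as a summary of the preceding discussion.
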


\begin{rem}
Let $\oper{P}r^L$ be the suboperad of $\oper{C}at_\infty$ whose objects are good complete Segal spaces with all homotopy colimits whose underlying quasi-category is presentable and whose morphisms are given by multi-functors which preserve the colimits in each variable. Then the map
\[\oModCat^c\to\oper{C}at_\infty\]
factors through $\oper{P}r^L$ and we conjecture that it is a homotopically fully faithful map.
\end{rem}

\begin{exa}
Take $\oper{M}$ be the nonsymmetric operad freely generated by an operation in degree $0$ and $2$. An algebra over $\oper{M}$ in $\cat{Set}$ is a set with a binary multiplication and a base point. Let $\oper{P}(n)$ be the operad in $\Cat$ which is given in degree $n$ by the groupoid whose objects are points of $\oper{M}(n)$ and a with a unique morphism between any two objects. Then an algebra over $\oper{P}$ is a monoidal category.

The nerve of $\oper{P}$ is an $\oper{A}_\infty$-operad $N\oper{P}$. According to the previous theorem, if $\cat{M}$ is a monoidal model category, then $N(\cat{M}_c)$ is a $N\oper{P}$-algebra in the simplicial category of complete Segal spaces.

One could similarly show that if $\cat{M}$ is a braided (resp. symmetric) monoidal model category, then $N(\cat{M}_c)$ is an algebra over a certain $\oper{E}_2$ (resp. $\oper{E}_\infty$)-operad.

One can also work with many objects operads. If $\cat{V}$ is a monoidal model category and $\cat{M}$ is a model category left tensored over $\cat{V}$, the the pair $(N(\cat{V}_c),N(\cat{M}_c))$ is an algebra over an operad equivalent to $L\oMod$ in $\oper{C}at_{\infty}$. 
\end{exa}

\begin{rem}
Note that the category of relative categories is actually enriched in relative categories. Therefore, using Rezk's functor, we can actually form an enrichment of $\cat{RelCat}$ in simplicial space. Similarly, the $2$-operad $\oModCat$ can be promoted to an operad enriched in relative categories by allowing natural transformations between Quillen functors that are not weak equivalences. We claim that the map $\oModCat\to\oper{C}at_{\infty}$ is just the degree $0$ part of a map of operad enriched in simplicial spaces. 

The operad $\oMor(\cat{V},S)$ can also be extended into an operad enriched in relative categories by allowing any map between bimodules. The map $\oMor(\cat{V},S)\to\oModCat$ constructed in \ref{from rings to cat} could then be extended to a map of operads in simplicial space. We will not use this additional structure in this paper.
\end{rem}

\section{Modules over an $\oper{O}$-algebra}

In this section, we give ourselves a one-object operad $\oper{O}$ and we construct a family of theories of modules over $\oper{O}$-algebras. These module categories are parametrized by associative algebras in the category of right modules over $\oper{O}$. Assuming that the symmetric monoidal model category we are working with satisfies certain reasonable conditions, these categories of modules can be given a model category structure.

The reader is invited to refer to the appendices for background material about operads and model categories.

\subsection{Definition of the categories of modules}

In this subsection $(\cat{C},\otimes,\un)$ denotes a simplicial symmetric monoidal category for which $\otimes$ preserves colimits in both variables. We do not assume any kind of model structure.

\begin{defi}\label{PMod}
Let $P$ be an associative algebra in right modules over $\oper{O}$. The operad $P\oMod$ of \emph{$P$-shaped $\oper{O}$-modules} has two objects $a$ and $m$. Its spaces of operations are as follows 
\begin{align*}
P\oMod(a^{\boxplus n};a)&=\oper{O}(n)\\
P\oMod(a^{\boxplus n}\boxplus m;m)&=P(n)
\end{align*}

Any other space of operation is empty. The composition is left to the reader.
\end{defi}

Any category that can reasonably be called a category of modules over an $\oper{O}$-algebras arises in the above way as is shown by the following easy proposition:

\begin{prop}
Let $\oper{M}$ be an operad with two objects $a$ and $m$ and satisfying the following properties:
\begin{itemize}
\item $\oper{M}(*;a)$ is empty if $*$ contains the object $m$.
\item $\oper{M}(a^{\boxplus n};a)=\oper{O}(n)$
\item $\oper{M}(*;m)$ is non empty only if $*$ contains exactly one copy of $m$.
\end{itemize}
Then $\oper{M}=P\oMod$ for some $P$ in $\Mod_{\oper{O}}[\oAss]$.
\end{prop}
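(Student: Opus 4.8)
The plan is to reconstruct $P$ directly from the data of $\oper{M}$ and check that the resulting structure is an associative algebra in right $\oper{O}$-modules whose associated operad $P\oMod$ is isomorphic to $\oper{M}$. First I would set $P(n) := \oper{M}(a^{\boxplus n}\boxplus m; m)$ for each $n \geq 0$; by the third hypothesis these are the only nonempty spaces of operations of $\oper{M}$ having target $m$, so this exhausts the "module part" of $\oper{M}$. The symmetric group $\Sigma_n$ acts on $P(n)$ by permuting the $n$ copies of $a$ (leaving $m$ fixed), making $P$ a symmetric sequence, i.e. an object of $\Mod_{\oper{O}}$ in the underlying sense before the module structure is specified.

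Next I would extract the right $\oper{O}$-module structure on $P$ from the operad composition in $\oper{M}$. Given $\oper{M}$-operations, one can compose an element of $\oper{O}(k) = \oper{M}(a^{\boxplus k};a)$ into one of the $a$-slots of an element of $P(n) = \oper{M}(a^{\boxplus n}\boxplus m;m)$; this yields the structure maps $P(n) \otimes \oper{O}(k) \to P(n+k-1)$ of a right $\oper{O}$-module, and the associativity, unitality and equivariance of these maps follow from the corresponding operad axioms in $\oper{M}$ together with the identification $\oper{M}(a^{\boxplus n};a) = \oper{O}(n)$ from the second hypothesis. Similarly, the associative algebra structure on $P$ in the symmetric monoidal category $(\Mod_{\oper{O}}, \otimes_{\oper{O}})$ comes from composing an element of $P(j)$ into the $m$-slot of an element of $P(n)$: this gives a map $P(n) \otimes P(j) \to P(n+j)$, and — because in each operation of $\oper{M}$ with target $m$ there is exactly one copy of $m$ among the inputs — these assemble into a well-defined map $P \otimes_{\oper{O}} P \to P$. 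The unit of this algebra is the element $1 \in \oper{M}(m;m)$ corresponding to the identity, which lies in $P(0)$. Associativity and unitality of the multiplication are again inherited from the operad axioms of $\oper{M}$.

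Having produced $P \in \Mod_{\oper{O}}[\oAss]$, I would then compare $P\oMod$ with $\oper{M}$. By construction $P\oMod$ and $\oper{M}$ have the same two objects $a,m$; by Definition \ref{PMod} we have $P\oMod(a^{\boxplus n};a) = \oper{O}(n) = \oper{M}(a^{\boxplus n};a)$ and $P\oMod(a^{\boxplus n}\boxplus m;m) = P(n) = \oper{M}(a^{\boxplus n}\boxplus m;m)$, and all other spaces of operations are empty in both $\oper{M}$ (by the first and third hypotheses) and $P\oMod$. These identifications are natural in the inputs, so they assemble into a bijection on spaces of operations, and the only thing left is to check that this bijection is compatible with operadic composition — but the composition in $P\oMod$ was precisely defined (the "left to the reader" clause of Definition \ref{PMod}) using the right $\oper{O}$-module structure and the multiplication on $P$, which we extracted from the composition in $\oper{M}$, so the two composition laws agree tautologically.

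The main obstacle is bookkeeping rather than conceptual: one has to be careful about how the single copy of $m$ among the inputs interacts with operadic composition, since composing into an $a$-input keeps the module "live" while composing into the $m$-input uses up the module structure and reintroduces it from the other factor — this is exactly what forces the multiplication to be a map of right $\oper{O}$-modules and not merely of symmetric sequences. Keeping track of the $\Sigma_n$-actions through all of these composites, and verifying that the extracted structure maps really do satisfy the associative-algebra-in-$\Mod_{\oper{O}}$ axioms (as opposed to some weaker coherence), is where the genuine work lies; everything else is a direct translation between the language of two-colored operads with the given vanishing conditions and the language of parametrized module theories.
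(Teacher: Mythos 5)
Your proposal matches the paper's (very brief) proof: define $P(n)=\oper{M}(a^{\boxplus n}\boxplus m;m)$, extract the right $\oper{O}$-module structure from composition into the $a$-slots and the associative multiplication from composition into the $m$-slot, and observe that $P\oMod$ then coincides with $\oper{M}$ by construction. You have merely spelled out the details the paper leaves "easy to prove"; the only minor nit is the notation $P\otimes_{\oper{O}}P$, which should be read as the Day tensor product $\otimes$ in the symmetric monoidal category $\Mod_{\oper{O}}$ rather than a relative tensor product over $\oper{O}$.
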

\begin{proof}
We define $P(n)=\oper{M}(a^{\boxplus n}\boxplus m;m)$. Using the fact that $\oper{M}$ is an operad, it is easy to prove that $P$ is an object of $\Mod_{\oper{O}}[\oAss]$ and that $\oper{M}$ coincides with $P\oMod$.
\end{proof}

We denote by $\cat{C}[P\oMod]$ the category of algebras over this two-objects operad in the category $\cat{C}$. Objects of this category are pairs $(A,M)$ of objects of $\cat{C}$. The object $A$ is an $\oper{O}$-algebra and the object $M$ has an action of $A$ parametrized by the spaces $P(n)$. Maps in this category are pairs $(f,g)$ preserving all the structure.

Note that the construction $P\mapsto P\oMod$ is a functor from $\Mod_{\oper{O}}[\oAss]$ to the category of operads. It preserves weak equivalences between objects of $\Mod_{\oper{O}}[\oAss]$. We can in fact improve this homotopy invariance.

We construct a category $\cat{OM}$. Its objects are pairs $(\oper{O},P)$ where $\oper{O}$ is a one-object operad and $P$ is an associative algebra in right modules. Its morphisms $(\oper{O},P)\to(\oper{O}',P')$ consist of a morphisms of operads $f:\oper{O}\to\oper{O}'$ together with a morphisms of associative algebras in $\oper{O}$-modules $P\to P'$ where $P$ is an seen as an $\oper{O}$-module by restriction along $f$. We say that a map in $\cat{OM}$ is a \emph{weak equivalence} if it induces a weak equivalence on $\oper{O}$ and $\oper{P}$.

\begin{prop}\label{invariance of operads}
The functor $\cat{OM}\to\cat{Oper}$ sending $(\oper{O},P)$ to $P\oMod$ preserves weak equivalences.\hfill$\square$
\end{prop}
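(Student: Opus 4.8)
The plan is to reduce the statement to a statement about maps of operads being weak equivalences objectwise, using the explicit description of $P\oMod$ given in \ref{PMod}. Suppose $(f,g)\colon(\oper{O},P)\to(\oper{O}',P')$ is a weak equivalence in $\cat{OM}$, so that $f\colon\oper{O}\to\oper{O}'$ is a weak equivalence of operads and $g\colon P\to P'$ is a weak equivalence of right $\oper{O}$-modules (where $P'$ is regarded as an $\oper{O}$-module by restriction along $f$). We must show the induced map of operads $P\oMod\to P'\oMod$ is a weak equivalence, i.e.\ induces a weak equivalence on each space of operations. Since both operads have object set $\{a,m\}$ and every nonempty space of operations is either of the form $\oper{O}(n)\to\oper{O}'(n)$ (the operations landing in $a$) or $P(n)\to P'(n)$ (the operations landing in $m$), while all other spaces are empty on both sides, this is immediate from the hypotheses. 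So in fact the content is entirely bookkeeping: one just matches up the two indexing conventions.

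First I would recall that a morphism in $\cat{OM}$ by definition induces a map of operads on the first coordinate and a map of right $\oper{O}$-modules (compatible with the algebra structures) on the second, and that a weak equivalence is one that is a levelwise weak equivalence in each coordinate. Second I would invoke the explicit formula for $P\oMod$ from Definition \ref{PMod}: its operations are $P\oMod(a^{\boxplus n};a)=\oper{O}(n)$ and $P\oMod(a^{\boxplus n}\boxplus m;m)=P(n)$, with all others empty, and the functoriality in $(\oper{O},P)$ sends $f$ to the map $\oper{O}(n)\to\oper{O}'(n)$ on the $a$-operations and $g$ to the map $P(n)\to P'(n)$ on the $m$-operations. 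Third I would conclude: on the operations landing in $a$ the induced map is $f$ at level $n$, which is a weak equivalence since $f$ is; on the operations landing in $m$ it is $g$ at level $n$, a weak equivalence since $g$ is; and on every empty space it is trivially an equivalence. Hence $P\oMod\to P'\oMod$ is a weak equivalence of operads.

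There is essentially no obstacle here — the proposition is the ``easy improvement'' of the already-observed fact that $P\mapsto P\oMod$ preserves weak equivalences in $P$, and the only new ingredient is that we also allow $\oper{O}$ to vary, which is absorbed by the identification $P\oMod(a^{\boxplus n};a)=\oper{O}(n)$. The one point worth being slightly careful about is that $g$ is a weak equivalence of $\oper{O}$-modules, not a priori of $\oper{O}'$-modules, but this is irrelevant for the argument since we only need $g$ to be a levelwise weak equivalence of underlying collections, which it is. This is exactly the sort of verification the authors mark with $\hfill\square$, and I would present it in a sentence or two rather than belaboring the composition maps (which are determined and play no role in the levelwise comparison).
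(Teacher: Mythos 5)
Your argument is correct and is exactly the level of verification the paper marks as routine with the $\hfill\square$. One small point you could make explicit: the paper's definition of weak equivalence of operads also requires essential surjectivity of the induced map on $\on{Ho}(\oper{M}^{(1)})$, which here is automatic since the map $P\oMod\to P'\oMod$ is a bijection on objects ($a\mapsto a$, $m\mapsto m$); otherwise your levelwise check on the two nonempty families of operation spaces, together with your remark that only the underlying collection of $g$ matters, is all that is needed.
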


\begin{defi}
Let $A$ be an $\oper{O}$-algebra in $\cat{C}$. The \emph{category of $P$-shaped $A$-modules} denoted by $P\Mod_A$ is the subcategory of $\cat{C}[P\oMod]$ on objects of the form $(A,M)$ and of maps of the form $(\id_A,g)$.
\end{defi}

Note that there is an obvious forgetful functor $P\Mod_A\to\cat{C}$. One easily checks that it preserves colimits and limits.

\medskip

This abstract definition recovers well-known examples. We can try to model left and right modules over associative algebras. Take $\oper{O}$ to be $\oAss$ as an operad in the category of sets. The category $\cat{Ass}$ is the category of non-commutative sets (it is defined for instance in \cite{angeltveitcyclic}). Its objects are finite sets and its morphisms are pairs $(f,\omega)$ where $f$ is a map of finite sets and $\omega$ is the data of a linear ordering of each fiber of $f$.

\begin{cons}
Let $\cat{Ass}^-$ (resp. $\cat{Ass}^+$) be the category whose objects are based finite sets and whose morphisms are  pairs $(f,\omega)$ where $f$ is a morphisms of based finite sets and $\omega$ is a linear ordering of the fibers of $f$ which is such that the base point is the smallest (resp. largest) element of the fiber over the base point of the target of $f$.

Let $R$ (resp. $L$) be the right module over $\oAss$ defined by the formulas
\begin{align*}
R(n)&=\cat{Ass}^-(\{*,1,\ldots,n\},\{*\})\\
L(n)&=\cat{Ass}^+(\{*,1,\ldots,n\},\{*\})
\end{align*}
Let us construct a pairing
\[R(n)\times R(m)\to R(n+m)\]
Note that specifying a point in $R(n)$ is equivalent to specifying a linear order of $\{1,\ldots,n\}$. Let $f$ be a point in $R(n)$ and $g$ be a point in $R(m)$. We define their product to be the map whose associated linear order of $\{1,\ldots,n+m\}$ is the linear order induced by $n$ concatenated with the linear order induced by $g$.
\end{cons}

\begin{prop}
Let $A$ be an associative algebra in $\cat{C}$. $L\Mod_A$ (resp. $R\Mod_A$) is isomorphic to the category of left (resp. right) modules over $A$. $\hfill\square$
\end{prop}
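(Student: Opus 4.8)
The plan is to unwind both sides of the claimed isomorphism and match the data; I treat the left case, the right case being symmetric (one replaces $\cat{Ass}^+$ by $\cat{Ass}^-$, so that the base point is the \emph{smallest} element of its fiber, and concatenation of linear orders deposits the $A$-factors on the other side of $m$).

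First I would spell out an object of $L\Mod_A$. By definition it is an $L\oMod$-algebra of the form $(A,M)$: this consists of the given associative algebra structure on $A$ (coming from $L\oMod(a^{\boxplus n};a)=\oAss(n)$ with the operad structure of $\oAss$) together with, for each $n$ and each $\sigma\in L(n)$, a map $\mu_\sigma\colon A^{\otimes n}\otimes M\to M$, subject to the operadic composition laws and $\Sigma_n$-equivariance. The relevant combinatorial facts are: $L(n)$ is the set of linear orders on $\{1,\dots,n\}$, on which $\Sigma_n$ acts simply transitively; the concatenation product $L(n)\times L(m)\to L(n+m)$ makes $L$ an associative algebra in $\Mod_{\oAss}$ generated by the unique point of $L(1)$; and under the identification $L\oMod(a^{\boxplus k}\boxplus m;m)=L(k)$, operadic composition into the $m$-slot is exactly this concatenation, while composition into an $a$-slot is exactly the right $\oAss$-action on $L$.

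From this I would deduce that an $L\oMod$-algebra structure on $(A,M)$ is the same as a left $A$-module structure on $M$. Writing $a:=\mu_{L(1)}\colon A\otimes M\to M$, equivariance reduces every $\mu_\sigma$ to the operation attached to the standard order, and the composition laws above force the latter to be the $n$-fold iterate $A^{\otimes n}\otimes M\to\cdots\to A\otimes M\to M$ of $a$; so the whole family is determined by $a$. Conversely the surviving operad axioms reduce to the module axioms: the single element $\mu_{(1<2<\ast)}\in L(2)$ can be written both as $a\circ(\mathrm{mult}_A\otimes\id_M)$ and as $a\circ(\id_A\otimes a)$ (using $L(1)\circ\oAss(2)$, resp.\ $L(1)\circ L(1)$), giving associativity of the action, and composing $L(1)$ with $\oAss(0)$ and the unique (hence identity) point of $L(0)$ gives $a\circ(\eta_A\otimes\id_M)=\id_M$. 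A morphism of $L\Mod_A$ is a pair $(\id_A,g)$ with $g$ commuting with all $\mu_\sigma$, equivalently with $a$ alone; so it is precisely a morphism of left $A$-modules. This yields the isomorphism of categories between $L\Mod_A$ and the category of left $A$-modules, and symmetrically for $R\Mod_A$ and right $A$-modules.

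The only delicate point is the middle step: one must check carefully that concatenation of linear orders is literally the operadic composition of $L\oMod$ in the $m$-slot, and keep track of the handedness convention so that $\cat{Ass}^+$ (base point largest, hence all $A$-factors to the left of $m$) produces \emph{left} modules rather than right ones. Everything else is a routine unwinding of definitions.
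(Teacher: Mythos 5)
Your proof is correct; the paper states this proposition without proof (the ``$\square$'' indicates it is left to the reader), so there is nothing to compare against. Your unwinding of definitions — identifying $L(n)$ with linear orders on $\{1,\dots,n\}$ on which $\Sigma_n$ acts simply transitively, reducing all structure maps to a single $a\colon A\otimes M\to M$, and checking that composition into the $a$-slot versus the $m$-slot recovers the two sides of the module associativity axiom, with the unit from $L(0)$ and $\oAss(0)$ — is exactly the argument the paper is implicitly invoking, and your sanity check on the handedness (base point largest in $\cat{Ass}^+$, so $A$-factors land to the left of $M$, giving \emph{left} modules) is the right thing to verify.
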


\begin{rem}
Operadic modules as defined for instance in \cite{bergerderived} are also a particular case of this construction. Let $\oper{O}[1]$ be the shift of the operad $\oper{O}$. Explicitely, $\oper{O}[1](n)=\oper{O}(n+1)$ with action induced by the inclusion $\Sigma_n\to\Sigma_{n+1}$. This is in an obvious way a right module over $\oper{O}$. Moreover it has an action of the associative operad
\[\oper{O}[1](n)\times\oper{O}[1](m)\cong\oper{O}(n+1)\times\oper{O}(m+1)\goto{\circ_{n+1}}\oper{O}(n+m+1)=\oper{O}(n+m)[1]\]

It is easy to check that the operad $\oper{O}[1]\oMod$ is the operad parametrizing operadic $\oper{O}$-modules. For instance if $\oper{O}=\oAss$, the associative operad, the category $\oAss[1]\Mod_A$ is the category of $A$-$A$-bimodules. If $\oCom$ is the commutative operad, the category $\oCom[1]\Mod_A$ is the category of left modules over $A$. If $\oper{L}ie$ is the operad parametrizing Lie algebra in an additive symmetric monoidal category, the category $\oper{L}ie[1]\Mod_{\mathfrak{g}}$ is the category of Lie modules over the Lie algebra $\mathfrak{g}$. That is object $M$ equipped with a map
\[-.-:\mathfrak{g}\otimes M\to M\]
satisfying the following relation
\[[X,Y].m=X.(Y.m)-Y.(X.m)\]
\end{rem}

\subsection{Universal enveloping algebra}

We want to show that the category $P\Mod_A$ is the category of left modules over a certain associative algebra built out of $A$ and $P$.

Let $U^{P}_A=P\circ_{\oper{O}}A$. Then by proposition \ref{P structure}, it is an associative algebra in $\cat{C}$

\begin{defi}
The associative algebra $U^P_A$ is called the \emph{universal enveloping algebra of $P\Mod_A$}.
\end{defi}

This name finds its justification in the following proposition.

\begin{prop}\label{universal enveloping algebra}
The category $P\Mod_A$ is equivalent to the category of left modules over the associative algebra $U^P_A$.
\end{prop}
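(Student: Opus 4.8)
The plan is to unwind both sides of the claimed equivalence down to the same concrete data and then check the resulting correspondence is functorial. Recall $U^P_A = P \circ_{\oper{O}} A$, where the relative composition product $P \circ_{\oper{O}} A$ is the coequalizer of the two obvious maps $P \circ \oper{O} \circ A \rightrightarrows P \circ A$ (one using the right $\oper{O}$-module structure on $P$, the other using the $\oper{O}$-algebra structure on $A$). First I would recall why this object is an associative algebra: the associative algebra structure on $P$ in the category of right $\oper{O}$-modules is compatible with $\circ_{\oper{O}}$, so multiplication $P \circ_{\oper{O}} A \otimes P \circ_{\oper{O}} A \to P \circ_{\oper{O}} A$ descends from the multiplication on $P$ (this is exactly what proposition \ref{P structure}, cited above, delivers), giving $U^P_A$ its algebra structure.

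Next I would describe a $P$-shaped $A$-module structure on an object $M$ of $\cat{C}$ explicitly as a coherent collection of maps $P(n) \otimes A^{\otimes n} \otimes M \to M$, compatible with the $\oper{O}$-algebra structure of $A$ and the right $\oper{O}$-module structure of $P$ (the latter being precisely what lets the $A$-factors be "absorbed"). The key observation is that, since $\otimes$ preserves colimits in each variable, such a collection of maps is the same datum as a single map
\[
\left(\coprod_n P(n) \otimes A^{\otimes n}\right)_{\oper{O}} \otimes M \longrightarrow M,
\]
i.e.\ a map $(P \circ_{\oper{O}} A) \otimes M \to M$, where the coend/coequalizer defining $\circ_{\oper{O}}$ is exactly the universal recipient of the $\oper{O}$-equivariance relations the module maps must satisfy. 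Then I would check that the associativity and unitality axioms for a $P$-shaped module (coming from the composition in the operad $P\oMod$, using both the associative multiplication on $P$ and the operad composition of $\oper{O}$ acting on $A$) translate precisely into the left-module axioms for the algebra $U^P_A$ with its multiplication described above. Unitality of the module corresponds to the unit of $P$ viewed inside $U^P_A$.

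On objects this already gives a bijection between $P$-shaped $A$-module structures on $M$ and $U^P_A$-module structures on $M$; for morphisms one checks directly that a map $g : M \to M'$ in $\cat{C}$ is a map of $P$-shaped $A$-modules iff it is $U^P_A$-linear, since both conditions say the same squares (indexed by $P(n) \otimes A^{\otimes n}$, resp. by $U^P_A$) commute, and these squares are identified under the colimit computation. Hence the forgetful functors to $\cat{C}$ are identified, and the equivalence — in fact an isomorphism of categories over $\cat{C}$ — follows. The main obstacle is bookkeeping rather than conceptual: one must be careful that the coequalizer presentation of $P \circ_{\oper{O}} A$ matches, relation for relation, the equivariance imposed by the operad $P\oMod$, and that the induced multiplication on $U^P_A$ is the one making "acting by $U^P_A$" agree with "acting by all the $P(n)$ simultaneously"; this is where proposition \ref{P structure} and the interchange of $\otimes$ with colimits do the real work. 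I would also note that this generalizes the classical statement that left/right/bi-modules over an associative algebra $A$ are left modules over $A$, $A\op$, and $A \otimes A\op$ respectively, which one recovers by taking $\oper{O} = \oAss$ and $P = L, R, \oAss[1]$.
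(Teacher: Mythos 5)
Your proposal is correct, but it takes a genuinely different route from the paper's proof. The paper does not unwind the $P\oMod$-algebra structure by hand; instead it invokes \cite[Proposition~1.9]{bergerderived}, which says that if the monad $T$ on $\cat{C}$ presenting $P\Mod_A$ satisfies the projection formula $T(X\otimes Y)\cong T(X)\otimes Y$, then $P\Mod_A$ is (iso)equivalent to left modules over $T(\un)$. The bulk of the paper's proof is then (a) a short verification of the projection formula via a mapping-space computation using that $P\Mod_A$ is tensored over $\cat{C}$, and (b) an explicit computation of $T(\un)$ as an operadic left Kan extension: one introduces the minimal module shape $J$, identifies $J\Mod_A\cong\cat{C}$, writes the free functor $\cat{C}[J\oMod]\to\cat{C}[P\oMod]$ as a coend over $\cat{Fin}_*$, and simplifies this coend to the coequalizer defining $U^P_A$. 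Your proposal instead identifies the monad directly: a $P$-shaped $A$-module structure on $M$ is unwound to a $\Sigma$- and $\oper{O}$-equivariant collection of maps $P(n)\otimes A^{\otimes n}\otimes M\to M$, which by the colimit-preservation of $\otimes$ is a single map $(P\circ_{\oper{O}}A)\otimes M\to M$, with the associativity/unitality axioms matching those of $U^P_A$-modules under the algebra structure supplied by \ref{P structure}. Both approaches are sound. Yours is more elementary and avoids the black-box monadic lemma, at the cost of having to match, relation for relation, the coequalizer presentation of $P\circ_{\oper{O}}A$ against the operad compositions in $P\oMod$ (which is exactly the bookkeeping you flagged, and is where the paper's $\cat{Fin}_*$-coend manipulation and the reduction to unbased maps would live in a fully written-out version). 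The paper's version localizes the delicate step to the single computation of $T(\un)$, with the categorical equivalence then coming for free from the cited result; yours front-loads the conceptual content and would give the isomorphism of categories over $\cat{C}$ somewhat more transparently.
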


\begin{proof}
We want to apply \cite[Proposition 1.9]{bergerderived} . In the proof of this proposition, we see that if the monad $T$ defining the category $P\Mod_A$ satisfies the condition
\[T(X\otimes Y)\cong T(X)\otimes Y\]
then the category $P\Mod_A$ is equivalent to the category of left modules over $T(\un)$ where the associative algebra structure on $T(\un)$ comes from
\[T(\un)\otimes T(\un)\cong T(T(\un)\otimes \un)\cong T(T(\un))\to T(\un)\]

Let us check that $T$ satisfies $T(X)\otimes Y\cong T(X\otimes Y)$. Note first that if $M$ is in $P\Mod_A$, $M\otimes X$ has an obvious structure of $P$-shaped module over $A$ and that this defines on $P\Mod_A$ the structure of a category tensored over $\cat{C}$. In particular $T(X)\otimes Y$ is in $P\Mod_A$. Let us compute
\[\Map_{P\Mod_A}(T(X)\otimes Y,M)\cong\Map_{\cat{C}}(Y,\Map_{\cat{C}}(X,M))\cong\Map_{C}(X\otimes Y,M)\]
Therefore, the functor represented by $T(X)\otimes Y$ is isomorphic to the functor represented by $T(X\otimes Y)$.

Let us compute $T(\un)$ in our case. 

Let $J$ be the associative algebra in $\Mod_{\oper{O}}$ which sends $0$ to $*$ and everything else to $\varnothing$. $J$ gives rise to a theory of modules. The operad $J\oMod$ has the following description:
\begin{align*}
J\oMod(a^{\boxplus k},a)&=\oper{O}(k)\\
J\oMod(a^{\boxplus k}\boxplus m,m)&=*\;\textrm{if}\;k=\varnothing,\;\varnothing\;\textrm{otherwise}
\end{align*}

The theory of modules parametrized by $J$ is the simplest possible. There are no operations $A^{\otimes n}\otimes M\to M$ except the identity map $M\to M$. 

There is an obvious operad map 
\[J\oMod\to P\oMod\]
inducing a forgetful functor $\cat{C}[P\oMod]\to\cat{C}[J\oMod]$. Let us fix the $\oper{O}$-algebra $A$. One checks easily that $J\Mod_A$ is isomorphic to the category $\cat{C}$. The functor $T$ is the left adjoint of the forgetful functor $P\Mod_A\to J\Mod_A\cong\cat{C}$.

Let us first study the left adjoint $F:\cat{C}[J\oMod]\to\cat{C}[P\oMod]$. This is an operadic left Kan extension. By \ref{coend}, we have the equation
\[F(A,\un)(m)\cong P\Mod(-,m)\otimes_{J\Mod}A^{\otimes-}\]

Note that the only nonempty mapping object in $P\Mod$ with target $m$ are those with source of the form $a^{\boxplus s}\boxplus m$. Hence if we denote $J\Mod_*$ and $P\Mod_*$ the full subcategories with objects of the form $a^{\boxplus s}\boxplus m$, the above coend can be reduced to
\[F(A,\un)(m)\cong P\Mod_*(-,m)\otimes_{J\Mod_*}A^{\otimes-}\]

Let us denote by $\cat{Fin}_*$ the category whose objects are nonnegative integers $n_*$ and whose morphisms from $n_*$ to $m_*$ are morphisms of finite pointed sets
\[\{*,1,\ldots,n\}\to\{*,1,\ldots,m\}\]

The previous coend is the coequalizer
\begin{align*}
\bigsqcup_{f\in\cat{Fin}_*(s_*,t_*)}P(t)\times \left(\prod_{x\in t}\oper{O}(f^{-1}(x))\right)\times J(f^{-1}(*))\otimes A^{\otimes s} \\
 \rightrightarrows \bigsqcup_{s\in\cat{Fin}}P(s)\otimes A^{\otimes s}
\end{align*}

Since the right module $J$ takes value $\varnothing$ for any non-empty set, we see that the  coproduct on the left does not change if we restrict to maps $s_*\to t_*$ for which the inverse image of the base point of $t_*$ is the base point of $s_*$. This set of maps is in bijection with the set of unbased maps $s\to t$. Therefore, the coend can be equivalently written as
\[
\bigsqcup_{f\in\cat{Fin}(s,t)}P(t)\times \left(\prod_{x\in t}\oper{O}(f^{-1}(x))\right)\otimes A^{\otimes s} \rightrightarrows \bigsqcup_{s\in\cat{Fin}}P(s)\otimes A^{\otimes s}
\]
which is the definition of $U^P_A$.

One can compute in a similar but easier fashion that $F(A,\un)(a)\cong A$. 

Hence, we have constructed a natural isomorphism
\[\cat{C}[P\oMod]((A,U^P_A),(A,N))\cong\cat{C}[J\oMod]((A,\un),(A,N))\]

It is clear that this isomorphisms preserves the subset of maps inducing the identity on $A$. Hence we have
\[P\Mod_{A}(U^{P}_A,N)\cong J\Mod_{A}(\un,N)\cong\cat{C}(\un,N)\]
which proves that $T(\un)$ is isomorphic to $U^P_A$.
\end{proof}

\begin{rem}
The above result is well-known if $P=\oper{O}[1]$. See for instance section 4.3. of \cite{fressemodules}.
\end{rem}

\begin{rem}
Note that there is an involution in the category of associative algebras in right modules over $\oper{O}$ sending $P$ to $P\op$. The construction $P\mapsto U^P_A$ sends $P\op$ to $(U^P_A)\op$.
\end{rem}

Assume that $\alpha:\oper{O}\to\oper{Q}$ is a morphism of operads. Let $A$ be an $\oper{Q}$ algebra and $P$ be an associative algebra in right modules over $\oper{O}$. Then by forgetting along the map $\oper{O}\to\oper{Q}$, we construct $\alpha^*A$ which is an $\oper{O}$-algebra and one may talk about the category $P\Mod_{\alpha^*A}$. The following proposition shows that this category of modules is of the form $Q\Mod_A$ for some $Q$.

\begin{prop}\label{change of operad}
We keep the notation of the previous remark. The object $\alpha_!P=P\circ_{\oper{O}}\oper{Q}$ is an associative algebra in right modules over $\oper{Q}$. Moreover, the category $P\Mod_{\alpha^*A}$ is equivalent to the category $\alpha_!P\Mod_A$.
\end{prop}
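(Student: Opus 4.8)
The plan is to reduce the statement to Proposition~\ref{universal enveloping algebra}, by comparing the two universal enveloping algebras that appear on the two sides. First, that $\alpha_!P=P\circ_{\oper{O}}\oper{Q}$ is an associative algebra in right $\oper{Q}$-modules: regarding $\oper{Q}$ via $\alpha$ as a left $\oper{O}$-module (compatibly with its right $\oper{Q}$-action), the relative composite $P\circ_{\oper{O}}\oper{Q}$ inherits a right $\oper{Q}$-action from $\oper{Q}$, and the multiplication $P(p)\otimes P(q)\to P(p+q)$ of $P$ together with the operad composition of $\oper{Q}$ induces a multiplication on it; the verification that this is associative and unital is the same argument as in Proposition~\ref{P structure}, applied with $\oper{Q}$ (viewed as a left $\oper{O}$-module) in place of the $\oper{O}$-algebra $A$. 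Equivalently, one invokes that induction along a morphism of operads is a strong symmetric monoidal functor $\Mod_{\oper{O}}\to\Mod_{\oper{Q}}$, hence preserves associative algebras.

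By Proposition~\ref{universal enveloping algebra}, $P\Mod_{\alpha^*A}$ is equivalent to $L\Mod_{U^P_{\alpha^*A}}$ with $U^P_{\alpha^*A}=P\circ_{\oper{O}}\alpha^*A$, and $\alpha_!P\Mod_A$ is equivalent to $L\Mod_{U^{\alpha_!P}_A}$ with
\[
U^{\alpha_!P}_A=(\alpha_!P)\circ_{\oper{Q}}A=(P\circ_{\oper{O}}\oper{Q})\circ_{\oper{Q}}A .
\]
It therefore suffices to produce an isomorphism of associative algebras $U^{\alpha_!P}_A\cong U^P_{\alpha^*A}$. On underlying objects of $\cat{C}$ this is the transitivity of relative composition products,
\[
(P\circ_{\oper{O}}\oper{Q})\circ_{\oper{Q}}A\cong P\circ_{\oper{O}}(\oper{Q}\circ_{\oper{Q}}A)\cong P\circ_{\oper{O}}A ,
\]
the last step using $\oper{Q}\circ_{\oper{Q}}A\cong A$ and the fact that $P\circ_{\oper{O}}-$ only sees $A$ through its restricted $\oper{O}$-algebra structure $\alpha^*A$; both isomorphisms are checked on the defining coequalizers as in \ref{coend}. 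Granting that this is moreover an isomorphism of algebras, composing it (applied to $L\Mod_{-}$) with the two equivalences of Proposition~\ref{universal enveloping algebra} yields the desired equivalence $\alpha_!P\Mod_A\simeq P\Mod_{\alpha^*A}$.

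The one point that requires care --- the only real content beyond diagram chasing --- is that the transitivity isomorphism respects the associative multiplications, i.e.\ that it is an isomorphism \emph{of algebras} and not merely of objects of $\cat{C}$. Here one recalls from the proof of Proposition~\ref{universal enveloping algebra} that the multiplication on an enveloping algebra $U^P_B$ is the composite $T(\un)\otimes T(\un)\cong T(T(\un))\to T(\un)$, writes it out on both sides, and checks that the transitivity map intertwines the two; this reduces to the statements that the unit $P\to\alpha^*\alpha_!P$ and the map $\oper{Q}\circ_{\oper{Q}}A\to A$ are morphisms of monoids, which are again instances of the symmetric monoidality of induction. One could instead bypass enveloping algebras: $\alpha$ and the unit $P\to\alpha^*\alpha_!P$ assemble into a map of operads $P\oMod\to(\alpha_!P)\oMod$, inducing on a fixed $\oper{Q}$-algebra $A$ the comparison functor $\alpha_!P\Mod_A\to P\Mod_{\alpha^*A}$, $(A,M)\mapsto(\alpha^*A,M)$, which one then shows to be an equivalence using the adjunction $\alpha_!\dashv\alpha^*$; the verifications involved are essentially the same.
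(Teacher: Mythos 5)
Your proposal is correct and takes essentially the same route as the paper: the main step in both is the comparison of universal enveloping algebras $U^{\alpha_!P}_A \cong U^P_{\alpha^*A}$ via transitivity of the relative composition product, combined with Proposition~\ref{universal enveloping algebra}. The only divergence is minor and in the first part: you argue that $\alpha_!P$ is an associative algebra via symmetric monoidality of $\alpha_!$ (or via Proposition~\ref{P structure} applied in $\Mod_{\oper{Q}}$), whereas the paper invokes preservation of associative algebras under reflexive coequalizers, and you also flag the implicit point that the transitivity isomorphism must be checked to be one of algebras, which the paper leaves unspoken.
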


\begin{proof}
The first part of the claim follows from the fact that $P\circ_{\oper{O}}\oper{Q}$ is a reflexive coequalizer of associative algebras in right $\oper{Q}$-modules and reflexive coequalizers preserve associative algebras.

The second part of the claim follows from a comparison of universal enveloping algebras
\begin{align*}
U^{\alpha_!P}_A&\cong(P\circ_{\oper{O}}\oper{Q})\circ_{\oper{Q}}A\\
 &\cong P\circ_{\oper{O}}(\oper{Q}\circ_\oper{Q} A)\\
 &\cong P\circ_{\oper{O}}\alpha^*A\cong U^{P}_{\alpha^*A}
\end{align*}
\end{proof}

\subsection{Model category structure}

We now give a model structure to the category $P\Mod_A$. In the remaining of this section, $(\cat{C},\otimes,\un_{\cat{C}})$ will denote the symmetric monoidal category $\Mod_E$ where $E$ is a commutative algebra in symmetric spectra. We give it the positive model structure. We restrict our proofs to this case but we have chosen a rather neutral notation to emphasize the fact that the main results work quite generally. In particular, up to minor modifications, our results remain true in  $\cat{Ch_*}(R)$ the category of chain complexes over a commutative $\mathbb{Q}$-algebra $R$. If one is willing to restrict to $\Sigma$-cofibrant operads and modules, they are also true in $\S$ or simplicial $R$-modules for a general commutative ring $R$.

\begin{prop}\label{model category on PMod_A}
Let $\oper{O}$ be an operad and $P$ be a right $\oper{O}$-module. Let $A$ be a cofibrant $\oper{O}$-algebra. There is a model category structure on the category $P\Mod_A$ in which the weak equivalences and fibrations are the weak equivalences and fibrations in $\cat{C}$. 

Moreover, this model structure is simplicial and if $\cat{C}$ is a $\cat{V}$-enriched model category for some monoidal model category $\cat{V}$, then so is $P\Mod_A$.
\end{prop}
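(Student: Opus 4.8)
The plan is to obtain the model structure by transfer along the free-forgetful adjunction between $P\Mod_A$ and $\cat{C}$, and then to verify the enrichment claims by a standard pushout-product argument. Recall from Proposition~\ref{universal enveloping algebra} that $P\Mod_A$ is equivalent to $L\Mod_{U^P_A}$, the category of left modules over the universal enveloping algebra $U^P_A=P\circ_{\oper{O}}A$. So it suffices to produce a transferred model structure on $L\Mod_{U^P_A}$. First I would recall that $\cat{C}=\Mod_E$ with the positive model structure is cofibrantly generated, and that the forgetful functor $L\Mod_{U^P_A}\to\cat{C}$ has a left adjoint $U^P_A\otimes(-)$. One then declares a map in $L\Mod_{U^P_A}$ to be a weak equivalence or fibration exactly when its image in $\cat{C}$ is; the generating (trivial) cofibrations are the images under the free functor of the generating (trivial) cofibrations of $\cat{C}$. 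To invoke the standard transfer criterion (e.g. the crossed-interval / Schwede--Shipley-type lemma, or Kan's transfer theorem) one needs: (i) $\cat{C}$ locally presentable so the small object argument runs, and (ii) relative cell complexes built from the free trivial cofibrations become weak equivalences after applying the forgetful functor. Point (ii) is where the real content sits, and it is exactly the place where the positive model structure on $E$-modules (or $\Sigma$-cofibrancy in the simplicial case) is used: one filters such a cell attachment and identifies the associated graded as a tensor with $U^P_A$, which is flat enough in the positive model structure that trivial cofibrations are preserved. This is a known fact for left modules over a (cofibrant, or merely flat) associative algebra in $\Mod_E$; since $A$ is assumed cofibrant, $U^P_A$ has the requisite flatness, so this step goes through.

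With the model structure in hand, the simplicial enrichment is immediate: $\cat{C}$ is a simplicial model category and the forgetful functor $P\Mod_A\to\cat{C}$ creates tensors, cotensors (these are computed in $\cat{C}$, as noted right after the definition of $P\Mod_A$ where we observed the forgetful functor preserves limits and colimits), weak equivalences and fibrations, so the simplicial model category axiom (the pushout-product axiom for $\otimes:P\Mod_A\times s\S\to P\Mod_A$) reduces to the corresponding axiom in $\cat{C}$. For the last sentence, suppose $\cat{C}$ is a $\cat{V}$-enriched model category for a monoidal model category $\cat{V}$; one must check that the tensoring $P\Mod_A\times\cat{V}\to P\Mod_A$ (again computed on underlying objects in $\cat{C}$, with the $P$-module structure coming only from the $P\Mod_A$-variable) satisfies the pushout-product axiom. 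Since cofibrations, trivial cofibrations and the pushout-product are all detected by the forgetful functor to $\cat{C}$, this is exactly the pushout-product axiom for $\cat{C}\times\cat{V}\to\cat{C}$, which holds by hypothesis.

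The main obstacle is the verification of the transfer criterion, i.e.\ showing that pushouts of free trivial cofibrations are sent to weak equivalences in $\cat{C}$; everything else is formal. I would handle it by the usual filtration of a free module cell attachment $U^P_A\otimes A\to U^P_A\otimes B$ along a trivial cofibration $A\to B$ of $\cat{C}$, reducing to the statement that $U^P_A\otimes(-)$ preserves trivial cofibrations, which is the defining good behaviour of the positive model structure (one may also cite the relevant statement in the appendices, which the paper refers the reader to for background on operads and model categories). I would also remark that the argument is insensitive to the precise ambient category: the same proof works in $\cat{Ch}_*(R)$ for $R$ a commutative $\mathbb{Q}$-algebra, and in $\S$ or simplicial $R$-modules provided one restricts to $\Sigma$-cofibrant $\oper{O}$, $P$ and $A$, as flagged in the text preceding the proposition.
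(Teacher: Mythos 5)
Your proposal follows the paper's key reduction --- identify $P\Mod_A$ with $\Mod_{U^P_A}$ via Proposition~\ref{universal enveloping algebra} --- but then diverges on the justification for the transferred model structure. The paper's proof is a one-liner: it appeals to the \emph{monoid axiom} for $\cat{C}=\Mod_E$ and invokes Proposition~\ref{model structure on modules}, which establishes the transferred model structure on $\Mod_R$ for \emph{any} associative algebra $R$ as soon as the ambient category satisfies the monoid axiom. Crucially, this route does not use the cofibrancy of $A$ at all for the existence of the model structure; that hypothesis is load-bearing elsewhere in the paper (e.g.\ for the Quillen equivalences and the homotopy invariance results).

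Your route instead hinges on the claim that, because $A$ is cofibrant, $U^P_A$ is ``flat enough'' for the cell-attachment filtration to go through. This is a genuinely different argument and it is not quite as clean. Two remarks. First, the relevant cofibrancy statement in the paper is Proposition~\ref{pseudo-cofibrantness}: $U^P_A = P\circ_{\oper{O}}A$ is cofibrant in the \emph{absolute} model structure on $\Mod_E$, not in the positive one. So if you want to invoke ``case (a)'' of Proposition~\ref{model structure on modules} ($R$ cofibrant in $\cat{V}$), you would have to pass through the absolute model structure or argue separately that absolute cofibrancy gives the flatness you need; your current write-up elides this. Second, the monoid axiom sidesteps this entirely: it is a property of the symmetric monoidal model category, not of the particular algebra, and for the positive model structure on $\Mod_E$ it is a known result (this is what the paper's appendix and the cited references supply). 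So while your approach is salvageable, the monoid axiom is both the intended and the more robust argument here; your enrichment argument (reduce the pushout-product axiom to $\cat{C}$ via the forgetful functor) matches the paper's appeal to Proposition~\ref{enriched monad}.
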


\begin{proof}
The category $P\Mod_A$ is isomorphic to $\Mod_{U_A^P}$. Since $\cat{C}$ satisfies the monoid axiom, the result follows \ref{model structure on modules}.
\end{proof}

The category $P\Mod_A$ depends on the variables $P$ and $A$. As expected, there are ``base change'' Quillen adjunctions.

\begin{prop}
Let $(\oper{O},P)\to (\oper{O}',P')$ be a morphism in $\cat{OM}$ and $A$ be a cofibrant $\oper{O}$-algebra, then there is a Quillen adjunction
\[P\Mod_A\leftrightarrows P'\Mod_A\]

Similarly, if $A\to A'$ is a morphisms of cofibrant $\oper{O}$-algebras then there is a Quillen adjunction
\[P\Mod_A\leftrightarrows P\Mod_{A'}\]
\end{prop}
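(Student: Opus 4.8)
The plan is to reduce everything to the theory of modules over associative algebras in $\cat{C}$, which we have already set up, via the universal enveloping algebra. By Proposition \ref{universal enveloping algebra}, for any cofibrant $\oper{O}$-algebra $A$ we have an equivalence $P\Mod_A\simeq L\Mod_{U^P_A}$, and similarly $P'\Mod_A\simeq L\Mod_{U^{P'}_A}$ and $P\Mod_{A'}\simeq L\Mod_{U^P_{A'}}$. So the task is to produce morphisms of associative algebras $U^P_A\to U^{P'}_A$ and $U^P_A\to U^P_{A'}$ and then invoke the standard base-change Quillen adjunction along a ring map, namely that for a map of associative algebras $R\to R'$ the extension/restriction of scalars adjunction $R'\otimes_R(-):L\Mod_R\leftrightarrows L\Mod_{R'}:(-)_{|R}$ is a Quillen adjunction (the right adjoint clearly preserves fibrations and trivial fibrations, which are detected in $\cat{C}$).

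First I would construct the map $U^P_A\to U^{P'}_A$. Given a morphism $(\oper{O},P)\to(\oper{O}',P')$ in $\cat{OM}$, i.e.\ an operad map $f:\oper{O}\to\oper{O}'$ together with a map $P\to f^*P'$ of associative algebras in right $\oper{O}$-modules, recall $U^P_A = P\circ_{\oper{O}}A$. There is a canonical map $P\circ_{\oper{O}}A\to (f^*P')\circ_{\oper{O}}A$; and using Proposition \ref{change of operad} (with the roles there matching $\alpha = f$) one identifies $(f^*P')\circ_{\oper{O}}A$, after the appropriate comparison of enveloping algebras, with $U^{P'}_A$ for the $\oper{O}$-algebra $A$ viewed via $f$ — more precisely, one should first note that to even speak of $P'\Mod_A$ in the statement $A$ must carry both structures compatibly, so implicitly $A$ is an $\oper{O}'$-algebra and $f^*A$ its restriction, and the claimed adjunction is really between $P\Mod_{f^*A}$ and $P'\Mod_A$. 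By Proposition \ref{change of operad}, $P\Mod_{f^*A}\simeq (f_!P)\Mod_A$, and the module map $P\to f^*P'$ induces $f_!P = P\circ_{\oper{O}}\oper{O}'\to P'$ of associative algebras in right $\oper{O}'$-modules; this in turn gives $U^{f_!P}_A\to U^{P'}_A$, all three identifications being compatible with the associative algebra structures because the enveloping-algebra construction $Q\mapsto U^Q_A$ is functorial in $Q$ (it is a colimit, $Q\circ_{\oper{O}}A$, and $\circ_{\oper{O}}$ preserves the monoid structure by the reflexive-coequalizer argument already used in Proposition \ref{change of operad}). For the second adjunction, a map $A\to A'$ of cofibrant $\oper{O}$-algebras induces $U^P_A = P\circ_{\oper{O}}A\to P\circ_{\oper{O}}A' = U^P_{A'}$, again a map of associative algebras since the whole construction is functorial in the algebra variable.

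With the ring maps in hand, both adjunctions follow from the single lemma that restriction of scalars along a map $R\to R'$ of admissible associative algebras in $\cat{C}$ is a right Quillen functor $L\Mod_{R'}\to L\Mod_R$: fibrations and weak equivalences in both module categories are created by the forgetful functors to $\cat{C}$, which the restriction functor visibly commutes with, so it preserves fibrations and trivial fibrations; its left adjoint $R'\otimes_R(-)$ then gives the Quillen adjunction. Finally one transports this across the equivalences $P\Mod_{(-)}\simeq L\Mod_{U^P_{(-)}}$ of Proposition \ref{universal enveloping algebra} (and \ref{change of operad}), noting these equivalences are equivalences of the model categories of Proposition \ref{model category on PMod_A} since the model structures on both sides are transferred from $\cat{C}$ along compatible forgetful functors.

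The main obstacle is bookkeeping rather than depth: one must be careful about what $A$ is in the first statement — it cannot literally be a single $\oper{O}$-algebra appearing on both sides of $P\Mod_A\leftrightarrows P'\Mod_A$ unless $\oper{O}=\oper{O}'$, so the clean statement uses $f^*A$ on the left, and the argument routes through Proposition \ref{change of operad} to turn the change-of-operad into a change-of-module $f_!P\to P'$ over a fixed operad. Once the statement is pinned down this way, the only genuinely mathematical input is functoriality of $Q\mapsto Q\circ_{\oper{O}}A$ and $A\mapsto P\circ_{\oper{O}}A$ as functors landing in associative algebras — which is exactly the reflexive-coequalizer observation invoked in the proof of Proposition \ref{change of operad} — plus the elementary fact that restriction of scalars is right Quillen, which needs nothing beyond the definitions of the transferred model structures.
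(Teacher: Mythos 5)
Your proof reaches the same conclusion by essentially the same route as the paper: reduce to base change along a map of universal enveloping algebras via Proposition~\ref{universal enveloping algebra}, and invoke the fact that restriction of scalars along a map of associative algebras is right Quillen because fibrations and weak equivalences in the transferred module model structures are created in $\cat{C}$ (this is exactly Proposition~\ref{Quillen equivalence module} in the paper). The construction of the ring maps $U^P_A\to U^{P'}_A$ and $U^P_A\to U^P_{A'}$ by functoriality of $Q\mapsto Q\circ_{\oper{O}}A$ and $A\mapsto P\circ_{\oper{O}}A$ as functors into associative algebras is what the paper's one-line proof is implicitly doing.

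The one point where you depart from the paper's intent is the reading of $P'\Mod_A$ when $\oper{O}\neq\oper{O}'$. You elected to treat $A$ as an $\oper{O}'$-algebra and route through Proposition~\ref{change of operad}, replacing $P\Mod_{f^*A}$ by $(f_!P)\Mod_A$ and then mapping $f_!P\to P'$. The paper's simpler reading keeps $A$ as a cofibrant $\oper{O}$-algebra throughout: a morphism $(\oper{O},P)\to(\oper{O}',P')$ in $\cat{OM}$ is by definition a map $f:\oper{O}\to\oper{O}'$ together with a map $P\to f^*P'$ of associative algebras in $\Mod_{\oper{O}}$, so $P'\Mod_A$ is interpreted as $(f^*P')\Mod_A$ and the map of enveloping algebras $U^P_A=P\circ_{\oper{O}}A\to (f^*P')\circ_{\oper{O}}A=U^{P'}_A$ is immediate, with no change of operad needed. (The later homotopy-invariance proposition, which writes $U^{P'}_A$ with $A$ unchanged, confirms this is the reading the paper has in mind.) Both versions are correct and yield the same Quillen adjunction; yours just does some extra bookkeeping that the definition of $\cat{OM}$ was designed to render unnecessary.
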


\begin{proof}
In both cases, we get an induced map between the corresponding universal enveloping algebras. The result then follows from \ref{Quillen equivalence module}.
\end{proof}

In some cases these adjunctions are Quillen equivalences.

\begin{prop}\label{pseudo-cofibrantness}
Let $A$ be a cofibrant algebra over $\oper{O}$. The functor $P\otimes_{\cat{O}}A$ sends any right module over $\oper{O}$ to an object of $\cat{C}$ that is absolutely cofibrant.
\end{prop}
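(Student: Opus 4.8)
The plan is to reduce the statement to a cofibrancy property of $P$ that is automatic in the category $\cat{C}=\Mod_E$, and then to propagate it through the coend $P\circ_{\oper{O}}A$ using that $A$ is cofibrant as an $\oper{O}$-algebra. The key observation is that in $\Mod_E$ with the positive model structure every object is cofibrant as a \emph{module} over $E$ in the relevant ``flat''/absolute sense — more precisely, the smash product $-\wedge_E X$ is homotopically well behaved for every $X$, so that tensoring with $X$ preserves weak equivalences. Call an object $X$ \emph{absolutely cofibrant} if $-\otimes X$ preserves weak equivalences (and cofibrations); this is the notion invoked in the statement. First I would record that the monoidal unit and, more generally, any object arising from a cofibrant symmetric sequence is absolutely cofibrant, and that absolutely cofibrant objects are closed under tensor products, coproducts, and filtered colimits and pushouts along maps of absolutely cofibrant objects (this is where the positive model structure and the monoid axiom enter).

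Next I would give the cofibrant $\oper{O}$-algebra $A$ a cellular presentation: since $A$ is cofibrant, it is a retract of a cell $\oper{O}$-algebra, i.e.\ built from the initial algebra by transfinitely attaching cells of the form $\oper{O}\circ(\text{generating cofibration})$. The functor $P\circ_{\oper{O}}(-)$ is a left adjoint, hence commutes with colimits, so it suffices to understand $P\circ_{\oper{O}}$ applied to the initial algebra and to each cell attachment. On the initial algebra one gets (a piece of) $P$ itself, which is absolutely cofibrant because $P$ is a right $\oper{O}$-module and, in $\Mod_E$ with the positive structure, every such module is levelwise absolutely cofibrant and the coequalizer defining $\circ_{\oper{O}}$ is computed by a $\Sigma$-cofibrant bar-type construction. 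For the inductive step, a standard filtration of $P\circ_{\oper{O}}(A')$ over $P\circ_{\oper{O}}(A)$ along a cell attachment $A\to A'$ expresses the map as a transfinite composite of pushouts of maps built by tensoring the attached cell with $\Sigma_n$-quotients of $\oper{O}(n)$ and with tensor powers of $A$; each such map is a map between absolutely cofibrant objects by the closure properties above, so absolute cofibrancy is preserved at each stage and in the colimit. Finally I would note that retracts of absolutely cofibrant objects are absolutely cofibrant, which handles the fact that $A$ was only a retract of a cell algebra.

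The main obstacle is the inductive step: making precise the filtration of $P\circ_{\oper{O}}(A')$ relative to $P\circ_{\oper{O}}(A)$ when one attaches an $\oper{O}$-algebra cell, and checking that the associated graded pieces are built only out of absolutely cofibrant objects via operations (tensor, coproduct, $\Sigma_n$-orbits of $\oper{O}(n)$, pushout) that preserve absolute cofibrancy. This is the familiar ``cellular pushout'' analysis for algebras over operads, and the subtle point is the appearance of symmetric group quotients $\oper{O}(n)\otimes_{\Sigma_j\times\Sigma_{n-j}}(\cdots)$, which is exactly why one works in $\Mod_E$ with the positive model structure — there the relevant $\Sigma$-actions are free enough (or the objects flat enough) that these quotients remain absolutely cofibrant. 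Once that bookkeeping is in place the rest is a routine induction using the closure properties established in the first step, together with \ref{P structure} to know that $P\circ_{\oper{O}}A$ is the object under discussion.
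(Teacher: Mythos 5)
Your proposal misidentifies the target property. In this paper ``absolutely cofibrant'' means cofibrant in the absolute model structure $\Mod_E^a$ (see the discussion in the appendix on model structures on symmetric spectra and Theorem~\ref{theo-operads in ring spectra}), not ``the functor $-\otimes X$ preserves weak equivalences.'' The latter is the weaker property of flatness: absolutely cofibrant objects are flat, but flat objects need not be absolutely cofibrant (flatness is closed under arbitrary filtered colimits, cofibrancy only under transfinite composites of cofibrations), and the downstream use of this proposition — establishing the Quillen equivalence $P\Mod_A^a\leftrightarrows P'\Mod_A^a$ via Proposition~\ref{Quillen equivalence module} — genuinely requires $U_A^P$ to be cofibrant in $\cat{V}=\Mod_E^a$, not merely flat. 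So even a fully worked-out version of your argument would prove a weaker statement than the one asserted.

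Separately, the filtration analysis you outline is precisely the crux of the argument and is left incomplete: you flag the need to control the associated graded of $P\circ_{\oper{O}}(A')\to P\circ_{\oper{O}}(A)$ under an $\oper{O}$-algebra cell attachment, with its $\Sigma_n$-orbit terms, but do not carry it out. The paper avoids this bookkeeping entirely by a reduction. It cites \cite[Prop.\ 2.7]{horelfactorization} for a two-colored operad $\oper{O}_P$ and a map of operads $\alpha_P:\oper{O}\to\oper{O}_P$ with the property that the operadic left Kan extension $(\alpha_P)_!A$ is an $\oper{O}_P$-algebra whose value at the extra color is exactly $P\otimes_{\cat{O}}A$. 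Since $(\alpha_P)_!$ is left Quillen, cofibrancy of $A$ gives cofibrancy of $(\alpha_P)_!A$ in $\cat{C}[\oper{O}_P]$, and Theorem~\ref{theo-operads in ring spectra} then says cofibrant algebras over (colored) operads in $\Mod_E$ are colorwise absolutely cofibrant. This packages the entire cellular analysis you were attempting into a single appeal to the established model theory of operadic algebras in symmetric spectra. If you wish to argue directly, you should (i) aim at cofibrancy in $\Mod_E^a$ rather than flatness, and (ii) realize that the cellular pushout analysis you would then need to do is exactly the one underlying Theorem~\ref{theo-operads in ring spectra}, so you might as well invoke it.
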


\begin{proof}
The operad $\oper{O}_P$ and the functor $\alpha_P$ are as in \cite[Proposition 2.7]{horelfactorization}. If $A$ is cofibrant, then $(\alpha_P)_!(A)$ is cofibrant in $\cat{C}[\oper{O}_P]$. This implies that it is colorwise absolutely cofibrant, in particular, its value at $\infty$  which is just $P\otimes_{\cat{O}}A$ is absolutely cofibrant.
\end{proof}

\begin{prop}
\begin{itemize}
\item If $(\oper{O},P)\to (\oper{O}',P')$ is a weak equivalence in $\cat{OM}$ then there is a Quillen equivalence
\[P\Mod_A\leftrightarrows P'\Mod_A\]
\item If $A\to A'$ is a morphisms of cofibrant $\oper{O}$-algebras then there is a Quillen equivalence
\[P\Mod_A\leftrightarrows P\Mod_{A'}\]
\end{itemize}
\end{prop}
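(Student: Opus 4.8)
The plan is to reduce both statements to the corresponding statement about left modules over associative algebras, using the identification $P\Mod_A \cong \Mod_{U^P_A}$ from Proposition \ref{universal enveloping algebra} and the base-change functoriality of universal enveloping algebras. The key input is the following elementary fact, which should be recorded first: if $u\colon R\to R'$ is a weak equivalence of associative algebras in $\cat{C}$ and both $R$ and $R'$ are \emph{absolutely cofibrant} (cofibrant as objects of $\cat{C}$), then the restriction-extension Quillen adjunction $\Mod_R \leftrightarrows \Mod_{R'}$ is a Quillen equivalence. This is standard (it appears in the appendix material the paper refers to, cf.\ \ref{Quillen equivalence module}), the point being that absolute cofibrancy guarantees that $R'\otimes_R(-)$ computes the derived functor and that the unit $M\to R'\otimes_R M$ is a weak equivalence on cofibrant $M$ because $-\otimes_R M$ sends weak equivalences between absolutely cofibrant algebras to weak equivalences.

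First I would treat the second bullet. Given a map $A\to A'$ of cofibrant $\oper{O}$-algebras, functoriality of $P\circ_{\oper{O}}(-)$ gives a map of associative algebras $U^P_A = P\circ_{\oper{O}}A \to P\circ_{\oper{O}}A' = U^P_{A'}$. Since $\otimes$ on $\cat{C}$ is a left Quillen bifunctor and $P\circ_{\oper{O}}(-)$ is built from tensor products and colimits, a weak equivalence $A\to A'$ between cofibrant algebras induces a weak equivalence $U^P_A\to U^P_{A'}$; here one uses Proposition \ref{pseudo-cofibrantness}, which tells us both $U^P_A$ and $U^P_{A'}$ are absolutely cofibrant. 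Applying the fact above to this weak equivalence of absolutely cofibrant associative algebras, and transporting along the isomorphisms $P\Mod_A\cong\Mod_{U^P_A}$, $P\Mod_{A'}\cong\Mod_{U^P_{A'}}$ from \ref{universal enveloping algebra}, yields the desired Quillen equivalence.

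For the first bullet, a weak equivalence $(\oper{O},P)\to(\oper{O}',P')$ in $\cat{OM}$ consists of a weak equivalence of operads $f\colon\oper{O}\to\oper{O}'$ and a weak equivalence $P\to P'$ of associative algebras in right $\oper{O}$-modules (with $P'$ restricted along $f$). I would factor the comparison as $P\Mod_A \to f_!P\Mod_A \to P'\Mod_A$, where the first equivalence is Proposition \ref{change of operad} applied to the base change along $f$ (so $A$ is implicitly the $\oper{O}'$-algebra and $\alpha^*A$ its restriction; one should note that since $f$ is a weak equivalence between operads and $A$ is cofibrant, restriction and the cofibrant replacement interact harmlessly, giving $f_!P\Mod_A\simeq P\Mod_{f^*A}$), and the second comes from the weak equivalence $f_!P = P\circ_{\oper{O}}\oper{O}'\to P'$ of associative algebras in right $\oper{O}'$-modules, which by \ref{invariance of operads} and the universal enveloping algebra description induces a weak equivalence $U^{f_!P}_A\to U^{P'}_A$ of absolutely cofibrant associative algebras, hence a Quillen equivalence on module categories by the fact above.

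The main obstacle is bookkeeping around cofibrancy: one must be careful that $P$ and $P'$ need only be right $\oper{O}$-modules (not assumed $\Sigma$-cofibrant in the ambient setting $\cat{C}=\Mod_E$), so the weak equivalences of enveloping algebras must be established using Proposition \ref{pseudo-cofibrantness} — which crucially uses cofibrancy of $A$ as an $\oper{O}$-algebra rather than cofibrancy of $P$ — and that the comparison $f_!P\to P'$ really is a weak equivalence, which follows from the definition of weak equivalence in $\cat{OM}$ together with the fact that $P\circ_{\oper{O}}\oper{O}'$ computes the derived base change precisely because $A$-enveloping algebras land in absolutely cofibrant objects. Once these cofibrancy points are in place, everything else is a formal transport along the isomorphisms and adjunctions already constructed.
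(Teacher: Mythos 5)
Your proof of the second bullet is correct and matches the paper's approach exactly: both hinge on Proposition \ref{pseudo-cofibrantness} to establish absolute cofibrancy of $U^P_A$ and $U^P_{A'}$, apply \ref{Quillen equivalence module} in the absolute model structure, and then transfer to the positive model structure via the absolute--positive Quillen equivalences. The imprecise phrase about ``built from tensor products and colimits'' should really be a citation of \ref{invariance of operadic coend}(1), but the substance is right.

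For the first bullet you depart from the paper's reading and introduce an unjustified claim. The paper's convention (clear from the immediately preceding proposition) is that $A$ is a cofibrant $\oper{O}$-algebra and $P'\Mod_A$ means $(f^*P')\Mod_A$. With that reading the proof is immediate: $U^P_A = P\circ_{\oper{O}}A$ and $U^{P'}_A=(f^*P')\circ_{\oper{O}}A$ are both absolutely cofibrant by \ref{pseudo-cofibrantness}, and the map between them is a weak equivalence by \ref{invariance of operadic coend}(3) applied to the given weak equivalence $P\to f^*P'$ of right $\oper{O}$-modules. Instead, you read $A$ as an $\oper{O}'$-algebra, route through \ref{change of operad}, and then need the map $f_!P=P\circ_{\oper{O}}\oper{O}'\to P'$ to be a weak equivalence in $\Mod_{\oper{O}'}$. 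This claim is in fact true (every object of $\Mod_{\oper{O}}$ with the injective model structure is cofibrant, so $f_!$ computes $\L f_!$, and $f$ being a weak equivalence of operads makes $(f_!,f^*)$ a Quillen equivalence on right-module categories), but your stated justification --- that ``$P\circ_{\oper{O}}\oper{O}'$ computes the derived base change precisely because $A$-enveloping algebras land in absolutely cofibrant objects'' --- is not an argument for it: absolute cofibrancy of enveloping algebras is a statement in $\cat{C}$ and says nothing about whether $f_!P\to P'$ is a weak equivalence of right $\oper{O}'$-modules. So the detour you take is both longer than the paper's route and leaves a genuine gap; adopting the paper's interpretation of the statement removes the gap entirely.
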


\begin{proof}
This does not quite follow from \ref{Quillen equivalence module} since $U_A^P$ and $U_A^{P'}$ are not cofibrant. However, they are cofibrant in the absolute model structure by \ref{pseudo-cofibrantness}. Therefore, in the first case, using \ref{Quillen equivalence module}, we get a Quillen equivalence
\[P\Mod_A^a\leftrightarrows P'\Mod_A^a\]
where we use the absolute instead of positive model structure. Since moreover, we have Quillen equivalences $P\Mod_A\leftrightarrows P\Mod_A^a$ and $P'\Mod_A\leftrightarrows P'\Mod_A^a$, the result follows. 

The second case is treated in a similar way.
\end{proof}

The following proposition gives a simple description of the cofibrant objects of the model category $\cat{C}[P\oMod]$ whose algebra component is cofibrant.

\begin{prop}\label{cofibrant replacement in oMod}
Let $A$ be a cofibrant $\oper{O}$-algebra in $\cat{C}$. Let $M$ be an object of $P\Mod_A$. The pair $(A,M)$ is a cofibrant object of $\cat{C}[P\oMod]$ if and only if $M$ is a cofibrant object of $P\Mod_A$.
\end{prop}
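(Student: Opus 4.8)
The plan is to relate the model category $\cat{C}[P\oMod]$ to a suitable Grothendieck-type construction and leverage the already-established isomorphism $P\Mod_A \cong \Mod_{U_A^P}$. First I would recall that forgetting the module component gives a functor $\pi : \cat{C}[P\oMod] \to \cat{C}[\oper{O}] = \Alg_{\oper{O}}(\cat{C})$ sending $(A,M)$ to $A$, and that this functor has a left adjoint sending $A$ to $(A, 0)$ (the pair with initial module component), since the operad map $\oper{O} \to P\oMod$ classifying the $a$-colored part admits the module operations as ``free'' additions. The fibers of $\pi$ over a fixed cofibrant $A$ are exactly the categories $P\Mod_A$, with their model structure from \ref{model category on PMod_A}, and I would want to say that a map $(\id_A, g)$ is a fibration (resp.\ weak equivalence) in $\cat{C}[P\oMod]$ iff $g$ is one in $P\Mod_A$ — this follows because fibrations and weak equivalences in both $\cat{C}[P\oMod]$ and $P\Mod_A$ are detected in the underlying category $\cat{C}$ (the module component), by the transfer description of both model structures.

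Next I would prove the ``if'' direction: suppose $M$ is cofibrant in $P\Mod_A$. Since $A$ is a cofibrant $\oper{O}$-algebra, $(A, 0)$ is the image of the cofibrant object $A$ under the left Quillen functor $\Alg_{\oper{O}}(\cat{C}) \to \cat{C}[P\oMod]$ described above, hence $(A,0)$ is cofibrant in $\cat{C}[P\oMod]$. Now the map $(A,0) \to (A,M)$ is, in the fiber $P\Mod_A$, the unique map from the initial object; if $M$ is cofibrant this is a cofibration in $P\Mod_A = \Mod_{U_A^P}$, and I claim it remains a cofibration in $\cat{C}[P\oMod]$. The cleanest way is to observe that the inclusion $P\Mod_A \hookrightarrow \cat{C}[P\oMod]$ of a fiber is a left adjoint (its right adjoint being the functor $(B,N) \mapsto$ the pullback of $N$ along the canonical map $A \to B$ obtained when there is one, or more precisely one uses that generating cofibrations of $\cat{C}[P\oMod]$ can be taken to be generating cofibrations of $\Alg_{\oper{O}}(\cat{C})$ with zero module together with $\{(A, f) : f \text{ generating cofibration of } P\Mod_A\}$ over the fixed $A$) and pushes out generating cofibrations of $\Mod_{U_A^P}$ to cofibrations of $\cat{C}[P\oMod]$. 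Composing the cofibration $(A,0)\to(A,M)$ with the cofibration (the initial map) $\varnothing \to (A,0)$ shows $(A,M)$ is cofibrant.

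For the converse, suppose $(A,M)$ is cofibrant in $\cat{C}[P\oMod]$. I would apply the retract/lifting characterization: to show $M$ is cofibrant in $P\Mod_A$ it suffices to solve lifting problems against trivial fibrations $N \twoheadrightarrow N'$ in $P\Mod_A$. Given such a lifting problem in $P\Mod_A$, push it forward to $\cat{C}[P\oMod]$: the map $(A,N) \to (A,N')$ is a trivial fibration there (by the fiberwise detection of weak equivalences and fibrations noted above, extended over the identity on $A$), so the square $(A,0) \to (A,N)$, $(A,M)\to(A,N')$ admits a lift $(A,M)\to(A,N)$ because $(A,M)$ is cofibrant; this lift is automatically a map over $\id_A$, hence lies in $P\Mod_A$ and solves the original problem. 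Therefore the initial map $\varnothing \to M$ in $P\Mod_A$ has the left lifting property against all trivial fibrations, so $M$ is cofibrant. The main obstacle I anticipate is the bookkeeping in the first paragraph: making precise that the model structure on $\cat{C}[P\oMod]$ restricted to the fiber over a cofibrant $A$ agrees with the transferred model structure on $P\Mod_A$, and identifying enough of a set of generating (trivial) cofibrations of $\cat{C}[P\oMod]$ to justify the pushout argument; once that compatibility is nailed down, both implications are formal adjunction-and-lifting arguments.
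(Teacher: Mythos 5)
Your \emph{only if} direction (deduce $M$ cofibrant from $(A,M)$ cofibrant) is exactly the paper's: given a trivial fibration $N\to N'$ in $P\Mod_A$, note that $(A,N)\to(A,N')$ is a trivial fibration in $\cat{C}[P\oMod]$, lift against $(A,M)$, and observe the lift must be the identity on the algebra component. No issues there.

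Your \emph{if} direction, however, has a genuine gap. You want to conclude from ``$\varnothing\to M$ is a cofibration in $P\Mod_A\cong\Mod_{U^P_A}$'' that ``$(A,0)\to(A,M)$ is a cofibration in $\cat{C}[P\oMod]$,'' and the two justifications you sketch both fail as stated. First, the fiber inclusion $\iota_A: P\Mod_A\hookrightarrow\cat{C}[P\oMod]$ is \emph{not} a left adjoint: the putative right adjoint ``restrict $N$ along the canonical map $A\to B$'' is only defined when such a map exists, and even then it is not a functor on all of $\cat{C}[P\oMod]$. Second, the generating cofibrations of $\cat{C}[P\oMod]$ are free maps $F(i)$ where $i$ is a generating cofibration of the bi-colored ground category, not maps over a fixed algebra $A$; your claim that a generating set can be taken ``over the fixed $A$'' would itself require proof. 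In fact, $\iota_A$ does not even preserve coproducts (the algebra component of $(A,M_1)\sqcup(A,M_2)$ is $A\sqcup_{\oper{O}}A$, not $A$), so it is not formal that cofibrations in the fiber go to cofibrations in the total category. You could probably repair this by arguing more carefully that $\iota_A$ preserves the relevant cellular pushouts and transfinite composites (the span $(A,M_1)\leftarrow(A,M_0)\to(A,M_2)$ with identity algebra maps does have $a$-component $A$), and by exhibiting $(A,0)\to(A,U^P_A\otimes X)$ as a pushout of a free generating cofibration along the unique map from the initial algebra — but that is precisely the bookkeeping you flag as an anticipated obstacle and never discharge.

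The paper sidesteps all of this with a direct two-stage lifting argument: given a trivial fibration $(B',N')\to(B,N)$ in $\cat{C}[P\oMod]$ and a map $(A,M)\to(B,N)$, one first factors it through $(A,N)$ (restricting $N$ along $A\to B$), lifts $A\to B$ through $B'\to B$ using cofibrancy of $A$ in $\cat{C}[\oper{O}]$, then pulls $N'$ back to the fiber over $A$ and uses cofibrancy of $M$ in $P\Mod_A$ to finish. This avoids ever having to show $\iota_A$ preserves cofibrations; it only needs the easy observation that $\iota_A$ sends trivial fibrations to trivial fibrations, which both you and the paper already use. I would recommend adopting the paper's argument for this direction rather than attempting to patch the fiber-inclusion route.
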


\begin{proof}
Assume $(A,M)$ is cofibrant in $\cat{C}[P\oMod]$. For any trivial fibration $N\to N'$ in $P\Mod_A$, the map $(A,N)\to (A,N')$ is a trivial fibration in $\cat{C}[P\oMod]$. A map of $P$-shaped $A$-module $M\to N'$ induces a map of $P\oMod$-algebras $(A,M)\to (A,N')$ which can be lifted to a map $(A,M)\to (A,N)$ and this lift has to be the identity on the first component. Thus $M$ is cofibrant.

Conversely, let $(B',N')\to (B,N)$ be a trivial fibration in $\cat{C}[P\oMod]$. We want to show that any map $(A,M)\to (B,N)$ can be lifted to $(B',N')$. We do this in two steps. We first lift the first component and then the second component.

Note that if we have a map $A\to B$, any $P$-shaped module $N$ over $B$ can be seen as a $P$-shaped module over $A$ by restricting the action along this map. With this in mind, it is clear that any map $(A,M)\to (B,N)$ can be factored as
\[(A,M)\to(A,N)\to (B,N)\]
where the first map is a map in $P\Mod_A$ and the second map induces the identity on $N$.

Since the map $(B',N')\to (B,N)$ is a trivial fibration in $\cat{C}[P\oMod]$, the induced map $B'\to B$ is a trivial fibration in $\cat{C}$ which implies that it is a trivial fibration in $\cat{C}[\oper{O}]$. $A$ is cofibrant as an $\oper{O}$-algebra so we can find a factorization $A\to B'\to B$.

Using this map, we can see $N'$ as an object of $P\Mod_A$ and, we have the following diagram in $\cat{C}[P\oMod]$:

\[\xymatrix{
 &(A,N')\ar[d]\ar[r]& (B',N')\ar[d]\\
(A,M)\ar[r]&(A,N)\ar[r]&(B,N)}\]

We want to construct a map $(A,M)\to (A,N')$ making the diagram to commute. The map $(A,N')\to (A,N)$ is the product of the identity of $A$ and a trivial fibration $N\to N'$ in $\cat{C}$. This implies that $(A,N')\to (A,N)$ is a trivial fibration in $P\Mod_A$, hence we can construct a map $(A,M)\to (A,N')$ making the left triangle to commute, which gives us the desired lift $(A,M)\to (B',N')$.
\end{proof}

\subsection{Operations on modules}

Let $\oper{O}$ be a single-object operad in $\S$. We denote by $\Mod_{\oper{O}}$ the category of right $\oper{O}$-modules with the \emph{injective} model structure. That is the model structure in which the weak equivalences and cofibrations are those maps that are objectwise weak equivalences or cofibrations.

\begin{prop}
The category $\Mod_{\oper{O}}$, is a symmetric monoidal model category.
\end{prop}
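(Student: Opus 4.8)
The plan is to check the pushout-product axiom and the unit axiom, the (injective) model structure and the closed symmetric monoidal structure on $\Mod_{\oper{O}}$ being already available. I will use two facts. First, the tensor product on $\Mod_{\oper{O}}$ is induced from that of symmetric sequences, whose arity-$n$ component is
\[(M\otimes N)(n)\;\cong\;\coprod_{p+q=n}\bigl(M(p)\times N(q)\bigr)\times_{\Sigma_p\times\Sigma_q}\Sigma_n .\]
Second, the forgetful functor $\Mod_{\oper{O}}\to\mathrm{SymSeq}$ is monadic for the monad $(-)\circ\oper{O}$, which preserves all colimits (immediate from its explicit formula); hence colimits in $\Mod_{\oper{O}}$ are computed arity-wise in $\S$, and --- $\S$ being cartesian closed --- the tensor product preserves colimits in each variable.

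For the pushout-product axiom, let $i\colon A\to B$ and $j\colon C\to D$ be cofibrations of $\Mod_{\oper{O}}$, i.e.\ arity-wise monomorphisms of simplicial sets; write $i\square j$ for the pushout-product map $A\otimes D\cup_{A\otimes C}B\otimes C\to B\otimes D$ in $\Mod_{\oper{O}}$, and $f\square g$ for the pushout-product of two maps of simplicial sets. Using that the tensor product and the pushout in $\Mod_{\oper{O}}$ are both computed arity-wise, and that induction $(-)\times_{\Sigma_p\times\Sigma_q}\Sigma_n$ (a left adjoint) and coproducts commute with pushouts, I would identify
\[(i\square j)(n)\;\cong\;\coprod_{p+q=n}\bigl(i(p)\square j(q)\bigr)\times_{\Sigma_p\times\Sigma_q}\Sigma_n .\]
Since $\Sigma_p\times\Sigma_q$ acts freely on $\Sigma_n$, the functor $(-)\times_{\Sigma_p\times\Sigma_q}\Sigma_n$ sends a simplicial set to a finite coproduct of copies of it, up to isomorphism of simplicial sets; thus $(i\square j)(n)$ is isomorphic to a coproduct of copies of the simplicial pushout-products $i(p)\square j(q)$. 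As $\S$ satisfies the pushout-product axiom, each $i(p)\square j(q)$ is a monomorphism, and a trivial cofibration once one of $i(p)$, $j(q)$ is a weak equivalence; since (trivial) cofibrations of simplicial sets are stable under coproducts, $(i\square j)(n)$ is a cofibration of simplicial sets, and a trivial one whenever $i$ or $j$ is an arity-wise weak equivalence. Hence $i\square j$ is a cofibration of $\Mod_{\oper{O}}$, trivial when $i$ or $j$ is.

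The unit axiom is then immediate: the monoidal unit is the symmetric sequence equal to $\Delta[0]$ in arity $0$ and empty in higher arities, and in fact \emph{every} object of $\Mod_{\oper{O}}$ is cofibrant, because the initial right $\oper{O}$-module is empty in every arity and any map out of it is an arity-wise monomorphism. I expect the one step requiring care to be the second displayed isomorphism --- verifying that forming the pushout-product commutes with the arity-wise tensor product and that the resulting $\Sigma_n$-action is free enough for the induction to split off as a coproduct of copies --- but this is bookkeeping rather than a genuine obstacle, and once it is done the statement reduces to the classical pushout-product axiom in $\S$.
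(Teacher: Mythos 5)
Your proof is correct and follows essentially the same route as the paper's: reduce to the injective model structure on symmetric sequences, observe that the Day tensor product in arity $n$ is a coproduct over $p+q=n$ of inductions along the free $\Sigma_p\times\Sigma_q$-action on $\Sigma_n$, and then invoke the pushout-product axiom in $\S$ together with stability of (trivial) cofibrations under coproducts. The paper packages this via the induction functor $\mathrm{Ind}$ and a decomposition of an arbitrary cofibration as a coproduct of maps $\sigma_n(X)\to\sigma_n(Y)$ concentrated in a single arity; you compute the arity-$n$ component of the pushout-product directly, which amounts to the same thing. You also spell out the unit axiom (trivial here since every object of the injective model structure is cofibrant), which the paper leaves implicit.
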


\begin{proof}
All the functor categories in this proof are equipped with the injective model structure.

(1) Since the tensor product and colimits of right $\oper{O}$-modules are computed in the category of symmetric sequences, and since the forgetful functor $\Mod_{\oper{O}}\to\Mod_{\oper{I}}$ reflects cofibrations and weak equivalences, we can assume that $\oper{O}$ is the terminal operad and we just need to prove that $\on{Fun}(\Sigma\op,\S)$ with the injective model structure is symmetric monoidal. 

(2) Let $\on{Ind}$ be the functor from $\Sigma_n\times\Sigma_m$-spaces to $\Sigma_{n+m}$-spaces which is left adjoint to the the forgetful functor. Concretely, $\on{Ind}(X)=(X\times\Sigma_{n+m})/(\Sigma_n\times\Sigma_m)$. Notice that $\on{Ind}$ sends monomorphisms to monomorphisms and preserves weak equivalences. Moreover it is easy to check that the pairing
\[-\otimes_{n,m}-:\S^{\Sigma_n}\times\S^{\Sigma_m}\to\S^{\Sigma_{n+m}}\]
sending $(X,Y)$ to $\on{Ind}(X\times Y)$ is a left Quillen bifunctor.

(3) For a $\Sigma_n$-space $X$, we denote by $\sigma_n(X)$ the symmetric sequence which is $X$ in degree $n$ and $\varnothing$ in all other degree. Any cofibration in $\on{Fun}(\Sigma\op,\S)$ is a coproduct of maps of the form $\sigma_n(X)\to \sigma_n(Y)$ for $X\to Y$ an injection of $\Sigma_n$-spaces. Hence, it suffices to prove the pushout-product axiom in $\on{Fun}(\Sigma\op,\S)$ for those maps. 

(4) Let $X$ be a $\Sigma_n$-space and $Y$ be a $\Sigma_m$-space. Then we have
\[\sigma_n(X)\otimes\sigma_m(Y)\cong\sigma_{n+m}(X\otimes_{n,m}Y)\]

Let $f:X\to Y$ be an injection of $\Sigma_n$-spaces and $g:Z\to T$ be an injection of $\Sigma_m$-spaces. Then the pushout product map is
\[\sigma_{n+m}(X\otimes_{n,m}T\sqcup^{X\otimes_{n,m}Z} Y\otimes_{n,m}Z)\to\sigma_{n+m}(Y\otimes_{n,m}T)\]
which is a cofibration because of part (2). Moreover if $f$ or $g$ is a weak equivalence, so is the pushout product also because of part (2).
\end{proof}

In this symmetric monoidal model category, any algebra is admissible and hence we can talk about the bioperad $\oMor(\Mod_{\oper{O}})$ (see \ref{Morita bioperad} for the construction) whose class of objects is the class of all associative algebras in $\Mod_{\oper{O}}$.

\begin{prop}\label{monoidality}
Let $A$ be a cofibrant $\oper{O}$-algebra, then the functor 
\[-\otimes_{\cat{O}}A:\Mod_{\oper{O}}\to\cat{C}\]
is a symmetric monoidal functor which preserves all weak equivalences between objects of $\Mod_{\oper{O}}$.
\end{prop}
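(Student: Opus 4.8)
The plan is to verify the two assertions—symmetric monoidality and homotopy invariance—separately, both by reducing to already-established facts about operadic left Kan extensions and about the monoidal structure on $\Mod_{\oper{O}}$.

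First I would recall that $-\otimes_{\oper{O}}A$ is, by definition, the operadic relative tensor product: for a right $\oper{O}$-module $M$ one has $M\otimes_{\oper{O}}A=\mathrm{coeq}\bigl(M\circ\oper{O}\circ A\rightrightarrows M\circ A\bigr)$, and this is the formula computing the operadic left Kan extension along $\oper{O}\to\oEnd(A)$ (compare the computation of $U^P_A$ in the proof of \ref{universal enveloping algebra}). The key structural input is that $\Mod_{\oper{O}}$ is symmetric monoidal, that $\cat{C}$ is symmetric monoidal with $\otimes$ preserving colimits in both variables, and that the plethysm product $\circ$ and the tensor product interact via the standard distributivity isomorphism $(M\otimes N)\circ A\cong (M\circ A)\otimes(N\circ A)$ when $A$ is viewed appropriately; more precisely, for a cofibrant $\oper{O}$-algebra $A$ the functor sending $M$ to $M\otimes_{\oper{O}}A$ is the left adjoint in a monadic adjunction and one checks it is strong monoidal by comparing the two functors $\Mod_{\oper{O}}\times\Mod_{\oper{O}}\to\cat{C}$ on representables (free right modules $\oper{O}[n]$), where both sides reduce to $A^{\otimes n}$ and the comparison is forced by Yoneda. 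The lax structure map $(M\otimes_{\oper{O}}A)\otimes(N\otimes_{\oper{O}}A)\to (M\otimes N)\otimes_{\oper{O}}A$ comes from the universal property of the coequalizer together with the monoidal structure on $\circ$-products, and unitality is the statement $\oper{O}\otimes_{\oper{O}}A\cong A=\un$ in $\cat{C}[\oper{O}]$; coherence (associativity, symmetry) is then automatic because everything is determined on free modules.

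For the homotopy invariance claim, I would invoke \ref{pseudo-cofibrantness}: since $A$ is cofibrant as an $\oper{O}$-algebra, $P\otimes_{\oper{O}}A$ is absolutely cofibrant in $\cat{C}$ for every right $\oper{O}$-module $P$. Given a weak equivalence $P\to P'$ in $\Mod_{\oper{O}}$ (which, in the injective model structure, is an objectwise equivalence of symmetric sequences), one writes $P\otimes_{\oper{O}}A$ as the coequalizer of $P\circ\oper{O}\circ A\rightrightarrows P\circ A$ and argues degreewise: each $P(k)$ appears smashed with $A^{\otimes k}$ and suitable operad spaces, the relevant smash products preserve the equivalence because of absolute cofibrancy, and reflexive coequalizers of such diagrams preserve weak equivalences between them. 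Equivalently, one can phrase this as: the functor $-\otimes_{\oper{O}}A$ is the composite of the left Quillen functor $(\alpha_A)_!$ of \cite[Proposition 2.7]{horelfactorization} (or the analogue used in the proof of \ref{pseudo-cofibrantness}) with evaluation at $\infty$, and left Quillen functors out of the injective model structure on $\Mod_{\oper{O}}$ preserve all weak equivalences since every object is cofibrant there.

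The main obstacle I anticipate is not the homotopy invariance—that is essentially bookkeeping once \ref{pseudo-cofibrantness} is in hand—but pinning down the strong monoidality cleanly: one must exhibit the natural isomorphism $(M\otimes_{\oper{O}}A)\otimes(N\otimes_{\oper{O}}A)\cong (M\otimes N)\otimes_{\oper{O}}A$ and check its compatibility with the associativity and symmetry constraints without drowning in the combinatorics of symmetric sequences. The efficient route is to observe that both $M,N\mapsto (M\otimes_{\oper{O}}A)\otimes(N\otimes_{\oper{O}}A)$ and $M,N\mapsto(M\otimes N)\otimes_{\oper{O}}A$ are colimit-preserving in each variable (using that $\otimes_{\oper{C}}$ and $-\otimes_{\oper{O}}A$ preserve colimits), so it suffices to construct and compare them on free right modules $\oper{O}[m]\otimes\oper{O}[n]\cong\oper{O}[m+n]$, where $\oper{O}[k]\otimes_{\oper{O}}A\cong A^{\otimes k}$ naturally; the coherence diagrams then commute because they do so on free modules and all functors involved are cocontinuous. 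Once this is set up, the symmetric monoidal functor axioms follow formally.
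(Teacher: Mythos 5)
Your overall strategy is sound, and for the homotopy-invariance half you essentially reproduce the paper's argument: the paper cites \ref{invariance of operadic coend}(3), which is precisely the observation that $-\otimes_{\cat{O}}A$ is left Quillen out of the injective model structure on $\Mod_{\oper{O}}$ (where every object is cofibrant) and hence preserves all weak equivalences, and your second suggested route is just a restatement of that. The degreewise argument via \ref{pseudo-cofibrantness} would also work but is unnecessarily laborious.

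For strong monoidality, you take a genuinely different route from the paper. The paper simply writes $(P\otimes Q)\circ_{\oper{O}}A$ as the coequalizer of $(P\otimes Q)\circ\oper{O}\circ A\rightrightarrows(P\otimes Q)\circ A$ and then applies the Fresse distributivity isomorphism $(P\otimes Q)\circ S\cong(P\circ S)\otimes(Q\circ S)$ twice, obtaining the coequalizer $(P\circ\oper{O}\circ A)\otimes(Q\circ\oper{O}\circ A)\rightrightarrows(P\circ A)\otimes(Q\circ A)$, which by cocontinuity of $\otimes$ in $\cat{C}$ is exactly $(P\circ_{\oper{O}}A)\otimes(Q\circ_{\oper{O}}A)$. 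This is shorter and more explicit than your proposed reduction to free modules: you state the distributivity formula at the outset but then pivot away from it, whereas the paper uses it directly on the defining coequalizer and never needs to invoke Yoneda, density of free modules, or a separate coherence check. Your reduction to free modules is a legitimate alternative, but writing it out carefully (naturality, compatibility of the isomorphisms with restriction along morphisms between free modules, the coherence squares) is more work than the direct computation.

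One concrete error you should fix: you identify the unit constraint as ``$\oper{O}\otimes_{\oper{O}}A\cong A=\un$ in $\cat{C}[\oper{O}]$.'' This conflates two different monoidal structures. The unit for the Day tensor product on $\Mod_{\oper{O}}$ is not $\oper{O}$ (which is the unit for the \emph{composition} product $\circ$); it is the symmetric sequence $\mathbf{1}$ concentrated in arity $0$, with $\mathbf{1}(0)=*$. Likewise the target unit is $\un_{\cat{C}}$, not $A$. The correct unit statement is $\mathbf{1}\otimes_{\oper{O}}A\cong\un_{\cat{C}}$, which one verifies by the same coequalizer manipulation.
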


\begin{proof}
The fact about equivalences is proved in \ref{invariance of operadic coend}. Let us prove that the functor is symmetric monoidal. Let $P$ and $Q$ be two right $\oper{O}$-modules. Then $(P\otimes Q)\circ_{\oper{O}}A\cong (P\otimes Q)\otimes_{\cat{O}}A$ is the coequalizer of
\[(P\otimes Q)\circ\oper{O}\circ A\rightrightarrows (P\otimes Q)\circ A\]
But for any symmetric sequence $S$, we have the formula
\[(P\otimes Q)\circ S\cong (P\circ S)\otimes(Q\circ S)\]
proved for instance in \cite{fressemodules}. Hence the coequalizer is isomorphic to the coequalizer of
\[[(P\circ\oper{O})\otimes(Q\circ\oper{O})]\circ A\rightrightarrows (P\otimes Q)\circ A\]
which, applying again the formula is isomorphic to the coequalizer of
\[(P\circ\oper{O}\circ A)\otimes(Q\circ\oper{O}\circ A)\rightrightarrows (P\circ A)\otimes (Q\circ A)\]
which is precisely $(P\circ_{\oper{O}}A)\otimes (Q\circ_\oper{O}A)$.
\end{proof}

Since in $\cat{C}$, all algebras are admissible, we have by \ref{Morita functoriality}, a morphism of bioperads
\[\oMor(\Mod_{\oper{O}})\to\oMor(\cat{C})\]

The main result of this section is the following:

\begin{theo}\label{map from Mor to ModCat}
The assignment $P\mapsto P\Mod_A$ defines a $\oMor(\oper{O})$ (resp. $\oMor(\oper{O},\Sigma)$)-algebra in $\oModCat$.
\end{theo}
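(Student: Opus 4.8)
The plan is to exhibit the assignment $P \mapsto P\Mod_A$ as a composite of two maps of bioperads that are, for all intents and purposes, already available, and then to read off the desired operad algebra via the strictification of \ref{strictification}.

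First I would feed the functor $-\otimes_{\oper{O}}A\colon\Mod_{\oper{O}}\to\cat{C}$ into \ref{Morita functoriality}. By \ref{monoidality} this functor is symmetric monoidal and preserves all weak equivalences between right $\oper{O}$-modules, and by \ref{pseudo-cofibrantness} it sends every right $\oper{O}$-module to an absolutely cofibrant object of $\cat{C}$ (here we use that $A$ is cofibrant); in particular it carries cofibrant bimodules over algebras in $\Mod_{\oper{O}}$, together with the weak equivalences between them, into the corresponding data in $\cat{C}$. Hence, as remarked just before the theorem, \ref{Morita functoriality} yields a map of bioperads $\oMor(\oper{O})\to\oMor(\cat{C})$, and likewise $\oMor(\oper{O},\Sigma)\to\oMor(\cat{C})$ in the variant where one restricts to $\Sigma$-cofibrant algebras and modules. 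On objects this map sends an associative algebra $P$ in $\Mod_{\oper{O}}$ to the universal enveloping algebra $U^P_A=P\otimes_{\oper{O}}A$.

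Next I would compose with the map of bioperads $\oMor(\cat{C})\to\oModCat$ of \ref{from rings to cat}, which sends an algebra $B$ to the model category $L\Mod_B$ and a cofibrant bimodule $M\in{}_B\Mod_{\{B_i\}}$ to the left Quillen multi-functor $M\otimes_{\otimes_iB_i}(\otimes_i-)$. The composite pseudo-functor of bioperads
\[\oMor(\oper{O})\longrightarrow\oMor(\cat{C})\longrightarrow\oModCat\]
then sends $P$ to $L\Mod_{U^P_A}$, which by \ref{universal enveloping algebra} is precisely $P\Mod_A$ (the transferred model structure of \ref{model category on PMod_A} agrees with the module model structure, as is immediate from the proof of that proposition). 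To turn this pseudo-functor into an honest $\oMor(\oper{O})$-algebra in $\oModCat$ I would apply \ref{strictification}: it replaces the pseudo-functor by a zig-zag of strict maps of operads $\oMor(\oper{O})\leftarrow\on{Str}\,\oMor(\oper{O})\to\oModCat$ whose left leg is a biequivalence, hence a weak equivalence of the underlying simplicial operads, which is exactly the data of the asserted algebra.

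The main obstacle has in fact already been surmounted: it lies in the inputs \ref{monoidality} and \ref{pseudo-cofibrantness} (symmetric monoidality and cofibrancy of $-\otimes_{\oper{O}}A$), in the identification \ref{universal enveloping algebra} of $P\Mod_A$ with left modules over $U^P_A$, and in the construction \ref{from rings to cat} of the map out of a Morita bioperad. Given these, the only residual content is bookkeeping --- that $U^{(-)}_A$ intertwines relative tensor products, which is automatic from the symmetric monoidality of $-\otimes_{\oper{O}}A$ exactly as in the proof of \ref{Morita functoriality}, and that the strictified map recovers $P\mapsto P\Mod_A$ on objects --- so that the proof is essentially an assembly of already-established pieces.
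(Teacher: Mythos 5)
Your proposal follows the paper's proof essentially verbatim: reduce, via \ref{strictification}, to constructing a pseudo-functor of bioperads, and obtain that pseudo-functor as the composite of the map $\oMor(\oper{O})\to\oMor(\cat{C})$ induced by the symmetric monoidal left Quillen functor $-\otimes_{\oper{O}}A$ (via \ref{Morita functoriality}, using \ref{monoidality}), followed by the map $\oMor(\cat{C})\to\oModCat$ of \ref{from rings to cat}. Your additional remarks about \ref{pseudo-cofibrantness} and the identification $L\Mod_{U^P_A}\cong P\Mod_A$ via \ref{universal enveloping algebra} are correct glosses the paper leaves implicit, so the two arguments coincide.
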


\begin{proof}
According to \ref{strictification}, it suffices to construct a map of bioperad
\[\oMor(\oper{O})\to\oModCat\]
But it suffices to define it as the composition of the map
\[\oMor(\oper{O})\to\oMor(\cat{C})\]
constructed above, followed by the map
\[\oMor(\cat{C})\to\oModCat\]
sending $R$ to $L\Mod_R$ constructed in \ref{from rings to cat}.
\end{proof}

\subsection{Homotopy invariance}

It remains to check how ``model independant'' this structure is.

First we observe that if $A\to A'$ is a map of cofibrant $\oper{O}$-algebras, we get a natural transformation of functors
\[-\circ_{\oper{O}}A\to -\circ_\oper{O}A'\]
If the map was a weak equivalence, this natural transformation is a weak equivalence. In particular, we find that the functor induced by $A$ and $A'$:
\[\oMor(\oper{O})\to\oMor(\cat{C})\]
are equivalent up to a natural $2$-morphism.

Now we want to talk about change of operads.

\begin{prop}
Let $u:\oper{P}\to\oper{Q}$ be a map of operad. Then, there are two maps of bioperads
\[u_!:\oMor(\oper{P})\to\oMor(\oper{Q})\]
and
\[u^*:\oMor(\oper{Q})\to\oMor(\oper{P})\]
that are inverse weak equivalences if $u$ is a weak equivalence.
\end{prop}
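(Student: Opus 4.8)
The plan is to construct the functors $u_!$ and $u^*$ at the level of the point-set Morita bioperads and then check that they preserve the homotopical data (cofibrant objects, weak equivalences, relative tensor products) so that they descend to maps $\oMor(\oper{P})\to\oMor(\oper{Q})$ as defined in \ref{Morita bioperad}. For $u^*$ the recipe is restriction: a map of operads $u:\oper{P}\to\oper{Q}$ makes every right $\oper{Q}$-module into a right $\oper{P}$-module, and this restriction functor $u^*:\Mod_{\oper{Q}}\to\Mod_{\oper{P}}$ is lax symmetric monoidal (in fact strong symmetric monoidal, since $u^*$ commutes with the composition product $\circ$ and hence with $\otimes$ of symmetric sequences) and a right Quillen functor for the injective model structures; it sends an associative algebra $B$ in $\Mod_{\oper{Q}}$ to the associative algebra $u^*B$ in $\Mod_{\oper{P}}$, and a $B$-$B'$-bimodule to a $u^*B$-$u^*B'$-bimodule. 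For $u_!$ the recipe is induction: $u_!(-)=(-)\circ_{\oper{P}}\oper{Q}$, which by the argument already used in \ref{change of operad} is strong symmetric monoidal (this is exactly the computation in the proof of \ref{monoidality}, applied with $\oper{P}$ in place of $\oper{O}$ and $\oper{Q}$ as the target symmetric sequence, using $(P\otimes Q)\circ S\cong (P\circ S)\otimes(Q\circ S)$) and a left Quillen functor, and which sends associative algebras to associative algebras because $(-)\circ_{\oper{P}}\oper{Q}$ is a reflexive coequalizer of associative algebras, just as in \ref{change of operad}.

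First I would invoke \ref{Morita functoriality}: since $u^*$ is a lax symmetric monoidal right-adjoint... more precisely, I need a \emph{left} Quillen lax symmetric monoidal functor, so I would use $u_!$ directly in \ref{Morita functoriality} to get $\oMor(u_!):\oMor(\oper{P})\to\oMor(\oper{Q})$, taking $S$ and $T$ to be the classes of all associative algebras (all admissible, as noted just before \ref{monoidality}). For $u^*$, which is a right adjoint, I would instead observe that it too is strong symmetric monoidal and preserves cofibrations and all weak equivalences in the injective model structure (cofibrations and weak equivalences are detected objectwise and $u^*$ only reindexes the objects), so the same bookkeeping as in the proof of \ref{Morita functoriality} applies: $u^*$ induces a functor ${}_B\Mod_{B'}\to{}_{u^*B}\Mod_{u^*B'}$ preserving cofibrant objects and weak equivalences, commuting with relative tensor products up to canonical isomorphism, hence a pseudo-functor $\oMor(u^*):\oMor(\oper{Q})\to\oMor(\oper{P})$.

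Next I would show these are weak equivalences of bioperads when $u$ is a weak equivalence of operads. The cleanest route is to apply \ref{Morita equivalence}: if $u:\oper{P}\to\oper{Q}$ is a weak equivalence then $u_!\dashv u^*$ is a Quillen equivalence between $\Mod_{\oper{P}}$ and $\Mod_{\oper{Q}}$ with their injective model structures (induction along a weak equivalence of operads is a Quillen equivalence on right modules — this is standard, e.g.\ via the same absolute-cofibrancy considerations as in \ref{pseudo-cofibrantness}, or it can be cited). Then $\oMor(u_!)$ is homotopically fully faithful by the first part of \ref{Morita equivalence}, and it is a weak equivalence by the essential-surjectivity clause of \ref{Morita equivalence}, provided every associative algebra $B$ in $\Mod_{\oper{Q}}$ receives a weak equivalence from $u_!(B')$ for some associative algebra $B'$ in $\Mod_{\oper{P}}$; taking $B'=u^*B$ and using the derived counit $u_!u^*B\to B$, which is a weak equivalence since the adjunction is a Quillen equivalence, gives this. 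Finally, the counit and unit of $u_!\dashv u^*$ furnish natural $2$-morphisms exhibiting $\oMor(u_!)$ and $\oMor(u^*)$ as inverse weak equivalences in the bicategorical sense, after strictifying via \ref{strictification} if one wants honest operad maps.

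The main obstacle I anticipate is the verification that induction along a weak equivalence of operads is a Quillen equivalence on \emph{right modules with the injective model structure} — this is the one input not literally proved in the excerpt. I would either cite it (it is the kind of statement proved in Fresse's work on modules over operads, already referenced for the formula $(P\otimes Q)\circ S\cong(P\circ S)\otimes(Q\circ S)$) or reduce to it: by the argument in the proof of the symmetric-monoidality of $\Mod_{\oper{O}}$, the forgetful functor $\Mod_{\oper{O}}\to\Mod_{\oper{I}}$ detects weak equivalences, so one is reduced to a statement about symmetric sequences, where $u_!$ is computed degreewise by a homotopy colimit over the iterated composition product of a weak equivalence, and cofibrancy of $A$ (or $\Sigma$-cofibrancy in the $\oMor(\oper{O},\Sigma)$ version) ensures the relevant objects are cofibrant enough for the comparison to be a weak equivalence. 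Everything else — the strong monoidality of $u_!$ and $u^*$, preservation of associative algebras, compatibility with relative tensor product, and the formal bicategorical consequences — is routine and parallels \ref{change of operad}, \ref{Morita functoriality}, and \ref{Morita equivalence}.
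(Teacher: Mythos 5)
Your proposal takes essentially the same route as the paper, whose entire proof is the sentence ``This follows from \ref{Morita functoriality} and \ref{Morita equivalence}''; what you have done is unpack those citations in detail. The unpacking is basically sound, and two points are worth flagging.

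First, a small clean-up on $u^{*}$: you treat it as a right adjoint and then re-derive the conclusion of \ref{Morita functoriality} ``by the same bookkeeping''. It is cleaner to observe that restriction $u^{*}\colon\Mod_{\oper{Q}}\to\Mod_{\oper{P}}$ is \emph{also} a left adjoint (its right adjoint is coinduction along $u$, i.e.\ right Kan extension), and since it visibly preserves objectwise monomorphisms and objectwise weak equivalences it is a left Quillen functor for the injective model structures. Together with the fact that $u^{*}$ is strong symmetric monoidal (it commutes with $\circ$, hence with the $\otimes$ on symmetric sequences), this puts $u^{*}$ squarely under the hypotheses of \ref{Morita functoriality}, so no ad hoc re-running of the argument is required.

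Second, and more substantively, you are right to call out the one input that neither the paper's two-word proof nor the cited propositions literally supply: that when $u$ is a weak equivalence, one of $u_!$ or $u^{*}$ is a lax symmetric monoidal \emph{left Quillen equivalence} between the injective model structures on right modules, which is what \ref{Morita equivalence} requires. This is genuinely nontrivial here because for $u_!=(-)\circ_{\oper{P}}\oper{Q}$ it is not even immediate that it preserves injective cofibrations (coequalizers need not preserve monomorphisms), whereas $u^{*}$ is obviously left Quillen but the Quillen-equivalence part (derived counit $u_!u^{*}\to\mathrm{id}$ an equivalence) still needs an argument. So your instinct to either cite Fresse or reduce to a symmetric-sequence computation using $(P\otimes Q)\circ S\cong(P\circ S)\otimes(Q\circ S)$ and cofibrancy is the right one, and it is in fact more careful than the paper itself, which leaves this step implicit. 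Once that input is secured, the rest — applying \ref{Morita functoriality} to $u_!$ and $u^{*}$, using the unit/counit of $u_!\dashv u^{*}$ to exhibit the inverse equivalence, and invoking the essential-surjectivity clause of \ref{Morita equivalence} — is exactly what the paper intends.
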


\begin{proof}
This follows from \ref{Morita functoriality} and \ref{Morita equivalence}.
\end{proof}

In particular, if $\oper{P}\to\oper{Q}$ is a weak equivalence and $A$ is a cofibrant $\oper{P}$-algebra, then there is a diagram
\[\xymatrix{
\oMor(\oper{P})\ar[r]^{u_!}\ar[dr]_A&\oMor(\oper{Q})\ar[d]^{u_!A}\\
                            &\oMor(\cat{C})
}
\]
which commutes up to a natural weak equivalence. Indeed, for any right module $M$ over $\oper{P}$, we have an equivalence
\[M\circ_{\oper{P}} A\to u^*u_!M\circ_{\oper{P}} A\cong u_!M\circ_{\oper{Q}}u_!A\]

\section{Factorization homology for manifolds with singularities}

In this section, we define factorization homology for manifolds with a fixed boundary. A much more general treatement of factorization homology for singular manifolds can be found in \cite{ayalastructured}. The only originality of the present section is the use of model category techniques as opposed to $\infty$-categories.

In this section, $\epsilon$ denotes a fixed real number in the open interval $(0,1)$.

Note that since all operads and modules appearing in this section are $\Sigma$-cofibrant, we could replace the category $cat{C}$ by the category $\S$ or the category of simplicial $R$-modules over any commutative ring.

\subsection{Embeddings between structured manifolds}

This subsection owes a lot to \cite{andrademanifolds}. In particular, the definition \ref{framed embedding} can be found in that reference. We then make analogous definitions of embedding spaces for $S_{\tau}$-manifolds which are straightforward generalizations of Andrade's construction.

\begin{defi}
A \emph{framed $d$-manifold} is a pair $(M,\sigma_M)$ where $M$ is a $d$-manifold and $\sigma_M$ is a smooth section of the $\on{GL}(d)$-principal bundle $\on{Fr}(TM)$.
\end{defi}

If $M$ and $N$ are two framed $d$-manifolds, we define a space of framed embeddings denoted by $\Emb_f(M,N)$ as in \cite{andrademanifolds}: 

\begin{defi}\label{framed embedding}
Let $M$ and $N$ be two framed $d$-dimensional manifolds. The \emph{topological space of framed embeddings from $M$ to $N$}, denoted $\Emb_f(M,N)$, is given by the following homotopy pullback in the category of topological spaces over $\Map(M,N)$:
\[\xymatrix
{\Emb_f(M,N)\ar[r]\ar[d] & \Map(M,N)\ar[d]\\
\Emb(M,N)\ar[r] & \Map_{\on{GL}(d)}(\on{Fr}(TM),\on{Fr}(TN))}
\]

The right hand side map is obtained as the composition
\[\Map(M,N)\to\Map_{\on{GL}(d)}(M\times\on{GL}(d),N\times\on{GL}(d))\cong\Map_{\on{GL}(d)}(\on{Fr}(TM),\on{Fr}(TN))\]  
where the first map is obtained by taking the product with $\on{GL}(d)$ and the second map is induced by the identification $\on{Fr}(TM)\cong M\times\on{GL}(d)$ and $\on{Fr}(TN)\cong N\times \on{GL}(d)$.
\end{defi}

A priori, this only defines $\Emb_f(M,N)$ as a homotopy type however, we can choose a certain explicit model. This explicit model allows us to construct well defined composition maps
\[\Emb_f(M,N)\times\Emb_f(N,P)\to\Emb_f(M,P)\]
allowing the construction of a topological category $f\Man_d$ (see \cite{andrademanifolds} for a precise construction of this category).

We now want to define a category of manifolds whose objects are manifodls with a fixed boundary.

\begin{defi}
An \emph{$S$-manifold} is a pair $(M,f)$ where $M$ is a $d$-manifold with boundary and $f:S\times[0,\epsilon)\to M$ is an embedding whose restriction to the boundary induces a diffeomorphism $S\cong \partial M$ 
\end{defi}

If $M$ and $N$ are two $S$-manifolds, we denote by $\Emb^S(M,N)$ the topological space of embeddings which commute strictly with the structure maps $S\times[0,\epsilon)\to M$ and $S\times[0,\epsilon)\to N$. We give it the weak $C^1$-topology.

\begin{defi}
A \emph{$d$-framing} of a $(d-1)$-manifold $S$ is a trivialization of the  $d$-dimensional bundle $TS\oplus\mathbb{R}$ where $\mathbb{R}$ is a trivial line bundle. 
\end{defi}

Note that if $\tau$ is a $d$-framing of $S$, then $\tau$ canonically induces a framing of $S\times[0,\epsilon)$.

\begin{defi}
A \emph{framed $S_\tau$-manifold} is an $S$-manifold $(M,f)$ with the datum of a framing of $TM$ such that the embedding
\[f:S\times [0,\epsilon)\to M\]
preserves the framing on the nose when we give $S\times[0,\epsilon)$ the framing $\tau$.
\end{defi}

\begin{defi}
Let $(M,f)$ and $(N,g)$ be two framed $S_\tau$-manifolds. The \emph{topological space of framed embeddings from $M$ to $N$}, denoted $\Emb_f^{S_\tau}(M,N)$, is the following homotopy pullback taken in the category of topological spaces over $\Map^S(M,N)$:
\[\xymatrix
{\Emb_f^{S_\tau}(M,N)\ar[r]\ar[d] & \Map^S(M,N)\ar[d]\\
\Emb^S(M,N)\ar[r] & \Map^{S_\tau}_{\on{GL}(d)}(\on{Fr}(TM),\on{Fr}(TN))}
\]

Any time we use the $S$ superscript, we mean that we are considering the subspace of maps commuting with the given map from $S\times[0,\epsilon)$. The topological space in the lower right corner is the space of morphisms of $\on{GL}(d)$-bundles inducing the identity over $S\times[0,\epsilon)$.
\end{defi}

We can extend the notation $\Emb^S(-,-)$ to manifolds without boundary:
\begin{itemize}
\item $\Emb^S(M,N)=\Emb(M,N)$ if $M$ is a manifold without boundary and $N$ is either an $S$-manifold or a manifold without boundary.
\item $\varnothing$ if $M$ is an $S$-manifold and $N$ is a manifold without boundary.
\end{itemize}

Using these as spaces of morphisms, there is a simplicical category $\Man^S_d$ whose objects are $S$-manifolds. Similarly, we can extend the notation $\Emb^{S_\tau}_f(-,-)$  to framed manifolds without boundary as above and construct a simplicical category $f\Man^{S_\tau}_d$ whose objects are framed $S_\tau$-manifolds.

\begin{rem}
When there is no ambiguity, we sometimes allow ourselves to drop the framing notation and write $S$ instead of $S_\tau$ to keep the notation simple.
\end{rem}

\subsection{Definition of the right module associated to a $(d-1)$-manifold}

In this subsection, we show that each $d$-framed $(d-1)$-manifold gives rise to a right module over the operad $\oper{E}_d$. Let us first recall the definition of $\oper{E}_d$ which will be used in this paper.

\begin{defi}
The operad $\oper{E}_d$ of \emph{little $d$-disks} is the simplicial operad whose $n$-th space is (the singular simplicial set on) $\Emb_f(D^{\sqcup n},D)$.
\end{defi} 

According to \ref{framed embeddings of disks}, the $k$-th space of this operad is equivalent to the space of configurations of $k$ points in the disk. Using this it is not hard to check that this model of $\oper{E}_d$ is equivalent to any other definition of the little $d$-disks operad.

\medskip

Let $S$ be a $(d-1)$-manifold and let $\tau$ be a $d$-framing of $S$.

\begin{defi}\label{def of Stau modules}
The right $\oper{E}_d$-module $\und{S}_\tau$ is given by
\[\und{S}_\tau(n)=\Emb_f^{S_\tau}(D^{\sqcup n}\sqcup S\times[0,1),S\times[0,1))\]
\end{defi}

We now construct an algebra structure on this right $\oper{E}_d$-module.

\begin{nota}
It will be convenient to have the following notation at our disposal. Assume that $M$, $N$, $P$ and $Q$ are manifolds. Let $\phi:M\to N$ and $\psi:N\sqcup P\to Q$ be smooth maps, the map $\psi\circ\phi$ is by convention the map $M\sqcup P\to Q$ given as the following composite
\[M\sqcup P\goto{\phi\sqcup P}N\sqcup P\goto{\psi}Q\]
Notice that if both $\phi$ and $\psi$ were embeddings, the resulting map $\psi\circ\phi$ is still an embedding. Moreover, if $\phi$ and $\psi$ live in one of the space of ``framed embeddings'' defined above the composite $\psi\circ \phi$ can be seen to live in the appropriate space.
\end{nota}

\begin{cons}
We construct a map $\und{S}_\tau\otimes\und{S}_\tau\to\und{S}_\tau$. By definition of the tensor product, it suffices to construct a graded monoid structure on $\{\und{S}_\tau(n)\}_{n\geq 0}$.
If $\phi$ is a point in $\und{S}_\tau(n)$ and $\psi $ is a point in $\und{S}_\tau(m)$, then $\psi\circ\phi$ is a point in $\und{S}_\tau(m+n)$ and this defines a pairing
\[\und{S}_\tau(n)\times\und{S}_\tau(m)\to\und{S}_\tau(n+m)\]
which is easily seen to extend to an associative algebra structure on $\und{S}_\tau$.
\end{cons}

The general theory of the first section gives rise to an operad $\und{S}_\tau \oMod$ and for any $\oper{E}_d$-algebra $A$ in $\cat{C}$, a category $\und{S}_\tau\Mod_A$.

\medskip

The unit sphere inclusion $S^{d-1}\to\mathbb{R}^d$ has a trivial normal bundle. This induces a $d$-framing on $S^{d-1}$ which we denote $\kappa$. Using \ref{def of Stau modules}, we can construct an operad $\und{S}^{d-1}_\kappa\oMod$. We have the following result relating $\und{S}^{d-1}_\kappa$-modules to operadic modules over $\oper{E}_d$.

\begin{prop}\label{equivalence of modules}
$\und{S}^{d-1}_\kappa$ and $\oper{E}_d[1]$ are weakly equivalent as associative algebras in right modules over $\oper{E}_d$. In particular, for a cofibrant $\oper{E}_d$-algebra $A$, the category $S_\kappa^{d-1}\Mod_A$ is connected to $\oper{E}_d[1]\Mod_A$ by a Quillen equivalence.
\end{prop}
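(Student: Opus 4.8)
The plan is to prove the two clauses separately; the second (the Quillen equivalence) is a formal consequence of the first. Indeed, once $\und{S}^{d-1}_\kappa$ and $\oper{E}_d[1]$ are linked by a zig-zag of weak equivalences of associative algebras in right $\oper{E}_d$-modules — i.e.\ by weak equivalences in $\cat{OM}$ which are the identity on the operad $\oper{E}_d$ — the Quillen equivalence $S^{d-1}_\kappa\Mod_A \leftrightarrows \oper{E}_d[1]\Mod_A$ follows by applying the invariance proposition proved above (a weak equivalence in $\cat{OM}$ induces a Quillen equivalence of the associated categories of modules over a cofibrant algebra) to each step of the zig-zag and composing. So I concentrate on the first clause.

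First I would fix a concrete model of the framed $S^{d-1}_\kappa$-manifold $S^{d-1}\times[0,1)$: choose a closed $d$-disk $\overline{D}$ and a smaller closed disk $\overline{D}_0$ in its interior, and set $M := \overline{D}\setminus\mathrm{int}\,\overline{D}_0$. This is diffeomorphic to $S^{d-1}\times[0,1)$, its only boundary is $\partial\overline{D}\cong S^{d-1}$, the standard framing of $\overline{D}$ restricts to it (and to $\kappa$ on the boundary sphere, since $\kappa$ is by definition the framing induced by $S^{d-1}\hookrightarrow\mathbb{R}^d$), and it carries an obvious collar of $\partial\overline{D}$; with this model $\und{S}^{d-1}_\kappa(n)=\Emb_f^{S^{d-1}_\kappa}(D^{\sqcup n}\sqcup M, M)$, the embeddings being the inclusion near $\partial\overline{D}$. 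The comparison map I have in mind ``caps the open end of $M$ by the disk $\overline{D}_0$'': a framed embedding $\phi\colon D^{\sqcup n}\sqcup M\hookrightarrow M$ is sent to the framed embedding $D^{\sqcup n}\sqcup D_{n+1}\hookrightarrow\overline{D}$ which is $\phi$ on the first $n$ disk components and whose $(n+1)$-st component is the inclusion of a disk sitting inside the complement $\overline{D}\setminus\phi(M)$, a smooth closed disk containing $\overline{D}_0$ together with all the $\phi(D_i)$; in the other direction one shrinks the module disk and compresses the thickened collar against $\partial\overline{D}$. Either way one gets maps between $\und{S}^{d-1}_\kappa$ and $\oper{E}_d[1](n)=\oper{E}_d(n+1)$ which are maps of right $\oper{E}_d$-modules (the $\oper{E}_d$-action lives on the $n$ disk slots on both sides) and which intertwine the collar-composition defining the associative structure on $\und{S}^{d-1}_\kappa$ with the operadic composition $\circ_{n+1}$ defining it on $\oper{E}_d[1]$, up to the standard identification of $\oper{E}_d[1]$ with its opposite algebra (harmless: this identification exists because $\oper{E}_d[1]$ is equivalent to its opposite, reflecting the symmetry of the little disks, and in any case can be absorbed into the orientation conventions).

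That the comparison is a levelwise weak equivalence I would check by a configuration-space computation. Restricting an embedding to its $M$-component gives a fibration $\und{S}^{d-1}_\kappa(n)\to\und{S}^{d-1}_\kappa(0)=\Emb_f^{S^{d-1}_\kappa}(M, M)$ with fibre $\Emb_f(D^{\sqcup n}, M\setminus j(M))$ over $j$. The base is contractible: a framed self-embedding of the half-open collar $M$ that is the inclusion near $\partial M$ can be isotoped, rel that collar, to the identity by pushing its image outwards until it exhausts $M$ (this is precisely the $n=0$ case, and it is the one genuinely isotopy-theoretic ingredient). Hence $\und{S}^{d-1}_\kappa(n)$ is equivalent to the fibre over a basepoint $j_0$ chosen so that $M\setminus j_0(M)\cong S^{d-1}\times(0,1)\cong\mathbb{R}^d\setminus\{0\}$, i.e.\ to $\Emb_f(D^{\sqcup n}, \mathbb{R}^d\setminus\{0\})$, which by \ref{framed embeddings of disks} is equivalent to $\mathrm{Conf}_n(\mathbb{R}^d\setminus\{0\})$. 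On the other side $\oper{E}_d[1](n)=\oper{E}_d(n+1)$ is equivalent, again by \ref{framed embeddings of disks}, to $\mathrm{Conf}_{n+1}(\mathbb{R}^d)$, and the Fadell--Neuwirth fibration $\mathrm{Conf}_{n+1}(\mathbb{R}^d)\to\mathbb{R}^d$ over the contractible base $\mathbb{R}^d$ identifies this with $\mathrm{Conf}_n(\mathbb{R}^d\setminus\{0\})$. Chasing the constructions shows that the comparison map realises this chain of equivalences, so it is a levelwise weak equivalence, hence a weak equivalence of associative algebras in right $\oper{E}_d$-modules, which is what the first clause asserts.

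The step I expect to be the main obstacle is organising the comparison so that it is a genuine map (or zig-zag) of associative algebras in right $\oper{E}_d$-modules rather than merely a levelwise equivalence: the capping disk and its position are not canonically determined by $\phi$, so a direct map on the nose is delicate, and I would most likely handle this by rigidifying the model of $M$ or by inserting one auxiliary right $\oper{E}_d$-module — one remembering a little disk together with an outward collar of its bounding sphere — into the zig-zag, where forgetting the collar produces $\oper{E}_d[1]$ via a fibration with contractible fibres and thickening the collar produces $\und{S}^{d-1}_\kappa$. The other nontrivial input, the contractibility of $\und{S}^{d-1}_\kappa(0)$, is elementary but still requires writing down an explicit isotopy; everything else reduces to bookkeeping with the homotopy-pullback definition of the framed embedding spaces and with Fadell--Neuwirth fibrations.
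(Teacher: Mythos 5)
Your proposal is correct in outline and takes essentially the same route as the paper: a zig-zag of weak equivalences of associative algebras in right $\oper{E}_d$-modules, established levelwise via fibration/configuration-space arguments, with the second clause following formally from the homotopy invariance of $P\mapsto P\Mod_A$.

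The one place where you were unsure — how to organise the comparison as an actual map of algebras — is exactly where the paper inserts a named intermediary: the right module $\oper{E}_d^*$, with $\oper{E}_d^*(n) = \Emb_f(D_*\sqcup D^{\sqcup n}, D_*)$ the space of framed embeddings of $n$ disks and a pointed disk into a pointed disk, preserving the base point. This plays the role of your proposed ``disk together with an outward collar,'' but is rigidified by keeping the marked point. The map $\und{S}^{d-1}_\kappa\to\oper{E}_d^*$ glues a closed disk along the inner boundary of the collar, and $\oper{E}_d^*\hookrightarrow\oper{E}_d[1]$ is the inclusion of embeddings sending the marked point to the origin; both are strict maps of associative algebras in $\Mod_{\oper{E}_d}$, so no opposite-algebra bookkeeping is needed. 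The paper then checks levelwise equivalence via the fibration $\oper{E}_d^*(n)\to\Emb_f(D^{\sqcup n}, D-0)$ (contractible fibres of based embeddings, and of collar embeddings via \ref{contractibility collars framed}) and the fibration $\oper{E}_d[1](n)\to D$ evaluating at the centre of the marked disk, which is the Fadell--Neuwirth style argument you sketch, just phrased directly on embedding spaces rather than passing through $\mathrm{Conf}$ via \ref{framed embeddings of disks}. In short, your plan is sound and the paper resolves your flagged obstacle precisely by the pointed-disk rigidification you anticipated might be needed.
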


\begin{proof}
We construct a chain of weak equivalences:
\[\und{S}^{d-1}_\kappa\rightarrow\oper{E}_d^*\rightarrow \oper{E}_d[1]\]

Let $D=\mathbb{R}^d$ and $D^*$ be the manifold $D$ based at the origin. The space $\oper{E}_d^{*}(n)$ is the space of framed embeddings $D_*\sqcup D^{\sqcup n}\to D$ preserving the base point. Any embedding $S^{d-1}\times[0,1)\to S^{d-1}\times[0,1)$ can be extended to an embedding $D^*\to D^*$ by glueing a closed disk along the boundary. This construction easily extends to give a map of associative algebras in right modules over $\oper{E}_d$:
\[\und{S}^{d-1}_\kappa\to\oper{E}_d^*\]
We claim that this map is an equivalence. It suffices to check it in each degree. There is a map \[\oper{E}_d^*(n)\to \Emb(D^{\sqcup n},D-0)\]
which sends an embedding to its restriction on the $n$ non-pointed disks. This map is a fibration whose fiber  over $\phi$ is the space of based embedding of the pointed disk in the complement of the configuration of disk given by the embedding $\phi$. It is straightforward to check that this space is contractible. Similarly, the composite
\[\und{S}^{d-1}_\kappa(n)\to\oper{E}_d^*(n)\to \Emb(D^{\sqcup n},D-0)\]
is a fibration with contractible fibers (by \ref{contractibility collars framed}).

A point in $\oper{E}_d^*(n)$ is just a point in $\oper{E}_d[1]$ where the orgin of the special disk is sent to the origin. Hence we have an inclusion
\[\oper{E}_d^*\to\oper{E}_d[1]\]
which also preserves the structure of an associative algebra in right modules. To prove that this map is an equivalence, it suffices to do it in each degree. But for an integer $n$, the map
\[\oper{E}_d^*(n)\to \oper{E}_d[1](n)\]
can be obtained as the pullback
\[
\xymatrix{
\oper{E}_d^*(n)\ar[d]\ar[r]&\oper{E}_d[1](n)\ar[d]^p\\
\on{pt}\ar[r] & D
}
\]
where the map $p$ evaluates an embedding at the center of the marked disk. It is classical that the map $p$ is a fibration. Hence by right properness of the category of spaces, the map $\oper{E}_d^*(n)\to\oper{E}_d[1](n)$ is an equivalence.
\end{proof}

\subsection{Factorization homology}

Let $\mathsf{M}$ be the set of framed $d$ manifolds whose underlying manifold is a submanifold of $\mathbb{R}^{\infty}$. Note that $\mathsf{M}$ contains at least one element of each diffeomorphism class of framed $d$-manifold.

\begin{defi}
We denote by $f\oMan_d$ \emph{the operad of framed $d$-manifolds} whose set of objects is $\mathsf{M}$ and with mapping space
\[f\oMan_d(\{M_1,\ldots,M_n\},M)=\Emb_f(M_1\sqcup\ldots\sqcup M_n,M)\]

As usual, we denote by $f\Man_d$ the free symmetric monoidal category on the operad $f\oMan_d$.

We can see the open unit disk $D\subset\mathbb{R}^d\subset\mathbb{R}^{\infty}$ as an element of $\mathsf{M}$. We denote by $\oper{E}_d$ the full suboperad of $f\oMan_d$ on the object $D$.  The category $\cat{E}_d$ is the full subcategory of $f\Man_d$ on objects of the form $D^{\sqcup n}$ with $n$ a nonnegative integer.
\end{defi}

Let $S$ be a $(d-1)$-manifold and $\tau$ be a $d$-framing on $S$. Let $\mathsf{M}^{S_\tau}$ be the set of $S_\tau$-manifolds whose underlying manifold is a submanifold of $\mathbb{R}^\infty$.

\begin{defi}
The \emph{operad of framed $S_\tau$-manifolds} denoted $f\oMan_d^{S_\tau}$ has the set $\mathsf{M}\sqcup\mathsf{M}^{S_\tau}$ as set of objects. Its spaces of operations are given by:
\begin{align*}
f\oMan_d^{S_\tau}(\{M_i\}_{i\in I};N)&=\varnothing \textrm{, if}\;\{M_i\}_{i\in I}\textrm{ contains more than 1 element of}\;\mathsf{M}^{S_\tau}\\
 &=\Emb_f^{S_\tau}(\sqcup_i M_i,N)\textrm{ otherwise}
\end{align*}
\end{defi}

We see that the operad $\und{S}_\tau\oMod$ (defined in \ref{def of Stau modules}) sits inside $f\oMan_d^{S_{\tau}}$ as the full suboperad on the objects $D$ and $S\times[0,1)$. This observation can be used to define factorization homology over $S_\tau$-manifolds.

\begin{defi}\label{factorization}
Let $A$ be an object of $\cat{C}[\oper{E}_d]$. We define the \emph{factorization homology with coefficients in $A$} to be the derived operadic left Kan extension of $A$ along the map of operads $\oper{E}_d\to f\oMan_d$. 

We denote by $M\mapsto\int_MA$ the symmetric monoidal functor $f\Man_d\to\cat{C}$ induced by that pushforward.
\end{defi}

We have $\int_MA=\Emb_f(-,M)\otimes_{\cat{E}_d}QA$ where $QA\to A$ is a cofibrant replacement in the category $\cat{C}[\oper{E}_d]$. We use the fact that the operad $\oper{E}_d$ is $\Sigma$-cofibrant and that the right module $\Emb_f(-,M)$ is $\Sigma$-cofibrant.

We can define, in a similar fashion, factorization homology on an $S_\tau$-manifold. This gives a pairing between $S_\tau$-manifolds and $\und{S}_{\tau}\oMod$-algebras.

\begin{defi}
Let $(A,M)$ be an $\und{S}_\tau\oMod$-algebra in $\cat{C}$. \emph{Factorization homology with coefficients in $(A,M)$} is the derived operadic left Kan extension of $(A,M)$ along the map of operad
\[\und{S}_\tau\oMod\to f\oMan_d^{S_\tau}\]

We write $\int_W(A,M)$ for the value at $W\in \und{S}_\tau\oMod$ of factorization homology with coefficients in $(A,M)$.
\end{defi}

\subsection{Factorization homology as a homotopy colimit}

In this subsection, we show that factorization homology can be expressed as the homotopy colimit of a certain functor on the poset of open sets of $M$ that are diffeomorphic to a disjoint union of disks. Note that this result in the case of manifolds without boundary is proved in \cite{luriehigher}.

Let $M$ be an object of $ f\Man_d$. Let $\cat{D}(M)$ the poset of subset of $M$ that are diffeomorphic to a disjoint union of disks. Let us choose for each object $V$ of $\cat{D}(M)$ a framed diffeomorphism $V\cong D^{\sqcup n}$ for some uniquely determined $n$. Each inclusion $V\subset V'$ in $\cat{D}(M)$ induces a morphism $D^{\sqcup n}\to D^{\sqcup n'}$ in $\cat{E}_d$ by composing with the chosen parametrization. Therefore each choice of parametrization induces a functor $\cat{D}(M)\to\cat{E}_d$. Up to homotopy this choice is unique since the space of automorphisms of $D$ in $ \cat{E}_d$ is contractible. 

In the following we assume that we have one of these functors $\delta:\cat{D}(M)\to\cat{E}_d$. We fix a cofibrant algebra $A: \cat{E}_d\to\cat{C}$.

\begin{prop}\label{nb}
There is a weak equivalence:
\[\int_MA\simeq\on{hocolim}_{V\in\cat{D}(M)} A(\delta (V))\]
\end{prop}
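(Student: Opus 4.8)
The plan is to identify $\int_M A = \Emb_f(-,M)\otimes_{\cat{E}_d} QA$ with a homotopy colimit by exhibiting the operadic left Kan extension as a bar construction and then reducing the indexing category to $\cat{D}(M)$. First I would recall that the derived operadic left Kan extension along $\oper{E}_d\to f\oMan_d$ evaluated at $M$ is computed by the two-sided bar construction $B(\Emb_f(-,M),\cat{E}_d,QA)$, whose realization is the homotopy colimit of $QA$ over the topological comma-type category $\cat{E}_d/M$ whose objects are pairs of a nonnegative integer $n$ together with an embedding $D^{\sqcup n}\hookrightarrow M$ (more precisely, over the simplicial replacement whose space of $0$-simplices is $\coprod_n \Emb_f(D^{\sqcup n},M)$). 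Since the operad $\oper{E}_d$ is $\Sigma$-cofibrant and the right module $\Emb_f(-,M)$ is $\Sigma$-cofibrant, this bar construction computes the derived functor, so $\int_M A\simeq \on{hocolim}_{(n,\phi)\in \cat{E}_d/M} QA(D^{\sqcup n})$.

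Next I would compare this homotopy colimit with the one over the discrete poset $\cat{D}(M)$. There is a functor $\cat{D}(M)\to \cat{E}_d/M$ sending $V$ with its chosen parametrization $V\cong D^{\sqcup n}$ to the composite embedding $D^{\sqcup n}\cong V\hookrightarrow M$; composing with $QA\circ\delta$ and $QA\circ\pi$ (where $\pi:\cat{E}_d/M\to\cat{E}_d$ is the projection) gives naturally equivalent functors, so it suffices to show this functor induces an equivalence on homotopy colimits. The key point is that $\cat{D}(M)$ is (homotopy) cofinal in $\cat{E}_d/M$: for a fixed embedding $\phi:D^{\sqcup n}\hookrightarrow M$, the category of objects of $\cat{D}(M)$ receiving a map from $\phi$ (equivalently, disjoint-union-of-disks open subsets $V\subseteq M$ containing the image of $\phi$ such that $\phi$ factors through $V$ in $\cat{E}_d$) has contractible nerve. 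This is because the image $\phi(D^{\sqcup n})$ is already such a $V$ up to isotopy — one can take $V$ to be a slightly shrunk open neighborhood of the closure of the image, or more carefully use the standard fact that the space of tubular-neighborhood-like disk systems around a given finite disjoint collection of disks is contractible. One should be a bit careful that $\cat{D}(M)$ is an honest poset while $\cat{E}_d/M$ is a topological category, so the cofinality argument is really comparing $\on{hocolim}$ over $\cat{D}(M)$ with $\on{hocolim}$ over the topological category; the cleanest route is to interpolate through the topological poset $\cat{D}^{fr}(M)$ of framed disk subsets with their framings, show $\cat{D}(M)\hookrightarrow \cat{D}^{fr}(M)$ is a levelwise equivalence of simplicial spaces (the space of framings on a given $V$ compatible with that on $M$ is contractible because $\on{GL}(d)$ is connected up to the relevant homotopy, or because $\Emb_f$ is defined as a homotopy pullback making the forgetful map from framed to underlying-set data an equivalence over the contractible automorphism space of $D$), and then show $\cat{D}^{fr}(M)\to \cat{E}_d/M$ is cofinal.

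The step I expect to be the main obstacle is proving this cofinality precisely, i.e. that for each $(n,\phi)$ the relevant slice has contractible classifying space. The heart of it is a Morse-theoretic or isotopy-extension argument: given finitely many disjointly embedded disks in $M$, the poset of disjoint-union-of-disks open subsets of $M$ through which the given disk-system factors is filtered (one can always enlarge slightly, and any two such are contained in a common third obtained by taking a neighborhood of the union), and a filtered poset has contractible nerve. One must also handle the interaction with the chosen parametrizations $V\cong D^{\sqcup n}$ — but since the space of framed self-embeddings of $D$ onto itself (equivalently $\oper{E}_d(1)$) is contractible, the choice of parametrization contributes only a contractible amount of data and does not affect the homotopy type, exactly as noted in the paragraph preceding the statement. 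Once cofinality is established, the equivalence $\on{hocolim}_{\cat{E}_d/M}QA(\delta(-))\simeq \on{hocolim}_{V\in\cat{D}(M)}A(\delta(V))$ follows formally (replacing $QA$ by $A$ is harmless since $QA\to A$ is an objectwise weak equivalence and homotopy colimits are invariant under such), completing the proof.
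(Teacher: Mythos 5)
Your argument is correct in spirit and lands on the right geometric content, but it takes a genuinely different route from the one the paper relies on. The paper's proof is a citation to \cite[Corollary 7.7]{horelfactorization}, and one can read off the strategy from the proof of the sibling Proposition \ref{nb2} in this paper: one first proves the equivalence of \emph{right $\oper{E}_d$-modules} $\Emb_f(-,M)\simeq\on{hocolim}_{V\in\cat{D}(M)}\Emb_f(-,V)$ using Lurie's generalized Seifert--van Kampen theorem (\cite[Theorem~A.3.1]{luriehigher}) applied to each space $\Emb_f(D^{\sqcup n},M)$ with its open cover by the $\Emb_f(D^{\sqcup n},V)$, and then applies the left Quillen functor $-\otimes_{\cat{E}_d}QA$ (Proposition \ref{invariance of operadic coend}), which preserves homotopy colimits of right modules and sends $\Emb_f(-,V)$ to something weakly equivalent to $A(\delta(V))$. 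You instead identify $\int_M A$ with a homotopy colimit over a simplicial comma category $\cat{E}_d/M$ (via the bar construction) and try to prove the inclusion $\cat{D}(M)\hookrightarrow\cat{E}_d/M$ is homotopy cofinal. Both routes ultimately hinge on the same filteredness observation, so your intuition is sound, and your approach is close to Lurie's own treatment of factorization homology.

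The place where your sketch would need real repair is exactly where you anticipate: the cofinality step. The slice $(n,\phi)/F$ is not the discrete poset of disk-subsets of $M$ through which $\phi$ factors; it is a \emph{simplicial} (topological) category whose objects carry the data of a factorizing framed embedding $D^{\sqcup n}\to\delta(V)$ and whose morphism spaces carry the ambient topology. The statement ``a filtered poset has contractible nerve'' addresses only the underlying discrete shadow and does not by itself control the homotopy type of the nerve of this topologized slice; you need to verify contractibility of a simplicial space, tracking both the space of factorizations and the compatibility of parametrizations, not just inclusions of open sets. Your remark that the framing data is a contractible choice is the right idea (and Proposition \ref{framed embeddings of disks} is the tool one would use), but turning it into a proof requires precisely the kind of argument that the right-module route packages once and for all via Lurie's theorem: working in $\S$ with the actual embedding spaces, where the cover-by-opens machinery applies cleanly, before ever touching the coend. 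That is what makes the paper's decomposition into ``geometry in right modules, then formal coend manipulation'' the cleaner of the two.
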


\begin{proof}
See \cite[Corollary 7.7]{horelfactorization}
\end{proof}

We have an analogous assertion for $S_\tau$-manifolds. Let $S_\tau$ be a $d$-framed $(d-1)$-manifold. Let $W$ be an $S_\tau$-manifold. Let $\cat{D}(W)$ be the poset of open subsets of $W$ which are diffeomorphic to $S\times[0,1)\sqcup D^{\sqcup n}$ under a diffeomorphism of $S_\tau$-manifold. Let $\delta:\cat{D}(W)\to \und{S}_\tau\oMod$ be any parametrization. As before, it turns out that the space of choices of such parametrizations is contractible.

\begin{prop}\label{nb2}
 Let $(A,M)$ be a cofibrant algebra over $\und{S}_\tau\oMod$. Then there is a weak equivalence:
\[\int_W(A,M)\simeq \on{hocolim}_{U\in\cat{D}(W)}(A,M)(\delta(V))\]
\end{prop}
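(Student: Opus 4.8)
The statement is the $S_\tau$-analogue of Proposition \ref{nb}, so the natural strategy is to reduce it to that result by the same mechanism used in the non-singular case, namely the description of factorization homology as an operadic left Kan extension together with a cofinality argument on the poset of ``good'' open subsets. Recall that $\int_W(A,M)$ is defined as the derived operadic left Kan extension of $(A,M)$ along $\und{S}_\tau\oMod\to f\oMan_d^{S_\tau}$, which by the same $\Sigma$-cofibrancy remarks as after Definition \ref{factorization} can be written as a (homotopy) coend
\[
\int_W(A,M)\;\simeq\; \Emb_f^{S_\tau}(-,W)\otimes_{\und{S}_\tau\oMod} Q(A,M),
\]
where $Q(A,M)$ is a cofibrant replacement and $\Emb_f^{S_\tau}(-,W)$ denotes the right $\und{S}_\tau\oMod$-module of framed embeddings into $W$ (which is $\Sigma$-cofibrant since $\und{S}_\tau\oMod$ has free symmetric group actions on its operations, exactly as for $\oper{E}_d$). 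First I would make this coend description precise and identify the right module $\Emb_f^{S_\tau}(-,W)$ as the one represented by $W$ inside $f\oMan_d^{S_\tau}$ restricted along the full inclusion $\und{S}_\tau\oMod\hookrightarrow f\oMan_d^{S_\tau}$.

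\textbf{Main steps.} The argument then proceeds in three steps, mirroring \cite[Corollary 7.7]{horelfactorization}. Step one: show that $\int_W(A,M)$ is computed by the homotopy colimit over the topological category (or its nerve) whose objects are framed embeddings $S\times[0,1)\sqcup D^{\sqcup n}\hookrightarrow W$ of the value of $(A,M)$ on the source; this is the standard rewriting of a homotopy coend over an operad with free symmetric actions as a homotopy colimit indexed by a comma-type category, using that the module $\Emb_f^{S_\tau}(-,W)$ is pointwise cofibrant. Step two: produce the comparison between that indexing category and the poset $\cat{D}(W)$ of open subsets of $W$ diffeomorphic to $S\times[0,1)\sqcup D^{\sqcup n}$; the functor from embeddings to $\cat{D}(W)$ sends an embedding to its image, and one checks it is homotopy cofinal. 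Step three: invoke the already-established contractibility of the space of parametrizations $\delta:\cat{D}(W)\to\und{S}_\tau\oMod$ (asserted just before the proposition) to replace $(A,M)$ precomposed with an embedding by $(A,M)(\delta(U))$ up to coherent homotopy, and conclude
\[
\int_W(A,M)\;\simeq\;\on{hocolim}_{U\in\cat{D}(W)}(A,M)(\delta(U)).
\]
Wherever the non-singular proof uses contractibility of $\Emb_f(D^{\sqcup n},D^{\sqcup n})\simeq$ (framed automorphisms), here one uses the analogous contractibility of the relevant collar/automorphism spaces, which in this section's framework is available by the lemmas on contractibility of framed collars (the reference used in the proof of Proposition \ref{equivalence of modules}).

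\textbf{Main obstacle.} The technical heart is the cofinality claim in step two: one must show that for a fixed finite configuration, the poset of open subsets of $W$ diffeomorphic to $S\times[0,1)\sqcup D^{\sqcup n}$ and containing a given such subset (equivalently, the relevant slice categories) has contractible nerve, and that the map from embedding spaces to this poset has contractible homotopy fibers. This is where the presence of the boundary collar $S\times[0,1)$ genuinely enters: one needs that any two embeddings of the collar into $W$ agreeing appropriately near $S$ can be connected through embeddings, and that a collar can always be shrunk inside a given open set compatibly with the $S_\tau$-structure. Concretely I would isotope using the flow that pushes the collar inward, exactly as in the disk case one shrinks disks, and check that the space of such isotopies is contractible; the framing conditions are preserved because the framing on $S\times[0,\epsilon)$ is the fixed $\tau$-framing and the isotopy is through the collar direction. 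Granting this, the rest is a formal transport of the proof of Proposition \ref{nb} and of \cite[Corollary 7.7]{horelfactorization}, so I would in fact phrase the final proof as: ``The same argument as in the proof of \cite[Corollary 7.7]{horelfactorization} applies verbatim, replacing the poset of disjoint unions of disks by $\cat{D}(W)$ and using the contractibility of framed collars in place of the contractibility of the space of framed self-embeddings of a disk.''
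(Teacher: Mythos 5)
Your proposal is correct and lands on essentially the same argument as the paper, which also defers the technical work to \cite{horelfactorization} and the contractibility lemmas for framed collars. The one organizational difference worth noting: the paper factors the argument through a statement purely about right modules, namely that $\Emb_f^{S_\tau}(-,W)$ is the homotopy colimit over $U\in\cat{D}(W)$ of the representable modules $\Emb_f^{S_\tau}(-,U)$ (the analogue of Lemma~7.9 of \cite{horelfactorization}), and then applies formal properties of the derived coend as in Proposition~7.10 there. You instead treat the whole derived coend at once, rewrite it as a homotopy colimit over an embedding category via the bar construction, and then argue cofinality of that category onto $\cat{D}(W)$. These are two presentations of the same underlying content (the cofinality you emphasize is exactly what is buried in the module-level hocolim statement), but the paper's version is a bit more modular in that the key input is isolated as a fact about right modules before the algebra $(A,M)$ ever appears. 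Either route works, and both ultimately rest on the same contractibility of framed collar embeddings (Proposition~\ref{contractibility collars framed}).
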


\begin{proof}
A very similar statement is proved in \cite{horelfactorization}. We start by proving that the right $\und{S}_\tau\oMod$-module $\Emb_f^{S_\tau}(-,W)$ is the homotopy colimit over $U\in\cat{D}(W)$ of $\Emb_f^{S_\tau}(-,U)$ and this is easily done analogously to \cite[lemma 7.9.]{horelfactorization}. Then one proceeds as in \cite[proposition 7.10]{horelfactorization}.
\end{proof} 

There is an obvious map $S_\tau\to\Emb_f(-S\times(0,1))$ which forgets the part with boundary and restricts to the open disks.

\begin{prop}
The map $\und{S}_\tau\to \Emb_f(-,S\times(0,1))$ is a weak equivalence of right $\oper{E}_d$-modules
\end{prop}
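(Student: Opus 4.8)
The plan is to compare the two right $\oper{E}_d$-modules degreewise, showing that in each arity $n$ the inclusion
\[\und{S}_\tau(n)=\Emb_f^{S_\tau}(D^{\sqcup n}\sqcup S\times[0,1),S\times[0,1))\longrightarrow \Emb_f(D^{\sqcup n},S\times(0,1))\]
is a weak equivalence of spaces. The map in question is the one that discards the collar component $S\times[0,1)\to S\times[0,1)$ and restricts the remaining embedding of disks to the open part $S\times(0,1)$ of the target; since an embedding that strictly commutes with the collar structure map in particular maps each little disk into the complement of a neighbourhood of the boundary, this restriction lands in $\Emb_f(D^{\sqcup n},S\times(0,1))$, and it is readily checked to be compatible with the $\oper{E}_d$-action, so it is indeed a map of right $\oper{E}_d$-modules. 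Thus it suffices to prove that it is an equivalence of homotopy types in each arity.

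First I would exhibit, in each arity $n$, a convenient fibration relating the two sides. The idea parallels the proof of \ref{equivalence of modules}: the source $\und{S}_\tau(n)$ fibers over $\Emb_f(D^{\sqcup n},S\times(0,1))$ by forgetting the collar datum (and restricting as above), with fiber over a given configuration $\phi$ of disks the space of framed self-embeddings of $S\times[0,1)$ that strictly fix the collar structure map near $0$ and are disjoint from $\phi$. Concretely, one should first establish that the forgetful/restriction map is a fibration (using a parametrized isotopy-extension argument, or citing the analogous statement used for collars earlier, e.g. the framework behind \ref{contractibility collars framed}), and then show that its fiber is contractible. Contractibility of the fiber is the crux: a framed self-embedding of $S\times[0,1)$ fixing the collar near the boundary and avoiding a compact configuration of little disks can be canonically deformed, by pushing it toward the boundary, to the standard inclusion; making this precise amounts to writing down an explicit deformation retraction, which is essentially a relative version of the contractibility of spaces of collars.

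The key steps, in order, are: (1) verify that the candidate map $\und{S}_\tau\to\Emb_f(-,S\times(0,1))$ is a well-defined morphism of right $\oper{E}_d$-modules (landing in the open part, compatible with composition); (2) for each $n$, identify the map in arity $n$ with the projection of a fibration whose total space is $\und{S}_\tau(n)$ and whose base is $\Emb_f(D^{\sqcup n},S\times(0,1))$, by forgetting the collar component; (3) prove this projection is a Serre fibration via isotopy extension, appealing where possible to the collar-contractibility results already invoked in the proof of \ref{equivalence of modules}; (4) show the fiber is contractible by a push-to-the-boundary deformation retraction; (5) conclude by the long exact sequence of the fibration that the map is a weak equivalence in every arity, hence an objectwise weak equivalence, which in the injective model structure on $\Mod_{\oper{E}_d}$ is exactly a weak equivalence of right $\oper{E}_d$-modules.

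The main obstacle I expect is step (4): making the ``push toward the boundary'' deformation precise while keeping it strictly compatible with the collar structure map on $S\times[0,1)$ and disjoint from the fixed configuration of disks throughout the homotopy. One has to be careful that the retracting isotopy does not, at intermediate times, collide with the little disks or fail to be a framed embedding; the cleanest route is probably to first contract the space of collar structure maps itself and then handle the embedding of the complement, i.e.\ to decompose the fiber as an iterated fibration each of whose stages is visibly contractible by a rescaling argument, exactly as in the treatment of $\oper{E}_d^*$ versus $\oper{E}_d[1]$ above. Given the parallel with \ref{equivalence of modules}, this should go through without genuinely new input, only a more careful bookkeeping of the boundary collar.
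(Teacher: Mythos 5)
The plan in your steps (2)--(3) has a genuine flaw: the forgetting map $\und{S}_\tau(n)\to\Emb_f(D^{\sqcup n},S\times(0,1))$ is not surjective, so it cannot be a fibration with contractible fibers, and step (5) cannot be carried out. An element of $\und{S}_\tau(n)$ restricts to the identity on the collar $S\times[0,\epsilon)$, and smoothness forces the image of the collar component to contain an open neighborhood of $S\times[0,\epsilon]$; consequently the little disks are forced to land in the complement of such a neighborhood. A configuration $\phi\in\Emb_f(D^{\sqcup n},S\times(0,1))$ whose disks meet $S\times(0,\epsilon]$ is therefore not in the image. Such configurations lie in the same path component of the target as configurations that are hit, so if the forgetting map were a fibration its fibers over that component would all be weakly equivalent, yet some would be empty and others nonempty --- a contradiction. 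Thus it is not a fibration. (Note also that the image is not simply $\Emb_f(D^{\sqcup n},S\times(\epsilon,1))$: when $S$ is noncompact, a configuration landing in $S\times(\epsilon,1)$ can still accumulate on $S\times\{\epsilon\}$ in a way that excludes every admissible collar.)

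A workable repair is to route the comparison through configuration spaces, where the relevant evaluation maps really are surjective. Evaluation at the centers of the disks gives a commutative square whose right vertical map $\Emb_f(D^{\sqcup n},S\times(0,1))\to\on{Conf}(n,S\times(0,1))$ is a weak equivalence by \ref{framed embeddings of disks}, whose bottom map $\on{Conf}(n,S\times(\epsilon,1))\to\on{Conf}(n,S\times(0,1))$ is a weak equivalence because the open embedding $S\times(\epsilon,1)\hookrightarrow S\times(0,1)$ is isotopic to a diffeomorphism, and whose left vertical map $\und{S}_\tau(n)\to\on{Conf}(n,S\times(\epsilon,1))$ is surjective (for finitely many centers one can always choose small disjoint disks and a sufficiently thin collar, using a partition-of-unity bound as in the proof of contractibility of $\Emb^S(S\times[0,1),M)$) and has contractible fibers by the collar-contractibility results in the appendix. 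Two-out-of-three then gives the claim in each arity, and since weak equivalences in $\Mod_{\oper{E}_d}$ with the injective model structure are objectwise, this proves the proposition. The paper itself declares this result ``clear,'' which is defensible if one has the configuration-space comparison in mind, but your specific fibration does not exist as stated.
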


\begin{proof}
This is clear.
\end{proof}

\begin{coro}
For a cofibrant $\oper{E}_d$-algebra $A$, there is a weak equivalence
\[U_A^{S_\tau}\goto{\simeq}\int_{S\times(0,1)}A\]
\end{coro}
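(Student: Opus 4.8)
The plan is to identify both sides of the asserted equivalence with operadic coends of right $\oper{E}_d$-modules against $A$, and then to transport along the weak equivalence of right $\oper{E}_d$-modules $\und{S}_\tau\to\Emb_f(-,S\times(0,1))$ furnished by the preceding proposition. By the definition of the universal enveloping algebra (compare Proposition~\ref{universal enveloping algebra}) we have $U_A^{S_\tau}=\und{S}_\tau\circ_{\oper{E}_d}A$. By the definition of factorization homology (see~\ref{factorization}) we have $\int_{S\times(0,1)}A=\Emb_f(-,S\times(0,1))\otimes_{\cat{E}_d}QA$, where $QA\to A$ is a cofibrant replacement in $\cat{C}[\oper{E}_d]$; the coend is taken against $QA$ only because factorization homology is defined as the \emph{derived} operadic left Kan extension.

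First I would remove the cofibrant replacement. Since $A$ is already cofibrant as an $\oper{E}_d$-algebra, $QA\to A$ is a weak equivalence between cofibrant objects of $\cat{C}[\oper{E}_d]$, so the operadic coend of the $\Sigma$-cofibrant right module $\Emb_f(-,S\times(0,1))$ against it is a weak equivalence; this is the homotopy invariance of the operadic coend (the statement~\ref{invariance of operadic coend}, used for instance in the proof of~\ref{monoidality}). Hence $\int_{S\times(0,1)}A$ is weakly equivalent to $\Emb_f(-,S\times(0,1))\circ_{\oper{E}_d}A$. Next I would invoke the preceding proposition: the restriction map $\und{S}_\tau\to\Emb_f(-,S\times(0,1))$ is a weak equivalence of right $\oper{E}_d$-modules, and both modules are $\Sigma$-cofibrant, as are all operads and modules occurring in this section. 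Applying $-\circ_{\oper{E}_d}A$ and using the same invariance once more, this yields a weak equivalence $U_A^{S_\tau}=\und{S}_\tau\circ_{\oper{E}_d}A\to\Emb_f(-,S\times(0,1))\circ_{\oper{E}_d}A$. Stringing the two steps together gives the zig-zag $U_A^{S_\tau}\goto{\simeq}\int_{S\times(0,1)}A$ asserted by the corollary.

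I do not expect a genuine obstacle here: the whole argument is a formal consequence of the preceding proposition together with the homotopy invariance of the operadic coend, both of which are already available. What remains is bookkeeping --- chiefly checking that the $\Sigma$-cofibrancy hypotheses required to apply the invariance statement really hold (they do, by the blanket assumption of this section), and that the two weak equivalences produced above assemble into the single zig-zag in the statement.
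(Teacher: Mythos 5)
Your proposal is correct and follows essentially the same route as the paper: identify both sides as operadic coends, apply the preceding proposition to get a weak equivalence of right $\oper{E}_d$-modules $\und{S}_\tau\to\Emb_f(-,S\times(0,1))$, and then invoke the homotopy invariance of the operadic coend. The paper's proof is terser --- it does not explicitly address the step of replacing $QA$ by $A$ --- but your added bookkeeping on that point is accurate and implicitly subsumed in the paper's appeal to~\ref{invariance of operadic coend}.
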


\begin{proof}
By the previous proposition, there is a weak equivalence of right $\oper{E}_d$-modules
\[\und{S}_\tau\goto{\simeq} \Emb_f(-,S\times(0,1))\]
Then it suffices to apply \ref{invariance of operadic coend} to this map.
\end{proof}

\section{Factorization homology of commutative algebras}

This section is an interlude in our study of the categories of modules over $\oper{E}_d$-algebras. We focus on the simpler case of commutative algebras. It turns out that commutative algebras have a notion of factorization homology where the manifolds can be replaced by simpliclial sets. The definition is a straightforward variant of factorization homology. Such a construction was made by Pirashvili (see \cite{pirashvilihodge}) in the category of chain complexes over a field of characteristic zero. See also \cite{ginothigher}.

Note that since the commutative operad is not $\Sigma$-cofibrant the results presented in this section only work in model categories like symmetric spectra or chain complexes in characteristic $0$ where commutative algebra are homotopically well-behaved. One would need to work with an $\oper{E}_\infty$-operad to make them work in model categories like $\S$ or simplicial modules over a ring.

\subsection{Construction}

Let $\mathsf{S}$ be a set of connected simplicial sets containing the point, we denote $\S^{\mathsf{S}}$ the  operad with objects $\mathsf{S}$ and with spaces of operations:
\[\oSpace^{\mathsf{S}}(\{s_i\}_{i\in I};t):=\Map(\sqcup_I s_i,t)\]

Note that the full suboperad on the point is precisely the operad $\oCom$, therefore, we have a morphism of  operads:
\[\oCom\to\oSpace^{\mathsf{S}}\]

\begin{defi}
Let $A$ be a commutative algebra in $\cat{C}$, let $X$ be an object of the symmetric monoidal category $\cat{Space}^{\mathsf{S}}$, we define the \emph{factorization homology of $A$ over $X$} to be the value at $X$ of the operadic left Kan extension of $A$ along the map 
\[\oCom\to\oSpace^{\mathsf{S}}\]
We denote by $\int_XA$ this object of $\cat{C}$.
\end{defi}

Note that the value of $\int_XA$ is:
\[\Map(-,X)\otimes_{\cat{Fin}}QA\]
where $QA\to A$ is a cofibrant replacement of $A$ as a commutative algebra. In particular, it is independant of the set $\mathsf{S}$. In the following we will write $\int_X A$ for any simplicial set $X$ without mentioning the set $\mathsf{S}$.

\begin{prop}\label{commutative fh preserves weak equivalences}
The functor $X\mapsto\int_X A$ preserves weak equivalences.
\end{prop}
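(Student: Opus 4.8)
The plan is to show that $X \mapsto \int_X A = \Map(-,X) \otimes_{\cat{Fin}} QA$ takes weak equivalences of simplicial sets to weak equivalences in $\cat{C}$, where $QA \to A$ is a cofibrant replacement of $A$ as a commutative algebra. The key point is that the functor $X \mapsto \Map(-,X)$, sending a simplicial set to a right $\cat{Fin}$-module (where $\cat{Fin}$ has the homotopy type of the commutative operad), is homotopically well-behaved, and that the operadic coend against a cofibrant commutative algebra preserves weak equivalences between $\Sigma$-cofibrant — or in this case $\cat{Fin}$-cofibrant — right modules.

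First I would reduce to a statement about right $\cat{Fin}$-modules: for a weak equivalence $f : X \to Y$ of simplicial sets, the induced map $\Map(-,X) \to \Map(-,Y)$ is a weak equivalence of right $\cat{Fin}$-modules. Since $\Map(\sqcup_n \mathrm{pt}, X) \cong X^n$, in degree $n$ this is just the map $X^n \to Y^n$, which is a weak equivalence because the cartesian product of simplicial sets preserves weak equivalences (all simplicial sets are cofibrant, and the product is a Quillen bifunctor on $\S$). So $\Map(-,X) \to \Map(-,Y)$ is an objectwise, hence injective, weak equivalence of right $\cat{Fin}$-modules.

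Second, I would invoke the invariance of the operadic coend. The relevant input is the analogue of \ref{invariance of operadic coend} adapted to the commutative operad: if $QA$ is a cofibrant commutative algebra, then $- \otimes_{\cat{Fin}} QA$ sends weak equivalences between right $\cat{Fin}$-modules (that are suitably cofibrant, e.g. levelwise cofibrant, which holds here since every simplicial set is cofibrant and the mapping spaces $X^n$ are therefore cofibrant with their $\Sigma_n$-action being free enough after the bar resolution is applied) to weak equivalences in $\cat{C}$. Concretely one models the derived operadic left Kan extension by the two-sided bar construction $B(\Map(-,X), \cat{Fin}, QA)$, and a levelwise weak equivalence of right modules induces a weak equivalence on each simplicial level of the bar construction, hence on the realization. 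Combining the two steps gives that $\int_X A \to \int_Y A$ is a weak equivalence.

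The main obstacle is the homotopical control of the coend in the non-$\Sigma$-cofibrant setting: the commutative operad is not $\Sigma$-cofibrant, so one must genuinely use the hypothesis (stated at the start of Section 5) that $\cat{C}$ is a category like symmetric spectra or rational chain complexes where commutative algebras are homotopically well-behaved, and that $QA$ is cofibrant \emph{as a commutative algebra}. The point is that for such a cofibrant $QA$, the underlying right $\cat{Fin}$-module $n \mapsto QA^{\otimes n}$ is flat enough that the bar construction computes the derived coend and is homotopy invariant in the module variable; this is exactly the content one borrows from the cited machinery. Everything else is a routine check.
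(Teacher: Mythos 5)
Your proof is correct and follows essentially the same route as the paper: reduce to observing that $\Map(-,X)\to\Map(-,Y)$ is an objectwise weak equivalence of right $\cat{Fin}$-modules (since $X^n\to Y^n$ is a weak equivalence), then invoke \ref{invariance of operadic coend}(3) to conclude that $-\otimes_{\cat{Fin}}QA$ preserves all weak equivalences between right modules when $QA$ is cofibrant. One small remark: you phrase the second step as ``the analogue of \ref{invariance of operadic coend} adapted to the commutative operad,'' but no adaptation is needed — that proposition is stated for arbitrary operads, and the stipulation that $\cat{C}$ is symmetric spectra (or rational chain complexes) is precisely what allows it to apply without $\Sigma$-cofibrancy; your bar-construction discussion is a reasonable unpacking of why (3) holds, but the paper just cites it directly.
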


\begin{proof}
The functor $X\mapsto \Map(-,X)$ sends any weak equivalence in $\S$ to a weak equivalence in $\on{Fun}(\cat{Fin}\op,\S)$. The result then follows from \ref{invariance of operadic coend}.
\end{proof}

We now want to compare $\int_XA$ with $\int_MA$ where $M$ is a framed manifold.

Recall that $\cat{D}(M)$ denotes the poset of open sets of $M$ that are diffeomorphic to a disjoint union of disks.

\begin{prop}
There is a weak equivalence
\[\on{hocolim}_{\cat{D}(M)}\cat{Fin}(S,\pi_0(-))\simeq \Map(S,M)\]
\end{prop}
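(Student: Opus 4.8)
The statement to prove is that there is a weak equivalence
\[\on{hocolim}_{\cat{D}(M)}\cat{Fin}(S,\pi_0(-))\simeq \Map(S,M).\]
Here the functor sends $V\in\cat{D}(M)$ to $\cat{Fin}(S,\pi_0(V))$, the (discrete) set of maps of finite sets $S\to\pi_0(V)$; since each $V$ is a disjoint union of disks, $\pi_0(V)$ is just the finite set of components of $V$, and a point of $\cat{Fin}(S,\pi_0(V))$ amounts to a choice, for each element of $S$, of a component of $V$. I would first unwind this: a point of $\cat{Fin}(S,\pi_0(V))$ together with a choice of point in $V$ inside each chosen component is the same thing as a map $S\to V$ landing in finitely many components, and every map $S\to V$ (with $S$ finite discrete) does this. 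So the plan is to replace $\cat{Fin}(S,\pi_0(V))$ up to equivalence by $\Map(S,V)$ — the two differ by contracting each disk to a point, which is a weak equivalence since disks are contractible — and then identify $\on{hocolim}_{\cat{D}(M)}\Map(S,V)$ with $\Map(S,M)$.

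For the second identification I would invoke the same machinery already used in the paper for \ref{nb}, namely that $\cat{D}(M)$ is a good indexing category for describing $M$ (and maps into $M$) as a homotopy colimit of its disk-like open subsets. Concretely, the key input is the statement (implicit in \cite{horelfactorization} and used in the proof of \ref{nb}) that the canonical map $\on{hocolim}_{V\in\cat{D}(M)}V\to M$ is a weak equivalence, and more generally that $\on{hocolim}_{V\in\cat{D}(M)}\Map(K,V)\to\Map(K,M)$ is a weak equivalence for $K$ a finite simplicial set (or at least for $K$ finite discrete, which is all we need since $S$ is a finite set of elements — I would read $S$ here as a finite set, matching the usage $\cat{Fin}(S,-)$). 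This reduces to the case $K=\mathrm{pt}$, i.e. to $\on{hocolim}_{\cat{D}(M)}V\simeq M$, because $\Map(S,-)$ for $S$ a finite set is just a finite product $\prod_{s\in S}(-)$ and homotopy colimits over the sifted-enough category $\cat{D}(M)$ (it is filtered-like in the relevant sense: any two disk-like opens are contained in a common one, and $\cat{D}(M)$ has the property that the relevant cofinality/contractibility statements hold) commute appropriately with these finite products up to weak equivalence. Alternatively, and perhaps more cleanly, I would argue directly that $\cat{D}(M)_{/S}$-style comma considerations show the two sides corepresent the same homotopy type.

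Putting the pieces together: there is a zig-zag
\[\on{hocolim}_{\cat{D}(M)}\cat{Fin}(S,\pi_0(V))\xrightarrow{\ \simeq\ }\on{hocolim}_{\cat{D}(M)}\Map(S,V)\xrightarrow{\ \simeq\ }\Map(S,M),\]
where the first map is levelwise a weak equivalence (contracting disks) hence a weak equivalence on homotopy colimits, and the second is the comparison map of the previous paragraph. I expect the main obstacle to be the second equivalence: one must check that forming $\Map(S,-)$ — a finite product — commutes with the homotopy colimit over $\cat{D}(M)$, or equivalently that the diagram $V\mapsto \Map(S,V)$ on $\cat{D}(M)$ has homotopy colimit computing $\Map(S,M)$. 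This is exactly the kind of statement proved in \cite{horelfactorization} (it is the same argument pattern as \cite[Lemma 7.9]{horelfactorization}, which shows $\Emb_f(-,M)$ is a homotopy colimit of $\Emb_f(-,U)$ over $\cat{D}(M)$, and as the cited \cite[Corollary 7.7]{horelfactorization}), so in a fully written proof I would simply cite those results; the first equivalence, contracting each disk component to its set of components, is routine because $\cat{D}(M)$ consists of disjoint unions of contractible opens.
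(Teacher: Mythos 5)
The first reduction in your proposal matches the paper's: since each $V\in\cat{D}(M)$ is a disjoint union of contractible disks, the projection $\Map(S,V)\to\cat{Fin}(S,\pi_0(V))$ is an objectwise weak equivalence, so both sides become $\on{hocolim}_{\cat{D}(M)}\Map(S,V)$.

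Your second step, however, contains a genuine gap. You propose to reduce to the case $S=\mathrm{pt}$ and then commute $\Map(S,-)\cong(-)^{|S|}$ with the homotopy colimit, justifying this by calling $\cat{D}(M)$ ``sifted-enough'' or ``filtered-like'' because ``any two disk-like opens are contained in a common one.'' That claim is false: $\cat{D}(M)$ is in general not sifted and not filtered. Two overlapping disk-like open subsets of $M$ need not be contained in a common disjoint union of disks (e.g.\ two arcs covering a circle direction in an annulus have no disk-like open containing both), so finite products do not commute with $\on{hocolim}_{\cat{D}(M)}$ by any generic siftedness argument, and the reduction to the pointwise statement $\on{hocolim}_{\cat{D}(M)}V\simeq M$ does not go through. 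The alternative you gesture at (``$\cat{D}(M)_{/S}$-style comma considerations'') is essentially the correct idea, but as written it is too vague to constitute a proof.

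What the paper actually does is apply Lurie's generalized Seifert--van Kampen theorem \cite[Theorem A.3.1]{luriehigher} \emph{directly to the mapping space} $\Map(S,M)$ rather than to $M$. One sends $U\in\cat{D}(M)$ to the open subset of $\Map(S,M)$ consisting of those maps whose image lies in $U$, and then verifies the local contractibility hypothesis: for a fixed $f\in\Map(S,M)$, the image $f(S)$ is a finite set, so the subposet of $U\in\cat{D}(M)$ containing $f(S)$ is (co)filtered — given two such $U_1,U_2$ one finds small disjoint disks around the points of $f(S)$ inside $U_1\cap U_2$ — hence has contractible nerve. The finiteness of $S$ enters precisely here, in the contractibility check for the comma-type category, not through any commutation of finite products with $\on{hocolim}$. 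If you replace your sifted-commutation step with this verification, the proof is repaired and agrees with the paper's.
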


\begin{proof}
Note that for $U\in\cat{D}(M)$, we have $\cat{Fin}(S,\pi_0(U))\simeq\Map(S,U)$, thus, we are reduced to showing:
\[\on{hocolim}_{U\in\cat{D}(M)}\Map(S,U)\simeq \Map(S,M)\]
We use \cite[Theorem A.3.1. p. 971]{luriehigher}, there is a functor $\cat{D}(M)\to\cat{U}(\Map(S,M))$ sending $U$ to the open set of maps whose image is contained in $U$. For $f\in \Map(S,M)$, the subcategory of $U\in\cat{D}(M)$ containing the image of $f$ is filtered, therefore, it is contractible.
\end{proof}

Let $F$ be any functor $\cat{Fin}\to\cat{C}$. We have the following diagram:
\[\cat{D}(M)\stackrel{\alpha}{\rightarrow}\cat{Fin}\stackrel{F}{\rightarrow}\cat{C}\]

\begin{prop}
There is a weak equivalence:
\[\on{hocolim}_{\cat{D}(M)}\alpha^*F\simeq \Map(-,M)\otimes_{\cat{Fin}}^{\L}F\]
\end{prop}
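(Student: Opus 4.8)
The plan is to reduce the statement to the previous proposition by a standard co-Yoneda / colimit-interchange argument. The key point is that any functor $F:\cat{Fin}\to\cat{C}$ can be written, up to weak equivalence, as a homotopy colimit built out of corepresentable functors, and homotopy left Kan extension along $\alpha:\cat{D}(M)\to\cat{Fin}$ commutes with these colimits. Concretely, I would first recall that for any $F$ there is a natural weak equivalence
\[
F(-)\;\simeq\;\operatorname*{hocolim}_{(n_*\in\cat{Fin})}\;\cat{Fin}(n_*,-)\otimes F(n_*),
\]
the homotopy co-Yoneda resolution (this uses that $\cat{C}$ is a nice simplicial model category, so the relevant bar construction computes the derived coend). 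Both sides of the claimed equivalence are, in $F$, homotopy colimit preserving: the right-hand side $\Map(-,M)\otimes^{\L}_{\cat{Fin}}F$ is a derived coend and hence sends homotopy colimits in $F$ to homotopy colimits, and the left-hand side $\operatorname*{hocolim}_{\cat{D}(M)}\alpha^*F$ is likewise homotopy colimit preserving in $F$ because $\alpha^*$ preserves objectwise weak equivalences and $\operatorname*{hocolim}$ commutes with $\operatorname*{hocolim}$. So it suffices to check the equivalence when $F=\cat{Fin}(m_*,-)\otimes X$ for a corepresentable functor tensored with an object $X\in\cat{C}$, and by a further reduction (tensoring is harmless) when $F=\cat{Fin}(m_*,-)$ is corepresentable.

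For $F=\cat{Fin}(m_*,-)$ corepresentable by $m_*$, the right-hand side computes, by the co-Yoneda lemma for derived coends, to $\Map(m_*,M)$, i.e.\ to $\Map(S,M)$ with $S$ the $m$-point set. On the other hand, $\alpha^*F$ is the functor $U\mapsto\cat{Fin}(m_*,\pi_0(U))=\cat{Fin}(S,\pi_0(U))$ on $\cat{D}(M)$, so the left-hand side is exactly $\operatorname*{hocolim}_{\cat{D}(M)}\cat{Fin}(S,\pi_0(-))$. The previous proposition (the one immediately above the statement) identifies this homotopy colimit with $\Map(S,M)$. Matching the two identifications with $\Map(S,M)$, and checking that they are compatible with the natural map relating the two sides, finishes the corepresentable case; the general case then follows by the homotopy-colimit-preservation argument of the first paragraph.

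The one technical point that needs care, and which I expect to be the main obstacle, is making the reduction along the co-Yoneda resolution rigorous at the level of homotopy (rather than strict) colimits: one must know that the bar resolution $\operatorname*{hocolim}_{(n_*)}\cat{Fin}(n_*,-)\otimes F(n_*)\to F$ is a levelwise weak equivalence of functors $\cat{Fin}\to\cat{C}$, and that applying $\alpha^*$ followed by $\operatorname*{hocolim}_{\cat{D}(M)}$, respectively the derived coend $\Map(-,M)\otimes^{\L}_{\cat{Fin}}(-)$, each commutes with the outer homotopy colimit over $\cat{Fin}$ up to natural weak equivalence. For the derived coend this is the Fubini theorem for homotopy coends; for the other side it is the fact that iterated homotopy colimits can be computed in either order, together with the cofibrancy/$\Sigma$-freeness hypotheses in force in this section (the modules $\cat{Fin}(S,\pi_0(-))$ are projectively cofibrant as functors on $\cat{D}(M)$, so the homotopy colimit is computed by the usual bar construction). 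Once these interchange statements are in place, the rest is the bookkeeping of the previous paragraph, and the result follows.
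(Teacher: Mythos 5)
Your proof is correct, but it takes a genuinely different route from the paper's. The paper does not resolve $F$ at all: it writes $\on{hocolim}_{\cat{D}(M)}\alpha^*F\simeq *\otimes^{\L}_{\cat{D}(M)}\alpha^*F$ and then uses the adjunction associated to $\alpha$ (a one-line homotopy Kan extension identity) to move the weight to $\cat{Fin}$, giving $\L\alpha_!(*)\otimes^{\L}_{\cat{Fin}}F$, where $\L\alpha_!(*)$ is the homotopy left Kan extension of the constant functor $*$. Since $(\L\alpha_!*)(S)\simeq\on{hocolim}_{\cat{D}(M)}\cat{Fin}(S,\pi_0(-))$, the previous lemma identifies $\L\alpha_!(*)$ with $\Map(-,M)$ and the proposition follows in a single step. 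Your argument resolves $F$ by corepresentables via homotopy co-Yoneda, reduces to $F$ of the form $\cat{Fin}(m_*,-)\otimes X$, and only then invokes the previous lemma. Both proofs hinge on the same prior lemma but use it on opposite legs of the coend: the paper transposes the weight $*$ across the adjunction, while you transpose $F$ across its bar resolution. The paper's version is shorter and sidesteps the interchange-of-homotopy-colimits bookkeeping (and the auxiliary cofibrancy check for the $\otimes X$ factor) that you rightly identify as the main technical burden of your version; your version is more hands-on and would survive in a setting where the $\L\alpha_!\dashv\alpha^*$ machinery is not yet in place.
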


\begin{proof}
The hocolim can be written as a coend
\[*\otimes_{\cat{D}(M)}^{\L}\alpha^*F\]

We use the adjuction induced by $\alpha$, and find:
\[\on{hocolim}_{\cat{D}(M)}\alpha^*F\simeq \L\alpha_!(*)\otimes_{\cat{Fin}}F\]

But $\L\alpha_!(*)$ is the functor whose value at $S$ is:
\[\cat{Fin}\op(\pi_0(-),S)\otimes^{\L}_{\cat{D}(M)\op}*\simeq\on{hocolim}_{\cat{D}(M)}\cat{Fin}(S,\pi_0(-))\]
The results then follows from the previous lemma.
\end{proof}

\begin{coro}
Let $M$ be a framed manifold and $A$ a commutative algebra in $\cat{C}$, then $\int_{\on{Sing}(M)}A$ is weakly equivalent to $\int_MA$
\end{coro}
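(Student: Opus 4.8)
The plan is to exhibit both $\int_M A$ and $\int_{\on{Sing}(M)}A$ as the same homotopy colimit over the poset $\cat{D}(M)$, so that the corollary reduces to stringing together \ref{nb} and the two propositions immediately preceding the corollary.

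First I would rewrite the left-hand side. By \ref{nb}, $\int_M A\simeq\on{hocolim}_{V\in\cat{D}(M)}A(\delta(V))$, where $\delta\colon\cat{D}(M)\to\cat{E}_d$ is a parametrization and $A$ is (a cofibrant model of) the commutative algebra $A$ regarded as an $\oper{E}_d$-algebra by restriction along the functor $\cat{E}_d\to\cat{Fin}$ that sends $D^{\sqcup n}$ to $\pi_0(D^{\sqcup n})=n$. The key observation is that this restriction functor composed with $\delta$ is exactly the functor $\alpha\colon\cat{D}(M)\to\cat{Fin}$, $U\mapsto\pi_0(U)$, of the preceding propositions — the choice of $\delta$ being irrelevant up to homotopy. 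Consequently the $\cat{D}(M)$-diagram $V\mapsto A(\delta(V))$ is identified with $\alpha^* A$, where $A$ now denotes the underlying functor $\cat{Fin}\to\cat{C}$ of the commutative algebra. Hence $\int_M A\simeq\on{hocolim}_{\cat{D}(M)}\alpha^* A$.

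Next, by the proposition asserting $\on{hocolim}_{\cat{D}(M)}\alpha^* F\simeq\Map(-,M)\otimes^{\L}_{\cat{Fin}}F$, applied with $F=A$, we obtain $\int_M A\simeq\Map(-,M)\otimes^{\L}_{\cat{Fin}}A$. It then remains to identify the derived coend on the right with $\int_{\on{Sing}(M)}A$. The derived coend is computed by $\Map(-,M)\otimes_{\cat{Fin}}QA$ for a cofibrant replacement $QA\to A$ of $A$ as a commutative algebra; and for a finite set $S$ one has $\Map(S,M)=\on{Sing}(M^{S})=(\on{Sing} M)^{S}=\Map(S,\on{Sing} M)$, so $\Map(-,M)$ and $\Map(-,\on{Sing} M)$ are literally equal as functors $\cat{Fin}\op\to\S$. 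Therefore $\Map(-,M)\otimes_{\cat{Fin}}QA=\Map(-,\on{Sing} M)\otimes_{\cat{Fin}}QA$, which is $\int_{\on{Sing}(M)}A$ by the definition of factorization homology over a simplicial set.

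The main obstacle, such as it is, is the homotopy-invariance bookkeeping that allows one to pass between the cofibrant-$\oper{E}_d$-algebra model of $A$ used implicitly in \ref{nb} and the cofibrant-commutative-algebra model used to define $\int_X A$, and to know that the various homotopy colimits and (derived) coends are insensitive to these choices. This is precisely what the restriction on $\cat{C}$ flagged at the beginning of the section is there to guarantee, and it is handled using the homotopy invariance of homotopy colimits together with that of the derived coend.
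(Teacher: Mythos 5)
Your proof follows the same route as the paper: use \ref{nb} to express $\int_M A$ as $\on{hocolim}_{\cat{D}(M)}\alpha^* A$, invoke the immediately preceding proposition to turn this into the derived coend $\Map(-,M)\otimes^{\L}_{\cat{Fin}}A$, and identify that with $\int_{\on{Sing}(M)}A$. The one place where your write-up is a little breezy is the assertion that the derived coend "is computed by $\Map(-,M)\otimes_{\cat{Fin}}QA$ for a cofibrant replacement $QA\to A$ of $A$ as a commutative algebra"; the paper makes this identification precise by citing \ref{Operadic vs categorical} (equivalently \ref{invariance of operadic coend}), which says exactly that the underived operadic coend against a cofibrant algebra computes the derived categorical coend — worth citing explicitly rather than treating as a definition.
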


\begin{proof}
We have by \ref{nb}
\[\int_M A\simeq \on{hocolim}_{\cat{D}(M)}\alpha^*A\]

By \ref{Operadic vs categorical}:
\[\int_{\on{Sing}(M)}A\simeq \Map(-,\on{Sing}(M))\otimes^{\L}_{\cat{Fin}} A\]

Hence the result is a trivial corollary of the previous proposition.
\end{proof}

\subsection{Comparison with McClure, Schw\"anzl and Vogt description of $THH$. }

In \cite{mcclurethh}, the authors show that $THH$ of a commutative ring spectrum $R$ coincides with the tensor $S^1\otimes R$ in the simplicial category of commutative ring spectra. We want to generalize this result and show that for a commutative algebra $A$, there is a natural weak equivalence of commutative algebras:
\[\int_{X}A\simeq X\otimes A\]

Let $X$ be a simplicial set. There is a category $\Delta/X$ called the category of simplices of $X$ whose objects are pairs $([n],x)$ where $x$ is a point of $X_n$ and whose morphisms from $([n],x)$ to $([m],y)$ are maps $d:[n]\to[m]$ in $\Delta$ such that $d^*y=x$. Note that there is a functor:
\[F_X:\Delta/X\to \S\]
sending $([n],x)$ to $\Delta[n]$. The colimit of that functor is obviously $X$ again.

\begin{prop}
The map
\[\on{hocolim}_{\Delta/X}F_X\to\on{colim}_{\Delta/X}F_X\cong X\]
is a weak equivalence.
\end{prop}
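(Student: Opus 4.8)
The plan is to exhibit the colimit $X = \operatorname{colim}_{\Delta/X} F_X$ as a homotopy colimit by recognizing the indexing diagram as cofibrant in the projective model structure, or equivalently by a cofinality/Reedy argument. The cleanest route is as follows. First I would recall that for a Reedy cofibrant diagram the canonical map from the homotopy colimit to the colimit is a weak equivalence, so it suffices to show that the functor $F_X : \Delta/X \to \cat{S}$ is projectively cofibrant, or at least that its colimit is already a homotopy colimit. Since each $F_X([n],x) = \Delta[n]$ is a representable simplicial set and the structure maps are the face and degeneracy inclusions coming from $\Delta$, the diagram $F_X$ is the left Kan extension along $\Delta/X \to \Delta$ of the Yoneda embedding, and one knows classically (this is essentially the statement that $X$ is the homotopy colimit of its simplices) that this diagram is projectively cofibrant: it is built as a transfinite composition of pushouts of the generating cofibrations $\partial\Delta[n] \hookrightarrow \Delta[n]$ tensored with representable diagrams on $\Delta/X$, by inducting over the skeletal filtration of $X$.

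Concretely, the key steps in order are: (1) filter $X$ by its skeleta $\operatorname{sk}_0 X \subset \operatorname{sk}_1 X \subset \cdots$, with colimit $X$; (2) observe that $\Delta/\operatorname{sk}_n X$ is obtained from $\Delta/\operatorname{sk}_{n-1}X$ by attaching, for each nondegenerate $n$-simplex of $X$, a copy of $\Delta/\Delta[n]$ glued along $\Delta/\partial\Delta[n]$; (3) conclude that $F_X$ restricted to $\Delta/\operatorname{sk}_n X$ is obtained from the restriction to $\Delta/\operatorname{sk}_{n-1} X$ by a pushout of a projective cofibration, using that $F_{\partial\Delta[n]} \to F_{\Delta[n]}$ is the inclusion $\partial\Delta[n]\hookrightarrow\Delta[n]$ at the cone point and is a projective cofibration of $\Delta/\Delta[n]$-diagrams since $\Delta/\Delta[n]$ has a terminal object; (4) take the colimit over $n$ to see $F_X$ is projectively cofibrant on all of $\Delta/X$; (5) invoke that for a projectively cofibrant diagram, $\operatorname{hocolim} \to \operatorname{colim}$ is a weak equivalence, and identify the colimit with $X$ as stated.

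An alternative, and perhaps shorter, argument avoids the explicit cofibrancy analysis: the nerve of $\Delta/X$ is weakly equivalent to $X$ (this is a standard fact, e.g. via Thomason's theorem or directly), and more precisely one can run Bousfield–Kan: $\operatorname{hocolim}_{\Delta/X} F_X \simeq \operatorname{hocolim}_{\Delta/X} (\ast) $ is not quite right since $F_X$ is not the constant diagram, but one can instead use that $F_X$ is the comma-category projection composed with Yoneda and apply the cofinality statement that $\operatorname{hocolim}_{\Delta/X}(\Delta[-]) \simeq \operatorname{hocolim}_{(\Delta/X)} \operatorname{hocolim}_{\Delta/\Delta[n]} \ast$; the inner diagram has a terminal object so its hocolim is contractible, collapsing the two-fold hocolim to the nerve of $\Delta/X$, which is $X$.

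The main obstacle I expect is step (3): verifying carefully that gluing in $\Delta/\Delta[n]$ along $\Delta/\partial\Delta[n]$ produces a \emph{projective} cofibration of diagrams, rather than merely a levelwise one. This hinges on the fact that $\Delta/\Delta[n]$ has a terminal object $([n],\mathrm{id})$, so a diagram on it is projectively cofibrant as soon as its value at the terminal object is cofibrant and — more to the point — the latching-object condition at the terminal object reduces to a single map; pinning down that this map is exactly $\partial\Delta[n]\hookrightarrow\Delta[n]$ requires knowing $F_X$ on $\Delta/\partial\Delta[n]$ has colimit $\partial\Delta[n]$, which is itself the inductive hypothesis applied to $\partial\Delta[n]$ in place of $X$. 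So the real content is organizing this as a clean double induction on the dimension and the skeleta; once that bookkeeping is in place, everything else is formal.
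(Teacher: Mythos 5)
The paper's own proof of this proposition is a one-line citation to \cite[Proposition 4.2.3.14]{lurietopos}, so you are necessarily taking a different route; that is fine, but your main argument has a real gap. The pivotal claim---that $F_X$ is projectively cofibrant in $\on{Fun}(\Delta/X,\S)$---is not established by the skeletal induction you sketch. Step (3) asserts that the restriction of $F_X$ to $\Delta/\on{sk}_n X$ is obtained from the restriction to $\Delta/\on{sk}_{n-1}X$ by a pushout of a projective cofibration, but these are objects of diagram categories over \emph{different} indexing categories, so ``pushout of a projective cofibration'' is not meaningful here without first passing through something like a left Kan extension along the inclusion $\Delta/\on{sk}_{n-1}X\hookrightarrow\Delta/\on{sk}_nX$, and you do not carry that out. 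The justification ``is a projective cofibration since $\Delta/\Delta[n]$ has a terminal object'' is also not correct: a terminal object makes evaluation there a left Quillen functor (it equals $\on{colim}$), but it does not turn a diagram with prescribed value at the terminal object into a free cell. Finally, your last paragraph appeals to a ``latching-object condition,'' which belongs to Reedy, not projective, cofibrancy; and while $F_X$ is indeed Reedy cofibrant (the latching map at $([n],x)$ is exactly $\partial\Delta[n]\hookrightarrow\Delta[n]$), the Reedy category $\Delta/X$ does not have cofibrant constants---already for $n=1$ the latching map of a constant diagram at $K$ is the fold $K\sqcup K\to K$---so $\on{colim}$ is not left Quillen from the Reedy structure and Reedy cofibrancy of $F_X$ alone does not yield $\on{hocolim}\simeq\on{colim}$.

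Your alternative paragraph is much closer in spirit to a valid proof, but it leaves the essential point unaddressed: computing $\on{hocolim}_{\Delta/X}\ast\simeq N(\Delta/X)\simeq X$ identifies the abstract homotopy type of $\on{hocolim}_{\Delta/X}F_X$, but the proposition is about the \emph{canonical comparison map} $\on{hocolim}\to\on{colim}$, and your argument never pins that map down. A way to close this gap: set $G(c)=N\bigl((\Delta/X)/c\bigr)$. This ``fat point'' $G$ is projectively cofibrant with $\on{colim}G=N(\Delta/X)$; moreover $(\Delta/X)/([n],x)\cong\Delta/[n]$, so the last-vertex maps $N(\Delta/[n])\to\Delta[n]$ assemble to a natural objectwise weak equivalence $G\to F_X$. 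Naturality of the comparison map then gives a commuting square whose top edge ($\on{hocolim}G\to\on{colim}G$, a weak equivalence by cofibrancy of $G$), left edge ($\on{hocolim}G\to\on{hocolim}F_X$, a weak equivalence since $G\to F_X$ is objectwise a weak equivalence) and right edge ($N(\Delta/X)\to X$, the last-vertex weak equivalence) are weak equivalences, forcing the bottom edge $\on{hocolim}F_X\to\on{colim}F_X$ to be one. Both the last-vertex lemma and the fat-point cofibrancy are genuine classical inputs you would need to cite or prove; your sketch gestures at this circle of ideas but omits precisely the naturality step where the content lies.
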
 

\begin{proof}
see \cite[Proposition 4.2.3.14.]{lurietopos}.
\end{proof}

\begin{coro}\label{unicity of functors}
Let $U$ be a functor from $\S$ to a model category $\cat{Y}$. Assume that $U$ preserves weak equivalences and homotopy colimits. Then $U$ is weakly equivalent to:
\[X\mapsto \on{hocolim}_{\Delta/X}U(*)\]
In particular, if $U$ and $V$ are two such functors, and $U(*)\simeq V(*)$, then $U(X)\simeq V(X)$ for any simplicial set $X$.
\end{coro}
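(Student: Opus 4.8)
The plan is to feed the weak equivalence of the preceding proposition into $U$ and then invoke the two hypotheses in turn. First I would observe that the assignment $X\mapsto(\Delta/X, F_X)$ is functorial in $X$ (this is the usual functoriality of the category of simplices, i.e. of the Grothendieck construction applied to $X\colon\Delta\op\to\S$), and that the collapse maps $\on{hocolim}_{\Delta/X}F_X\to\on{colim}_{\Delta/X}F_X\cong X$ assemble into a natural transformation of functors $\S\to\S$, which by the previous proposition is a natural weak equivalence. Applying $U$ and using that $U$ preserves weak equivalences yields a natural weak equivalence $U\bigl(\on{hocolim}_{\Delta/X}F_X\bigr)\simeq U(X)$.

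Next, since $U$ preserves homotopy colimits, we have a natural weak equivalence $U\bigl(\on{hocolim}_{\Delta/X}F_X\bigr)\simeq\on{hocolim}_{\Delta/X}(U\circ F_X)$. The diagram $U\circ F_X$ sends $([n],x)$ to $U(\Delta[n])$. Because each $\Delta[n]$ is contractible, the unique maps $\Delta[n]\to *$ form a natural weak equivalence of $\Delta/X$-diagrams from $F_X$ to the constant diagram at the point; applying $U$ (which preserves weak equivalences) gives a natural weak equivalence of $\Delta/X$-diagrams from $U\circ F_X$ to the constant diagram at $U(*)$. Since homotopy colimits take objectwise weak equivalences of diagrams to weak equivalences, this produces a natural weak equivalence $\on{hocolim}_{\Delta/X}(U\circ F_X)\simeq\on{hocolim}_{\Delta/X}U(*)$. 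Concatenating the three zig-zags gives the asserted natural weak equivalence $U(X)\simeq\on{hocolim}_{\Delta/X}U(*)$.

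For the final sentence, I would run the same argument for $V$ to get $V(X)\simeq\on{hocolim}_{\Delta/X}V(*)$; a weak equivalence $U(*)\simeq V(*)$ induces a weak equivalence of constant $\Delta/X$-diagrams, hence (again by invariance of homotopy colimits under objectwise weak equivalences) a weak equivalence $\on{hocolim}_{\Delta/X}U(*)\simeq\on{hocolim}_{\Delta/X}V(*)$, and composing all the zig-zags yields $U(X)\simeq V(X)$.

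The only genuinely delicate point is to fix precisely what ``preserves homotopy colimits'' means and to check that the comparison is natural in $X$ rather than merely an objectwise equivalence. Since every intermediate construction ($X\mapsto\Delta/X$, $X\mapsto\on{hocolim}_{\Delta/X}F_X$, the collapse maps, the natural transformation $F_X\Rightarrow\underline{*}$) is manifestly natural, and the only analytic inputs are ``$U$ preserves weak equivalences'', ``$U$ preserves homotopy colimits'', and the standard fact that the homotopy colimit functor is invariant under objectwise weak equivalences of diagrams, the argument is routine once the conventions are in place; as usual one works with projectively cofibrant replacements of the $\Delta/X$-diagrams so that the homotopy colimits are genuinely homotopy invariant, which is standard model-categorical bookkeeping.
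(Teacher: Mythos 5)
Your proof is correct and follows essentially the same route as the paper's: both combine the previous proposition, preservation of weak equivalences, preservation of homotopy colimits, and objectwise contractibility of $F_X$ into the same three-link zig-zag, merely differing in whether one replaces $F_X$ by the constant diagram at $*$ before or after pulling $U$ through the homotopy colimit. You are more explicit than the paper's one-line chain about naturality and about what ``preserves homotopy colimits'' entails, but there is no substantive difference in strategy.
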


\begin{proof}
Since $U$ preserves weak equivalences and homotopy colimits, we have a weak equivalence:
\[\on{hocolim}_{\Delta/X}U(*)\simeq U(\on{hocolim}_{\Delta/X}*)\simeq U(X)\]
\end{proof}

We now have the following theorem:

\begin{theo}
Let $A$ be a cofibrant commutative algebra in $\cat{C}$. The functor $X\mapsto \int_XA$ and the functor $X\mapsto X\otimes A$ are weakly equivalent as functors from $\S$ to $\cat{C}[\oCom]$.
\end{theo}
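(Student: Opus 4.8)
The plan is to invoke Corollary~\ref{unicity of functors} with $\cat{Y}=\cat{C}[\oCom]$, taking $U$ to be the functor $X\mapsto\int_XA$ and $V$ to be the functor $X\mapsto X\otimes A$. This reduces the theorem to four checks: that both $U$ and $V$ take values in $\cat{C}[\oCom]$; that both preserve weak equivalences; that both preserve homotopy colimits; and that $U(*)$ is weakly equivalent to $V(*)$.

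Three of these I would dispatch first, as they are routine. For $V$: since $A$ is cofibrant and every simplicial set is cofibrant, $X\mapsto X\otimes A$ is a left Quillen functor $\S\to\cat{C}[\oCom]$, hence preserves weak equivalences and homotopy colimits, and $V(*)=*\otimes A=A$. For $U$: the functor $\int_{-}A$ is symmetric monoidal from $\cat{Space}^{\mathsf{S}}$ (with disjoint union) to $(\cat{C},\otimes)$, and every $X\in\S$ is canonically a commutative monoid object of $\cat{Space}^{\mathsf{S}}$ via its fold maps $X\sqcup\cdots\sqcup X\to X$ and its empty map $\varnothing\to X$; applying $\int_{-}A$ endows $\int_XA$ with a commutative algebra structure, naturally in $X$, so $U$ does land in $\cat{C}[\oCom]$. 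That $U$ preserves weak equivalences is Proposition~\ref{commutative fh preserves weak equivalences}. Finally $U(*)\simeq A$ because $\oCom\to\oSpace^{\mathsf{S}}$ is fully faithful, so operadic left Kan extension followed by restriction to the object $*$ recovers $A$ up to weak equivalence; concretely $\int_*A=\Map(-,*)\otimes_{\cat{Fin}}QA=\on{colim}_{\cat{Fin}}QA^{\otimes\bullet}=QA\simeq A$, using that $\underline{1}$ is a terminal object of $\cat{Fin}$.

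The one substantial step, which I expect to be the main obstacle, is that $U$ preserves homotopy colimits. I would establish this by constructing a natural isomorphism $\int_XA\cong X\otimes A$ in $\cat{C}[\oCom]$ — which simultaneously verifies the remaining hypothesis of Corollary~\ref{unicity of functors} and proves the theorem outright. Let $\underline A\colon\cat{Fin}\to\cat{C}[\oCom]$ be the Loday diagram $\underline{n}\mapsto A^{\otimes n}$, where one uses that $A^{\otimes n}$ is the $n$-fold coproduct of $A$ in $\cat{C}[\oCom]$; since $A$ is already cofibrant we may compute $\int_XA$ as the honest coend $\Map(-,X)\otimes_{\cat{Fin}}\underline A$. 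The idea is that both $X\otimes A$ and $\int_XA$ corepresent the functor $B\mapsto\Map_{\S}(X,\Map_{\cat{C}[\oCom]}(A,B))$ on $\cat{C}[\oCom]$: for $X\otimes A$ this is the defining property of the copowering, and for $\int_XA$ one uses the coend/copower adjunction to rewrite maps $\int_XA\to B$ as natural transformations $\Map(-,X)\to\Map_{\cat{C}[\oCom]}(\underline A(-),B)$ in $\on{Fun}(\cat{Fin}\op,\S)$, together with the identification $\Map_{\cat{C}[\oCom]}(\underline A(-),B)\cong\Map(-,\Map_{\cat{C}[\oCom]}(A,B))$ coming from $A^{\otimes n}=\coprod_nA$. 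The key formal input is that $Y\mapsto\Map(-,Y)$ is a fully faithful embedding of $\S$ into the presheaves $\cat{Fin}\op\to\S$ carrying coproducts to products: a natural transformation out of $\Map(-,Y)$ is determined by — and may be prescribed arbitrarily through — its value at the terminal object $\underline{1}\in\cat{Fin}$, because naturality along the coordinate maps $\underline{1}\to\underline{n}$ forces the component at $\underline{n}$ to be the $n$-fold power of the component at $\underline{1}$ once the target sends $\underline{n}$ to an $n$-fold product.

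The real difficulty I anticipate is the bookkeeping this last step conceals. The coend defining $\int_XA$ is formed in $\cat{C}$, where copowers by finite sets are disjoint unions, whereas the copowering $X\otimes A$ lives in $\cat{C}[\oCom]$, where copowers by finite sets are tensor products; matching the two, and checking that the commutative algebra structure on $\int_XA$ produced by the fold maps of $X$ really is the copower structure, will need an argument. I would handle it by noting that both constructions, regarded as functors of $X$, preserve sifted colimits — the forgetful functor $\cat{C}[\oCom]\to\cat{C}$ does, and $X\mapsto\Map(-,X)$ does because finite powers preserve sifted colimits — and that they agree on discrete simplicial sets, where both reduce to iterated tensor powers of $A$; since every simplicial set is the realization of a simplicial object in discrete simplicial sets, agreement propagates to all $X$. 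The remaining homotopical point — that the honest coend $\Map(-,X)\otimes_{\cat{Fin}}\underline A$ with the cofibrant commutative algebra $A$ already computes the derived operadic left Kan extension — is precisely what the standing hypotheses of this section, namely that $\cat{C}=\Mod_E$ carries the positive model structure, are there to guarantee.
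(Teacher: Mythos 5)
Your overall strategy matches the paper's: invoke Corollary~\ref{unicity of functors}, handle the easy checks the same way (left Quillen for $X\otimes A$, Proposition~\ref{commutative fh preserves weak equivalences} for weak equivalences, terminality of $\underline{1}$ in $\cat{Fin}$ for agreement on the point), and reduce to showing $\int_{-}A$ preserves homotopy colimits. The fallback you sketch in the last paragraph --- agreement on discrete simplicial sets plus preservation of realizations --- is in substance the paper's argument, which verifies preservation of coproducts (hence agreement on discrete sets given agreement on the point) and then computes $\int_{|X_\bullet|}A\cong|\int_{X_\bullet}A|$ via $|X_\bullet|^S\cong|X_\bullet^S|$. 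So the two proofs end up in the same place.

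The corepresentability route you propose first, however, does have a genuine gap, and it is slightly sharper than the one you flag. The coend $\Map(-,X)\otimes_{\cat{Fin}}\underline{A}$ is formed in $\cat{C}$, so the coend/copower adjunction identifies maps $\int_XA\to B$ \emph{in $\cat{C}$} with natural transformations $\Map(-,X)\Rightarrow\Map_{\cat{C}}(\underline{A}(-),B)$; it does not give you maps in $\cat{C}[\oCom]$ against $\Map_{\cat{C}[\oCom]}(\underline{A}(-),B)$ as written. A priori the forgetful functor $\cat{C}[\oCom]\to\cat{C}$ need not preserve this coend, so it is not formal that the coend in $\cat{C}$, endowed with the fold-map algebra structure, is the copower in $\cat{C}[\oCom]$. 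Closing this requires a separate input --- for instance identifying $\cat{C}[\oSpace^{\mathsf{S}}]$ with the category of simplicial functors $\mathsf{S}\to\cat{C}[\oCom]$, so that $\alpha_!$ becomes an enriched left Kan extension along $\{*\}\hookrightarrow\mathsf{S}$ and the copower formula drops out --- and that is exactly the ``bookkeeping'' your argument defers. Your fallback handles this, but note that ``preserves sifted colimits'' is not quite the right phrase: the geometric realization in $\S$ (the diagonal of a bisimplicial set) is not the strict colimit over $\Delta\op$, and what you actually need and what the paper actually verifies is that $\Map(\underline{n},-)$ commutes with the diagonal. Once you replace ``sifted colimits'' by ``realizations of simplicial objects'' and cite the identification of realization in $\cat{C}[\oCom]$ with realization in $\cat{C}$ (as the paper does via Brun's result), your argument and the paper's coincide.

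Finally, a small plus: the chain you propose would, if completed, yield a natural \emph{isomorphism} $\int_XA\cong X\otimes A$ rather than a mere weak equivalence, which is stronger than the statement of the theorem. The paper opts for the homotopical version because it is safer and Corollary~\ref{unicity of functors} delivers exactly that, but your instinct that something stricter is available is sound.
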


\begin{proof}
The two functors obviously coincide on the point. In order to apply \ref{unicity of functors}, we need to check that both functors preserve weak equivalences and homotopy colimits. 

Since $A$ is cofibrant and $\cat{C}$ is simplicial, $X\mapsto X\otimes A$ is a left Quillen functor $\S\to\cat{C}[\oCom]$ and as such preserves weak equivalences and homotopy colimits.

The functor $X\mapsto \int_X A$ preserves weak equivalences by \ref{commutative fh preserves weak equivalences}. We need to show that it preserves homotopy colimits.

Note that if $\cat{U}$ is a small category and $Y:\cat{U}\to \S$ is a functor, then the homotopy colimit of $Y$ can be expressed as the realization of the Reedy cofibrant simplicial object $\on{B}_\bullet(*,\cat{U},Y)$. From this construction it is clear that a functor preserves homotopy colimit if and only if it preserves homotopy colimits over $\Delta\op$ as well as coproducts.

Clearly $\int_{-}A$ send coproduct to coproducts in $\cat{C}[\oCom]$. 

Let $X_\bullet$ be a simplicial space. For each finite set $S$, we have an isomorphism $|X_\bullet|^S\cong |X^S_\bullet|$ because the realization of a simplicial space is just its diagonal. Hence we have

\begin{align*}
\int_{|X|}A&\cong \Map(-,|X|)\otimes_{\cat{Fin}}A\\
           &\cong |\Map(-,X_\bullet)|\otimes_{\cat{Fin}}A\\
           &\cong |\Map(-,X_\bullet)\otimes_{\cat{Fin}}A|\\
           &\cong |\int_{X_\bullet}A|
\end{align*}
In the last line the object $|\int_{X_\bullet}A|$ is a priori the geometric realization in $\cat{C}$ but this turns out to coincide with the geometric realization in $\cat{C}[\oCom]$ as is proved in the case of EKMM spectra in \cite[Proposition 2.3.]{brunmultiplicative}.
\end{proof}

\subsection{The commutative field theory}

This subsection is a toy-example of what we are going to consider in the sixth section. 

If $X$ is a space, we denote by $\S^{X/}$, the category of simplicial sets under $X$ with the model structure whose cofibrations, fibrations and weak equivalences are reflected by the forgetful functor $\S^{X/}\to \S$.

We define a large bicategory $\cat{Cospan}(\S)$. Its objects are the objects $\S$. 

The morphisms category $\cat{Cospan}(\S)(X,Y)$ is the category whose objects are diagrams of cofibrations:
\[Y\rightarrow U\leftarrow X\]
and whose morphisms are commutative diagrams:
\[
\xymatrix{
 &U\ar[dd]^{\simeq}& \\
X\ar[ur]\ar[dr]& &Y\ar[ul]\ar[dl]\\
 &V& 
}
\]
whose middle arrow is a weak equivalence.

The composition:
\[\cat{Cospan}(\S)(Y,Z)\times \cat{Cospan}(\S)(X,Y)\to \cat{Cospan}(\S)(X,Z)\]
is deduced from the Quillen bifunctor:
\[\S^{Z\sqcup Y}\times \S^{Y\sqcup X}\to \S^{Z\sqcup X}\]
taking $(Z\rightarrow A\leftarrow Y,Y\rightarrow B\leftarrow X)$ to $Z\rightarrow A\sqcup^Y B\leftarrow X$.

The bicategory $\cat{Cospan}(\S)$ is the underlying bicategory of a bioperad $\oper{C}\mathrm{ospan}(\S)$ which we now define.

\begin{defi}
A \emph{multi-cospan} from  $\{X_i\}_{i\in I}$ to $Y$ is a diagram:
\[
\xymatrix{
X_i\ar[ddr]& & \\
X_j\ar[dr]& & \\
\ldots &A&Y\ar[l]\\
X_k\ar[ur]& & 
}
\]
where all the objects $X_i$ for $i\in I$ appear on the left of the diagram.
\end{defi}

There is a model category on the category of multi-cospans from $\{X_i\}_{i\in I}$ to $Y$ in which weak equivalences, fibrations and cofibrations are reflected by the forgetful functor to $\S$.

The category of multi-morphisms from $\{X_i\}_{i\in I}$ to $Y$ in the  bioperad $\oCospan(\S)$ is the category of weak equivalences between cofibrant multi-cospans from $\{X_i\}_{i\in I}$ to $Y$.

\begin{theo}\label{from cospan to modcat}
Let $A$ be a cofibrant commutative algebra in $\cat{C}$. There is a morphism of operad $\oCospan(\S)\to\oMod\oper{C}\mathrm{at}$ sending $X$ to $\Mod_{\int_X A}$.
\end{theo}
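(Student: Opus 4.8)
The plan is to factor the desired map through the Morita bioperad of $\cat{C}$: by \ref{strictification}, exactly as in the proof of \ref{map from Mor to ModCat}, it suffices to construct a map of bioperads $\oCospan(\S)\to\oMor(\cat{C})$ and then postcompose with the map $\oMor(\cat{C})\to\oModCat$, $R\mapsto L\Mod_R$, of Proposition \ref{from rings to cat}. Since $\int_X A$ is a \emph{commutative} algebra, $L\Mod_{\int_X A}=\Mod_{\int_X A}$, so the composite indeed sends $X$ to $\Mod_{\int_X A}$. On objects, the map $\oCospan(\S)\to\oMor(\cat{C})$ sends a simplicial set $X$ to $\int_X A\in\cat{C}[\oCom]$; this is in particular an associative algebra, and every associative algebra of $\cat{C}=\Mod_E$ is admissible, so it is a legitimate object of $\oMor(\cat{C})$. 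Because $\int_{(-)}A$ is symmetric monoidal for disjoint union it transports the tensor product of algebras correctly, and no cofibrant replacement of the algebras themselves is required.

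On $1$-morphisms, a cofibrant multi-cospan from $\{X_i\}_{i\in I}$ to $Y$ is a diagram $\{X_i\}_{i\in I}\to W\leftarrow Y$, and applying $\int_{(-)}A$ to its legs yields algebra maps $\int_{X_i}A\to\int_W A$ and $\int_Y A\to\int_W A$, hence an algebra map $\int_Y A\otimes\bigotimes_{i\in I}\int_{X_i}A\to\int_W A$. Using commutativity to identify $(\int_Y A)\op$ with $\int_Y A$, restriction of scalars along this map makes $\int_W A$ an object of ${}_{\int_Y A}\Mod_{\{\int_{X_i}A\}_I}$; composing with a functorial cofibrant replacement in this module category gives the required object of $\oMor(\cat{C})(\{\int_{X_i}A\};\int_Y A)$. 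A weak equivalence of multi-cospans is sent to a weak equivalence of modules because $\int_{(-)}A$ preserves weak equivalences (\ref{commutative fh preserves weak equivalences}).

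It remains to supply the coherence data of a pseudo-functor; as usual it suffices to treat faces and degeneracies. Degeneracies involve only the unit $\int_X A\in{}_{\int_X A}\Mod_{\int_X A}$, which corresponds to the identity cospan $X\to X\leftarrow X$, so the substantial case is the face collapsing an internal edge: the composition in $\oCospan(\S)$ glues two apexes along a pushout $W\sqcup^{Y}W'$, while the Morita composition forms the relative tensor product of the two bimodules, so what must be exhibited is a natural isomorphism in the homotopy category
\[\int_{W}A\,\otimes^{\L}_{\int_Y A}\int_{W'}A\;\simeq\;\int_{W\sqcup^{Y}W'}A\]
compatible with the module structures induced by the two algebra maps into the pushout. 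This holds because $\int_{(-)}A$ preserves homotopy colimits as a functor $\S\to\cat{C}[\oCom]$ (it is equivalent to $X\mapsto X\otimes A$, which is left Quillen by the cofibrancy of $A$), because the pushout of a cofibrant cospan is a homotopy pushout, and because the homotopy pushout $\int_W A\sqcup^{\int_Y A}\int_{W'}A$ of commutative algebras has underlying object the derived relative tensor product $\int_W A\otimes^{\L}_{\int_Y A}\int_{W'}A$ — a standard fact in the ambient categories considered here (symmetric spectra, rational chain complexes). The remaining coherences are then automatic from the ``maximal coherence'' design of bioperads: once the represented functors agree, the comparison isomorphisms are pinned down by Yoneda, just as in the proof that $\oMor^d(\cat{V},S)$ is a bioperad.

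The only real work is the identification in the last display, together with the claim that the bimodule structure it carries is the one produced by the Morita composition. Concretely this splits into: homotopy-colimit-preservation of $\int_{(-)}A\colon\S\to\cat{C}[\oCom]$; the identification of a homotopy pushout of commutative algebras with the derived relative tensor product of the underlying objects, which requires commutative algebras in $\cat{C}$ to be homotopically well-behaved, as is assumed throughout this section; and the bookkeeping that restriction of scalars along the two maps into the pushout gives exactly the bimodule structure produced by the relative tensor product, in the spirit of the proof of \ref{tensor product is left Quillen bifunctor}.
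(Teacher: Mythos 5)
Your overall strategy of routing through a Morita bioperad and composing with the map of Proposition \ref{from rings to cat} is the right idea, but you aim at the wrong Morita bioperad, and this creates a genuine gap. You try to build the map $\oCospan(\S)\to\oMor(\cat{C})$ directly by sending $X\mapsto\int_X A$, and the coherence data you offer for the key face (collapsing an internal edge) is the derived equivalence
\[\int_{W}A\,\otimes^{\L}_{\int_Y A}\int_{W'}A\;\simeq\;\int_{W\sqcup^{Y}W'}A.\]
That is a zig-zag of weak equivalences, not a natural isomorphism of functors. But the definition of a pseudo-functor between bioperads in this paper requires the coherence data $\upsilon_c$ to be actual natural \emph{iso}morphisms, and these then have to satisfy strict cocycle identities. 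A weak equivalence in a model category is not an isomorphism, and once you insert a functorial cofibrant replacement (as you propose for producing objects of $\oMor(\cat{C})(\{\int_{X_i}A\};\int_YA)$) the situation gets strictly worse, not better. So the step you describe as ``the only real work'' is not a piece of the required data at all; you would still need to strictify this zig-zag into a pseudo-functor, which is exactly the hard part you have not done.

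The paper sidesteps this by constructing the intermediate map into $\oMor(\oCom)$, the Morita bioperad of right $\oCom$-modules in spaces, sending $X$ to the representable $\Map(-,X)$. There the crucial comparison
\[\Map(-,X\sqcup^U Y)\;\cong\;\Map(-,X)\otimes_{\Map(-,U)}\Map(-,Y)\]
is a literal isomorphism of functors on finite sets (both sides are identified with the explicit formula in the proof), so the pseudo-functor coherence is available on the nose. Only afterwards does one compose with the map $\oMor(\oCom)\to\oModCat$, which is where the underived objects get replaced by model categories and derived tensor products. In short: do the strict combinatorics before tensoring with $A$, not after.
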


\begin{proof}
Let us first construct a morphism of operad 
\[\oCospan(\S)\to \oMor(\oCom)\]

We do this by sending the object $X$ to the right $\oCom$-module $\Map(-,X)$. We observe that $\Map(-,X)$ is a commutative algebra in $\Mod_{\oCom}$ and any map of simplicial sets $X\to Y$ induces a commutative algebra map $\Map(-,X)\to \Map(-,Y)$ making $\Map(-,Y)$ into a left module over $\Map(-,X)$. This observation implies that any multicospan from $\{X_i\}_{i\in I}$ to $Y$ represents an object of ${}_{\{\Map(-,X_i)\}_{i\in I}}\Mod_{\Map(-,Y)}$.

Moreover observe that if $X\leftarrow U\rightarrow Y$ is a diagram in $\S$ in which both maps are cofibrations, then the functor on finite sets $\Map(-,X\sqcup ^U Y)$ is isomorphic (not just weakly equivalent) to the functor $\Map(-,X)\otimes_{\Map(-,U)}\Map(-,Y)$. Indeed, both functors can be identified with the following functor:
\[S\mapsto \bigsqcup _{S=A\cup B}\Map((A,A\cap B),(X,U))\times_{\Map(A\cap B,U)}\Map((B,A\cap B),(Y,U))\]
This proves that the assignment $X\mapsto \Map(-,X)$ is a morphism of operads from $\oper{C}\mathrm{ospan}(\S)$ to $\oMor(\oCom)$.

We have already constructed a morphism of operad from $\oMor(\oCom)$ to $\oModCat$ in the third section. We can compose it with the map we have just constructed.
\end{proof}

\begin{rem}
In \cite{toenoperations}, To\"en proves that the operad $\oper{E}_d$ maps to the operad of endomorphisms of $S^{d-1}$ in $\oCospan(\S)$. In particular, if $A$ is a commutative algebra in $\cat{C}$, the category $\Mod_{\int_{S^{d-1}}A}$ is an $\oper{E}_d$-monoidal model category. In the following section, we generalize this result to the case where $A$ is an $\oper{E}_d$-algebra.
\end{rem}

\section{The field theory associated to an $\oper{E}_d$-algebra}

In this section we continue our study of the categories of modules over $\oper{E}_d$-algebras. More precisely, we study the operations between categories of modules built out of a cobordisms.

\subsection{The cobordism category}

\begin{cons}
Let $V$ be a $(d-1)$-dimensional real vector space and $\tau$ be a basis of $V\oplus\mathbb{R}$. We define by $-\tau$ the basis of $V\oplus\mathbb{R}$ which is the image of $\tau$ under the unique linear transformation of $V\oplus\mathbb{R}$ whose restriction to $V$ is the identity and whose restriction to $\mathbb{R}$ is the opposite of the identity.

More generally, if $S$ is a $(d-1)$-manifold and $\tau$ is a $d$-framing, we denote by $-\tau$ the $d$-framing obtained by applying the above procedure fiberwise in $TS\oplus\mathbb{R}$.
\end{cons}

\begin{defi}
Let $S_\sigma$ and $T_\tau$ be two $(d-1)$-manifold with a $d$-framing. A \emph{bordism from $S_{\sigma}$ to $T_{\tau}$} is a $d$-manifold $W$ with boundary together with the data of
\begin{itemize}
\item An embedding $\psi_{in}^W:S_\sigma\times[0,\epsilon)\to W$ 
\item An embedding $\psi_{out}^W:T_\tau\times(0,\epsilon]\to W$ 
\item A framing which restricts to $\sigma$ and $\tau$ on $S_\sigma\times[0,\epsilon)$ and $T\times(0,\epsilon]$.
\end{itemize}
This data is moreover require to have the following properties
\begin{itemize}
\item The embedding $\psi_{out}^W$ admits a smooth extension $\psi_{out}^W:T\times[0,\epsilon]\to W$ (which may not send $T\times\{0\}$ to the boundary of $W$).
\item The induced map $\psi_{in}\sqcup\psi_{out}:S\sqcup T\to\partial W$ is a diffeomorphism.
\end{itemize}
\end{defi}

\begin{rem}
Note that according to our conventions, the framing of $W$ looks like $\sigma$ in a neighborhood of $S\subset \partial W$ but looks like $-\tau$ in a neighborhood of $T\subset \partial W$. 
\end{rem}

If $W$ is such a bordism, we denote by $W^-$ the manifold $W-T\times\{\epsilon\}$ If $W$ and $W'$ are two bordisms from $S_\sigma$ to $T_\tau$, we denote by $\Emb_f^{S_\sigma,T_{\tau}}(W,W')$, the topological space $\Emb_f^{S_\sigma\sqcup T_{-\tau}}(W ,W')$. We denote by $\on{Diff}_f^{S_\sigma,T_{\tau}}(W,W')$ the subspace consisting of those embeddings which are surjective.

\begin{rem}
If $V$ is a bordism from $S_\sigma$ to $T_\tau$ and $W$ is a bordism from $T_\tau$ to $U_\upsilon$, we denote by $W\circ V$ the manifold $V^-\sqcup^{T\times[0,\epsilon)} W$. Where the map $T\times[0,\epsilon)\to W$ is the map $\psi_{in}^W$ and the map $T\times[0,\epsilon)\to V^-$ is the map $\psi_{out}^V$. Note that $W\circ V$ is in a canonical way a bordism from $S_\sigma$ to $U_\upsilon$. 

Notice that there is the structure of a self-bordism of $S_\sigma$ on the manifold $S\times[0,\epsilon]$ in which $\psi_{in}$ and $\psi_{out}$ are the obvious inclusions. Note that also that if $W$ is a bordism from $S_\sigma$ to $T_\tau$, then $W\circ S\times[0,\epsilon]$ and $T\times[0,\epsilon]\circ W$ are canonically diffeomorphic to $W$.
\end{rem}

\begin{cons}
We can construct a $2$-category enriched in spaces $f\cat{Cob}_d$ as follows. 

Let us fix a set $\mathsf{X}$ of closed $d-1$-manifolds with a $d$ framing containing at least one element of each diffeomorphism class of connected $d$-framed $(d-1)$-manifold. For $\{S_i\}$ a finite family of elements of $\mathsf{X}$ and $\{T_j\}$ a finite family of elements of $\mathsf{X}$, we fix $\mathsf{Y}(\{S_i\};\{T_j\})$ a set containing at least one element in each diffeomorphism class of pairs $(W,s)$ where $W$ is a $d$-manifold with boundary and $s$ is the data of a the structure of a bordism from $\bigsqcup_iS_i$ to $\bigsqcup_jT_j$ on $W$.

We define our $2$-category to have as objects the finite families of elements of $\mathsf{X}$ and as morphism from $\{S_i\}$ to $\{T_j\}$ the sequences of bordisms $(W_0,W_1,\ldots W_n)$ such that for each $i$, the source of $W_i$ coincides with the target of $W_{i-1}$ and such that the source of $W_0$ is $\{S_i\}$ and the target of $W_{n}$ is $\{T_j\}$. The space of $2$-morphisms from $(W_0,\ldots, W_n)$ to $(W'_0,\ldots, W'_m)$ is the group \[\on{Diff}^{\sqcup S_i,\sqcup T_j}_f(W_n\circ\ldots W_0,W'_m\circ\ldots W'_0)\]
There is a symmetric monoidal structure on $f\cat{Cob}_d$ sending $\{S_i\}_{i\in I}$ and $\{S_j\}_{j\in J}$ to $\{S_i\}_{i\in I\sqcup J}$. 

We can apply the functor nerve to each Hom groupoid and we find a simplicial category also denoted $f\cat{Cob}_d$

Note that changing the sets $\mathsf{X}$ and $\mathsf{Y}$ does not change the $2$-category $f\cat{Cob}_d$ up to equivalence of categories.
\end{cons}

\begin{rem}
Note that in our version of the cobordism category, both the objects and the morphisms are allowed to be non-compact.
\end{rem}

\subsection{Construction of a bimodule from a bordism}

If $\cat{C}$ is a symmetric monoidal category and $\cat{X}$ is tensored over $\cat{C}$ and $A$ is an $\oper{O}$-algebra in $\cat{C}$, we can define the category of $P$-shaped $A$-modules in $\cat{X}$ as the category of $U^P_A$-modules in $\cat{X}$.

In particular, let $Q$ be an associative algebra in $\Mod_{\oper{O}}$. The category $\Mod_Q$ is tensored over $\Mod_{\oper{O}}$. The operad $\oper{O}$ seen as a right module over itself is an $\oper{O}$-algebra in $\Mod_{\oper{O}}$. Thus it makes sense to speak of $P$-shaped modules over $\oper{O}$ in the category $\Mod_Q$. It is straightforward to check that those are the same as $P$-$Q$-bimodules in $\Mod_{\oper{O}}$.

This observation will be useful in the following construction.

\begin{cons}
Let $W$ be an $S_\sigma$-manifold. We define $\und{W}$, a right module over $\oper{E}_d$ with a left $\und{S}_\sigma$-action.

As a right module over $\oper{E}_d$, $\und{W}$ sends $D^{\sqcup n}$ to $\Emb^{S_\sigma}(S\times[0,1)\sqcup D^{\sqcup n},W)$. The left $\und{S}_\sigma$-module structure comes from the composition
\[\Emb^{S_\sigma}(S\times[0,1)\sqcup D^{\sqcup p}, S\times[0,1))\times\Emb^{S_\sigma}(S\times[0,1)\sqcup D^{\sqcup n},W)\to \Emb^{S_\sigma}(S\times[0,1)\sqcup D^{\sqcup n+p},W)\]

Note that the construction $W\mapsto \und{W}$ is functorial in $W$. More precisely for $W$ and $W'$ two $S_\sigma$-manifolds, there is a map
\[\Emb^{S_\sigma}(W,W')\to \Map_{L\Mod_{\und{S}_\sigma}}(\und{W},\und{W}')\]

Now, let $W$ be a bordism from $S_\sigma$ to $T_\tau$. Let $\widetilde{W}$ be the $S_\sigma$-manifold $T\times[0,1)\circ W$. We have the left $S_\sigma$-module $\undtil{W}$. Observe that there is a map
\[\Emb^{T_\tau}_f(T\times[0,1)\sqcup D^{\sqcup n},T\times[0,1))\to \Emb^{T_\tau}_f(\widetilde{W}\sqcup D^{\sqcup n},\widetilde{W})\]
obtained by extending embedding by the identity on $W\subset \widetilde{W}$. 

This implies that there are maps
\[\Emb^{T_\tau}(T\times[0,1)\sqcup D^{\sqcup n},T\times[0,1))\to\Map_{\Mod_{\und{S}_\sigma}}(\und{\widetilde{W}\sqcup D^{\sqcup n}},\undtil{W})\]

Note moreover that as right modules over $\oper{E}_d$ with left $\und{S}_\sigma$-module structure, we have an isomorphism
\[\und{\widetilde{W}\sqcup D^{\sqcup n}}\cong \undtil{W}\otimes \und{D}^{\otimes n}\]

These two facts together imply that $\undtil{W}$ is an $\und{S}_\sigma$-$\und{T}_\tau$-bimodule in the category $\Mod_{\oper{E}_d}$ 
\end{cons}

\begin{rem}
The above construction relies on the following two observations:
\begin{itemize}
\item The category of $S_\sigma$ manifold is tensored over the category of framed $d$-manifold.
\item The manifold $\widetilde{W}$ is a $T_\tau$-shaped module over $D$ in the category of $S_\sigma$-manifolds.
\end{itemize}
\end{rem}

\begin{cons}\label{functor induced by a bordism}
Let $A$ be a cofibrant $\oper{E}_d$-algebra in $\cat{C}$. Let $W$ be a bordism from $\und{S}_\sigma$ to $\und{T}_\tau$, we define a functor $P_W$
\[P_W:\und{S}_\sigma\Mod_A\to \und{T}_\tau\Mod_A\]
by sending $M$ to $\undtil{W}\otimes_{\und{S}_\sigma\Mod}(A,M)$
\end{cons}

Recall that $P_W(-)$ can be derived by restricting it to cofibrant $\und{S}_\sigma$-shaped modules and we have an equivalence
\[\int_{\widetilde{W}}(A,M)\cong \L P_W(M)\]
by definition of factorization homology over an $S_\sigma$-manifold. 

\subsection{Compositions}

We now want to study composition of functors of the form $P_W$.

\begin{prop}\label{lax functor}
Let $W$ be a bordism from $S_{\sigma}$ to $T_{\tau}$ and $W'$ be a bordism from $T_{\tau}$ to $U_{\upsilon}$, then there is a map
\[\undtil{W}\otimes_{\und{T}}\undtil{W'}\to\undtil{W'\circ W}\]
\end{prop}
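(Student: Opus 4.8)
The plan is to build the arrow directly on embedding spaces and then see that it descends from the absolute tensor product to the relative one. By the universal property of the relative tensor product $\otimes_{\und{T}_\tau}$ as a coequalizer in $\und{S}_\sigma$-$\und{U}_\upsilon$-bimodules, it is enough to construct a map of right $\oper{E}_d$-modules
\[\Theta\colon\undtil{W}\otimes\undtil{W'}\longrightarrow\undtil{W'\circ W}\]
out of the \emph{absolute} tensor product in $\Mod_{\oper{E}_d}$ which is a map of $\und{S}_\sigma$-$\und{U}_\upsilon$-bimodules for the obvious outer actions and which coequalizes the two maps $\undtil{W}\otimes\und{T}_\tau\otimes\undtil{W'}\rightrightarrows\undtil{W}\otimes\undtil{W'}$ induced by the right $\und{T}_\tau$-action on $\undtil{W}$ and the left $\und{T}_\tau$-action on $\undtil{W'}$.

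The geometric ingredient is the identification
\[\widetilde{W'\circ W}\;=\;U\times[0,1)\circ(W'\circ W)\;\cong\;W^{-}\sqcup^{T\times[0,\epsilon)}\widetilde{W'},\]
obtained by unwinding $W'\circ W=W^{-}\sqcup^{T\times[0,\epsilon)}W'$; thus $W^{-}$ and $\widetilde{W'}$ sit in $\widetilde{W'\circ W}$ as open submanifolds overlapping exactly along the incoming collar $T\times[0,\epsilon)$ of $\widetilde{W'}$. Given $\phi\in\undtil{W}(D^{\sqcup p})=\Emb^{S_\sigma}(S\times[0,1)\sqcup D^{\sqcup p},\widetilde W)$ and $\phi'\in\undtil{W'}(D^{\sqcup q})=\Emb^{T_\tau}(T\times[0,1)\sqcup D^{\sqcup q},\widetilde{W'})$, I note that $\phi'$ restricts to a framed embedding $\phi'|_{T\times[0,1)}\colon T\times[0,1)\hookrightarrow\widetilde{W'}$ agreeing with the structure map on $T\times[0,\epsilon)$; gluing it to the identity of $W^{-}$ therefore produces a framed open embedding $\widehat{\phi'}\colon\widetilde W=W^{-}\sqcup^{T\times[0,\epsilon)}(T\times[0,1))\hookrightarrow\widetilde{W'\circ W}$. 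I then let $\Theta(\phi,\phi')\in\undtil{W'\circ W}(D^{\sqcup p+q})$ be the embedding of $S\times[0,1)\sqcup D^{\sqcup p+q}$ into $\widetilde{W'\circ W}$ which equals $\widehat{\phi'}\circ\phi$ on $S\times[0,1)\sqcup D^{\sqcup p}$ and equals $\phi'$ composed with the open inclusion $\widetilde{W'}\hookrightarrow\widetilde{W'\circ W}$ on the remaining $D^{\sqcup q}$. Its two pieces have disjoint images because $\phi'(D^{\sqcup q})$ misses both $\phi'(T\times[0,1))$ and the collar $T\times[0,\epsilon)\subset W^{-}$, and $\Theta(\phi,\phi')$ lies in $\Emb^{S_\sigma}$ since $\widehat{\phi'}$ is the identity near the $S$-collar. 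Compatibility of $\Theta$ with the right $\oper{E}_d$-action is clear: that action only reparametrizes the little disks, which $\Theta$ keeps in the two families coming from $\phi$ and $\phi'$, and it commutes with post-composition by $\widehat{\phi'}$.

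It then remains to verify that $\Theta$ coequalizes the two $\und{T}_\tau$-actions, i.e. $\Theta(\phi\cdot\theta,\phi')=\Theta(\phi,\theta\cdot\phi')$ for $\theta\in\und{T}_\tau$. The right action of $\theta$ on $\phi$ extends $\theta$ to a self-embedding $\hat\theta$ of $\widetilde W$ which is the identity on $W^{-}$ and equals $\theta$ on the collar $T\times[0,1)$, and the left action of $\theta$ on $\phi'$ precomposes $\phi'$ with $\theta$ in the $T\times[0,1)$ slot. Unwinding both sides, one checks that $\widehat{\phi'}\circ\hat\theta$ restricted to $\widetilde W$ equals $\widehat{\phi'\circ\theta}$, and on the disks introduced by $\theta$ equals $\phi'\circ\theta$; hence both $\Theta(\phi\cdot\theta,\phi')$ and $\Theta(\phi,\theta\cdot\phi')$ are the single embedding which is $\widehat{\phi'\circ\theta}\circ\phi$ on $S\times[0,1)\sqcup D^{\sqcup p}$ and $\phi'\circ\theta$ on the remaining disks. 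For the outer bimodule structure: the left $\und{S}_\sigma$-action only precomposes in the $S\times[0,1)$ slot, on which $\Theta$ is built from $\phi$, while the right $\und{U}_\upsilon$-action is supported near the opened $U$-end of $\widetilde{W'}$, disjoint from the image of $\widehat{\phi'}$, so $\Theta$ is a bimodule map and the induced $\overline\Theta\colon\undtil{W}\otimes_{\und{T}_\tau}\undtil{W'}\to\undtil{W'\circ W}$ is the desired arrow.

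The substantive point, and the only step I expect to require genuine care, is the observation that the ``extra'' length $[\epsilon,1)$ of the collar of $\widetilde W$ is absorbed by composing with $\phi'$ rather than by any rescaling: this is what makes $\widehat{\phi'}$ defined on the nose and forces the coequalizer identity to hold strictly, and correspondingly one must be careful in bookkeeping which of the several $T\times[0,\epsilon)$-collars in play — the outgoing collar of $W$, the incoming collar of $\widetilde{W'}$, and the initial segment of the $T\times[0,1)$ in the domain of $\phi'$ — is being identified at each stage. Everything else (that the various maps are framed open embeddings with disjoint images, and that $\Theta$ is a map of right $\oper{E}_d$-modules) is routine.
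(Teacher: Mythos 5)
Your construction is the same as the paper's: you build a map on the absolute tensor product $\undtil{W}\otimes\undtil{W'}$ by extending $\phi'$ over $\widetilde W$ via the identity on the $W$-part (your $\widehat{\phi'}$ is the restriction of the paper's $\phi''$ to $\widetilde W$), post-composing with $\phi$, and then verifying the coequalizer relation to descend to $\otimes_{\und{T}_\tau}$. Your write-up spells out the gluing $\widetilde{W'\circ W}\cong W^-\sqcup^{T\times[0,\epsilon)}\widetilde{W'}$ and the disjointness/coequalizer checks more explicitly than the paper does, but the idea and the route are the same.
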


\begin{proof}
We first construct a map $u:\undtil{W}\otimes\undtil{W'}\to \undtil{W'\circ W}$. Let 
\[\phi:S\times[0,1)\sqcup D^{\sqcup p}\to \widetilde{W}\]
and 
\[\phi':T\times[0,1)\sqcup D^{\sqcup p'}\to \widetilde{W'}\]
Then $u(\phi,\psi)$ is constructed as follows. First, we extend $\phi'$ to an embedding 
\[\phi'':\widetilde{W}\sqcup D^{\sqcup p'}\to \widetilde{W'\circ W}\]
by glueing a copy of $W$ on $\phi'$. Then we consider the composite of $\phi''$ with $\phi$ and find an embedding
\[u(\phi,\phi')=\phi''\circ \phi:S\times[0,1)\sqcup D^{\sqcup p}\sqcup D^{\sqcup p'}\to \widetilde{W'\circ W}\]

We now want to show that this map factors through $\undtil{W}\otimes_{\und{T}}\undtil{W'}$. It suffices to show that for any $m$ and any embedding $\psi$
\[\psi:T\times[0,1)\sqcup D^{\sqcup m}\to T\times[0,1)\]
there is an equality in $\undtil{W'\circ W}(p+m+p')$:
\[u(\phi.\psi,\phi')= u(\phi,\psi.\phi')\]
where $\phi.\psi\in\undtil{W}(p+m)$ is obtained by the right action of $\und{T}$ on $\undtil{W}$ and $\psi.\phi'\in\undtil{W}'(m+p')$ is obtained by the left action of $\und{T}$ on $\undtil{W}'$.

But the above equality can be checked explicitly.
\end{proof}

\begin{prop}\label{composition of bordisms}
Let $W$ be a bordism from $S_{\sigma}$ to $T_{\tau}$ and $W'$ be a bordism from $T_{\tau}$ to $U_{\upsilon}$. Let $M$ be a $\und{S}_\sigma$-module, then there is a weak equivalence:
\[\L P_{W'}(\L P_W(M))\simeq \L P_{W'\circ W}(M)\]
\end{prop}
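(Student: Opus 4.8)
The plan is to reduce the statement to the associativity of the relative tensor product together with the comparison between $\L P_W$ and factorization homology. First I would recall from Construction \ref{functor induced by a bordism} that $P_W(M) = \undtil{W}\otimes_{\und{S}_\sigma\Mod}(A,M)$, which on the level of underlying objects (using the universal enveloping algebra description of Proposition \ref{universal enveloping algebra}) is the relative tensor product $\undtil{W}\circ_{\und{S}_\sigma} M$, where $\undtil{W}$ is regarded as a $\und{S}_\sigma$-$\und{T}_\tau$-bimodule in $\Mod_{\oper{E}_d}$ and then pushed to $\cat{C}$ via $-\otimes_{\cat{E}_d}A$. Concretely, $\L P_W(M)\simeq \undtil W\otimes_{U^{\und S_\sigma}_A}^{\L} M$ where we now view $\undtil W\otimes_{\cat O}A$ as a $U^{\und S_\sigma}_A$-$U^{\und T_\tau}_A$-bimodule in $\cat{C}$; the left- and right-derived functors agree because of Proposition \ref{tensor product is left Quillen bifunctor}. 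So the claim becomes the associativity equivalence
\[
\bigl(\undtil{W'}\otimes_{U^{\und T_\tau}_A}^{\L}\bigl(\undtil{W}\otimes_{U^{\und S_\sigma}_A}^{\L}M\bigr)\bigr)\simeq \bigl((\undtil{W'}\otimes_{U^{\und T_\tau}_A}^{\L}\undtil{W})\otimes_{U^{\und S_\sigma}_A}^{\L}M\bigr),
\]
which holds in any monoidal model category once all modules are cofibrant and the tensor product is a left Quillen bifunctor.

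The second ingredient is the identification of the derived relative tensor product of bimodules $\undtil{W}$ and $\undtil{W'}$ with $\undtil{W'\circ W}$. Proposition \ref{lax functor} gives a canonical comparison map $\undtil{W}\otimes_{\und{T}}\undtil{W'}\to\undtil{W'\circ W}$ of $\und{S}_\sigma$-$\und{U}_\upsilon$-bimodules in $\Mod_{\oper{E}_d}$; I would show that this map is a weak equivalence after passing to the homotopy (derived) tensor product. This is the key geometric input: it is the bimodule-level avatar of the fact that factorization homology of a glued manifold $W'\circ W = W^-\sqcup^{T\times[0,\epsilon)}W'$ is computed by the relative tensor product over the homology of the collar $T\times[0,1)$. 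Concretely I would argue via Proposition \ref{nb2} (factorization homology as a homotopy colimit over $\cat{D}(-)$), writing $\widetilde{W'\circ W}$ as an appropriate homotopy colimit of disjoint-union-of-disk pieces, separating those contained in $W^-$, those in $W'$, and those meeting the collar, and matching the result with the bar construction computing $\undtil{W}\otimes^{\L}_{\und{T}}\undtil{W'}$. Alternatively, one checks directly that the map of Proposition \ref{lax functor} is a levelwise equivalence of $\Sigma$-cofibrant right $\oper{E}_d$-modules by analyzing the embedding spaces, using that gluing a collar is a homotopy equivalence of embedding spaces.

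Putting these together: apply $-\otimes_{\cat O}A$ (which preserves weak equivalences between $\Sigma$-cofibrant objects by Proposition \ref{monoidality}/\ref{invariance of operadic coend}) to the equivalence $\undtil{W}\otimes^{\L}_{\und{T}}\undtil{W'}\simeq\undtil{W'\circ W}$ to get $(\undtil{W'}\otimes^{\L}_{U^{\und T_\tau}_A}\undtil{W})$-side identification, then combine with the associativity equivalence above and with the defining equivalence $\L P_W(M)\simeq\int_{\widetilde W}(A,M)$. I expect the main obstacle to be the careful verification that the map from Proposition \ref{lax functor} becomes an equivalence on derived tensor products — i.e.\ that no higher gluing corrections appear — since this is where the geometry of the collar and the excision property of factorization homology really enter; everything else is formal manipulation with Quillen bifunctors and cofibrant replacements. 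A secondary bookkeeping point is ensuring all the bimodules in sight are cofibrant (or $\Sigma$-cofibrant) so that the underived and derived tensor products agree and Proposition \ref{tensor product is left Quillen bifunctor} applies; this is handled by restricting $P_W$ to cofibrant modules as in Construction \ref{functor induced by a bordism}.
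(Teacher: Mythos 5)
Your plan is correct and would yield a valid proof, but it takes a genuinely different (and somewhat more ambitious) route than the paper. The paper never passes through a bimodule-level equivalence $\undtil{W}\otimes^{\L}_{\und{T}_\tau}\undtil{W'}\simeq\undtil{W'\circ W}$ in $\Mod_{\oper{E}_d}$ at all. Instead, it fixes the (cofibrant) module $M$, applies Proposition \ref{nb2} to express $P_{W'}(P_W(M))$ as a homotopy colimit of $(A,P_W(M))$ over $\cat{D}(\widetilde{W'})$, and separately expresses $P_{W'\circ W}(M)$ as a homotopy colimit over an auxiliary poset $\cat{E}$ of opens of $\widetilde{W'\circ W}$ of the form $Z\sqcup D^{\sqcup n}$ where $Z$ is a copy of $\widetilde{W}$ sitting inside the glued manifold. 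Identifying $\cat{E}\cong\cat{D}(\widetilde{W'})$ and matching up the values of the two diagrams finishes the proof. So the two identifications happen directly in $\cat{C}$ with $M$ already present, and no appeal is made to associativity of the derived relative tensor product, cofibrancy of bimodules in $\Mod_{\oper{E}_d}$, or to $-\otimes_{\cat{O}}A$ being applied to a bimodule equivalence. Your factorization — first establish the $A$-independent bimodule equivalence by the same \ref{nb2}-style excision argument, then deduce the proposition by formal Quillen bifunctor associativity — is cleaner and is, in fact, the statement implicitly needed later in Construction \ref{fake cobordism category} to see that representable bimodules are closed under composition in $f\widehat{\oper{C}ob}_d$; the paper only asserts that this ``follows from'' the present proposition. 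So your route buys modularity, at the cost of having to keep track of cofibrancy for bimodules over $\und{T}_\tau$ in $\Mod_{\oper{E}_d}$ with the injective model structure; the paper's route is shorter and bypasses that bookkeeping entirely.

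One caveat on your writeup: the ``alternatively'' remark, that the strict map of Proposition \ref{lax functor} is a levelwise weak equivalence of right $\oper{E}_d$-modules, is too strong as stated. The map $\undtil{W}\otimes_{\und{T}_\tau}\undtil{W'}\to\undtil{W'\circ W}$ is an underived relative tensor product (a reflexive coequalizer), and there is no reason for it to have the correct homotopy type before taking a cofibrant resolution on one side; the whole point of the excision/hocolim argument is to replace this coequalizer by its bar resolution. Your main plan (``weak equivalence after passing to the derived tensor product'') is the right formulation, so this does not affect the overall correctness, but the alternative shortcut should be dropped.
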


\begin{proof}
First notice, that $P_W$ sends cofibrant modules to cofibrant modules, therefore, we can assume that $M$ is cofibrant and prove that $P_{W'}\circ P_W(M)\simeq P_{W'\circ W}(M)$.

According to \ref{nb2}. We have 
\[P_{W'}(P_W(M))\simeq\on{hocolim}_{\cat{D}(\widetilde{W'})}(A,P_W(M))\]

Let $\cat{E}$ be the category of open sets of $\widetilde{W'\circ W}$ of the form $Z\sqcup D^{\sqcup n}$ where $Z$ is a submanifold of $\widetilde{W'\circ W}$ which contains $W$ and which is such that there is a diffeomorphism $Z\cong \widetilde{W}$ inducing the identity on $W$. In other words, $Z$ is $W$ together with a collar of the $T$ boundary which is contained in the $W'$ side.

We claim that 
\[P_{W'\circ W}(M)\simeq\on{hocolim}_{E\in \cat{E}} \int_E(A,M)\]
The proof of this claim is entirely analogous to \ref{nb2}. 

If $E$ is of the form $Z\sqcup D^{\sqcup n}$ and $Z$ is as in the previous paragraph, we have $\int_E(A,M)\cong P_W(M)\otimes A^{\otimes n}$. Moreover, the category $E$ is isomorphic to $\cat{D}(\widetilde{W'})$ under the map sending $E$ to the intersection of $E$ with the $W'$ half of $\widetilde{W'\circ W}$.

Thus, we have identified both $P_W'\circ P_W(M)$ and $P_{W'\circ W}(M)$ with the same homotopy colimit.
\end{proof}

The cobordism category $f\cat{Cob}_d$ has an approximation in the world of right modules over $\oper{E}_d$ that we now describe.

\begin{defi}
Let $\und{S}_\sigma$ and $\und{T}_\tau$ be two $d$-framed, $(d-1)$-manifolds. A $\und{S}_\sigma$-$\und{T}_\tau$-bimodule is called \emph{representable} if it is isomorphic to $\undtil{W}$ for some bordism $W$ from $\und{S}_\sigma$ to $\und{T}_\tau$.
\end{defi}

\begin{cons}\label{fake cobordism category}
We construct a bioperad $f\widehat{\oper{C}ob}_d$.

Its objects are $(d-1)$-manifolds with a $d$-framing. For $\{(S_\sigma)_{i\in I}\}_i$ a finite collection of $d$-framed $(d-1)$-manifolds and $T_\tau$ a $d$-framed $(d-1)$-manifold, we define $f\widehat{\oper{C}ob}_d(\{(S_\sigma)_i\}_{i\in I},T_\tau)$ to be full subcategory of the category 
\[\oMor(\oper{E}_d)(\{(\und{S}_\sigma)_i\}_{i\in I},\und{T}_\tau)\]
on objects that are weakly equivalent to representable bimodules.

The composition comes from the composition in $\oMor(\oper{E}_d)$. The fact that the composition of two representable bimodules is equivalent to a representable bimodules follows from \ref{composition of bordisms}.
\end{cons}

Now let us compare $f\cat{Cob}_d$ and $f\widehat{\cat{Cob}}_d$. In the two categories the objects are the same, namely $(d-1)$-manifolds with a $d$-framing. In $f\cat{Cob}_d$, the space of maps from $\und{S}_\sigma$ to $\und{T}_\tau$ is equivalent to
\[\bigsqcup_V B\on{Diff}_f^{S\sqcup T}(V)\]
where the disjoint union is taken over all diffeomorphism classes of bordisms.

In $f\widehat{\cat{Cob}}_d$, the nerve of the category of maps from $\und{S}_\sigma$ to $\und{T}_\tau$ is equivalent to
\[\bigsqcup_V B\on{Auth}_{S_\sigma-T_\tau}(\Emb^S_f(-,\widetilde{V}))\]
where the disjoint union is taken over the set of equivalence classes of representable $\und{S}_\sigma$-$\und{T}_\tau$-bimodules and the homotopy automorphisms are taken in the model category of $\und{S}_\sigma$-$\und{T}_\tau$-bimodules. There is an obvious map
\[\on{Diff}_f^{S\sqcup T}(V)\to\on{Auth}_{S_\sigma-T_\tau}(\Emb^S_f(-,\widetilde{V}))\]
which can be interpreted as the map from the group of diffeomorphisms to the limit of its embedding calculus (see section 9 of \cite{boavidamanifold}). In that sense, the bicategory $f\widehat{\cat{Cob}}_d$ is an embedding calculus approximation of the cobordism category. 

\medskip
In proposition \ref{lax functor}, we have essentially constructed a lax functor
\[f\oper{C}ob_d\to f\widehat{\oper{C}ob_d}\]
which is unfortunately not a pseudo-functor. However proposition \ref{composition of bordisms} shows that the coherence $2$-morphisms for this lax-functor are equivalence. We believe that this gives enough structure to strictify this map into an actual map from some version of the cobordism operad to the Morita operad of $\oper{E}_d$. One should observe that, although it is less geometric than the cobordism category, the Morita operad of $\oper{E}_d$ is more approachable to computations. In fact, the philosophy of embedding calculus is exactly to replace manifold by the functor they represent on the category of finite disjoint unions of disks. It can also happen that the embedding calculus converges to something more interesting than the actual space of embeddings. For instance the group of framed diffeomorphisms of the disk fixing the boundary is contractible. On the other hand, its embedding calculus approximation (when working over the rationals) contains the Grothendieck-Teichm\"uller Lie algebra (see \cite{aronegraph} and \cite{willwachermaxim}).

To conclude, we can prove the main theorem of this section.

\begin{theo}
Let $A$ be a cofibrant $\oper{E}_d$-algebra in $\cat{C}$. There is a map of operad
\[f\widehat{\oper{C}ob}_d\to\oModCat\]
mapping the object $\und{S}_\sigma$ to $\und{S}_\sigma\Mod_A$.
\end{theo}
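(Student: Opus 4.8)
The plan is to obtain the map essentially for free, by restricting the map of bioperads
\[
\oMor(\oper{E}_d)\longrightarrow\oModCat
\]
already produced, for the fixed cofibrant $\oper{E}_d$-algebra $A$, in Theorem~\ref{map from Mor to ModCat} (which sends $P$ to $L\Mod_{U^P_A}$), along the inclusion of $f\widehat{\oper{C}ob}_d$. Recall from Construction~\ref{fake cobordism category} that $f\widehat{\oper{C}ob}_d$ is, by definition, a full sub-bioperad of $\oMor(\oper{E}_d)$: on objects it sends a $d$-framed $(d-1)$-manifold $S_\sigma$ to the associative algebra $\und{S}_\sigma$ of \ref{def of Stau modules}, and its category of operations from $\{(\und{S}_\sigma)_i\}_{i\in I}$ to $\und{T}_\tau$ is the full subcategory of $\oMor(\oper{E}_d)(\{(\und{S}_\sigma)_i\};\und{T}_\tau)$ on the bimodules weakly equivalent to a representable one $\undtil{W}$ (here using the identification $\und{\bigsqcup_i S_{\sigma,i}}\cong\bigotimes_i\und{S}_{\sigma,i}$, which is immediate from the definition). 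Thus the inclusion $f\widehat{\oper{C}ob}_d\hookrightarrow\oMor(\oper{E}_d)$ is a strict morphism of bioperads, and composing with the pseudo-functor above yields a pseudo-functor $f\widehat{\oper{C}ob}_d\to\oModCat$. (If one prefers, one may equally work with the $\Sigma$-cofibrant variant $\oMor(\oper{E}_d,\Sigma)$; all the modules occurring here are $\Sigma$-cofibrant, so it makes no difference.)

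The remaining steps are: first, one should double-check that $f\widehat{\oper{C}ob}_d$ really is closed under the composition operations of $\oMor(\oper{E}_d)$ — but this is exactly what is verified in \ref{fake cobordism category}, using that every composition $\chi_c$ decomposes into iterated relative tensor products of bimodules, that these preserve weak equivalences between cofibrant bimodules by Proposition~\ref{tensor product is left Quillen bifunctor}, that collapsing an internal edge corresponds under Proposition~\ref{lax functor} to the comparison map $\undtil{W}\otimes_{\und{T}}\undtil{W'}\to\undtil{W'\circ W}$, which is a weak equivalence by Proposition~\ref{composition of bordisms}, and that units and external tensors of representables are again representable (via the trivial bordism $S\times[0,\epsilon]$ and disjoint unions of bordisms). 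Second, one identifies the value on objects: the composite sends $\und{S}_\sigma$ to $L\Mod_{U^{\und{S}_\sigma}_A}$ with $U^{\und{S}_\sigma}_A=\und{S}_\sigma\circ_{\oper{E}_d}A$, and Proposition~\ref{universal enveloping algebra} identifies this model category with $\und{S}_\sigma\Mod_A$. Third, since the theorem asks for a map of simplicial operads while $\oModCat$ is a strict $2$-operad, one invokes Construction~\ref{strictification}: the pseudo-functor $f\widehat{\oper{C}ob}_d\to\oModCat$ strictifies to a genuine functor $\on{Str}(f\widehat{\oper{C}ob}_d)\to\oModCat$ fitting with the biequivalence $\on{Str}(f\widehat{\oper{C}ob}_d)\to f\widehat{\oper{C}ob}_d$, and applying the nerve gives the asserted map of simplicial operads (a zig-zag whose backward arrow is a levelwise equivalence) with the stated value on objects.

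The main obstacle is therefore not in any of this final bookkeeping but is already discharged in Propositions~\ref{lax functor} and~\ref{composition of bordisms}: the genuine geometric input is that gluing of bordisms corresponds, up to coherent weak equivalence, to relative tensor product of the associated bimodules over $\oper{E}_d$. This is precisely what makes $f\widehat{\oper{C}ob}_d$ a well-defined bioperad and what upgrades the lax assignment $W\mapsto\undtil{W}$ to a genuine map of operads after strictification; everything else is a formal consequence of the machinery of the previous sections.
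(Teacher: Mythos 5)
Your proposal is correct and takes essentially the same approach as the paper: the paper's (terse) proof also defines the map by assigning to a bimodule/bordism the functor $P_W$, that is, tensoring over the Morita bioperad $\oMor(\oper{E}_d)$, and then invokes strictification. Your version is more careful in spelling out that $f\widehat{\oper{C}ob}_d$ is by construction a full sub-bioperad of $\oMor(\oper{E}_d)$ so that one simply restricts the map of Theorem~\ref{map from Mor to ModCat}, and in making explicit the identification $L\Mod_{U^{\und{S}_\sigma}_A}\cong\und{S}_\sigma\Mod_A$ via Proposition~\ref{universal enveloping algebra}; these are exactly the steps the paper elides with ``we already know what this map is on objects'' and ``extending this to a map of bioperad is straightforward.''
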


\begin{proof}
We already know what this map is on objects. Let $W$ be an object of $\cat{Cob}(\und{S}_\sigma,\und{T}_\tau)$, we have the functor $P_W$ from $\und{S}_\sigma\Mod_A$ to $\und{T}_\tau\Mod_A$  which defines a functor $f\widehat{\cat{Cob}}_d\to\cat{ModCat}$. Extending this to a map of bioperad is straightforward and then we apply \ref{strictification}.
\end{proof}

\begin{rem}
It seems to be a folk theorem that the suboperad of $f\oCob_d$ on the object $S^{d-1}_\kappa$ receives a map from $\oper{E}_d$. If we combine this result with the previous proposition, we recover the fact that the category of operadic $\oper{E}_d$-modules has an action by $\oper{E}_d$.
\end{rem}

\subsection{Change of dimension}

We have maps $\alpha_k:\oper{E}_d\to\oper{E}_{d+k}$ defined for any $k\geq 0$ which are obtained by taking the product of a framed embedding $D^{\sqcup k}\to D$ with $\mathbb{R}^k$.

In particular using \ref{change of operad}, we see that, for $S_\tau$ a $d$-framed $(d-1)$-manifold, there is an associative algebra in right module $(\alpha_k)_!\und{S}_\tau$ over $\oper{E}_{d+k}$ which parametrizes a certain shape of modules. We use the notation $\und{S}^{(k)}_\tau$ for this object.

\begin{prop}
The associative algebra in $\Mod_{\oper{E}_d}$ $\und{S}_\tau^{(k)}$ is equivalent to $\und{(S\times\mathbb{R}^k)}_\tau$.
\end{prop}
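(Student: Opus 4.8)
The plan is to write down an explicit comparison map of associative algebras in $\Mod_{\oper{E}_{d+k}}$ and then show it is a weak equivalence by reducing to the stabilization (pushforward) property of factorization homology. For the construction, fix the evident framed identifications $\mathbb{R}^d\times\mathbb{R}^k\cong\mathbb{R}^{d+k}$ (the one used in the definition of $\alpha_k$) and $(S\times[0,1))\times\mathbb{R}^k\cong (S\times\mathbb{R}^k)\times[0,1)$, where $S\times\mathbb{R}^k$ carries the $(d+k)$-framing obtained from $\tau$ and the standard framing of $\mathbb{R}^k$ under $T(S\times\mathbb{R}^k)\oplus\mathbb{R}\cong (TS\oplus\mathbb{R})\oplus\mathbb{R}^k$; by abuse we still call this framing $\tau$. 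Taking the product of a framed embedding with $\id_{\mathbb{R}^k}$ gives, for each $n$, a map
\[\und{S}_\tau(n)=\Emb_f^{S_\tau}(D^{\sqcup n}\sqcup S\times[0,1),S\times[0,1))\longrightarrow \Emb_f^{(S\times\mathbb{R}^k)_\tau}(D^{\sqcup n}\sqcup (S\times\mathbb{R}^k)\times[0,1),(S\times\mathbb{R}^k)\times[0,1))=\und{(S\times\mathbb{R}^k)}_\tau(n).\]
Since $(\phi\times\id_{\mathbb{R}^k})\circ(\psi\times\id_{\mathbb{R}^k})=(\phi\circ\psi)\times\id_{\mathbb{R}^k}$ and $\alpha_k$ is itself ``$-\times\mathbb{R}^k$'', this is a map of associative algebras $\und{S}_\tau\to\alpha_k^*\und{(S\times\mathbb{R}^k)}_\tau$ in $\Mod_{\oper{E}_d}$. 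As $(\alpha_k)_!=-\circ_{\oper{E}_d}\oper{E}_{d+k}$ is left adjoint to $\alpha_k^*$ on associative algebras in right modules (as already used in \ref{change of operad}), it extends uniquely to a map of associative algebras in $\Mod_{\oper{E}_{d+k}}$, namely $\Theta\colon \und{S}^{(k)}_\tau=(\alpha_k)_!\und{S}_\tau\to\und{(S\times\mathbb{R}^k)}_\tau$.

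Next I reduce to a levelwise statement. Being a weak equivalence only concerns the underlying right $\oper{E}_{d+k}$-modules, all of which are $\Sigma$-cofibrant, so it suffices to check that $\Theta$ is an equivalence in each arity. Write $M=S\times(0,1)$. Using $\und{S}_\tau\simeq\Emb_f(-,M)$ and its analogue in dimension $d+k$ (valid since $(S\times\mathbb{R}^k)\times(0,1)=M\times\mathbb{R}^k$), the homotopy colimit presentation $\Emb_f(-,M)\simeq\on{hocolim}_{V\in\cat{D}(M)}\Emb_f(-,V)$ of \cite{horelfactorization}, and the facts that $-\circ_{\oper{E}_d}\oper{E}_{d+k}$ is homotopical on $\Sigma$-cofibrant modules, commutes with homotopy colimits, and carries the free right $\oper{E}_d$-module $\Emb_f(-,V)$ with $V\cong D^{\sqcup j}$ to $\Emb_f^{(d+k)}(-,D^{\sqcup j})\cong\Emb_f^{(d+k)}(-,V\times\mathbb{R}^k)$, I obtain
\[\und{S}^{(k)}_\tau\simeq\on{hocolim}_{V\in\cat{D}(M)}\Emb_f^{(d+k)}(-,V\times\mathbb{R}^k),\qquad \und{(S\times\mathbb{R}^k)}_\tau\simeq\on{hocolim}_{W\in\cat{D}(M\times\mathbb{R}^k)}\Emb_f^{(d+k)}(-,W),\]
with $\Theta$ identified with the map induced by $V\mapsto V\times\mathbb{R}^k$.

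Finally, it remains to see that this reindexing map between homotopy colimits is an equivalence, and this is the main obstacle. It is \emph{not} a formal cofinality statement: the functor $\cat{D}(M)\to\cat{D}(M\times\mathbb{R}^k)$, $V\mapsto V\times\mathbb{R}^k$, is not homotopy final, since a disk of $M\times\mathbb{R}^k$ may project onto a non-disk-like subset of $M$ (e.g. a thin spiralling tube). But the coefficient system $\Emb_f(-,-)$ is locally constant and the opens $\{V\times\mathbb{R}^k\}_{V\in\cat{D}(M)}$ form a cover of $M\times\mathbb{R}^k$ adapted to the projection $M\times\mathbb{R}^k\to M$; the required equivalence is exactly the pushforward (stabilization) property of factorization homology for the trivial $\mathbb{R}^k$-bundle, i.e. the identification $\int_{M\times\mathbb{R}^k}A\simeq\int_M\alpha_k^*A$ for every $\oper{E}_{d+k}$-algebra $A$, which can be cited from \cite{ayalastructured}. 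Equivalently, by \ref{change of operad} one has $U^{\und{S}^{(k)}_\tau}_A\cong U^{\und{S}_\tau}_{\alpha_k^*A}\simeq\int_{M}\alpha_k^*A$ and $U^{\und{(S\times\mathbb{R}^k)}_\tau}_A\simeq\int_{M\times\mathbb{R}^k}A$, with $\Theta$ realizing the pushforward equivalence; since a map of $\Sigma$-cofibrant right $\oper{E}_{d+k}$-modules is a weak equivalence once it becomes one after $-\circ_{\oper{E}_{d+k}}QA$ on free algebras $A$, this completes the argument. The crux is therefore to invoke, or reprove, the pushforward theorem, since the naive reindexing of the disk cover along $-\times\mathbb{R}^k$ is not cofinal.
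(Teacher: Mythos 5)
Your proof is correct, and your diagnosis — that the reindexing $V\mapsto V\times\mathbb{R}^k$ from $\cat{D}(M)$ to $\cat{D}(M\times\mathbb{R}^k)$ is not homotopy final — is exactly right and worth making explicit, since the paper does not articulate it. Where you diverge from the paper is in how you close this gap: you invoke the pushforward/stabilization theorem of \cite{ayalastructured}, which is correct but a heavier tool than needed. The paper instead closes it with the same covering argument used in the proof of \ref{nb2}: after deformation-retracting the embedding spaces onto configuration spaces, the opens $\on{Conf}(n,(U\setminus\partial M)\times\mathbb{R}^k)$ for $U\in\cat{D}(M)$ cover $\on{Conf}(n,(M\setminus\partial M)\times\mathbb{R}^k)$ (a configuration lies in $U\times\mathbb{R}^k$ if and only if its projection to $M$ lies in $U$), and for any fixed configuration the poset of $U$'s whose interior contains its finite projected image is cofiltered, hence has contractible nerve; the covering theorem (\cite[Theorem A.3.1]{luriehigher}) then gives $\on{hocolim}_{U\in\cat{D}(M)}\Emb_f^{S_\tau}(\ldots,U\times\mathbb{R}^k)\simeq\Emb_f^{S_\tau}(\ldots,M\times\mathbb{R}^k)$ directly, with no intervening appeal to $\cat{D}(M\times\mathbb{R}^k)$. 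So the paper's argument is self-contained and elementary at the level of configuration spaces, while yours imports a substantially stronger result proved in a different ($\infty$-categorical) framework, which would then have to be transported into the model-categorical setting used here. On the other hand your version explicitly constructs the comparison map $\Theta$, which the paper omits, and the pushforward citation makes the conceptual mechanism transparent.
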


\begin{proof}
We compute 
\[\und{S}^{(k)}(n)=\Emb^S_f(S\times[0,1)\sqcup -,S\times[0,1))\otimes^{\L}_{\cat{E}_d}\Emb_f((D\times\mathbb{R}^k)^{\sqcup n},-\times\mathbb{R}^k)\]
One shows exactly as in \ref{nb2} that this computation can be reduced to computing
\[\on{hocolim}_{U\in\cat{D}(S)}\Emb_f^S(S\times[0,1)\times\mathbb{R}^k\sqcup(D\times\mathbb{R}^k)^{\sqcup n},U\times\mathbb{R}^k)\]
which is equivalent to $\Emb_f^S(S\times[0,1)\times\mathbb{R}^k\sqcup(D\times\mathbb{R}^k)^{\sqcup n},S\times[0,1))$.
\end{proof}

\begin{rem} As a particular case of this construction, we can look at the map $\oper{E}_1\to\oper{E}_d$. We have the right module $L$ obtained from the point (seen as a zero manifold) with one of the two possible $1$-framing so that $L\Mod_A$ is a model for left modules over the $\oper{E}_1$-algebra $A$.

The previous proposition tells us that for an $\oper{E}_d$-algebra $A$, the category of left module over the underlying $\oper{E}_1$-algebra is equivalent to the category of $\mathbb{R}^{d-1}$-shapes modules. 

We believe that similarly to the fact that $\oper{E}_d$ maps to the endomorphisms of $S^{d-1}_\kappa$ in $f\oCob_d$, the operad $\oper{E}_{d-1}$ maps to the operad of endomorphisms of $\mathbb{R}^{d-1}$ in $f\oCob_d$.

Even more generally, we believe that the swiss-cheese operad of Kontsevich and Voronov (see \cite{voronovswiss} or \cite{horelfactorization}) maps to the endomorphisms operad of the pair $(S^{d-1}_\kappa,\mathbb{R}^{d-1})$. This result, if true would imply that the operad $\oper{SC}_d$ of $d$-dimensional swiss-cheese acts on the pair $(\und{S}^{d-1}_\kappa\Mod_A,\und{\mathbb{R}}^{d-1}\Mod_A)$. This result is closely related to Kontsevich's version of Deligne's conjecture. We hope to prove this missing statements in future work.
\end{rem}

\appendix

\section{The homotopy type of certain spaces of embeddings}

In this appendix, we collect a few facts about the homotopy type of certain spaces of embeddings.

We denote by $D$, the $d$-dimensional euclidean space $\mathbb{R}^d$.

\begin{prop}\label{framed embeddings of disks}
Let $M$ be a framed manifold with boundary. The evaluation at the center of the disks induces a weak equivalence
\[\Emb_f(D^{\sqcup p},M)\to\on{Conf}(p,M-\partial M)\]
\end{prop}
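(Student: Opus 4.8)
The plan is to compare the space of framed embeddings with the ordinary embedding space and then identify the latter with a configuration space. Recall from Definition~\ref{framed embedding} that $\Emb_f(D^{\sqcup p},M)$ is defined as a homotopy pullback over $\Map(D^{\sqcup p},M)$ of $\Emb(D^{\sqcup p},M)$ along the map $\Emb(D^{\sqcup p},M)\to\Map_{\on{GL}(d)}(\on{Fr}(TD^{\sqcup p}),\on{Fr}(TM))$. First I would observe that since $D=\mathbb{R}^d$ is framed, the bundle $\on{Fr}(TD^{\sqcup p})$ is canonically trivial, so a $\on{GL}(d)$-equivariant bundle map $\on{Fr}(TD^{\sqcup p})\to\on{Fr}(TM)$ is the same data as a map $D^{\sqcup p}\to\on{Fr}(TM)$, i.e. a map to the frame bundle covering the underlying map $D^{\sqcup p}\to M$. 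Thus the homotopy fiber of $\Map_{\on{GL}(d)}(\on{Fr}(TD^{\sqcup p}),\on{Fr}(TM))\to\Map(D^{\sqcup p},M)$ over a map $f$ is the space of sections of $f^*\on{Fr}(TM)$, and since $D^{\sqcup p}$ is a disjoint union of contractible pieces and $\on{GL}(d)$ is grouplike, this section space is contractible. Consequently the right-hand vertical map in the pullback square is a weak equivalence after taking homotopy fibers, hence the square exhibits $\Emb_f(D^{\sqcup p},M)\to\Emb(D^{\sqcup p},M)$ as a weak equivalence (a homotopy pullback of an equivalence is an equivalence).

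Next I would handle the ordinary (unframed) statement: the evaluation at the centers gives a weak equivalence
\[
\Emb(D^{\sqcup p},M)\to\on{Conf}(p,M-\partial M).
\]
The target is correct because an embedding of a disk lands in the interior of $M$ (the disk $D=\mathbb{R}^d$ is a manifold without boundary, so its image misses $\partial M$), and the centers of the $p$ disjoint disks give $p$ distinct points of $M-\partial M$. The map is a fibration (or can be replaced by one) whose fiber over a configuration $(x_1,\dots,x_p)$ is the space of embeddings $D^{\sqcup p}\hookrightarrow M-\partial M$ sending the centers to the $x_i$; this fiber is contractible by a standard argument using the fact that the derivative at the center gives a deformation retraction onto the space of linear embeddings, combined with the contractibility of $\on{GL}(d)$-worth of choices and an isotopy-extension / tubular-neighborhood argument to shrink the disks. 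This is the classical statement that the space of little disks in a manifold is equivalent to the configuration space of points.

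Combining the two steps gives the desired weak equivalence. The main obstacle is the second step: proving that the fiber of the center-evaluation map on the \emph{unframed} embedding space is contractible requires some care with the point-set topology of embedding spaces (the weak $C^1$-topology) and an application of parametrized isotopy extension. I would either cite the standard reference for this (it is, e.g., implicit in the construction of $f\Man_d$ in \cite{andrademanifolds} and appears in Lurie's and others' treatments of the little disks operad) or sketch the retraction explicitly: scale each embedded disk toward its center, observe that the space of embeddings becomes a space of framed points (a point together with an injective linear map $\mathbb{R}^d\to T_xM$), and use that the latter deformation retracts onto $\on{Conf}(p,M-\partial M)$ via the contractibility of the general linear group. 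The framed case then follows formally from the first paragraph, so no separate work is needed there.
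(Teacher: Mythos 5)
Your first step contains a genuine error that also reappears in your second step. You claim that the space of sections of $f^*\on{Fr}(TM)$ over $D^{\sqcup p}$ is contractible because "$D^{\sqcup p}$ is a disjoint union of contractible pieces and $\on{GL}(d)$ is grouplike." But a principal $\on{GL}(d)$-bundle over a disjoint union of $p$ contractible pieces is trivial, and its section space is homotopy equivalent to $\on{GL}(d)^p$, which is not contractible (recall $\on{GL}_d(\mathbb{R})\simeq O(d)$; being grouplike has nothing to do with being contractible). Consequently the right-hand vertical map in the homotopy pullback square, $\Map(D^{\sqcup p},M)\to\Map_{\on{GL}(d)}(\on{Fr}(TD^{\sqcup p}),\on{Fr}(TM))$, is \emph{not} a weak equivalence: up to homotopy it is the inclusion of the framing section $M^p\hookrightarrow\on{Fr}(TM)^p\simeq (M\times\on{GL}(d))^p$. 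Hence your conclusion that $\Emb_f(D^{\sqcup p},M)\to\Emb(D^{\sqcup p},M)$ is a weak equivalence is false. A quick sanity check: for $p=1$, $M=D$, the proposition gives $\Emb_f(D,D)\simeq\on{Conf}(1,D)\simeq\on{pt}$, whereas $\Emb(D,D)\simeq\on{GL}(d)$, so they cannot be equivalent.

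The same $\on{GL}(d)^p$ is missing from your second step: the evaluation-at-centers map $\Emb(D^{\sqcup p},M)\to\on{Conf}(p,M-\partial M)$ is a fibration whose fiber (the space of embeddings with prescribed centers, retracting onto linear injections via the derivative) is equivalent to $\on{GL}(d)^p$, again not contractible. So $\Emb\not\simeq\on{Conf}$ either. Your two errors are "off by $\on{GL}(d)^p$" in compatible ways, which is exactly what the homotopy pullback defining $\Emb_f$ is designed to cancel — but the factorization $\Emb_f\simeq\Emb\simeq\on{Conf}$ does not exist. The correct shape of the argument is to work over $\Map(D^{\sqcup p},M)\simeq M^p$: the bottom horizontal map $\Emb(D^{\sqcup p},M)\to\Map_{\on{GL}(d)}(\on{Fr}(TD^{\sqcup p}),\on{Fr}(TM))\simeq\on{Fr}(TM)^p$ (sending an embedding to its derivatives at the centers) is a weak equivalence onto the open subspace of frames over distinct interior points, i.e., it is a fiberwise equivalence over $M^p$ with fibers $\on{GL}(d)^p$ on each side; pulling the framing section $M^p\hookrightarrow\on{Fr}(TM)^p$ back along this open embedding then yields precisely $\on{Conf}(p,M-\partial M)$. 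The paper itself simply cites Proposition 6.6 of the reference \cite{horelfactorization} for this, so there is no in-paper proof to compare against, but your argument as written does not go through.
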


\begin{proof}
See \cite[proposition 6.6]{horelfactorization}.
\end{proof}

Now we want to study the spaces $\Emb^S(M,N)$ and $\Emb^{S_\tau}_f(M,N)$. Note that the manifold $S\times[0,1)$ is canonically an $S$-manifold if we take the map $S\times[0,\epsilon)\to S\times[0,1)$ to be the inclusion.

If $\tau$ is a framing of $TS\oplus\mathbb{R}$, then $S\times[0,1)$ is a framed $S_\tau$-manifold in a canonical way.

\begin{prop}
Let $M$ be an $S$-manifold. The space $\Emb^S(S\times[0,1),M)$ is contractible. 
\end{prop}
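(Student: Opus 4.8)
Write $\mathcal E=\Emb^S(S\times[0,1),M)$; its points are the embeddings $\phi\colon S\times[0,1)\hookrightarrow M$ with $\phi|_{S\times[0,\epsilon)}=f$, where $f$ denotes the structure map of $M$. The plan is to reduce, by an elementary compression argument, to the contractibility of the analogous space for the model collar $S\times[0,1)$, and then to obtain the latter from the standard uniqueness of collars.

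The first step is to fix a reference collar: since $f$ is the germ of a collar along $\partial M$, the relative collar neighbourhood theorem provides an extension $\hat f\in\mathcal E$, which we may take to be an open embedding onto an open neighbourhood $U$ of $\partial M$. (This uses that $M$ strictly contains $f(S\times[0,\epsilon))$, which holds for every structured manifold occurring in the paper; when $\mathcal E=\varnothing$ there is nothing to prove, so I would simply impose this hypothesis.) Any $\phi\in\mathcal E$ agrees with $\hat f$ on $S\times[0,\epsilon)$, hence also at $t=\epsilon$ by continuity, so $\phi(S\times[0,\epsilon])\subseteq U$; by openness of $U$ one produces a continuous function $\beta$ on $\mathcal E$ with $\epsilon<\beta(\phi)$ and $\phi(S\times[0,\beta(\phi)))\subseteq U$, locally uniformly in $\phi$. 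Precomposing $\phi$ with the self-embedding $(x,t)\mapsto(x,\lambda_{\phi}(t))$ for a suitable increasing $\lambda_{\phi}\colon[0,1)\hookrightarrow[0,\beta(\phi))$ fixing $[0,\epsilon)$, and interpolating $\lambda_{\phi}$ to the identity by convex combination, exhibits the inclusion $\mathcal E_U\hookrightarrow\mathcal E$ of the subspace of embeddings with image in $U$ as a homotopy equivalence, rel $S\times[0,\epsilon)$. Finally, $\phi\mapsto\hat f^{-1}\circ\phi$ identifies $\mathcal E_U$ homeomorphically with $\mathcal G:=\Emb^S(S\times[0,1),S\times[0,1))$, the space of self-embeddings of the model collar extending the inclusion. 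Hence $\mathcal E\simeq\mathcal G$.

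It remains to prove that $\mathcal G$ is contractible. This is precisely the uniqueness of collars of $S\times\{0\}$ inside $S\times[0,1)$, relative to the prescribed germ on $S\times[0,\epsilon)$: one runs the isotopy extension theorem against the contractibility of the space of $\infty$-jets (or germs) along $S\times\{0\}$ of such self-embeddings, deformation retracting that jet space onto its part that is linear along $S\times\{0\}$ and then onto the identity. This can be carried out exactly as in \cite{andrademanifolds}. I expect this last point — the parametrised, germ-relative uniqueness of collars — to be where all the work is; the compression reduction of the previous paragraph is routine.
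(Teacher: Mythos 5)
Your overall strategy---compress into a fixed reference collar and then quote contractibility of $\Emb^S(S\times[0,1),S\times[0,1))$---is the same as the paper's, but the compression step, which you describe as routine, is in fact where the paper does all of its work (the author even credits Martin Palmer for the key trick), and your version of it has a gap when $S$ is non-compact. You posit a continuous $\beta\colon\mathcal E\to(\epsilon,1)$ with $\phi(S\times[0,\beta(\phi)))\subseteq U$. When $S$ is non-compact no such $\beta(\phi)$ need exist even for a single fixed $\phi$: the escape time of the ray $t\mapsto\phi(s,t)$ from $U$ can tend to $\epsilon$ as $s$ runs to infinity in $S$. Concretely, take $S=\mathbb R$, $M=\mathbb R\times[0,2)$, $\hat f=\id$ onto $U=\mathbb R\times[0,1)$, and $\phi(s,t)=(s,g_s(t))$ where $g_s(t)=t$ on $[0,\epsilon]$ and $g_s$ thereafter rises so fast that $g_s(\epsilon+\tfrac{1}{1+|s|})=\tfrac32$; then $\phi\in\mathcal E$, yet $\phi(S\times[0,\epsilon+\delta))\not\subseteq U$ for every $\delta>0$, so no $s$-uniform cutoff $\beta(\phi)$ exists and your rescaling $\lambda_\phi$ cannot be defined.

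The paper repairs exactly this by building a nowhere-zero \emph{two-variable} continuous cutoff $f\colon\mathcal E\times S\to(0,1)$, depending on the point of $S$ as well as on the embedding, such that $e$ maps $\{s\}\times[0,f(e,s))$ into the fixed collar; getting $f$ jointly continuous with $S$ non-compact requires nested partitions of unity (first over $\mathcal E$, subordinate to opens $U_{K,n}$ indexed by compacta $K\subseteq S$, then over $S$) and a supremum-continuity argument. The compression $\kappa(e)(s,t)=e(s,f(e,s)t)$ and the straight-line homotopy $H(e,u)(s,t)=e(s,t[(1-u)+uf(e,s)])$ then do what your $\lambda_\phi$ and convex interpolation were meant to do, with the necessary $s$-dependence built in. Your remaining steps---the homeomorphism $\phi\mapsto\hat f^{-1}\circ\phi$ onto $\Emb^S(S\times[0,1),S\times[0,1))$, and the reduction of that space's contractibility to parametrised, germ-relative uniqueness of collars---are sound; the paper handles the latter by citing its earlier work with the remark that the argument there is independent of compactness of $S$. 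But without the two-variable cutoff your compression fails in the non-compact case, and the compact case is already disposed of by citation in the first line of the paper's proof, so this is the only case that matters.
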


\begin{proof}\footnote{We wish to thank Martin Palmer for helping us with this proof.} 
If $S$ is compact, this is proved in \cite[proposition 6.7]{horelfactorization}. 

We denote by $E$ the space $\Emb^S(S\times[0,1),M)$. Let us pick a collar $C$ of the boundary, i.e. $C$ is the image of an embedding $S\times[0,1)\to M$.

(1) We start with an easy observation. 

Let $X$ be a topological space and $\{f_i\}_{i\in I}$ be a finite family of continuous functions on $X$ taking values in $[0,\infty)$. Then the function
\[x\mapsto \on{sup}_{i\in I} f_i(x)\]
is continuous.

Since continuity is a local property, the above conclusion remains true if we drop the finiteness assumption but instead assume that, locally around each point, only a finite number of the functions are non-zero.

(2) Let $K$ be a compact subset of $S$. We denote by $U_{K,n}$ the open set in $E$ containing the embeddings $e$ such that $e(K\times[0,\frac{1}{n}])\subset C$. The open sets $U_{K,n}$ form an open cover of the paracompact space $E$. Let $\phi_{K,n}$ be a partition of unity subordinate to that open cover.

We define a function
\[f_K:E\to [0,1)\]
sending $e$ to $\on{sup}_{n}\frac{1}{n}\phi_{K,n}(e)$.

This function is nowhere $0$ and continuous by claim (1). Moreover, for any $e\in E$, and any $s\in K$, we see that $e$ sends $\{s\}\times[0,f_K(s))$ to a subset of $C$.

(3) Now we pick $\mathcal{K}$ a cover of $S$ by compact subsets whose interiors form an open cover (one can for instance take the collection of all compact subsets). Let $\phi_K$ be a smooth partition of unity subordinate to the open cover $\{\on{int}(K)\}_{K\in\mathcal{K}}$.

We define a function
\[f:E\times S\to[0,1)\]
sending $(e,s)$ to $\sum_{K\in \mathcal{K}}\phi_K(s)f_K(e)$. This function takes values in $[0,1)$ and is continuous. This function is also clearly nowhere zero. Indeed, for each $s$, there is $K$ such that $\phi_K(s)\neq 0$ and for that $K$, $f_K(e)\neq 0$ according to (2). 

(4) Let us fix a point $s\in S$ and $e\in E$. The value of $f(e,s)$ is a weighted sum of the $f_K(e)$ for $\phi_K(s)\neq 0$. We observe that the total weight is exactly $1$ which implies that
\[f(e,s)\leq \on{sup}_{K,\phi_K(s)\neq 0}f_K(e)\]

Let $K_0$ be a compact which realizes this sup ($K_0$ exists by definition of a partition of unity). Hence the image of $s\times[0,f_{K_0}(e))$  is contained in $C$ by (2) and contains the image of $s\times[0,f(e,s))$ under $e$. 

In conclusion, $f$ is a smooth function $E\times S\to [0,1)$ which is nowhere $0$ and which is such that the image of $s\times[0,f(e,s))$ under $e$ is contained in $C$.

(5) Now we use the function $f$ to construct a map from $E$ onto the subspace $\Emb^S(S\times[0,1),C)$. We define 
\[\kappa:E\to \Emb^S(S\times[0,1),C)\]
by the formula $\kappa(e)(s,t)=e(s,f(e,s)t)$. The function $f$ has been designed exactly  to insure that the image of $\kappa(e)$ is contained in $C$. 

(6) Let $\iota:\Emb^S(S\times[0,1),C)\to E$ be the inclusion. We claim that $\iota\circ\kappa$ is homotopic to $\id_E$. Indeed, we have the homotopy
\[H:E\times[0,1]\to E\]
defined by $H(e,u)(s,t)=e(s,t[(1-u)+uf(e,s)])$. Then $H(e,0)=e$ and $H(e,1)=\kappa(e)$. One would show similarly that $\kappa\circ\iota$ is homotopic to $\id_{\Emb^S(S\times[0,1),C)}$.

(7) Now we are reduced to proving the contractibility of $\Emb^S(S\times[0,1),S\times[0,1))$. But in \cite[proposition 6.7]{horelfactorization} we give a proof of that fact which does not depend on the compactness of $S$.
\end{proof}

\begin{prop}\label{contractibility collars framed}
Let $N$ be a framed $S_\tau$-manifold. The space $\Emb_f^{S_\tau}(S\times[0,1),N)$ is contractible.
\end{prop}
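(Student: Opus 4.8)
The plan is to combine the homotopy pullback square that defines $\Emb_f^{S_\tau}(S\times[0,1),N)$ with the contractibility statement of the previous proposition. Write $M=S\times[0,1)$, equipped with its canonical framed $S_\tau$-structure. By definition, $\Emb_f^{S_\tau}(M,N)$ is the homotopy pullback of the cospan
\[
\Emb^S(M,N)\longrightarrow\Map^{S_\tau}_{\on{GL}(d)}(\on{Fr}(TM),\on{Fr}(TN))\longleftarrow\Map^S(M,N).
\]
I claim all three corners of this cospan are weakly contractible; since a homotopy pullback of a diagram of contractible spaces is contractible, this gives the result.

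The corner $\Emb^S(M,N)$ is contractible by the previous proposition, since a framed $S_\tau$-manifold is in particular an $S$-manifold. For $\Map^S(M,N)$, one uses a collar deformation: the half-line $[0,1)$ strongly deformation retracts onto $[0,\epsilon]$ rel $[0,\epsilon]$, and every map in $\Map^S(M,N)$ is pinned to the fixed structure embedding on $S\times[0,\epsilon)$; composing with this retraction deforms $\Map^S(M,N)$ onto the single map which is the structure map on the collar and constant in the collar direction beyond it (continuity forces the value along $S\times\{\epsilon\}$, so the homotopy stays inside $\Map^S$). Hence $\Map^S(M,N)$ is contractible.

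For the remaining corner, I would trivialize both frame bundles using the given framings, so that $\on{Fr}(TM)\cong M\times\on{GL}(d)$ and $\on{Fr}(TN)\cong N\times\on{GL}(d)$. A $\on{GL}(d)$-equivariant bundle map between trivialized principal bundles covering a map $a\colon M\to N$ is precisely the datum of $a$ together with a clutching map $b\colon M\to\on{GL}(d)$; the constraint of restricting to the canonical map over $S\times[0,\epsilon)$ — which makes sense precisely because the structure embedding of $N$ preserves framings — forces $a$ to agree with the structure map there and $b$ to be constantly the identity there. Thus this corner is identified with $\Map^S(M,N)\times\mathcal B$, where $\mathcal B$ is the space of maps $S\times[0,1)\to\on{GL}(d)$ that are constantly the identity on $S\times[0,\epsilon)$; the first factor is contractible by the previous paragraph, and the very same collar retraction contracts $\mathcal B$ (continuity forces the value to be the identity along $S\times\{\epsilon\}$). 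Equivalently, this shows the right-hand map of the cospan is a weak equivalence, so that the homotopy pullback collapses directly onto $\Emb^S(M,N)$, which is contractible.

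The one genuine subtlety — the main obstacle — is the point-set bookkeeping around the half-open collar $S\times[0,\epsilon)$: one must check that the deformation retractions really preserve all the constraints, that the forced limiting values along $S\times\{\epsilon\}$ behave as the argument needs, and that the trivialization identification in the last paragraph matches the map appearing in the definition of $\Map^{S_\tau}_{\on{GL}(d)}(\on{Fr}(TM),\on{Fr}(TN))$ (concretely, that $\Map^S(M,N)\to\Map^{S_\tau}_{\on{GL}(d)}(\on{Fr}(TM),\on{Fr}(TN))$ sends $a$ to the pair $(a,\text{const}_e)$). None of this is deep, but it is where care is required; everything else reduces to the input proposition and elementary properties of homotopy pullbacks.
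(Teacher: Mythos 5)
Your argument is correct, and it is the same approach the paper intends by its one-line reference to \cite[Proposition 6.8]{horelfactorization}: reduce to the previous proposition via the homotopy pullback defining $\Emb_f^{S_\tau}$, and observe that the frame-bundle corner collapses because, after trivializing both frame bundles by the given framings, the constraint of being the identity over $S\times[0,\epsilon)$ together with the collar retraction $u\mapsto\min(u,\epsilon)$ contracts the extra $\on{GL}(d)$-valued clutching data. (Either of your two closing formulations works — all three corners contractible, or the right-hand leg a weak equivalence so the pullback collapses onto $\Emb^S(S\times[0,1),N)$ — and nonemptiness of the mapping-space corners is automatic from nonemptiness of $\Emb^S(S\times[0,1),N)$, itself implied by the previous proposition.)
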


\begin{proof}
The proof uses the previous proposition exactly as in \cite[proposition 6.8]{horelfactorization}.
\end{proof}

\section{A few facts about model categories}

\subsection{Monoidal and enriched model categories}

\begin{defi}
Let $\cat{X}$, $\cat{Y}$ and $\cat{Z}$ be three model categories.
A pairing $T:\cat{X}\times\cat{Y}\to\cat{Z}$ is said to satisfies the \emph{pushout-product axiom} if for each pair of cofibrations $f:A\to B$ of $\cat{X}$ and $g:K\to L$ of $\cat{Y}$, the induced map
\[T(B,K)\sqcup^{T(A,K)}T(A,L)\to T(B,L)\]
is a cofibration which is trivial if one of $f$ and $g$ is.

We say that $T$ is a \emph{left Quillen bifunctor} if it satisfies the pushout-product axiom and if it is a left adjoint when one variable is fixed.
\end{defi}

One useful consequence of the pushout-product axiom is that if $A$ is cofibrant $T(A,-)$ preserves trivial cofibrations between cofibrant objects. Then by Ken Brown's lemma (see \cite{hoveymodel}) it preserves all weak equivalences between cofibrant objects.

Recall that if $\cat{X}$ is a model category, $\cat{X}\op$ has a dual model structure in which (trivial) fibrations are opposite of (trivial) cofibrations and weak equivalences are opposite of weak equivalences.

\begin{defi}
A \emph{(closed) monoidal model} category is a model category structure on a (closed) monoidal category $(\cat{V},\otimes,\un)$ which is such that
\begin{itemize}
\item The functor $-\otimes -:\cat{V}\times\cat{V}\to\cat{V}$ satisfies the pushout-product axiom.
\item The map $Q\un\to\un$ induces a weak equivalence $Q\un\otimes V\to V$ for each $V$.
\end{itemize}
A symmetric monoidal model category is a model category structure on a symmetric monoidal category which makes the underlying monoidal category into a monoidal model category.
\end{defi}

\begin{defi}
Let $\cat{V}$ be a monoidal model category. Let $(\cat{X},\Hom_{\cat{X}}(-,-))$ be a $\cat{V}$-enriched category. A \emph{$\cat{V}$-enriched model structure} on $\cat{X}$ is a model category structure on the underlying category of $\cat{X}$ that is such that the functor
\[\Hom_{\cat{X}}\op:\cat{X}\times\cat{X}\op\to\cat{V}\op\]
is a left Quillen bifunctor.
\end{defi}

Note that in a $\cat{V}$-enriched model category $\cat{X}$, we have a tensor and cotensor functor:
\[\cat{V}\times\cat{X}\to\cat{X},\;\;\;\;\cat{V}\op\times\cat{X}\to\cat{X}\]
fitting into the usual two variables adjunction.

\begin{defi}
Let $(\cat{X},\Hom_{\cat{X}})$ be a $\cat{V}$-enriched category. Let $T$ be a monad on $\cat{X}$ and $\cat{X}[T]$ be the category of $T$-algebras in $cat{X}$. Let us define the following equalizer
\[\Hom_{\cat{X}[T]}(X,Y)\to\Hom_\cat{X}(X,Y)\rightrightarrows\Hom_{\cat{X}}(TX,Y)\]
where the top map is obtained by precomposition with the structure map $TX\to X$ and the bottom map is the composition
\[\Hom_\cat{X}(X,Y)\to\Hom_\cat{X}(TX,TY)\to\Hom_\cat{X}(TX,Y)\]
\end{defi}

\begin{prop}\label{enriched monad}
Let $\cat{V}$ be a monoidal model category and $(\cat{X},\Hom_{\cat{X}})$ be a $\cat{V}$-enriched model category. Let $\cat{X}$ be a cofibrantly generated model category. If the category $\cat{X}[T]$ can be given the transferred model structure. Then $\cat{X}[T]$ equipped with $\Hom_{\cat{X}[T]}$ is a $\cat{V}$-enriched model category.
\end{prop}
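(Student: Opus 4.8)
The plan is to verify directly that the functor $\Hom_{\cat{X}[T]}\op:\cat{X}[T]\times\cat{X}[T]\op\to\cat{V}\op$ is a left Quillen bifunctor, using the fact that the transferred model structure on $\cat{X}[T]$ has its fibrations and weak equivalences created by the forgetful functor $G:\cat{X}[T]\to\cat{X}$, while the cofibrations are generated by the images under the left adjoint $F$ of the generating cofibrations of $\cat{X}$. Since $\cat{V}$ is assumed cofibrantly generated (being a monoidal model category we at least know the pushout-product axiom; for the lifting argument we really only need $\cat{X}$ cofibrantly generated), it suffices to check the pushout-product axiom on generators. Concretely, I would take a generating cofibration $i:FA\to FB$ of $\cat{X}[T]$ coming from a generating cofibration $a:A\to B$ of $\cat{X}$, and a fibration $p:X\to Y$ in $\cat{X}[T]$ (equivalently, $Gp$ a fibration in $\cat{X}$), and show that the map
\[
\Hom_{\cat{X}[T]}(FB,X)\to \Hom_{\cat{X}[T]}(FA,X)\times_{\Hom_{\cat{X}[T]}(FA,Y)}\Hom_{\cat{X}[T]}(FB,Y)
\]
is a fibration in $\cat{V}$, trivial if $a$ or $p$ is trivial.

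The key step is the identification $\Hom_{\cat{X}[T]}(FA,X)\cong\Hom_{\cat{X}}(A,GX)$, which is the enriched form of the adjunction $F\dashv G$; this should follow by comparing the equalizer defining $\Hom_{\cat{X}[T]}$ with the unenriched adjunction isomorphism, exactly as in the classical (unenriched) proof that a transferred model structure exists. Granting this, the pushout-product map above is isomorphic to
\[
\Hom_{\cat{X}}(B,GX)\to \Hom_{\cat{X}}(A,GX)\times_{\Hom_{\cat{X}}(A,GY)}\Hom_{\cat{X}}(B,GY),
\]
which is precisely the map controlled by the hypothesis that $\cat{X}$ is a $\cat{V}$-enriched model category: since $a:A\to B$ is a cofibration in $\cat{X}$ and $Gp:GX\to GY$ is a fibration in $\cat{X}$, that map is a fibration in $\cat{V}$, and it is trivial as soon as $a$ is trivial (automatic: it's a generating cofibration, so no issue) or $p$ is. Reducing the general case of the pushout-product axiom in $\cat{X}[T]$ to the case of generating cofibrations is the standard argument: the class of cofibrations $i$ in $\cat{X}[T]$ for which $\Hom_{\cat{X}[T]}\op(i,p)$ has the required lifting property is closed under pushouts, transfinite composition and retracts, because these operations are taken to the relevant limits on the $\Hom$-side and $\cat{V}$'s fibrations/trivial fibrations are closed under the dual operations.

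I expect the main obstacle to be bookkeeping around the enriched adjunction isomorphism $\Hom_{\cat{X}[T]}(FA,X)\cong\Hom_{\cat{X}}(A,GX)$: one must check that the two structure maps in the equalizer defining the left-hand side correspond, under the ordinary adjunction, to the (collapsing) pair whose equalizer is just $\Hom_{\cat{X}}(A,GX)$, i.e. that the $T$-algebra compatibility is automatically satisfied on the free side. This is a diagram chase using the monad multiplication and unit together with the $\cat{V}$-naturality of $T$ (which is where the hypothesis that $T$ is a $\cat{V}$-enriched monad, implicit in $\cat{X}[T]$ being a $\cat{V}$-enriched category via the displayed equalizer, gets used). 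Once that isomorphism is in hand, and once one also checks that it is natural in both variables and compatible with the tensoring so that pushouts of generating cofibrations behave correctly, the rest is the routine "check on generators" reduction, and the conclusion that $\cat{X}[T]$ is a $\cat{V}$-enriched model category follows.
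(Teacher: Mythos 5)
Your proposal is essentially the same as the paper's proof: reduce to the generating (trivial) cofibrations of $\cat{X}[T]$, observe these are free on generating (trivial) cofibrations of $\cat{X}$, and use the enriched adjunction isomorphism $\Hom_{\cat{X}[T]}(FA,X)\cong\Hom_{\cat{X}}(A,GX)$ to invoke the pushout-product axiom in $\cat{X}$. You provide somewhat more detail on the closure argument and on verifying the enriched adjunction, but the strategy and key steps coincide exactly with the paper's.
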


\begin{proof}
Let $f:U\to V$ be a (trivial) cofibration and $p:X\to Y$ be a fibration in $\cat{C}[T]$. We want to show that the obvious map
\[\Hom_{\cat{X}[T]}(V,X)\to\Hom_{\cat{X}[T]}(U,X)\times_{\Hom_{\cat{X}[T]}(U,Y)}\Hom_{\cat{X}[T]}(V,Y)\]
is a (trivial) fibration in $\cat{V}$. It suffices to do it for all generating (trivial) cofibration $f$. Hence it suffices to do this for a free map $f=Tm:TA\to TB$ where $m$ is a (trivial) cofibration in $\cat{X}$. But then the statement reduces to proving that
\[\Hom_{\cat{C}}(B,X)\to\Hom_{\cat{C}}(A,X)\times_{\Hom_{\cat{C}}(A,Y)}\Hom_{\cat{C}}(B,Y)\]
is a (trivial) fibration which is true because $\cat{C}$ is a $\cat{V}$-enriched model category.
\end{proof}

\begin{prop}\label{model structure on modules}
Let $\cat{V}$ be a cofibrantly generated monoidal model category. Let $R$ be an associative algebra in $\cat{V}$ whose underlying object is cofibrant or $R$ be any associative algebra if $\cat{V}$ satisfies the monoid axiom. Then the transferred model structure on the category $\Mod_R$ of right $R$-modules in $\cat{V}$ exists. Moreover, if $\cat{V}$ is symmetric monoidal and $R$ is a commutative algebra, $\Mod_R$ is a symmetric monoidal model category for the relative tensor product $-\otimes_R-$. $\hfill\square$. 

Finally, if $\cat{V}$ is enriched  over a monoidal model category $\cat{W}$, then, so is $\Mod_R$.
\end{prop}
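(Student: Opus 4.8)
The plan is to treat the three assertions separately, deducing the last one from \ref{enriched monad}. For the existence of the transferred model structure, and for the symmetric monoidal structure in the commutative case, I would appeal to the standard theory of modules over a monoid in a monoidal model category (Schwede--Shipley). The relevant observation is that $\Mod_R$ is the category of algebras for the monad $T=-\otimes R$ on $\cat{V}$, whose free functor $X\mapsto X\otimes R$ is left adjoint to the forgetful functor, and whose candidate generating (trivial) cofibrations are the free maps $f\otimes R$ with $f$ a generating (trivial) cofibration of $\cat{V}$. When $\cat{V}$ satisfies the monoid axiom, the existence of the transferred structure for an arbitrary monoid $R$ is exactly the Schwede--Shipley theorem; when instead $R$ is cofibrant as an object of $\cat{V}$, the pushout-product axiom applied to $f$ and $\varnothing\to R$ shows that $f\otimes R$ is itself a (trivial) cofibration of $\cat{V}$, so every relative $\{f\otimes R\}$-cell complex is a (trivial) cofibration of $\cat{V}$, hence a weak equivalence when $f$ is trivial, and the usual transfer machinery applies. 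For $R$ commutative in a symmetric monoidal $\cat{V}$, the pushout-product axiom for $-\otimes_R-$ is checked on generators $f\otimes R$ and $g\otimes R$ and reduces to the pushout-product axiom in $\cat{V}$; this is again Schwede--Shipley. I would simply cite these facts rather than reprove them.

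For the final clause I would argue that it is a direct instance of \ref{enriched monad}. Suppose $\cat{V}$ is a $\cat{W}$-enriched model category for a monoidal model category $\cat{W}$. Tensoring with the fixed object $R$ is a $\cat{W}$-enriched functor (using the compatibility of $\otimes$ with the $\cat{W}$-tensoring of $\cat{V}$), so $T=-\otimes R$ is a $\cat{W}$-enriched monad on $\cat{V}$ with $\cat{V}[T]\cong\Mod_R$. Since $\cat{V}$ is cofibrantly generated and, by the first assertion, $\Mod_R$ carries the transferred model structure, \ref{enriched monad}---read with the roles of its ``$\cat{V}$'' and ``$\cat{X}$'' played here by $\cat{W}$ and $\cat{V}$---produces on $\Mod_R$ the $\cat{W}$-enrichment
\[\Hom_{\Mod_R}(M,N)\to\Hom_{\cat{V}}(M,N)\rightrightarrows\Hom_{\cat{V}}(M\otimes R,N)\]
and certifies that this makes $\Mod_R$ a $\cat{W}$-enriched model category. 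It then remains only to note that this equalizer is the expected hom-object of right $R$-modules, the two parallel arrows being precomposition with the action $M\otimes R\to M$ and the composite $\Hom_{\cat{V}}(M,N)\to\Hom_{\cat{V}}(M\otimes R,N\otimes R)\to\Hom_{\cat{V}}(M\otimes R,N)$, which is immediate from the definitions.

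The step requiring attention---and the one I expect to be the main obstacle, to the extent there is one---is matching the hypotheses of \ref{enriched monad} to the situation at hand: that lemma presupposes both that the ambient category is cofibrantly generated and $\cat{W}$-enriched and that the category of algebras admits the transferred model structure, so one must feed it the first assertion of the present proposition together with the description of the generating (trivial) cofibrations of $\Mod_R$ as the free maps $f\otimes R$, which is precisely what the proof of \ref{enriched monad} exploits when it reduces the two-variable lifting problem to generators. Everything else is routine bookkeeping.
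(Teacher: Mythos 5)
Your proposal is correct and takes the same route as the paper's own two-line proof, which simply cites Schwede--Shipley for the existence of the transferred (and, when $R$ is commutative, symmetric monoidal) model structure and invokes Proposition \ref{enriched monad} for the enrichment. The extra details you supply — the cofibrant-$R$ case via the pushout-product axiom applied to $\varnothing\to R$, and the explicit matching of the roles of $\cat{W}$ and $\cat{V}$ against the hypotheses of \ref{enriched monad} — are accurate expansions of what the paper leaves implicit.
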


\begin{proof}
The existence of the model structure is straightforward and can be found in many sources (for instance \cite{schwedealgebras}). The fact about enrichments follows from \ref{enriched monad}.
\end{proof}

\begin{prop}\label{Quillen equivalence module}
If $f:R\to S$ is a map between associative algebras of $\cat{V}$ that are cofibrant in $\cat{V}$ or any associative algebras if $\cat{V}$ satisfies the monoid axiom, then the functor $f_!:\Mod_R\to \Mod_S$ sending $M$ to $M\otimes_R S$ is a Quillen left adjoint. 

Moreover, if $R$ and $S$ are cofibrant in $\cat{V}$ and if weak equivalences in $\cat{V}$ are preserved under filtered colimits, then, $f_!$ is a left Quillen equivalence if $f$ is a weak equivalence.
\end{prop}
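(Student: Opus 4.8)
The plan is to handle the two assertions in turn. For the first, observe that $f_!=-\otimes_R S$ is left adjoint to the restriction-of-scalars functor $f^*\colon\Mod_S\to\Mod_R$, which sends an $S$-module to the same underlying object of $\cat{V}$ equipped with the $R$-action obtained by precomposing with $f$. The model structures on $\Mod_R$ and $\Mod_S$ are transferred along the forgetful functors to $\cat{V}$ by \ref{model structure on modules}, so their fibrations and weak equivalences are exactly the maps that are fibrations, resp.\ weak equivalences, in $\cat{V}$; since $f^*$ commutes strictly with the forgetful functors, it preserves fibrations and weak equivalences, hence trivial fibrations, and therefore $f_!$ is a left Quillen functor.

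For the second assertion I would invoke the standard criterion for a Quillen equivalence. Weak equivalences of $R$-modules and of $S$-modules are both detected on underlying objects of $\cat{V}$, and $f^*$ does not change the underlying object, so $f^*$ reflects (and preserves) all weak equivalences. Consequently $(f_!,f^*)$ is a Quillen equivalence provided the unit $\eta_M\colon M\to f^*f_!M=M\otimes_R S$ is a weak equivalence for every cofibrant $M\in\Mod_R$; indeed $f^*$ preserves weak equivalences, so the derived unit $M\to f^*((f_!M)^{\mathrm{fib}})$ is a weak equivalence if and only if $\eta_M$ is. Note also that, since $R$ is cofibrant in $\cat{V}$, the pushout--product axiom applied to $\varnothing\to R$ shows that the forgetful functor $\Mod_R\to\cat{V}$ carries the generating cofibrations $R\otimes i$ to cofibrations; as it preserves colimits it then carries all cofibrations to cofibrations, and similarly over $S$. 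Hence $M$ and $f_!M$ are cofibrant in $\cat{V}$ whenever $M$ is cofibrant in $\Mod_R$.

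It remains to show $\eta_M$ is a weak equivalence for cofibrant $M$, which I would prove by induction over a cell presentation of $M$. The class of $M$ for which this holds is closed under retracts; under pushouts along a generating cofibration $R\otimes i$, by applying the gluing lemma to the two pushout squares obtained from a cell attachment $M_\alpha\to M_{\alpha+1}$ by applying $\id$ and $f^*f_!$ respectively --- the common left-hand leg $R\otimes A\to R\otimes B$ being a cofibration --- and under transfinite composition, using the hypothesis that weak equivalences in $\cat{V}$ are stable under filtered colimits together with the fact that $\id$ and $f_!$ preserve filtered colimits. This reduces everything to the base case $M=R\otimes A$ with $A$ a (co)domain of a generating cofibration of $\cat{V}$, where $\eta_{R\otimes A}$ is the map $f\otimes\id_A\colon R\otimes A\to S\otimes A$. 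The main obstacle is precisely this base case: one must know that $f\otimes\id_A$ is a weak equivalence, which uses the cofibrancy of $R$ and $S$ in $\cat{V}$ and the compatibility of $\otimes$ with weak equivalences in an essential way --- either by cofibrantly replacing $A$ and applying Ken Brown's lemma to $-\otimes A$ on cofibrant objects, or by identifying $M\otimes_R S$ (for $M$ cofibrant) with the realization of the two-sided bar construction $B_\bullet(M,R,S)$ and comparing it levelwise with $B_\bullet(M,R,R)\simeq M$. This is the module-theoretic analogue of the facts recalled in \ref{model structure on modules}; compare \cite{schwedealgebras}.
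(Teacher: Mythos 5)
Your proof is correct and follows essentially the same route as the paper's: $f^*$ preserves (and reflects) fibrations and weak equivalences since they are created in $\cat{V}$, reducing the Quillen-equivalence claim to showing the unit $M\to M\otimes_R S$ is a weak equivalence for cofibrant $M$, which you then establish by cell induction exactly as the paper does (free base case, gluing lemma for cell attachments using cofibrancy of $R$ and $S$ in $\cat{V}$, filtered colimits for transfinite composition, and retracts). Your treatment is somewhat more explicit than the paper's — in particular you spell out why $f^*$ reflecting weak equivalences suffices and why the free base case $R\otimes X\to S\otimes X$ is a weak equivalence — but the argument is the same.
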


\begin{proof}
The right adjoint of $f_!$ is the forgetful functor $f^*$ from $\Mod_S$ to $\Mod_R$ which obviously preserves fibrations and weak equivalences. Hence $f_!$ is left Quillen. 

Now, assume that $f$ is a weak equivalence. We want to show that $u_M:M\to M\otimes_RS$ is a weak equivalence if $M$ is cofibrant. Clearly this is true for $M$ of the form $X\otimes R$ with $X$ cofibrant. Now assume that $u_M$ is a weak equivalence for some $M$ and let $N$ be the pushout of $X\otimes R\to M$ along a map $i\otimes R:X\otimes R\to Y\otimes R$ where $i$ is a cofibration in $\cat{C}$. Then the map $u_N$ is the map
\[Y\otimes R\sqcup^{X\otimes R}M\to Y\otimes S\sqcup^{X\otimes S}M\otimes_RS\]

Since $i$ is a cofibration and $R$ and $S$ are $\cat{V}$-cofibrant, both pushouts are homotopy pushouts. Therefore the map $u_N$ is a weak equivalence. Finally since weak equivalences are preserved under filtered colimits by assumption and under retract (because this is the case in any model category), $u_M$ is a weak equivalence for any cofibrant object in $\Mod_R$
\end{proof}

\subsection{Homotopy colimits and bar construction}

See \cite{dwyerhomotopy} or \cite{shulmanhomotopy} for a general definition of derived functors. We will use the following

\begin{prop}
Let $\cat{X}$ be a model category tensored over $\S$ and $s\cat{X}$ be the category of simplicial objects in $\cat{X}$ with the Reedy model structure. Then the geometric realization functor
\[|-|:s\cat{X}\to\cat{X}\]
is left Quillen
\end{prop}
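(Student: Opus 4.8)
The plan is to realise $|-|$ as the left adjoint of a Quillen pair: I would write down its right adjoint $R$ explicitly and then reduce the Reedy fibrancy condition on $R$ to the pushout--product axiom for the simplicial structure on $\cat{X}$. Recall first that $|Y_\bullet|$ is the coend $\int^{[n]\in\Delta}\Delta[n]\otimes Y_n$, where $\otimes$ denotes the $\S$-tensoring of $\cat{X}$, and let $R$ be the functor sending $X\in\cat{X}$ to the simplicial object $[n]\mapsto X^{\Delta[n]}$ (the $\S$-cotensor, functorial in $[n]\in\Delta\op$). Manipulating (co)ends and using the tensor--cotensor adjunction $\cat{X}(\Delta[n]\otimes Y_n,X)\cong\cat{X}(Y_n,X^{\Delta[n]})$ gives a natural isomorphism
\[\cat{X}(|Y_\bullet|,X)\;\cong\;\int_{[n]}\cat{X}\bigl(Y_n,X^{\Delta[n]}\bigr)\;\cong\;s\cat{X}(Y_\bullet,RX),\]
so $|-|$ is left adjoint to $R$, and it suffices to show that $R$ preserves fibrations and trivial fibrations.

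The key computation is that of the matching objects of $RX$. For a simplicial set $K$ and a simplicial object $Z_\bullet$, write $\Hom(K,Z_\bullet)=\lim_{\Delta[m]\to K}Z_m$ for the cotensor; then $(Z_\bullet)_n=\Hom(\Delta[n],Z_\bullet)$ and $M_nZ_\bullet=\Hom(\partial\Delta[n],Z_\bullet)$. Since the $\S$-cotensor $X^{(-)}\colon\S\op\to\cat{X}$ carries colimits of simplicial sets to limits in $\cat{X}$, and since $\Hom(-,RX)$ does too and agrees with $X^{(-)}$ on representables, we get $\Hom(K,RX)=X^{K}$ for all $K$; in particular $(RX)_n=X^{\Delta[n]}$ and $M_nRX=X^{\partial\Delta[n]}$. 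Consequently, for a fibration (resp.\ trivial fibration) $p\colon X\to X'$, the $n$-th relative matching map of $Rp\colon RX\to RX'$ is
\[X^{\Delta[n]}\longrightarrow X^{\partial\Delta[n]}\times_{(X')^{\partial\Delta[n]}}(X')^{\Delta[n]}.\]

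This last map is exactly the one produced by the pushout--product axiom for the simplicial model category $\cat{X}$ (in its cotensored form), applied to the cofibration $\partial\Delta[n]\hookrightarrow\Delta[n]$ of simplicial sets and to $p$; it is therefore a fibration, and a trivial fibration if $p$ is. Hence $Rp$ is a Reedy (trivial) fibration whenever $p$ is, so $R$ is right Quillen and $|-|$ is left Quillen. The main thing needing care is the identification $M_nRX=X^{\partial\Delta[n]}$ of the previous step, which amounts to the standard fact that $\Delta[\bullet]$ is a Reedy cofibrant cosimplicial simplicial set whose $n$-th latching map is the boundary inclusion $\partial\Delta[n]\hookrightarrow\Delta[n]$; granting this, everything else is formal. (Alternatively one could argue dually and directly, showing that $|-|$ preserves Reedy cofibrations by means of the skeletal filtration of a simplicial object, which at the $n$-th stage attaches a pushout along the pushout--product of $\partial\Delta[n]\hookrightarrow\Delta[n]$ with a relative latching map; this is the more classical route but needs the same input.)
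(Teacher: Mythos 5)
Your argument is correct. The paper itself gives no proof but simply cites Goerss--Jardine, VII.3.6, whose argument proceeds on the left-adjoint side: one shows directly that $|-|$ preserves Reedy (trivial) cofibrations by filtering a simplicial object by its skeleta and observing that each stage is a pushout along the pushout--product of $\partial\Delta[n]\hookrightarrow\Delta[n]$ with a relative latching map. You instead work on the right-adjoint side: you identify the right adjoint $R=([n]\mapsto X^{\Delta[n]})$, compute $M_nRX=X^{\partial\Delta[n]}$, and observe that the relative matching map of $Rp$ is exactly the map supplied by SM7 in its cotensored form for the cofibration $\partial\Delta[n]\hookrightarrow\Delta[n]$ against $p$. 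The two arguments are formally dual and of comparable length; yours requires identifying $M_nRX$ (via the fact that $X^{(-)}$ and $\operatorname{Hom}(-,RX)$ both turn colimits in $\S$ into limits and agree on representables), while the latching-object argument requires the analogous identification $L_n(\Delta[\bullet]\otimes -)$, so neither has a clear edge. You also point out the latching-side alternative yourself, so you clearly see both routes.

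One small point of care: the hypothesis as stated says only ``tensored over $\S$,'' but both your argument and the cited one actually need $\cat{X}$ to be a simplicial model category in the full sense (tensored, cotensored, enriched, satisfying SM7). Your proof uses the cotensor to build $R$ and SM7(b) for the relative matching map; the latching-side proof uses SM7(a). This is surely what the paper intends, since the Goerss--Jardine reference is stated for simplicial model categories, but it is worth being explicit that a bare tensoring without the pushout--product axiom would not suffice.
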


\begin{proof}
See \cite[VII.3.6.]{goersssimplicial}.
\end{proof}

\begin{prop}\label{Reedy cofibrant}
Let $\cat{X}$ be a simplicial model category, let $\cat{K}$ be a simplicial category and let $F:\cat{K}\to\cat{X}$ and $W:\cat{K}\op\to \S$ be simplicial functors. Then the Bar construction
\[\on{B}_{\bullet}(W,\cat{K},F)\]
is Reedy cofibrant if $F$ is objectwise cofibrant.
\end{prop}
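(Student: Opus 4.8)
The plan is to verify Reedy cofibrancy directly from the definition, by showing that for each $n$ the $n$-th latching map $L_n\on{B}_\bullet(W,\cat{K},F)\to\on{B}_n(W,\cat{K},F)$ is a cofibration of $\cat{X}$. Recall that $\on{B}_n(W,\cat{K},F)$ is the coproduct, over all $(n+1)$-tuples $(k_0,\dots,k_n)$ of objects of $\cat{K}$, of the objects $\big(W(k_0)\times\cat{K}(k_0,k_1)\times\cdots\times\cat{K}(k_{n-1},k_n)\big)\otimes F(k_n)$, where the products are taken in $\S$ and $\otimes$ denotes the tensoring of the simplicial category $\cat{X}$ over $\S$; the codegeneracies act by inserting identity morphisms and the faces by composition and by the actions of $W$ and $F$.

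The first step is a piece of bookkeeping. For a surjection $\sigma:[n]\twoheadrightarrow[m]$, the operator $\sigma^{*}:\on{B}_m\to\on{B}_n$ carries the summand indexed by $(l_0,\dots,l_m)$ isomorphically onto a subobject of the summand indexed by $(l_{\sigma(0)},\dots,l_{\sigma(n)})$: writing $\vec{k}=(l_{\sigma(0)},\dots,l_{\sigma(n)})$ and $P_{\vec{k}}=\cat{K}(k_0,k_1)\times\cdots\times\cat{K}(k_{n-1},k_n)$, the image is $\big(W(k_0)\times Q_{\sigma}\big)\otimes F(k_n)$, where $Q_{\sigma}\subseteq P_{\vec{k}}$ is the subsimplicial set obtained by forcing the $j$-th factor to be $\on{id}$ whenever $\sigma(j)=\sigma(j+1)$. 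These inclusions are split monomorphisms and are compatible as $\sigma$ varies, so the colimit defining $L_n\on{B}_\bullet(W,\cat{K},F)$ decomposes along the coproduct over $\vec{k}$, and on the $\vec{k}$-summand it equals $\big(W(k_0)\times U_{\vec{k}}\big)\otimes F(k_n)$, where $U_{\vec{k}}=\bigcup_{\sigma}Q_{\sigma}\subseteq P_{\vec{k}}$ (the union over non-identity surjections, which is empty, hence $\varnothing$, when $\vec{k}$ has no consecutive repetitions). Thus the latching map is the coproduct over $\vec{k}$ of the maps
\[
\big(W(k_0)\times U_{\vec{k}}\big)\otimes F(k_n)\longrightarrow\big(W(k_0)\times P_{\vec{k}}\big)\otimes F(k_n).
\]

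The second step is then formal. Each inclusion $U_{\vec{k}}\hookrightarrow P_{\vec{k}}$ is a monomorphism of simplicial sets, hence a cofibration in $\S$; taking the product with the fixed simplicial set $W(k_0)$ preserves monomorphisms; and since $\cat{X}$ is a simplicial model category and $F(k_n)$ is cofibrant, the functor $-\otimes F(k_n):\S\to\cat{X}$ is left Quillen (the pushout-product axiom applied to the datum $\varnothing\to F(k_n)$), so it takes this cofibration to a cofibration of $\cat{X}$. As coproducts of cofibrations are cofibrations, $L_n\on{B}_\bullet(W,\cat{K},F)\to\on{B}_n(W,\cat{K},F)$ is a cofibration for every $n$; for $n=0$ this reads $\varnothing\to\coprod_k W(k)\otimes F(k)$, i.e.\ that $\on{B}_0$ is cofibrant. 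Hence $\on{B}_\bullet(W,\cat{K},F)$ is Reedy cofibrant.

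The main obstacle is the first step: justifying that the latching colimit genuinely splits as the coproduct over $\vec{k}$ of the subobjects $\big(W(k_0)\times U_{\vec{k}}\big)\otimes F(k_n)$, which amounts to tracking how composites of codegeneracies act on the indexing tuples and on the morphism factors. This is purely combinatorial and demands some care, but everything downstream of it is a formal consequence of $\cat{X}$ being a simplicial model category; in particular no hypothesis on $W$ or on the enrichment of $\cat{K}$ is needed, since these contribute only (automatically cofibrant) simplicial sets.
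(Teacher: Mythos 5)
Your proof is correct. The paper itself gives no argument — the stated "proof" is simply a citation to \cite{shulmanhomotopy} — so there is nothing in the paper to compare against; but your direct verification is the standard one and, to my knowledge, essentially what the cited reference does. The one step you flag as needing care is indeed the crux: that the latching colimit on the $\vec{k}$-summand computes the \emph{union} $U_{\vec{k}}=\bigcup_\sigma Q_\sigma$ inside $P_{\vec{k}}$ rather than some larger colimit. This does hold, because the relevant index category (non-identity surjections $\sigma:[n]\twoheadrightarrow[m]$ on whose fibres $\vec{k}$ is constant) is a poset which is \emph{confluent} with respect to the diagram of subobjects: for any $\sigma,\sigma'$ there is a common coarsening $\sigma''$ (take the surjection whose collapsed set of adjacent pairs is the union of those of $\sigma$ and $\sigma'$) mapping to both in the latching category and satisfying $Q_{\sigma''}=Q_\sigma\cap Q_{\sigma'}$; this is exactly the condition under which the colimit of a diagram of subobjects of a fixed object of $\S$ equals their union, and is the same phenomenon that underlies the Eilenberg--Zilber lemma. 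Everything after that — pulling the coproduct decomposition through, using that $W(k_0)\times(-)$ preserves monomorphisms of simplicial sets, that $(-)\otimes F(k_n)$ is left Quillen because $F(k_n)$ is cofibrant, and that coproducts of cofibrations are cofibrations — is formal and correct, and you are right that no cofibrancy hypothesis is needed on $W$ or on the hom-objects of $\cat{K}$ since all objects of $\S$ are cofibrant. (One cosmetic remark: your sign convention has $W$ attached to $k_0$ and $F$ to $k_n$ with the hom-objects running $\cat{K}(k_0,k_1),\dots,\cat{K}(k_{n-1},k_n)$, which reverses the usual orientation for a right module $W$ and left module $F$; this changes nothing in the argument.)
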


\begin{proof}
See \cite{shulmanhomotopy}.
\end{proof}

\begin{defi}
Same notation as in the previous proposition. Assume that $\cat{X}$ has a simplicial cofibrant replacement functor $Q$. We denote by $W\otimes^{\L}_{\cat{K}}F$ the realization of the simplicial object
\[\on{B}_{\bullet}(W,\cat{K},F\circ Q)\]
\end{defi}

Finally let us mention the following proposition which insures that having a simplicial cofibrant replacement diagram is not a strong restriction

\begin{prop}
Let $\cat{X}$ be a cofibrantly generated simplicial model category. Then $\cat{X}$ has a simplicial cofibrant replacement functor.
\end{prop}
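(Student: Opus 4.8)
The plan is to build the functor by running the \emph{enriched} (simplicial) small object argument on the set $I$ of generating cofibrations of $\cat{X}$. Since $\cat{X}$ is a simplicial model category it is tensored over $\S$, and the tensoring $\S\times\cat{X}\to\cat{X}$ satisfies the pushout--product axiom; this is precisely what makes the construction below simplicial. For $f\colon A\to B$ in $\cat{X}$ and $i\in I$ with domain $C_i$ and codomain $D_i$, set
\[S_i(f)=\Hom_{\cat{X}}(D_i,B)\times_{\Hom_{\cat{X}}(C_i,B)}\Hom_{\cat{X}}(C_i,A)\in\S,\]
the mapping space of lifting problems of $i$ against $f$, and define the one--step gluing
\[T(f)=A\ \sqcup_{\coprod_{i\in I}\,S_i(f)\otimes C_i}\ \Bigl(\coprod_{i\in I}\,S_i(f)\otimes D_i\Bigr),\]
together with its tautological factorization $A\to T(f)\to B$. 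All ingredients here ($\Hom_{\cat{X}}$, pullbacks in $\S$, the tensoring $S\otimes(-)$, coproducts and pushouts in $\cat{X}$) are simplicial functors, so $f\mapsto(A\to T(f)\to B)$ is a simplicial endofunctor of the arrow category of $\cat{X}$. Iterating $T$ transfinitely, with colimits at limit stages (again computed simplicially), gives a simplicial functorial factorization $A\to T^{\infty}(f)\to B$, and I would take $Q(X)=T^{\infty}(\varnothing\to X)$ with the natural transformation $Q\to\id$ given by the second leg.

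Next I would verify that $\varnothing\to Q(X)$ is a cofibration and $Q(X)\to X$ a trivial fibration. The first leg $A\to T^{\infty}(f)$ is a transfinite composite of pushouts of maps $S\otimes C_i\to S\otimes D_i$ with $S\in\S$; since $\varnothing\to S$ is a cofibration of $\S$ and $C_i\to D_i$ is a cofibration of $\cat{X}$, the pushout--product axiom makes each of these a cofibration, hence so is $A\to T^{\infty}(f)$; in particular $Q(X)$ is cofibrant. For the second leg one runs the usual argument: a lifting problem of some $i\in I$ against $T^{\infty}(f)\to B$ factors, by smallness, through a stage $T^{\alpha}(f)$, where it represents a $0$--simplex of $S_i(T^{\alpha}(f))$; the corresponding universal cell $D_i\to T^{\alpha+1}(f)\to T^{\infty}(f)$ is then a solution. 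So $T^{\infty}(f)\to B$ has the right lifting property against every $i\in I$ and is therefore a trivial fibration.

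The one genuinely technical point is the smallness needed to make the transfinite iteration converge: the cells being attached have the form $S\otimes C_i\to S\otimes D_i$ for an \emph{arbitrary} simplicial set $S$, not a discrete set, so the hypothesis that the $C_i$ are small relative to $I$--cell does not immediately apply. I expect this to be the main obstacle, and the way around it is to refine each attachment $S\otimes C_i\to S\otimes D_i$ into a transfinite composite of pushouts of the pushout--products of the boundary inclusions $\partial\Delta^{n}\hookrightarrow\Delta^{n}$ with $i$, for $n\ge 0$: since $\Delta^{n}$ and $\partial\Delta^{n}$ are finite simplicial sets, the domains of these maps are finite colimits of the $C_i$ and $D_i$, hence are still small relative to the relevant cell complexes, and the classical small object argument then goes through in the enriched setting. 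Alternatively, one may simply quote the enriched small object argument (see \cite{shulmanhomotopy}), which produces exactly such a simplicial functorial factorization into a cofibration followed by a trivial fibration, and restrict it to the maps $\varnothing\to X$.
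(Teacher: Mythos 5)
Your construction is the enriched (simplicial) small object argument, and it is the right proof; the paper itself does not give an argument but simply defers to Blumberg--Riehl \cite[Theorem 6.1]{blumberghomotopical}, so what you have written is a reasonable reconstruction of what underlies that citation. The one-step gluing $T(f)$ is exactly the enriched analogue of Quillen's step, the observation that $\Hom_{\cat{X}}$, pullbacks of spaces, tensoring, and colimits are all simplicial functors is precisely what makes $T^{\infty}$ a simplicial endofunctor of the arrow category, and the second leg being a trivial fibration follows from the usual lifting-through-a-stage argument once convergence is in hand.

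You are also right that convergence is the genuine technical heart, and your fix is the standard one: rewrite each enriched attachment $S\otimes C_i\to S\otimes D_i$ as a transfinite composite of pushouts of the pushout-products $(\partial\Delta^{n}\hookrightarrow\Delta^{n})\mathbin{\hat\otimes}i$, whose domains and codomains are finite colimits of the $C_i,D_i$ and hence small. This also re-derives, by the pushout-product axiom, that the first leg is a cofibration. One residual point you should make explicit: after this rewrite the smallness you need is relative to $I'$-cell, where $I'=\{(\partial\Delta^{n}\hookrightarrow\Delta^{n})\mathbin{\hat\otimes}i : i\in I,\ n\ge 0\}$, and $I'$-cell sits inside the cofibrations but not obviously inside $I$-cell, so "$C_i$ is small relative to $I$-cell'' does not immediately give what you need. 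For a combinatorial model category (which covers every application in this paper, e.g.\ $\Mod_E$) every object is small and the issue evaporates; in general one either strengthens the smallness hypothesis or follows Blumberg--Riehl, whose proof runs through Garner's algebraic small object argument precisely to sidestep this transfinite bookkeeping. Finally, a small citation note: \cite{shulmanhomotopy} is primarily about the enriched bar/cobar machinery for homotopy (co)limits rather than an enriched small object argument, so the paper's own reference \cite{blumberghomotopical} is the one that actually carries the statement.
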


\begin{proof}
See \cite[theorem 6.1.]{blumberghomotopical}.
\end{proof}

\subsection{Model structure on symmetric spectra}

Let $E$ be a an associative algebra in symmetric spectra. Then $\Mod_E$ has (at least) two simplicial cofibrantly generated model category structures in which the weak equivalences are the stable equivalences of the underlying symmetric spectrum:
\begin{itemize}
\item The positive model structure that we denote $\Mod_E$.

\item The absolute model structure that we denote $\Mod_E^a$.
\end{itemize}

Moreover if $E$ is commutative, both are closed symmetric monoidal model categories.

The identity functor induces a Quillen equivalence
\[\Mod_E\leftrightarrows \Mod_E^a\]

Both model structures have their advantages. The absolute model structure has more cofibrant objects (for instance $E$ itself is cofibrant which is often convenient). On the other hand the positive model structure has fewer cofibrant objects but a very well-behaved monoidal structure.

\section{Operads and modules}

\subsection{Colored operad}

In this paper, we call operad a symmetric colored operads in simplicial sets (also called a multicategory). When we want to specifically talk about operads with only one object, we say ``one-object operad''. If $\oper{M}$ is an operad, we write
\[\oper{M}(\{m_i\}_{i\in I};n)\]
for the space of operations from the set of the $m_i$'s to $n$.

Recall that any symmetric monoidal category can be seen as an operad:

\begin{defi}
Let $(\cat{A},\otimes,\un_\cat{A})$ be a small symmetric monoidal category enriched in $\S$. Then $\cat{A}$ has an underlying  operad $\oper{U}\cat{A}$ whose objects are the objects of $A$ and whose spaces of operations are given by
\[\oper{U}\cat{A}(\{a_i\}_{i\in I};b)=\Map_{\cat{A}}(\bigotimes_{i\in I}a_i,b)\]
\end{defi}

The construction $\cat{A}\mapsto \oper{U}\cat{A}$ sending a symmetric monoidal category to an operad has a left adjoint. The underlying category of the left adjoint applied to $\oper{M}$ is $\cat{M}$. A construction of that left adjoint is given in the first section of \cite{horelfactorization}.

We define an algebra over an operad $\oper{M}$ with value in a symmetric monoidal category $(\cat{C},\otimes,\un_{\cat{C}})$ as a morphism of operad $\oper{M}\to\oper{U}\cat{C}$. Equivalently, an $\oper{M}$-algebra in $\cat{C}$ is a symmetric monoidal functor $\cat{M}\to\cat{C}$. We will use the same notation for the two objects and allow oursleves to switch between them without mentioning it.

\subsection{Right modules over operads}

\begin{defi}
Let $\oper{M}$ be an operad. A \emph{right $\oper{M}$-module} is a simplicial functor
\[R:\cat{M}\op\to \S\]

When $\oper{O}$ is a single-object operad, we denote by $\Mod_\oper{O}$ the category of right modules over $\oper{O}$.
\end{defi}

Let $\Sigma$ be the category whose objects are the finite sets $\{1,\ldots,n\}$ with $n\in\mathbb{Z}_{\geq 0}$ and morphisms are bijections. $\Sigma$ is a symmetric monoidal category for the disjoint union operation.

Let $\oper{I}$ be the initial one-object operad (i.e. $\oper{I}(1)=*$ and $\oper{I}(k)=\varnothing$ for $k\neq 1$). It is clear that the free symmetric monoidal category associated to $\oper{I}$ is the category $\Sigma$. Let $\oper{O}$ be an operad and $\cat{O}$ be the free symmetric monoidal category associated to $\oper{O}$. By functoriality of the free symmetric monoidal category construction, there is a symmetric monoidal functor $\Sigma\to \cat{O}$ which induces a functor
\[\on{Fun}(\cat{O}\op,\S)\to\on{Fun}(\Sigma\op,\S)\] 

Recall the definition of the Day tensor product:

\begin{defi}
Let $(\cat{A},\square,\un_\cat{A})$ be a small symmetric monoidal category, then the category $\on{Fun}(\cat{A},\S)$ is a symmetric monoidal category for the operation $\otimes$ defined as the following coend:
\[F\otimes G(a)=\cat{A}(-\square-,a)\otimes_{\cat{A}\times\cat{A}}F(-)\times G(-)\]
\end{defi}

Now we can make the following proposition:

\begin{prop}
Let $\oper{O}$ be a single-object operad. The category of right $\oper{O}$-modules has a symmetric monoidal structure such that the restriction functor
\[\on{Fun}(\cat{O}\op,\S)\to\on{Fun}(\Sigma\op,\S)\]
is symmetric monoidal when the target is equipped with the Day tensor product.
\end{prop}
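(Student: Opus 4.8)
The plan is to take the symmetric monoidal structure to be Day convolution on $\on{Fun}(\cat{O}\op,\S)$ and then to verify by an explicit computation of coends that restriction along $\Sigma\to\cat{O}$ is strong monoidal. First I would note that since $\cat{O}$ is symmetric monoidal, so is $\cat{O}\op$, and therefore $\on{Fun}(\cat{O}\op,\S)$ carries the Day convolution tensor product, a closed symmetric monoidal structure with $(R\otimes R')(c)=\int^{c_1,c_2}\cat{O}(c,c_1\otimes c_2)\times R(c_1)\times R'(c_2)$ and unit the representable at the monoidal unit of $\cat{O}$. This is the claimed structure on right $\oper{O}$-modules.

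The symmetric monoidal functor $\Sigma\to\cat{O}$ induces a strong symmetric monoidal functor $\Sigma\op\to\cat{O}\op$, and the functor appearing in the statement is the associated restriction functor $i^*$. By functoriality of Day convolution in strong monoidal functors, the left Kan extension $i_!$ is strong symmetric monoidal, and hence its right adjoint $i^*$ is canonically lax symmetric monoidal; so it suffices to prove that the lax structure maps $i^*R\otimes i^*R'\to i^*(R\otimes R')$ and the corresponding map of units are isomorphisms.

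To see this I would unwind the coend. Every object of $\cat{O}$ is a tensor power $1^{\otimes n}$ of the unique colour, and here is the point at which one uses that $\cat{O}$ is the \emph{free} symmetric monoidal category on $\oper{O}$: a morphism $1^{\otimes n}\to 1^{\otimes p_1}\otimes 1^{\otimes p_2}$ in $\cat{O}$ is the datum of an ordered partition $\mathbf n=\mathbf n_1\sqcup\mathbf n_2$ together with morphisms $1^{\otimes\lvert\mathbf n_1\rvert}\to1^{\otimes p_1}$ and $1^{\otimes\lvert\mathbf n_2\rvert}\to1^{\otimes p_2}$, up to reindexing. Substituting this decomposition into $i^*(R\otimes R')(\mathbf n)=\int^{p_1,p_2}\cat{O}\bigl(1^{\otimes n},1^{\otimes(p_1+p_2)}\bigr)\times R(1^{\otimes p_1})\times R'(1^{\otimes p_2})$ and collapsing the two resulting coends by the co-Yoneda lemma produces $\coprod_{\mathbf n=\mathbf n_1\sqcup\mathbf n_2}R(\mathbf n_1)\times R'(\mathbf n_2)$, which is exactly the value at $\mathbf n$ of $i^*R\otimes i^*R'$ for the Day convolution on $\on{Fun}(\Sigma\op,\S)$ — this Day convolution being the usual tensor product of symmetric sequences, $\coprod_{p+q=n}\on{Ind}_{\Sigma_p\times\Sigma_q}^{\Sigma_n}(-\times-)$. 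A diagram chase identifies this isomorphism with the lax structure map, and the unit is treated the same way. Compatibility with the associativity and symmetry constraints is then automatic, since on both sides they are inherited from those of $\S$ and of disjoint union of finite sets.

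The main obstacle is this last step: the bookkeeping needed to match the decomposition of the hom-sets of $\cat{O}$, the co-Yoneda reductions, and the canonical lax structure coefficients of $i^*$. Nothing here is conceptually deep, but it is the only place where one genuinely uses that $\cat{O}$ is free on an operad — for a general strong symmetric monoidal functor, restriction is merely lax symmetric monoidal, so this structural input is essential.
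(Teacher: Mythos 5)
Your proof is correct, but the route is genuinely different from the paper's. The paper's argument is a one-liner: it cites the distributivity of the symmetric-sequence tensor $\otimes$ over the composition product $\circ$, namely $(M\otimes N)\circ P\cong(M\circ P)\otimes(N\circ P)$, and observes that when $P=\oper{O}$ and $M,N$ carry right $\oper{O}$-actions this isomorphism gives $M\otimes N$ a right $\oper{O}$-action; the restriction functor is strong monoidal on the nose because by construction the lifted product has the symmetric-sequence tensor as its underlying symmetric sequence. You instead take the monoidal structure on $\on{Fun}(\cat{O}\op,\S)$ to be Day convolution for $\cat{O}\op$, and prove strong monoidality of restriction by decomposing $\cat{O}(n,p+q)$ over ordered splittings $[n]=A\sqcup B$ and collapsing the two coends by co-Yoneda. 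Both proofs ultimately lean on the freeness of $\cat{O}$ on $\oper{O}$: the paper packages it inside the distributivity identity (whose proof is essentially the same decomposition you perform), while you use it directly. The paper's route is shorter and never opens up the PROP $\cat{O}$; yours is self-contained and, by exhibiting the structure as Day convolution from the start, gives it a universal characterization that the paper's description leaves implicit. The step you flag---identifying your explicit isomorphism with the canonical lax constraint of $i^*$---is indeed the only remaining bookkeeping, and it goes through.
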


\begin{proof}
We have the following identity for three symmetric sequences in $\S$ (see \cite{fressemodules} 2.2.3.):
\[(M\otimes N)\circ P\cong (M\otimes P)\circ (N\otimes P)\]
If $P$ is an operad, this identity gives a right $P$-module structure on the tensor product $M\otimes N$. 
\end{proof}

The category $\Mod_{\oper{O}}$ is a symmetric monoidal category tensored over $\S$. Therefore if $\oper{P}$ is another operad, we can talk about the category $\Mod_\oper{O}[\oper{P}]$.

It is easy to check that the category $\Mod_\oper{O}[\oper{P}]$ is isomorphic to the category of $\oper{P}$-$\oper{O}$-bimodules in the category of symmetric sequences in $\S$.

From now on, we assume that $\cat{C}$ is cocomplete and that the tensor product preserves colimits in both variables.

Any right module $R$ over a single-object operad $\oper{O}$ gives rise to a functor $\cat{C}[\oper{O}]\to\cat{C}$
\[A\mapsto R\circ_{\oper{O}}A=\on{coeq}(R\circ\oper{O}(A)\rightrightarrows R(A))\]

\begin{prop}
There is an isomorphism
\[R\circ_{\oper{O}}A\cong R\otimes_{\cat{O}}A\]
\hfill$\square$
\end{prop}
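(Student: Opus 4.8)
The plan is to show that the two objects $R\circ_{\oper{O}}A$ and $R\otimes_{\cat{O}}A$ corepresent one and the same functor $\cat{C}\to\S$, namely
\[X\longmapsto \Map_{\on{Fun}(\cat{O}\op,\S)}\big(R,\,\Map_{\cat{C}}(A(-),X)\big),\]
and then to conclude by the Yoneda lemma. Throughout I use freely that $\cat{O}$ is the free symmetric monoidal category on $\oper{O}$, so that an $\oper{O}$-algebra $A$ is a symmetric monoidal functor $\cat{O}\to\cat{C}$ with $A(n)\cong A^{\otimes n}$; that a right $\oper{O}$-module is precisely a functor $\cat{O}\op\to\S$, equivalently a symmetric sequence $R$ equipped with a right action $R\circ\oper{O}\to R$ of $\oper{O}$ for the composition product; and that for a symmetric sequence $M$ and an object $Y\in\cat{C}$ the Schur-type evaluation $M(Y)=\coprod_n M(n)\otimes_{\Sigma_n}Y^{\otimes n}$ satisfies $\Map_{\cat{C}}(M(Y),X)\cong\Map_{\on{Fun}(\Sigma\op,\S)}\!\big(M,\Map_{\cat{C}}(Y^{\otimes\bullet},X)\big)$, naturally in $Y$ and $X$ (this last point is just the tensor--cotensor adjunction in the simplicial category $\cat{C}$, using that $\otimes$ preserves colimits).

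For the right-hand side this is immediate: by definition $R\otimes_{\cat{O}}A$ is the coend $\int^{n\in\cat{O}}R(n)\otimes A^{\otimes n}$, so applying $\Map_{\cat{C}}(-,X)$ turns the coend into an end and yields exactly the space of natural transformations $R\Rightarrow\Map_{\cat{C}}(A(-),X)$ of functors on $\cat{O}\op$, i.e. $\Map_{\on{Fun}(\cat{O}\op,\S)}\big(R,\Map_{\cat{C}}(A(-),X)\big)$.

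For the left-hand side we start from $R\circ_{\oper{O}}A=\on{coeq}\big(R\circ\oper{O}(A)\rightrightarrows R(A)\big)$. Using the associativity of the composition product of symmetric sequences in the form $(R\circ\oper{O})(A)\cong R(\oper{O}(A))$, together with the exponential law recalled above, applying $\Map_{\cat{C}}(-,X)$ converts this coequalizer into the equalizer
\[\on{eq}\Big(\Map_{\on{Fun}(\Sigma\op,\S)}(R,G)\rightrightarrows\Map_{\on{Fun}(\Sigma\op,\S)}(R\circ\oper{O},G)\Big),\qquad G:=\Map_{\cat{C}}(A(-),X),\]
in which one arrow is precomposition with the module structure map $R\circ\oper{O}\to R$ and the other is the arrow induced by the $\oper{O}$-action on $G$ (note $G$ is itself a functor $\cat{O}\op\to\S$, since $A$ is). But this equalizer is precisely the standard description of the space of maps of right $\oper{O}$-modules $R\to G$, so it equals $\Map_{\Mod_{\oper{O}}}(R,G)=\Map_{\on{Fun}(\cat{O}\op,\S)}(R,G)$. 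Comparing with the previous paragraph, both $R\circ_{\oper{O}}A$ and $R\otimes_{\cat{O}}A$ corepresent the same functor, and Yoneda produces the desired natural isomorphism.

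The only genuinely non-formal input is the identification $(R\circ\oper{O})(A)\cong R(\oper{O}(A))$ and its compatibility with the two structure maps — i.e. associativity of the composition product — which is a standard fact (of the same flavour as the identities $(P\otimes Q)\circ S\cong(P\circ S)\otimes(Q\circ S)$ already invoked in the paper) whose verification is a direct comparison of the combinatorial descriptions of $\Map_{\cat{O}}$ and of $\circ$, both being indexed by set maps $f\colon\{1,\dots,m\}\to\{1,\dots,n\}$ decorated by operations in $\prod_j\oper{O}(f^{-1}(j))$. One can also bypass this bookkeeping entirely: both $R\mapsto R\circ_{\oper{O}}A$ and $R\mapsto R\otimes_{\cat{O}}A$ are cocontinuous (each is left adjoint to $X\mapsto\Map_{\cat{C}}(A(-),X)$), every right $\oper{O}$-module is a colimit of the free ones $\Map_{\cat{O}}(-,n)\cong\oper{O}^{\otimes n}$, and on such a free module both functors evaluate naturally to $A^{\otimes n}$; hence they agree. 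This is presumably why the statement is recorded in the paper without proof.
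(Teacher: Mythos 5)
Your proof is correct, and since the paper records this statement with no proof at all (it is tagged with $\square$ directly as a routine bookkeeping fact), there is nothing in the paper to compare against. Both of your arguments work: the corepresentability argument is sound once one carefully matches the coequalizer arrows to the right-$\oper{O}$-module structure on $G = \Map_{\cat{C}}(A(-),X)$, and the second argument you sketch at the end (both functors are cocontinuous in $R$, every right $\oper{O}$-module is a colimit of free ones $\Sigma_n\circ\oper{O}\cong\oper{O}^{\otimes n}\cong\cat{O}(-,n)$, and both functors send the free module at level $n$ to $A^{\otimes n}$) is the more economical route and is exactly the kind of verification the paper leaves to the reader.
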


\begin{prop}\label{coend}
Let $\alpha:\oper{M}\to\oper{N}$ a map of operads, the forgetful functor $\cat{C}[\oper{N}]\to\cat{C}[\oper{M}]$ has a left adjoint $\alpha_!$.

For $A\in\cat{C}[\oper{M}]$, the value at the object $n$ of $\on{Col}(\oper{N})$ of $\alpha_!A$ is given by
\[\alpha_!A(n)=\cat{N}(\alpha(-),n)\otimes_{\cat{M}}A(-)\]
\hfill$\square$
\end{prop}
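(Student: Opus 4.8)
The plan is to work through the identification of $\cat{C}[\oper{M}]$ with the category of symmetric monoidal simplicial functors $\cat{M}\to\cat{C}$ (as recalled in the appendix on operads), under which the forgetful functor $\cat{C}[\oper{N}]\to\cat{C}[\oper{M}]$ becomes restriction along the symmetric monoidal functor $\cat{M}\to\cat{N}$ induced by $\alpha$; note that this latter functor is strong monoidal, being induced by a map of operads. First I would forget the monoidal structures: since $\cat{C}$ is cocomplete and tensored over $\S$, the pointwise enriched left Kan extension $\on{Lan}_\alpha A\colon\cat{N}\to\cat{C}$ of any simplicial functor $A\colon\cat{M}\to\cat{C}$ exists and is computed object by object as the weighted colimit (coend)
\[\on{Lan}_\alpha A(n)=\cat{N}(\alpha(-),n)\otimes_{\cat{M}}A(-),\]
formed in $\cat{C}$ by means of its $\S$-tensoring. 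This already yields the adjunction $\on{Lan}_\alpha\dashv\alpha^*$ on the underlying simplicial functor categories, with unit $A\to\alpha^*\on{Lan}_\alpha A$ the universal cocone.

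The substantial step is to lift this to the monoidal level: to equip $\on{Lan}_\alpha A$ with a canonical symmetric monoidal structure whenever $A$ has one, and to check that the unit is then monoidal. The comparison map
\[\on{Lan}_\alpha A(n)\otimes\on{Lan}_\alpha A(n')\longrightarrow\on{Lan}_\alpha A(n\otimes n')\]
is assembled from the universal cocones together with the monoidal structure maps of $A$, of $\alpha$ and of $\cat{C}$; to see that it is invertible I would push $\otimes_{\cat{C}}$ inside the two coends (legitimate because $\otimes_{\cat{C}}$, hence also $K\otimes-$ for $K\in\S$, preserves colimits in each variable), apply a Fubini interchange, use strong monoidality of $A$ to replace $A(m)\otimes A(m')$ by $A(m\otimes m')$, and finally use that $\cat{M}$ and $\cat{N}$ are \emph{free} symmetric monoidal categories on $\oper{M}$ and $\oper{N}$, so that hom-spaces out of a tensor product split as a coproduct over partitions exactly as in the computation at the end of the proof of \ref{from cospan to modcat}. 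Together with strong monoidality of $\alpha$ this is precisely the Day-convolution statement that restriction along a strong monoidal functor is strong monoidal, and it reduces the claim to a finality property of the binary tensor $\cat{M}\times\cat{M}\to\cat{M}$. Once this is established, coherence and symmetry are diagram chases, and unitality is the easy fact that $\on{Lan}_\alpha A$ sends the unit of $\cat{N}$ to that of $\cat{C}$ since $A$ and $\alpha$ are unital; the adjunction on symmetric monoidal functor categories then follows formally, because a transformation $\on{Lan}_\alpha A\to B$ is monoidal precisely when its mate $A\to\alpha^*B$ is, the two monoidal structures and their structure maps being built solely from the colimit data and from the monoidal structures of $\cat{M}$, $\cat{N}$, $\cat{C}$. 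This produces $\alpha_!=\on{Lan}_\alpha$ with the asserted value at each color $n\in\on{Col}(\oper{N})$.

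I expect the genuine obstacle to be exactly this monoidal Kan extension lemma — invertibility of the comparison map above — since it is the only point where strong monoidality of $\alpha$ and cocontinuity of $\otimes_{\cat{C}}$ really enter; everything else is bookkeeping. An alternative, more monad-theoretic route would note that $\cat{C}[\oper{M}]$ and $\cat{C}[\oper{N}]$ are categories of algebras over simplicial monads on powers of $\cat{C}$, that $\alpha$ induces a morphism of such monads, and invoke the standard existence of a left adjoint to restriction along a monad morphism (as in \cite{bergerderived}); one would then still identify that left adjoint with the coend formula by evaluating it on free algebras and passing to colimits.
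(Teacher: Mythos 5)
The paper does not give its own argument here: the proof is a single citation to Proposition 1.15 of \cite{horelfactorization}, so there is nothing in the present source to compare against line by line. Your proposal supplies a genuine proof, and the overall architecture is sound: view $\oper{M}$-algebras as strong symmetric monoidal functors $\cat{M}\to\cat{C}$, compute the underlying functor of $\alpha_!A$ as the pointwise enriched left Kan extension via the coend, then check that this Kan extension is again strong monoidal so that the adjunction descends. Your identification of the single nontrivial point --- invertibility of the comparison map $\on{Lan}_\alpha A(n)\otimes\on{Lan}_\alpha A(n')\to\on{Lan}_\alpha A(n\otimes n')$ --- is also correct, and the intended computation (push $\otimes_{\cat{C}}$ through the coends, Fubini, strong monoidality of $A$ and $\alpha$, then co-Yoneda using that in the free symmetric monoidal category on an operad the hom $\cat{M}(m'',m\otimes m')$ decomposes as a coproduct over partitions of the color sequence $m''$) does establish the isomorphism
\[
\int^{m,m'}\cat{N}(\alpha m,n)\times\cat{N}(\alpha m',n')\times\cat{M}(m'',m\otimes m')\;\cong\;\cat{N}(\alpha m'',n\otimes n'),
\]
which is exactly what is needed.

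One sentence should be repaired, though, because as written it is false and would not survive refereeing: ``this is precisely the Day-convolution statement that restriction along a strong monoidal functor is strong monoidal.'' For a strong monoidal $\alpha\colon\cat{M}\to\cat{N}$, restriction $\alpha^*\colon\on{Fun}(\cat{N},\S)\to\on{Fun}(\cat{M},\S)$ is only \emph{lax} monoidal for Day convolution, and its left adjoint is only \emph{oplax}; strong monoidality fails in general (take $\alpha$ any non-full inclusion). What actually saves the argument is precisely the freeness you invoke immediately before: it forces $\cat{N}(\alpha(-),n)\otimes_{\on{Day}}\cat{N}(\alpha(-),n')\cong\cat{N}(\alpha(-),n\otimes n')$ because a morphism $\alpha m''\to n\otimes n'$ in a free symmetric monoidal category is nothing other than a partition of the color sequence of $m''$ together with a pair of morphisms to $n$ and $n'$. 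So the conclusion is correct, but the justification should be the explicit partition-splitting computation, not an appeal to a nonexistent general Day-convolution theorem. Your concluding monad-theoretic alternative (a morphism of simplicial monads on powers of $\cat{C}$, left adjoint via the bar construction, then evaluate on free algebras) is also valid and is probably closer to what the cited reference does.
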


\begin{proof}
See \cite[Proposition 1.15]{horelfactorization}.
\end{proof}

\begin{prop}\label{P structure}
Let $R$ be a $\oper{P}$-algebra in $\Mod_\oper{O}$. The functor $A\mapsto R\circ_{\oper{O}}A$ factors through the forgetful functor $\cat{C}[\oper{P}]\to\cat{C}$.\hfill$\square$
\end{prop}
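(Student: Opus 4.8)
The plan is to recognise the functor $-\circ_{\oper O}A\colon\Mod_{\oper O}\to\cat C$ as a symmetric monoidal functor (for the symmetric monoidal structure on right $\oper O$-modules and the given one on $\cat C$), and then simply push the $\oper P$-algebra $R$ forward along it. Before that I would record two routine facts. First, with $A$ fixed, $M\mapsto M\circ_{\oper O}A$ preserves reflexive coequalizers: $M\circ_{\oper O}A$ is itself the reflexive coequalizer of $M\circ\oper O\circ A\rightrightarrows M\circ A$, the composition product $-\circ S$ preserves colimits in its first variable, and coequalizers commute with coequalizers. Second, $\otimes$ on $\cat C$ preserves colimits, hence reflexive coequalizers, in each variable (this is part of our standing hypotheses on $\cat C$).

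The main step is to construct natural isomorphisms
\[(M\otimes N)\circ_{\oper O}A\;\cong\;(M\circ_{\oper O}A)\otimes(N\circ_{\oper O}A),\qquad J\circ_{\oper O}A\;\cong\;\un_{\cat C},\]
where $J$ is the monoidal unit of $\Mod_{\oper O}$, i.e.\ the symmetric sequence which is a point in arity $0$ and $\varnothing$ in all other arities. For the first isomorphism I would reproduce verbatim the computation in the proof of \ref{monoidality} (which uses no cofibrancy hypothesis on $A$): write $(M\otimes N)\circ_{\oper O}A$ as the reflexive coequalizer of $(M\otimes N)\circ\oper O\circ A\rightrightarrows(M\otimes N)\circ A$, use the identity $(P\otimes Q)\circ S\cong(P\circ S)\otimes(Q\circ S)$ for symmetric sequences to rewrite the two terms as $(M\circ\oper O\circ A)\otimes(N\circ\oper O\circ A)$ and $(M\circ A)\otimes(N\circ A)$, and then invoke the two facts above to identify the coequalizer with $(M\circ_{\oper O}A)\otimes(N\circ_{\oper O}A)$. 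For the unit, $J\circ\oper O\cong J$ since $J$ is concentrated in arity $0$, so $J\circ_{\oper O}A\cong J\circ A\cong\un_{\cat C}$. I would then check that these structure isomorphisms are compatible with the associativity, symmetry and unit constraints; this reduces to the corresponding (standard) coherences for the product of symmetric sequences. This makes $-\circ_{\oper O}A$ a (strong, in particular lax) symmetric monoidal functor.

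To conclude: a $\oper P$-algebra $R$ in $\Mod_{\oper O}$ is, by definition, a map of operads $\oper P\to\oper U\Mod_{\oper O}$. Any lax symmetric monoidal functor $G\colon\Mod_{\oper O}\to\cat C$ induces a map of operads $\oper U G\colon\oper U\Mod_{\oper O}\to\oper U\cat C$; taking $G=-\circ_{\oper O}A$ and composing gives a map of operads $\oper P\to\oper U\cat C$, that is, a $\oper P$-algebra structure in $\cat C$ on the object $R\circ_{\oper O}A$. All the constructions involved are natural in $A\in\cat C[\oper O]$, and forgetting the $\oper P$-algebra structure returns $R\circ_{\oper O}A$; hence the functor $A\mapsto R\circ_{\oper O}A$ factors through $\cat C[\oper P]\to\cat C$, as asserted. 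The only genuine work lies in the main step — verifying that $-\circ_{\oper O}A$ is symmetric monoidal, i.e.\ that the bar-type coequalizers are reflexive, that $\otimes$ on $\cat C$ commutes with them, and that the resulting structure isomorphism satisfies the monoidal coherence axioms; everything else is formal bookkeeping.
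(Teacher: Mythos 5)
Your proof is correct and is essentially the argument the paper has in mind: the statement is marked $\hfill\square$ as routine, and the symmetric monoidality of $-\circ_{\oper O}A$ is spelled out (for cofibrant $A$) in the proof of Proposition \ref{monoidality} via exactly the identity $(M\otimes N)\circ S\cong(M\circ S)\otimes(N\circ S)$ and the coequalizer presentation. You correctly observe that no cofibrancy is needed for the monoidality computation itself, and the remaining step — pushing a $\oper P$-algebra forward along a lax symmetric monoidal functor — is formal.
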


\subsection{Homotopy theory of operads and modules}

\begin{defi}
A map $f:\oper{M}\to\oper{N}$ is said to be an equivalence, if 
\begin{itemize}
\item For any finite collection of objects $\{x_i\}$ in $\on{Ob}(\oper{M})$ and $y$ in $\on{Ob}(\oper{M})$, the induced map
\[\oper{M}(\{x_i\};y)\to\oper{N}(\{f(x_i)\};f(y))\]
is a weak equivalence.
\item The induced map $\on{Ho}(\oper{M}^{(1)})\to\on{Ho}(\oper{N}^{(1)})$ is essentially surjective.
\end{itemize}
\end{defi}

We write $\cat{C}$ for $\Mod_E$ the category of right modules over a commutative monoid in symmetric spectra. We write $\cat{C}$ in order to emphasize that the results hold more generally. However the argument are slightly different in each cases. For instance, one could work in $\cat{Ch_*}(R)$, the category of chain complexes over a commutative $\mathbb{Q}$-algebra $R$. However, this category is not stricly speaking a simplicial category. The functor $X\mapsto C_*(X,R)$ however is lax monoidal and in many respects the category $\cat{Ch_*}(R)$ behaves as a simplicial category for the ``simplicial'' structure given by 
\[\Map(C_*,D_*)_n=\cat{Ch_*}(R)(C_*\otimes C_*(\Delta[n]),D_*)\]

Similarly, our result remain true for symmetric monoidal model categories like $\S$, $s\Mod_R$, the category of simplicial modules over a commutative ring $R$ or $s\cat{Sh}(\cat{T})$ the category of simplicial sheaves over a site $\cat{T}$ with its injective model structure. However, in those cases, one has to restrict to $\Sigma$-cofibrant operads and right modules. 

If $E$ is a commutative monoid in the category $\Spec$ of symmetric spectra, we define $\Mod_E$ to be the category of right modules over $E$ equipped with the positive model structure (see \cite{schwedeuntitled} for a definition of the positive model structure). This category is a closed symmetric monoidal left proper simplicial model category. There is another model structure called the absolute model structure $\Mod_E^a$ on the same category with the same weak equivalences but more cofibrations. In particular, the unit $E$ is cofibrant in $\Mod_E^a$ but not in $\Mod_E$. The model category $\Mod_E^a$ is also a symmetric monoidal left proper simplicial model category. 

The following two theorems can be found in \cite{pavlovsymmetric}.

\begin{theo}\label{theo-operads in ring spectra}
Let $E$ be a commutative symmetric ring spectrum. Then the positive model structure on $\Mod_E$ is such that for any operad $\oper{M}$, the category $\Mod_E[\oper{M}]$ has a model structure in which the weak equivalences and fibrations are colorwise. Moreover if $A$ is a cofibrant algebra over an operad $\oper{M}$, then $A$ is cofibrant for the absolute model structure.
\end{theo}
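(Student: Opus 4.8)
The statement is imported essentially verbatim from \cite{pavlovsymmetric}, so the plan is to indicate the shape of the argument rather than reproduce its (substantial) details. For the existence of the colorwise model structure on $\Mod_E[\oper{M}]$, I would invoke Kan's transfer principle along the free--forgetful adjunction $F\dashv U$ between $\prod_{c\in\on{Col}(\oper{M})}\Mod_E$, equipped with the product model structure, and $\Mod_E[\oper{M}]$, where $U$ records the underlying $\on{Col}(\oper{M})$-indexed family of objects. Two hypotheses must be verified: that $\Mod_E[\oper{M}]$ admits the small object argument, which is immediate since $\Mod_E$ is cofibrantly generated and $U$ preserves filtered colimits and reflexive coequalizers; and that every transfinite composite of pushouts of maps $F(j)$, with $j$ a generating trivial cofibration in $\prod_c\Mod_E$, is a colorwise weak equivalence. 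The ``moreover'' clause will then be obtained by rerunning the same cell-by-cell analysis while bookkeeping cofibrancy in the absolute model structure $\Mod_E^a$.

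The heart of the matter is the structure of one free extension. Given a (trivial) cofibration $f$ in $\prod_c\Mod_E$ and a pushout in $\Mod_E[\oper{M}]$ of $F(f)$ along a map $F(X)\to A$, one has the standard filtration $A=A_0\to A_1\to A_2\to\cdots$ of the pushout, in which the step $A_{n-1}\to A_n$ is itself a pushout along a map whose source and target are assembled from the operation spaces of $\oper{M}$, the $n$-fold iterated pushout-product of $f$, and tensor factors coming from $A$, all carrying a $\Sigma_n$-action which is then quotiented out. This is exactly where the \emph{positive} model structure is used: its generating cofibrations are concentrated in positive spectral degree, so that the $n$-th pushout-product power of a positive cofibration is a cofibration in the $\Sigma_n$-equivariant positive structure on which $\Sigma_n$ acts freely in the relevant degrees, and hence $\oper{M}(n)\otimes_{\Sigma_n}(-)$ carries it to a (trivial) cofibration of $E$-modules \emph{no matter how} $\Sigma_n$ acts on $\oper{M}(n)$. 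Granting this, each $A_{n-1}\to A_n$ is a (trivial) cofibration, so $A\to A'$ is one as well, and stability under transfinite composition and retracts supplies the hypothesis of the transfer principle. The same filtration also yields simpliciality and left properness, although only existence is asserted here.

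For the second half, I would first record that the identity functor $\Mod_E\to\Mod_E^a$ is left Quillen, so that every generating positive cofibration is an absolute cofibration, and that $\Mod_E^a$ is symmetric monoidal with cofibrant unit, so that the initial $\oper{M}$-algebra --- which is colorwise a free $E$-module on the simplicial set $\oper{M}(\varnothing;c)$ --- is colorwise absolutely cofibrant. A cofibrant object $A$ of $\Mod_E[\oper{M}]$ is a retract of a cellular $\oper{M}$-algebra, that is, one obtained from the initial algebra by a transfinite sequence of pushouts of maps $F(f)$ with $f$ a generating positive cofibration. Rerunning the filtration of the previous paragraph along each such cell, and using that the subquotient functors $\oper{M}(n)\otimes_{\Sigma_n}(-)$, applied to pushout-product powers of absolute cofibrations smashed with the --- inductively absolutely cofibrant --- previous stage, again land in the absolutely cofibrant $E$-modules, one concludes by transfinite induction that each stage, and hence $A$, is absolutely cofibrant.

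The main obstacle, and the part that genuinely requires \cite{pavlovsymmetric} rather than formalities, is the homotopical control of $\oper{M}(n)\otimes_{\Sigma_n}(-)$ on equivariant pushout-product powers when $\oper{M}(n)$ carries a non-free $\Sigma_n$-action. This is precisely the input that fails for the absolute structure on $\Mod_E$ --- whose unit sits in degree $0$ --- and is repaired by positivity; its verification, via the explicit filtration of free algebra extensions together with the freeness of $\Sigma_n$ on positive-degree cells, is the technical core one is really citing. Everything else is routine bookkeeping with the transfer theorem and with left Quillen functors.
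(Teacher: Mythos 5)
The paper offers no proof of this theorem and simply cites \cite{pavlovsymmetric} for both Theorem~\ref{theo-operads in ring spectra} and Theorem~\ref{theo-Quillen equivalence}. Your sketch correctly identifies this and gives a faithful outline of the transfer-plus-free-extension-filtration argument underlying the cited reference, including the crucial role of positivity in controlling the $\Sigma_n$-equivariant pushout-product powers and the cell-by-cell bookkeeping of absolute cofibrancy for the second clause, so it is consistent with the approach the paper delegates to.
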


Moreover, this model structure is homotopy invariant:

\begin{theo}\label{theo-Quillen equivalence}
Let $\alpha:\oper{M}\to\oper{N}$ be a weak equivalence of operads. Then the adjunction
\[\alpha_!:\Mod_E[\oper{M}]\leftrightarrows\Mod_E[\oper{N}]:\alpha^*\]
is a Quillen equivalence.
\end{theo}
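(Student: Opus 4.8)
The plan is to dispatch the easy half first and then isolate the one substantive point. Since weak equivalences and fibrations in both $\Mod_E[\oper{M}]$ and $\Mod_E[\oper{N}]$ are detected colorwise and $\alpha^*$ is restriction of colors, the functor $\alpha^*$ preserves fibrations, trivial fibrations, and in fact all weak equivalences; hence $\alpha_!\dashv\alpha^*$ is a Quillen adjunction whose right adjoint already equals its total right derived functor. Because $\alpha^*$ also reflects weak equivalences between fibrant objects — every color of $\oper{N}$ being, up to homotopy, a retract of one in the image of $\alpha$ by essential surjectivity — the Quillen-equivalence statement reduces to proving that for every cofibrant $\oper{M}$-algebra $A$ the unit $A\to\alpha^*\alpha_!A$ is a weak equivalence. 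This map is exactly the derived unit, since $A$ is cofibrant and $\alpha^*$ needs no fibrant replacement.

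Next I would reduce, by a standard maneuver, to the case where $\alpha$ is the identity on colors, the essential-surjectivity part of the hypothesis being absorbed into this reduction. After it, $\alpha^*$ is honest relabeling, $\alpha_!$ sends the free $\oper{M}$-algebra on an object $X$ of $\Mod_E$ (concentrated at a single color) to the free $\oper{N}$-algebra on $X$, and $\alpha_!$ of a cellular $\oper{M}$-algebra is the corresponding cellular $\oper{N}$-algebra built from the same cells.

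Then I would run the cellular induction. A cofibrant $\oper{M}$-algebra is a retract of a cell algebra $A=\on{colim}_\mu A_\mu$, where $A_{\mu+1}$ is the pushout of $A_\mu\leftarrow\oper{F}_{\oper{M}}(X_\mu)\to\oper{F}_{\oper{M}}(Y_\mu)$ along a free map generated by a generating cofibration $X_\mu\to Y_\mu$. Since $\alpha_!$ preserves colimits and free algebras, $\alpha^*$ commutes with the filtered colimits involved, and colorwise weak equivalences in $\Mod_E$ are stable under transfinite composition along cofibrations, it suffices to prove by induction on $\mu$ that $A_\mu\to\alpha^*\alpha_!A_\mu$ is a weak equivalence. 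For the successor step one invokes the standard filtration of a pushout of operad algebras along a free map (Harper, Berger--Moerdijk, Spitzweck): $A_{\mu+1}$ admits a filtration whose subquotients are assembled colorwise out of the operation objects $\oper{M}(\cdots)$, the algebra $A_\mu$, and the iterated pushout-product powers $Q^k$ of $X_\mu\to Y_\mu$, smashed together and quotiented by symmetric group actions; applying $\alpha_!$ yields the same filtration with $\oper{M}$ replaced by $\oper{N}$ and $A_\mu$ by $\alpha_!A_\mu$, and the comparison map is induced by $\alpha$ on operation objects, by the inductive equivalence on $A_\mu$, and by the identity on the $Q^k$.

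The genuine obstacle — and the only place where the specific model category enters — is showing these comparison maps on subquotients are weak equivalences, since the operation objects $\oper{M}(\cdots)$ are not $\Sigma$-cofibrant and a priori $(-)\wedge_{\Sigma_k}Q^k$ need not be homotopy invariant. This is precisely what the positive model structure on $\Mod_E$ is for: positive cofibrancy of the generating cofibrations makes each $Q^k$ sufficiently flat, so that smashing with $Q^k$ over $\Sigma_k$ does preserve the relevant weak equivalences in the remaining variables. This flatness, together with the left properness that guarantees the filtration stages are cofibrations (so the colimit computes the homotopy colimit), is exactly the admissibility package for symmetric spectra recorded in Theorem \ref{theo-operads in ring spectra} and established in \cite{pavlovsymmetric}. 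Granting it, a two-out-of-three argument up the filtration and then induction on $\mu$ show that $A\to\alpha^*\alpha_!A$ is a weak equivalence for cell algebras, and closure of weak equivalences under retracts finishes the proof.
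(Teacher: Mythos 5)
The paper does not prove this theorem; both Theorem~\ref{theo-operads in ring spectra} and Theorem~\ref{theo-Quillen equivalence} are stated with the single remark ``The following two theorems can be found in \cite{pavlovsymmetric}.'' So there is no ``paper's proof'' to compare against — the paper delegates the entire content to that reference.

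Your sketch reconstructs, at the level of an outline, the standard argument that underlies such admissibility-and-rectification results (Berger--Moerdijk, Spitzweck, Harper, and in the form actually cited here, Pavlov--Scholbach): reduce via the derived-unit criterion to showing $A\to\alpha^*\alpha_!A$ is an equivalence for cofibrant $A$, reduce to $\alpha$ bijective on colors, do cellular induction, and invoke the Berger--Moerdijk--Harper filtration of a pushout along a free map, with the only nontrivial input being that $(-)\wedge_{\Sigma_k}Q^k$ is appropriately homotopy invariant thanks to positive flatness in symmetric spectra. That is indeed the correct shape of the proof and exactly where the hypotheses on the ambient model category bite. Two places deserve a second look if you were to flesh this out: (i) the passage ``$\alpha^*$ reflects weak equivalences between fibrant objects'' in the essentially-surjective case is not purely formal — one has to know that a fibrant algebra sends $1$-ary operations that are weak equivalences to weak equivalences of spectra, which is true here but not a tautology; and (ii) the reduction ``to the case where $\alpha$ is the identity on colors'' conceals a separate Dwyer--Kan-style step (full, essentially surjective inclusion of a suboperad) that itself needs an argument. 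Both are handled in the cited reference; your outline correctly identifies where they live and leans on the reference for the hard part, which is also what the paper does, just more tersely.
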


We want to study the homotopy invariance of coends of the form $P\otimes_{\cat{M}}A$ for $A$ an $\oper{M}$-algebra and $P$ a right module over $\oper{M}$.

\begin{prop}\label{invariance of operadic coend}
Let $\oper{M}$ be an operad and let $\cat{M}$ be the PROP associated to $\oper{M}$. Let $A:\cat{M}\to\cat{C}$ be an algebra. Then
\begin{enumerate}
\item Let $P:\cat{M}\op\to \S$ be a right module. Then $P\otimes_{\cat{M}}-$ preserves weak equivalences between cofibrant $\oper{M}$-algebras.
\item If $A$ is a cofibrant algebra, the functor $-\otimes_{\cat{M}}A$ is a left Quillen functor from right modules over $\oper{M}$ to $\cat{C}$ with the absolute model structure.
\item Moreover the functor $-\otimes_{\cat{M}}A$ preserves all weak equivalences between right modules.
\end{enumerate}
\end{prop}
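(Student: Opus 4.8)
The plan is to establish the three assertions in order, using the standard toolkit for bar resolutions in the category $\Mod_E$ together with the results on model structures for algebras over operads recalled in Theorems \ref{theo-operads in ring spectra} and \ref{theo-Quillen equivalence}. For part (2), I would argue that $-\otimes_{\cat{M}}A$ has an obvious right adjoint, namely $N\mapsto \Map_{\cat{C}}(A(-),N)$ viewed as a right $\oper{M}$-module, so it suffices to check that it sends generating (trivial) cofibrations of right $\oper{M}$-modules to (trivial) cofibrations in $\cat{C}$ with the absolute model structure. The generating cofibrations of $\Mod_{\oper{M}}$ with the injective (or projective) structure are built from the representable modules $\cat{M}(-,n)$ smashed with generating cofibrations of $\cat{C}$, and $\cat{M}(-,n)\otimes_{\cat{M}}A \cong A(n)$. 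Since $A$ is a cofibrant $\oper{M}$-algebra, $A(n)$ is absolutely cofibrant by Theorem \ref{theo-operads in ring spectra}, and then the pushout-product of a generating cofibration of $\cat{C}$ with $A(n)$ lands among cofibrations of $\Mod_E^a$; the trivial case is identical. This gives the left Quillen assertion.

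For part (3), I would first reduce to the case of $\Sigma$-cofibrant right modules, or more precisely use the explicit bar construction $\mathrm{B}_\bullet(P,\cat{M},A)$: the derived functor $P\otimes^{\L}_{\cat{M}}A$ is the realization of $\mathrm{B}_\bullet(QP,\cat{M},A)$, but when $A$ is cofibrant (hence $\Sigma$-cofibrant by Theorem \ref{theo-operads in ring spectra} together with $\Sigma$-cofibrancy of $\oper{M}$, or directly termwise cofibrant) each simplicial level of $\mathrm{B}_\bullet(P,\cat{M},A)$ is already of the form $P(n)\wedge(\text{cofibrant})$, so the natural map $P\otimes^{\L}_{\cat{M}}A\to P\otimes_{\cat{M}}A$ is a weak equivalence for \emph{every} right module $P$, not just cofibrant ones. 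This is the key point: cofibrancy has been pushed entirely onto the $A$ side. A weak equivalence $P\to P'$ then induces a levelwise weak equivalence of Reedy-cofibrant simplicial objects (each level is $P(n)\wedge(\text{cofibrant}) \to P'(n)\wedge(\text{cofibrant})$, a weak equivalence since smashing with a cofibrant object of $\Mod_E$ preserves weak equivalences), hence an equivalence on realizations, and part (3) follows by comparing with the underived coends via the two-out-of-three property.

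For part (1), I would symmetrically exploit that for a fixed right module $P$, the functor $A\mapsto P\otimes^{\L}_{\cat{M}}A = |\mathrm{B}_\bullet(P,\cat{M},A\circ Q)|$ is homotopy invariant in $A$ by a Reedy-cofibrancy argument (Proposition \ref{Reedy cofibrant}), and then observe that for cofibrant $\oper{M}$-algebras the canonical map $P\otimes^{\L}_{\cat{M}}A\to P\otimes_{\cat{M}}A$ is an equivalence — this uses part (2), since a left Quillen functor computes its own derived functor on cofibrant objects. Hence a weak equivalence $A\to A'$ of cofibrant $\oper{M}$-algebras induces $P\otimes^{\L}_{\cat{M}}A\simeq P\otimes^{\L}_{\cat{M}}A'$, and the underived maps agree with the derived ones, giving the result.

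I expect the main obstacle to be the bookkeeping in part (3): showing that the levels of the bar construction $\mathrm{B}_\bullet(P,\cat{M},A)$ are genuinely of the form $P(n)\wedge(\text{absolutely cofibrant})$ requires knowing that iterated composition products $\cat{M}(-,n)\circ\cdots\circ A$ evaluated on cofibrant $A$ remain absolutely cofibrant, which in turn relies on $\oper{M}$ (or at least its composition structure) interacting well with cofibrancy — in the $\Mod_E$ setting this is exactly the content of the positive model structure making all $\oper{M}$-algebra cofibrant objects absolutely cofibrant, but one must be careful that this survives the \emph{partial} bar construction where only some slots are filled by $A$. If $\oper{M}$ is not assumed $\Sigma$-cofibrant one genuinely needs the special features of $\Mod_E$ here, which is consistent with the paper's running conventions.
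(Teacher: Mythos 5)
The paper itself only cites the proof, so there is no internal argument to compare against; but your proposal contains both a genuine gap and a missed simplification that you should address.

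The key point you overlook is which model structure the paper puts on $\Mod_{\oper{O}}$. In the body of the paper (subsection ``Operations on modules'') the category $\Mod_{\oper{O}}$ carries the \emph{injective} model structure, where cofibrations are objectwise monomorphisms in $\S$. Since every simplicial set is cofibrant, \emph{every} right $\oper{O}$-module is cofibrant in this model structure. Once part (2) is established, part (3) is therefore immediate from Ken Brown's lemma: a left Quillen functor with all-cofibrant source preserves all weak equivalences. Your bar-construction argument for part (3) is circuitous, and more importantly the worry you yourself raise at the end about $\Sigma$-cofibrancy of the bar levels is a real one that the bar approach leaves unresolved; the simpler route avoids it entirely.

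The gap is in part (2). You write that ``the generating cofibrations of $\Mod_{\oper{M}}$ with the injective (or projective) structure are built from the representable modules $\cat{M}(-,n)$ smashed with generating cofibrations of $\cat{C}$.'' This describes the generating cofibrations of the \emph{projective} model structure, not the injective one: a cellular complex built from representables will only ever produce projective cofibrations, and the injective model structure has strictly more cofibrations (all objectwise monos). Your hedge ``(or projective)'' cannot be sustained, because with the projective structure part (3) would no longer follow from part (2) as above, and your bar argument would then carry all the weight. Also, as a smaller but symptomatic slip, right $\oper{M}$-modules take values in $\S$, not $\cat{C}$, so the generators should involve generating cofibrations of $\S$ and the tensoring $\S\times\cat{C}\to\cat{C}$; the identity $\bigl(\cat{M}(-,n)\times (\partial\Delta[k]\subset\Delta[k])\bigr)\otimes_{\cat{M}}A\cong (\partial\Delta[k]\subset\Delta[k])\otimes A(n)$ is what you actually want to use. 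What remains to be shown for the injective structure is that $-\otimes_{\cat{M}}A$ sends an arbitrary objectwise cofibration to a cofibration in $\cat{C}^a$ when $A$ is cofibrant; this requires a cell-induction argument over the cell structure of $A$ as a cofibrant $\oper{O}$-algebra (as in \cite[Proposition 2.8]{horelfactorization}), not just reduction to representables. Your correct identifications of the right adjoint $N\mapsto\Map_{\cat{C}}(A(-),N)$ and of the role of absolute cofibrancy of $A(n)$ are the right ingredients, but they must be combined with such a filtration argument rather than the projective-generator shortcut.
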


\begin{proof}
See \cite[Proposition 2.8.]{horelfactorization}
\end{proof}

Given a map of operad $\alpha:\oper{M}\to\oper{N}$, proposition \ref{coend} insures that the operadic left Kan extension $\alpha_!$ applied to an algebra $A$ over $\oper{M}$ coincides with the left Kan extension of the functor $A:\cat{M}\to\cat{C}$. We call the latter the categorical left Kan extension of $A$. 

The following propostion insures that the left derived functors of these two functors coincide.

\begin{prop}\label{Operadic vs categorical}
Let $\alpha:\oper{M}\to\oper{N}$ be a morphism of operads. Let $A$ be an algebra over $\oper{M}$. The derived operadic left Kan extension $\L\alpha_!(A)$ is weakly equivalent to the homotopy left Kan extension of $A:\cat{M}\to\cat{C}$ along the induced map $\cat{M}\to\cat{N}$.
\end{prop}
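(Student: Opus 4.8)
The plan is to reduce both the operadic and the categorical left Kan extensions to the same two-sided bar construction. Write $P_n = \cat{N}(\alpha(-),n)$ for the representable right $\oper{M}$-module. By \ref{coend}, the underived operadic left Kan extension of any $\oper{M}$-algebra $B$ is the strict coend $\alpha_!(B)(n) \cong P_n\otimes_{\cat{M}} B$; since $\alpha^*$ preserves fibrations and (colorwise) weak equivalences, $\alpha_!$ is left Quillen, so $\L\alpha_!(A)(n) \cong P_n\otimes_{\cat{M}} QA$ where $QA\to A$ is a cofibrant replacement in $\Mod_E[\oper{M}]$ (\ref{theo-operads in ring spectra}). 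On the other hand, by the pointwise formula for homotopy left Kan extensions, the homotopy categorical left Kan extension of $A:\cat{M}\to\cat{C}$ along $\cat{M}\to\cat{N}$, evaluated at $n$, is the derived coend $P_n\otimes^{\L}_{\cat{M}} A = |\on{B}_\bullet(P_n,\cat{M}, A\circ Q')|$, with $Q'$ a simplicial cofibrant replacement functor in $\cat{C}$.

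The first step is to move $QA$ onto the homotopy side. As $QA\to A$ is a weak equivalence of $\oper{M}$-algebras, it is colorwise a weak equivalence, hence an objectwise weak equivalence of diagrams $\cat{M}\to\cat{C}$ between objectwise cofibrant diagrams (using \ref{theo-operads in ring spectra} for the cofibrancy of $QA$). Consequently the maps $\on{B}_\bullet(P_n,\cat{M}, QA\circ Q')\to \on{B}_\bullet(P_n,\cat{M},A\circ Q')$ and $\on{B}_\bullet(P_n,\cat{M},QA\circ Q')\to \on{B}_\bullet(P_n,\cat{M},QA)$ are levelwise weak equivalences of Reedy cofibrant simplicial objects (\ref{Reedy cofibrant}), so passing to realizations gives $P_n\otimes^{\L}_{\cat{M}} A \simeq P_n\otimes^{\L}_{\cat{M}} QA \simeq |\on{B}_\bullet(P_n,\cat{M},QA)|$.

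The second step is to collapse this bar construction to the strict coend. There is a natural isomorphism $\on{B}_\bullet(P_n,\cat{M},QA)\cong \on{B}_\bullet(P_n,\cat{M},\cat{M})\otimes_{\cat{M}} QA$, and since $-\otimes_{\cat{M}} QA$ is a left adjoint it commutes with geometric realization, so $|\on{B}_\bullet(P_n,\cat{M},QA)|\cong |\on{B}_\bullet(P_n,\cat{M},\cat{M})|\otimes_{\cat{M}} QA$. The simplicial right $\oper{M}$-module $\on{B}_\bullet(P_n,\cat{M},\cat{M})$ is a resolution of $P_n$, so its realization maps to $P_n$ by a weak equivalence of right $\oper{M}$-modules; and by \ref{invariance of operadic coend}(3) the functor $-\otimes_{\cat{M}} QA$ preserves all weak equivalences between right $\oper{M}$-modules. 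Hence $|\on{B}_\bullet(P_n,\cat{M},\cat{M})|\otimes_{\cat{M}} QA \simeq P_n\otimes_{\cat{M}} QA$. Combining the two steps yields $P_n\otimes^{\L}_{\cat{M}} A\simeq P_n\otimes_{\cat{M}} QA \cong \L\alpha_!(A)(n)$, naturally in $n$.

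The routine bookkeeping — Reedy cofibrancy of the bar constructions, the compatibility of the positive and absolute model structures on $\cat{C}$, and the fact that $-\otimes_{\cat{M}} QA$ commutes with realization — is where I expect the only friction; all of it is controlled by \ref{invariance of operadic coend} and \ref{Reedy cofibrant}. The main conceptual input is \ref{invariance of operadic coend}(3), which is exactly what makes the strict coend against a cofibrant algebra homotopically correct. As usual the argument adapts without change to $\cat{Ch}_*(R)$, and to $\S$ or simplicial $R$-modules once $\oper{M}$ is assumed $\Sigma$-cofibrant.
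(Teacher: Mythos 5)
The paper itself gives no argument for this proposition; it simply cites Proposition 2.9 of the author's companion paper on factorization homology. So there is nothing in-text to compare your proof against, but your bar-construction argument is the standard route and I believe it is what that reference carries out. Structurally it is sound: identify both sides as computing $P_n\otimes_{\cat{M}}QA$ for $QA$ a cofibrant algebra, using \ref{coend} and the transferred model structure on one side, and the bar resolution of $P_n$ plus \ref{invariance of operadic coend}(3) on the other.

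One sentence is misstated, though the argument recovers: you write that $QA\to A$ is ``an objectwise weak equivalence of diagrams $\cat{M}\to\cat{C}$ between objectwise cofibrant diagrams,'' but $A$ itself carries no cofibrancy hypothesis — only $QA$ does, via \ref{theo-operads in ring spectra}. What rescues this is exactly the simplicial cofibrant replacement $Q'$ you insert into the bar construction: the two levelwise equivalences you invoke are between $\on{B}_\bullet(P_n,\cat{M},Q'\circ QA)$, $\on{B}_\bullet(P_n,\cat{M},Q'\circ A)$, and $\on{B}_\bullet(P_n,\cat{M},QA)$, all of which have objectwise cofibrant last input and are hence Reedy cofibrant by \ref{Reedy cofibrant}, so the realizations compare correctly. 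It would be cleaner to drop the parenthetical claim about $A$ being objectwise cofibrant and just say the Reedy cofibrancy of the three bar constructions is what licenses passing to realizations.

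Two smaller notational points worth fixing. First, the positive versus absolute model structures on $\cat{C}=\Mod_E$: \ref{theo-operads in ring spectra} gives cofibrancy of $QA$ in the \emph{absolute} structure, and \ref{invariance of operadic coend}(2)–(3) is likewise formulated for the absolute structure; since the two have the same weak equivalences and the absolute structure has \emph{more} cofibrant objects, everything you use is available, but it is worth saying so explicitly since you also appeal to \ref{Reedy cofibrant}, which asks for objectwise cofibrancy in whichever structure is in force. Second, in the paper's appendix the derived coend is written $|\on{B}_\bullet(W,\cat{K},F\circ Q)|$, which can only mean objectwise cofibrant replacement $Q\circ F$; matching the paper's typography as you have done is fine, but the composition order is something a careful reader will trip over.

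With those cosmetic corrections, the proof stands: the two-step reduction via the bar resolution of $P_n$ and the key input \ref{invariance of operadic coend}(3) is exactly what makes the strict coend against a cofibrant algebra compute the derived coend, independently of the (non-)cofibrancy of the representable right module $P_n$.
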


\begin{proof}
See \cite[Proposition 2.9.]{horelfactorization}.
\end{proof}

\bibliographystyle{alpha}
\bibliography{biblio}

\end{document}